\DeclareMathAlphabet{\mathpzc}{OT1}{pzc}{m}{it}
\newtheorem{theorem}{Theorem}[section]
\newtheorem{proposition}[theorem]{Proposition}
\newtheorem{corollary}[theorem]{Corollary}
\newtheorem{lemma}[theorem]{Lemma}
\newtheorem*{theorem*}{Theorem}
\newtheorem*{proposition*}{Proposition}
\newtheorem*{corollary*}{Corollary}
\newtheorem*{lemma*}{Lemma}
\newtheorem*{conjecture*}{Conjecture}
\theoremstyle{definition}
\newtheorem{definition}[theorem]{Definition}
\newtheorem*{definition*}{Definition}
\theoremstyle{remark}
\newtheorem{example}[theorem]{Example}
\newtheorem{examples}[theorem]{Examples}
\newtheorem{remark}[theorem]{Remark}
\newtheorem{remarks}[theorem]{Remarks}
\newtheorem*{example*}{Example}
\newtheorem*{examples*}{Examples}
\newtheorem*{remark*}{Remark}
\newtheorem*{remarks*}{Remarks}
\newtheorem*{exercise*}{Exercise}
\newcommand\da{\!\downarrow\!}
\newcommand\ra{\rightarrow}
\newcommand\la{\leftarrow}
\newcommand\lra{\longrightarrow}
\newcommand\lla{\longleftarrow}
\newcommand\id{\mathrm{id}}
\newcommand\ten{\otimes}
\newcommand\vareps{\varepsilon}
\newcommand\eps{\epsilon}
\newcommand\CC{\mathrm{C}}
\renewcommand\H{\mathrm{H}}
\newcommand\z{\mathrm{Z}}
\newcommand\N{\mathbb{N}}
\newcommand\Z{\mathbb{Z}}
\newcommand\bH{\mathbb{H}}
\newcommand\bI{\mathbb{I}}
\newcommand\bS{\mathbb{S}}
\newcommand\C{\mathcal{C}}
\newcommand\cA{\mathcal{A}}
\newcommand\cB{\mathcal{B}}
\newcommand\cD{\mathcal{D}}
\newcommand\cE{\mathcal{E}}
\newcommand\cF{\mathcal{F}}
\newcommand\cN{\mathcal{N}}
\newcommand\cO{\mathcal{O}}
\newcommand\cU{\mathcal{U}}
\newcommand\cV{\mathcal{V}}
\newcommand\D{\mathcal{D}}
\newcommand\cDel{\mathcal{DEL}}
\newcommand\n{\mathfrak{n}}
\newcommand\g{\mathfrak{g}}
\newcommand\cHom{\mathcal{H}\!\mathit{om}}
\newcommand\cHHom{\underline{\mathcal{H}\!\mathit{om}}}
\newcommand\Ho{\mathrm{Ho}}
\newcommand\Ring{\mathrm{Ring}}
\newcommand\Hom{\mathrm{Hom}}
\newcommand\Map{\mathrm{Map}}
\newcommand\map{\mathrm{map}}
\newcommand\HHom{\underline{\mathrm{Hom}}}
\newcommand\End{\mathrm{End}}
\newcommand\Der{\mathrm{Der}}
\newcommand\coker{\mathrm{coker\,}}
\newcommand\Ob{\mathrm{Ob}\,}
\newcommand\Ch{\mathrm{Ch}}
\newcommand\Ab{\mathrm{Ab}}
\newcommand\Top{\mathrm{Top}}
\newcommand\Gp{\mathrm{Gp}}
\newcommand\Shf{\mathrm{Shf}}
\newcommand\Set{\mathrm{Set}}
\newcommand\Cat{\mathrm{Cat}}
\newcommand\CSS{\mathcal{CSS}}
\newcommand\Dat{\mathrm{Dat}}
\newcommand\Sing{\mathrm{Sing}}
\newcommand\ad{\mathrm{ad}}
\newcommand\Lim{\varprojlim}
\newcommand\LLim{\varinjlim}
\DeclareMathOperator*{\holim}{holim}
\newcommand\ho{\mathrm{ho}\!}
\newcommand\into{\hookrightarrow}
\newcommand\onto{\twoheadrightarrow}
\newcommand\abuts{\implies}
\newcommand\xra{\xrightarrow}
\newcommand\xla{\xleftarrow}
\newcommand\pr{\mathrm{pr}}
\newcommand\alg{\mathrm{alg}}
\newcommand\bt{\bullet}
\newcommand\by{\times}
\newcommand\mc{\mathrm{MC}}
\newcommand\mmc{\underline{\mathrm{MC}}}
\newcommand\cMC{\mathcal{MC}}
\newcommand\Gg{\mathrm{Gg}}
\newcommand\ddef{\mathrm{Def}}
\newcommand\ddel{\mathrm{Del}}
\newcommand\Vect{\mathrm{Vect}}
\newcommand\Rep{\mathrm{Rep}}
\newcommand\Tot{\mathrm{Tot}\,}
\newcommand\diag{\mathrm{diag}\,}
\newcommand\toph{\top_{\mathrm{h}}}
\newcommand\hor{\mathrm{hor}}
\newcommand\ver{\mathrm{ver}}
\newcommand\pd{\partial}
\newcommand\half{\frac{1}{2}}
\newcommand\Ar{\mathrm{Ar}}
\newcommand\gr{\mathrm{gr}}
\newcommand\ab{\mathrm{ab}}
\newcommand\gpd{\mathrm{Gpd}}
\newcommand\Gpd{\mathrm{Gpd}}
\newcommand\Dpd{\mathrm{Dpd}}
\renewcommand\alg{\mathrm{alg}}
\newcommand\Fr{\mathrm{Fr}}
\newcommand\cosk{\mathrm{cosk}}
\newcommand\op{\mathrm{opp}}
\newcommand\co{\colon\thinspace}
\newcommand\oR{\mathbf{R}}
\newcommand\oL{\mathbf{L}}
\newcommand\on{\mathbf{n}}
\newcommand\om{\mathbf{m}}
\newcommand\ok{\mathbf{k}}
\newcommand\oO{\mathbf{0}}
\newcommand\oI{\mathbf{1}}
\newcommand\uleft\underleftarrow
\newcommand\uline\underline
\renewcommand\alg{\mathrm{alg}\,}
\begin{document}

\begin{abstract}
Lada introduced strong homotopy algebras to describe the structures on a deformation retract of an algebra in topological spaces. However, there is no  satisfactory general definition of a morphism of   strong homotopy (s.h.) algebras. 
Given a monad $\top$ on a simplicial category $\C$, we instead show how s.h. $\top$-algebras over $\C$ naturally form a 
Segal space. Given a distributive monad-comonad pair $(\top, \bot)$, the same is true for s.h. $(\top, \bot)$-bialgebras over $\C$; in particular this yields the homotopy theory of s.h. sheaves of s.h. rings.  There are similar statements for quasi-monads and quasi-comonads. We also show how the structures arising are related to derived connections on bundles.
\end{abstract}

\title{The homotopy theory of strong homotopy algebras and bialgebras}
\author{J.P.Pridham}
\thanks{This work was supported by Trinity College, Cambridge; and by the Engineering and Physical Sciences Research Council [grant number  EP/F043570/1].}
\maketitle

\section*{Introduction}

Given a monad $\top$ acting on the category of topological spaces, Lada introduced (in \cite{loop}) the notion of a strong homotopy (s.h.) $\top$-algebra. This characterises the structures arising on   deformation retracts of  $\top$-algebras. Indeed, when $\top$ is an operad, there is a bar construction which realises every s.h. $\top$-algebra as such a deformation retract.  The formulation of s.h. algebras  does not use any special properties of topological spaces, so adapts to any simplicial category, and likewise adapts to describe s.h. coalgebras of a comonad $\bot$.

Structures such as Hopf algebras or sheaves of rings cannot be described as algebras of a monad $\top$ or as coalgebras of a comonad $\bot$. However, in both cases, there are both  a  natural monad $\top$ (governing the algebraic structure) and a comonad $\bot$ (governing the coalgebraic structure), satisfying a distributivity condition. This seems to have first been described by  
Van Osdol in \cite{osdol} in order to develop bicohomology theory. Independent rediscoveries have appeared in \cite{bissonjoyal} Appendix C, \cite{powerwatanabe} and \cite{paper2} \S 2. This permits the characterisation of a compatibility condition for the algebraic and coalgebraic structures.

In \cite{ddt2}, it was  observed that the equations defining a s.h. $\top$-algebra can also be used to define s.h. $(\top, \bot)$-bialgebras associated to a distributive monad-comonad pair $(\top, \bot)$. In particular, this gives rise to a notion of s.h. sheaves of s.h. algebras (on any site with enough points), yielding important applications in algebro-geometric deformation theory. 

The first main result in this paper is Proposition \ref{enrichtop}, which provides a single unified framework for dealing with algebras, coalgebras and bialgebras. This then combines with Proposition \ref{MCsegal} and Corollaries \ref{NCSS} and \ref{DefCSS} to give three possible models for the $\infty$-category of s.h. algebras, coalgebras and bialgebras. These models are shown to be equivalent in Propositions \ref{MCNequiv} and \ref{ddefmap}. Finally, Theorem \ref{cfexp} shows how this $\infty$-category is related to the Maurer-Cartan functor featuring in  \cite{Man2} and \cite{hinstack}.

These results have applications in  derived deformation theory, which  starts with a moduli functor from algebras to groupoids, and seeks a derived moduli functor from simplicial algebras to $\infty$-groupoids. By describing deformation problems in terms of bialgebraic structures, \cite{ddt2} and \cite{higher} apply the results of this paper to construct derived deformation functors, and these are being extended to (global) derived moduli functors in \cite{dmc}. Even where there are other possible approaches to defining derived moduli functors, strong homotopy bialgebras often provide a more concrete description, and have the crucial property that the functor is left exact, making it easier to verify Lurie's representability theorem (\cite{lurie}).   For   some examples, such as  Hopf algebras, strong homotopy bialgebras provide the only known means of constructing derived deformations.

A major failing of the theory of s.h. algebras  is that there is no  satisfactory general definition of  morphisms. In \cite{loop}, this difficulty was obviated by considering morphisms of the associated bar constructions, but this has several disadvantages. For applications in deformation theory, the main problem is that the bar construction does not respect finite limits, in general. For bialgebras, the difficulty is even more fundamental, since the bar and cobar constructions will in general be incompatible, so the s.h. structures cannot be rectified. This is essentially the phenomenon that a  lax sheaf of lax simplicial rings cannot be replaced by a strict sheaf of strict simplicial rings.

Rather than seeking to define  simplicial categories of s.h. algebras or bialgebras, we instead construct Segal spaces. These are a special type of simplicial space (i.e. bisimplicial set) introduced by Rezk in \cite{rezk} as a model for  homotopy theories, and Bergner showed in \cite{bergner3} that the associated model category is Quillen-equivalent to the model category of simplicial categories, so any Segal space naturally gives rise to a simplicial category.

Our approach makes use of a generalisation of the theory of homotopy monoids expounded by Leinster in \cite{leinster}. We introduce a slight generalisation, the quasi-comonoid, of a homotopy comonoid, and  associate a simplicial set $\mmc(E)$, the Maurer-Cartan space, to any simplicial quasi-comonoid $E$. When $E^0$ is a group rather than a monoid, a more natural object is the Deligne space $\ddel(E)$, which is the  homotopy quotient of $\mmc(E)$ by $E^0$. We also develop the concept of a quasi-descent datum; put simply, a quasi-descent datum is to a quasi-comonoid as a category is to a monoid. 

Proposition \ref{enrichtopbot} then shows how a distributive  monad-comonad pair  on a simplicial category  $\C$ naturally enriches it to a simplicial quasi-descent datum, with $\mmc$ evaluated at any object $x$ of $\C$ giving the space of s.h. bialgebras over $x$.  Moreover, there is  a natural quasi-comonoid associated to any diagram in $\C$, and we use this to define a simplicial space $\cMC$ by mimicking the nerve construction (Definition \ref{MCdef}).  In Proposition \ref{MCsegal} this is seen to be a Segal space. 

Another source of quasi-comonoids is from cosimplicial groups, with the construction $\cE$ given in Definition \ref{cEdef}. In this case, the Maurer-Cartan space admits a simpler description (Proposition \ref{cfmc}). If $X$ is a simplicial set, $G$ a simplicial group and $\CC^{\bt}(X,G)$ the cosimplicial simplicial group of $G$-cochains on $X$, then $\ddel(\cE\CC^{\bt}(X,G))$  is equivalent to $\HHom(X, \bar{W}G)$, where $\bar{W}G$ is the classifying space of $G$ (Proposition \ref{pathdef} and Remark \ref{pathdef}). If the cosimplicial group comes from denormalising and exponentiating a differential graded Lie algebra, then the Maurer-Cartan space is equivalent to  the  classical Maurer-Cartan space of derived connections, given by the equation
$$
d\omega + \half[\omega,\omega]=0,
$$
for $\omega$ of total degree $1$ (Corollary \ref{expmap2}).

The structure of the paper is as follows.

In Section \ref{leinrev}, we recall Leinster's  comonoids up to homotopy, and introduce  quasi-comonoids and then the category $Q\Dat$ of quasi-descent data.  There is an adjunction (Lemma \ref{algadj}):
$$
\xymatrix@1{ \Cat \ar@<1ex>[r]^{\alg^*}_{\bot} & Q\Dat \ar@<1ex>[l]^{\alg}}.
$$
In Section \ref{algsn}, we introduce monads and comonads, and show how a  monad $\top$ on a category  $\C$ naturally enriches it to a quasi-descent datum $\cD(\C, \top)$.  This has the properties that $\alg\cD(\C, \top)$ is the category $\C^{\top}$ of $\top$-algebras, and that $\alg^*\C= \cD(\C, \id)$.
 There is a similar result for $(\top, \bot)$-bialgebras. 

Section \ref{simplicialsn} begins the work of extending these constructions to simplicial categories. Distributive monad-comonad pairs  on simplicial categories give rise to simplicial quasi-descent data,  but the functor  $\alg$ above destroys  higher order information. We therefore begin (Definition \ref{mcdefqm}) by defining the higher Maurer-Cartan functor $\mmc$ from simplicial quasi-comonoids $sQM^*$ to simplicial sets $\bS$. Given an object $x$ of a simplicial quasi-descent datum $\cD$, there is a simplicial quasi-comonoid $\cD(x,x)$, and the vertices of $\mmc(\cD(x,x))$  correspond closely to  Lada's set of strong homotopy $\top$-algebras over $x$, when $\cD=\cD(\C,\top)$ (Remark \ref{ladark}). We then extend this to  a simplicial space (i.e. bisimplicial set) $\cMC(\cD)$ (Definition \ref{MCdef}), which  mimics the nerve construction by developing  constructions loosely corresponding to strong homotopy diagrams of strong homotopy algebras. When the underlying category is a groupoid, $\cDel(\cD)$ is the homotopy quotient of $\cMC(\cD)$ by morphisms in $\cD$, and $\ddel$ is similarly related to $\mmc$.

Section \ref{absn} is primarily concerned with the study of quasi-comonoids in $(\Ab,\by)$  (abelian groups with the monoidal structure $\by$). These are equivalent to cosimplicial abelian groups (Lemma \ref{abqm}), and we exploit this to develop  cohomology of quasi-comonoids, and relate it to homotopy groups of $\mmc$ and $\ddel$. We also study nerves $B\Gamma$ of quasi-comonoids in groupoids, in order to understand  fundamental groupoids of simplicial quasi-comonoids. We then introduce linear quasi-comonoids (i.e. quasi-comonoids in $(\Ab, \ten)$). These allow us to relate cohomology of a simplicial quasi-comonoid to homology of the underlying simplicial abelian group (Proposition \ref{adams}). This gives a cohomological characterisation of when a cofibrant simply connected simplicial quasi-comonoid is contractible (Corollary \ref{trivchar}). These results are then extended to quasi-bicomonoids, which play a crucial r\^ole in the construction of $\cMC$.

Although the functor $\alg$ above is poorly suited to simplicial categories, its left adjoint $\alg^*$ extends naturally and preserves weak equivalences. There is a model structure on $sQ\Dat$, and the purpose of Section \ref{mapsn} is to relate, for $\cD$ fibrant, the simplicial spaces $\cMC(\cD)$ and $\cDel(\cD)$ with the the simplicial space $\cN(\cD)$ given by the derived function complexes
$$
\cN(\cD)_n= \Map_{sQ\Dat}^h(\alg^*\on, \cD),
$$
where $\on$ is the category associated to the poset $[0,n]$. When $\cD$ is constructed from  a distributive  monad-comonad pair  on a simplicial category $\cB$, we think of $\cMC(\cD),\cDel(\cD)$ and $\cN(\cD)$ as  candidates for the   simplicial space of s.h. bialgebras over $\cB$.

It turns out that $\cMC(\cD)$ is a Segal space, while $\cN(\cD)$ and $\cDel(\cD)$ are complete Segal spaces (Proposition \ref{MCsegal},  Corollaries \ref{NCSS} and \ref{DefCSS}). There are Dwyer-Kan equivalences between these Segal spaces (Propositions \ref{MCNequiv} and \ref{ddefmap}), so they are all weakly equivalent in the complete Segal space model structure from \cite{rezk}. Moreover,  Theorem \ref{catswork} shows how $\cMC$ and $\cN$  can both be regarded as the derived right adjoint of $\alg^*$, even though $\alg^*$ is not left Quillen. 

Section \ref{class} establishes a  simpler Maurer-Cartan space construction for cosimplicial simplicial groups, yielding the equivalence $\ddel(\cE\CC^{\bt}(X,G)) \simeq \HHom(X, \bar{W}G)$ described earlier, and  simplifies this description further when $G$ is a formal Lie group, via Theorem \ref{cfexp}.

The appendices establish a framework for applying these results more widely still. In Appendix  \ref{qmonadsn}, quasi-monads and quasi-comonads are introduced, motivated by the need for an analogue of the homotopy operads of \cite{laan}. We show what it would mean for a quasi-monad to distribute over a quasi-comonad, while ensuring that these weaker structures still enhance a category to form a quasi-descent datum. In Appendix \ref{shaa}, we relate $A_{\infty}$-algebras to quasi-semigroups, and thus compare quasi-operads with homotopy operads.

I would like to thank Carlos Simpson for alerting me to the similarities between the SDCs of \cite{paper1} and Leinster's homotopy monoids (\cite{leinster}). I would also like  to thank the anonymous referee heartily for diligently identifying many errors and omissions in the original manuscript.

\tableofcontents

\section{Monoidal structures up to homotopy}\label{leinrev}

In this section, we will introduce various structures which will provide the framework for the rest of the paper. The main concepts are those of a quasi-comonoids and quasi-descent data, which will  provide the intermediate step between monads and comonads one one hand, and strong homotopy structures on the other.

\subsection{Quasi-comonoids}

\begin{definition}\label{delta**}
Define $\Delta_{**}$ to be the subcategory of the ordinal number category $\Delta$ containing only  those non-decreasing (i.e. $f(i+1) \ge f(i)$) morphisms $f:\mathbf{m} \to \mathbf{n}$ with $f(0)=0, f(m)=n$. We define a monoidal structure on this category by setting $\mathbf{m}\ten \mathbf{n}= \mathbf{m+n}$, with 
$$
(f\ten g)(i)= \left\{ \begin{matrix} f(i) & i\le m \\ g(i-m)+p & i \ge m, \end{matrix} \right.
$$ 
for $f:\mathbf{m} \to \mathbf{p}$, $g:\mathbf{n} \to \mathbf{q}$. 
\end{definition}

\begin{remark}\label{cflein1}
There is an isomorphism $\Delta_{**}^{\op} \cong \Delta_0$, the category of finite sets (i.e. $\Delta \sqcup \emptyset$), given by $\mathbf{n} \mapsto \mathbf{n-1}$, where $\mathbf{-1}:=\emptyset$, with the coboundary morphisms $\pd^i$ mapping to $\sigma_{i-1}$, and the coface morphisms $\sigma^i$ mapping to $\pd_i$. 
\end{remark}

\begin{remark}\label{qmstructure}
Given a category $\C$, a functor $X:\Delta_{**} \to \C$ consists of objects $X^n \in \C$, with all of the operations $\pd^i, \sigma^i$ of a cosimplicial complex except $\pd^0, \pd^{n+1}:X^n\to X^{n+1}$. 
\end{remark}

\begin{definition}\label{qmdef}
Given a monoidal category $\C$, define a quasi-comonoid $X$ in $\C$ to be a lax monoidal functor $X: \Delta_{**} \to \C$. This means that we have maps
$$
\zeta^{mn}:  X^m\ten X^n \to X^{m+n} ,\quad \zeta^0:1 \to X^0 ,
$$
 satisfying naturality and coherence, where $1$ is the unit in the category. If $\C$ is a model category, say that $X$ is a homotopy comonoid whenever the maps $\zeta^{mn}, \zeta^0$ are all weak equivalences. This is equivalent to the definition in \cite{leinster}, via the comparison of Remark \ref{cflein1}. 
 
 Define a quasi-monoid in $\C$ to be a quasi-comonoid in $\C^{\op}$. 
 We let  $QM^*(\C)$ denote the category of quasi-comonoids in $\C$.
\end{definition}

\begin{lemma}\label{qmlemma}
Giving  a quasi-comonoid $X$ in $\C$ is equivalent to giving objects $X^n \in \C$ for $n \in \N_0$, together with morphisms
$$
\begin{matrix}
\pd^i:X^n \to X^{n+1} & 1\le i \le n\\
\sigma^i:X^{n}\to X^{n-1} &0 \le i <n,
\end{matrix}
$$
an associative product $\zeta^{mn}:X^m \ten X^n \to X^{m+n}$, with identity $\zeta^0: 1 \to X^0$, where $1$ is the unit in the category, satisfying:
\begin{enumerate}
\item $\pd^j\pd^i=\pd^i\pd^{j-1}\quad i<j$.
\item $\sigma^j\sigma^i=\sigma^i\sigma^{j+1} \quad i \le j$.
\item 
$
\sigma^j\pd^i=\left\{\begin{matrix}
			\pd^i\sigma^{j-1} & i<j \\
			\id		& i=j,\,i=j+1 \\
			\pd^{i-1}\sigma^j & i >j+1
			\end{matrix} \right. .
$
\item $\zeta^{m+1,n}(\pd^i\ten \id)=\pd^i\zeta^{mn}$.
\item $\zeta^{m,n+1}(\id\ten \pd^i)=\pd^{i+m}\zeta^{mn}$.
\item $\zeta^{m-1,n}(\sigma^i\ten \id)=\sigma^i\zeta^{mn}$.
\item $\zeta^{m,n-1}(\id\ten \sigma^i)=\sigma^{i+m}\zeta^{mn}$.
\end{enumerate}
\end{lemma}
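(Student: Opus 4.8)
The plan is to unwind the definition (Definition \ref{qmdef}) of a quasi-comonoid as a lax monoidal functor $X\colon \Delta_{**} \to \C$ into two independent pieces of data: the underlying functor, and the lax monoidal structure maps. For the underlying functor I would use a presentation of $\Delta_{**}$ by generators and relations; for the structure maps I would unwind the coherence and naturality axioms of a lax monoidal functor. Relations (1)--(3) will come from the first piece and (4)--(7) from the second, while associativity and the unit of $\zeta$ are the lax coherence axioms.

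For the functor part, the key input is that $\Delta_{**}$ is generated by the coface maps $\pd^i\colon \mathbf{m} \to \mathbf{m+1}$ with $1 \le i \le m$ and the codegeneracy maps $\sigma^i\colon \mathbf{n} \to \mathbf{n-1}$ with $0 \le i < n$, subject only to the relations (1)--(3). To establish this I would start from the standard epi--mono factorisation in $\Delta$ (every morphism factors uniquely as a surjection followed by an injection) and observe that an endpoint-preserving $f\colon \mathbf{m}\to\mathbf{n}$ (with $f(0)=0$, $f(m)=n$) forces both factors to be endpoint-preserving, so the factorisation stays inside $\Delta_{**}$. Every codegeneracy preserves endpoints, whereas an injection preserves endpoints precisely when it omits no endpoint value, i.e. is a composite of the $\pd^i$ with $1\le i\le m$; hence these maps generate $\Delta_{**}$. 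Their relations are the restriction of the usual cosimplicial identities to the surviving generators, which—after discarding the excluded $\pd^0,\pd^{n+1}$—are exactly (1)--(3). Thus, as already foreshadowed in Remark \ref{qmstructure}, a functor $X\colon\Delta_{**}\to\C$ is the same as objects $X^n$ with maps $\pd^i,\sigma^i$ satisfying (1)--(3).

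For the lax monoidal structure, the data are the natural maps $\zeta^{mn}\colon X^m\ten X^n \to X^{m+n}$ and a unit $\zeta^0\colon 1 \to X^0$ (the monoidal unit of $\Delta_{**}$ being $\mathbf{0}$), and the lax coherence axioms say precisely that $\zeta$ is associative with unit $\zeta^0$. It then remains to extract (4)--(7) from naturality of the transformation $\zeta$. Since $\ten$ is a bifunctor and every morphism of $\Delta_{**}$ is a composite of generators, it suffices to test naturality against $g\ten\id$ and $\id\ten g$ for $g$ a generator. Using the explicit formula of Definition \ref{delta**} I would compute inside $\Delta_{**}$ that $\pd^i\ten\id=\pd^i$ and $\id\ten\pd^i=\pd^{i+m}$ as maps $\mathbf{m+n}\to\mathbf{m+n+1}$, and likewise $\sigma^i\ten\id=\sigma^i$, $\id\ten\sigma^i=\sigma^{i+m}$; feeding these into the naturality square $X(f\ten g)\circ\zeta^{mn}=\zeta^{pq}\circ(X(f)\ten X(g))$ yields exactly (4), (5), (6), (7) in turn. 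Conversely, the same computation shows (4)--(7) imply naturality against all morphisms, so the data of the lemma reassemble into a lax monoidal functor, giving the claimed equivalence.

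The main obstacle is the first step: correctly pinning down the generators-and-relations presentation of the \emph{subcategory} $\Delta_{**}$ and verifying that the epi--mono factorisation of $\Delta$ restricts to it, so that (1)--(3) really are all the relations. The secondary pitfall is the index bookkeeping in the tensor formulas—precisely the shift $\id\ten\pd^i=\pd^{i+m}$ (and its degeneracy analogue) is what produces the index shifts in (5) and (7), and this is where an off-by-$m$ error would most easily slip in.
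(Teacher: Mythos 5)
Your proposal is correct and follows essentially the same route as the paper's (very terse) proof: the functor part is exactly Remark \ref{qmstructure}, obtained from the generators-and-relations presentation of $\Delta_{**}$, and relations (4)--(7) come from naturality of $\zeta$ tested against the generators via the identities $\pd^i\ten\id=\pd^i$, $\id\ten\pd^i=\pd^{i+m}$, $\sigma^i\ten\id=\sigma^i$, $\id\ten\sigma^i=\sigma^{i+m}$ in $\Delta_{**}$. You have simply written out in full the "analysis of the interaction of the monoidal structure and the morphisms $\sigma^i,\pd^i$" that the paper leaves to the reader, including the check that the epi--mono factorisation restricts to $\Delta_{**}$; the index bookkeeping is right.
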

\begin{proof}
This is a straightforward consequence of Remark \ref{qmstructure}, together with an analysis of the interaction in $\Delta_{**}$ of the monoidal structure and the morphisms $\sigma^i, \pd^i$.
\end{proof}

\begin{remark}
When the maps $\zeta^{mn}, \zeta^0$ are all isomorphisms, this becomes equivalent to the definition of a comonoid $C$. The correspondence is given by setting $X^n:= C^{\ten n}$, and letting $\pd^1: C \to C\ten C$ be the coproduct.
\end{remark}

In order to simplify the notation, we will write $x*y$ instead of $\zeta^{m,n}(x,y)$ from now on.

\subsubsection{Maurer-Cartan}

Observe the category of comonoids in $(\Set,\by)$ is equivalent to $\Set$ itself, since comultiplication $\Delta:X \to X \by X$ is necessarily the diagonal embedding. This gives a functor $\iota: \Set \to QM^*(\Set)$ from sets to quasi-comonoids.

\begin{definition}\label{mcdef}
Define the functor $\mc: QM^*(\Set) \to \Set$ by 
$$
\mc(E):= \Hom(\iota\bt, E),
$$
where $\bt$ is the one-point set. Explicitly,
$$
\mc(E) = \{\omega \in E^1\,:\, \sigma^0\omega =1, \,\pd^1\omega = \omega *\omega\}.
$$
\end{definition}

The reason for notation is that any cosimplicial unital ring $R$ has  a quasi-comonoid structure, with $*$ given by the Alexander-Whitney cup product $\cup$, and then 
\begin{eqnarray*}
\mc(R)&=& \{1_1+\alpha \in R^1\,:\, \sigma^0\alpha=0\, 1_2+ \pd^1\alpha = 1_2+ 1_1\cup\alpha +\alpha\cup 1_1 + \alpha\cup \alpha  \}\\
&=& 1+\{\alpha \in N^1R\,:\, \pd^1\alpha = \pd^2\alpha+\pd^0\alpha+ \alpha\cup \alpha\}\\
&=& 1+\{\alpha \in N^1R\,:\, d\alpha + \alpha\cup \alpha =0\},
\end{eqnarray*}
where $NR$ is the cosimplicial normalisation of $R$, with differential $d$. Thus $\mc(R)$  is just the classical set of Maurer-Cartan forms of the differential graded algebra $NR$, which parametrises flat connections on a vector bundle.

\subsection{Quasi-descent data}
We will now introduce quasi-descent data, which will form the bridge between categories equipped with monads and/or comonads, and Segal spaces. \S \ref{algsn} will show how monads and comonads give rise to quasi-descent data, while much of the remainder of the paper relates quasi-descent data to Segal spaces.

\begin{definition}
Given a monoidal category $\C$ and a set $\cO$, let a $\C$-valued descent datum $D=(C,G)$ on objects $\cO$ consist of:
\begin{enumerate}
		\item objects $G(a,b) \in \C$ for each pair $a,b \in \cO$;
	\item compatible systems  $G(a,b)\ten G(b,c) \cong  G(a,c)$ and $G(a,a) \cong 1$ of transition isomorphisms, for all $a,b,c \in \cO$ (the cocycle condition);
	\item comonoids $C(a) \in \C$ for each $a \in \cO$;
	\item isomorphisms $C(a) \ten G(a,b)\cong G(a,b)\ten C(b)$ for all $a,b \in \cO$, compatible with the comonoidal structures and transition isomorphisms.
\end{enumerate}
Note that these conditions imply that $G(b,a)\ten C(a) \ten G(a,b)$ is isomorphic as a comonoid to $C(b)$.
\end{definition}

The reason for this terminology is that for an open  cover $\{U_a\}_{a \in \cO}$  of topological spaces $X$, a descent datum consists of sheaves $C_a$ on $U_a$, with isomorphisms $g_{ab}: C_b|_{U_a\cap U_b} \to C_a|_{U_a\cap U_b}$, with  $g_{ab}\circ g_{bc}=g_{ac}: C_c|_{U_a\cap U_b\cap U_c} \to C_a|_{U_a\cap U_b\cap U_c} $ (and $g_{aa}$ necessarily the identity).

\begin{definition}
Let $\Dat_{\cO}(\C)$ be the category of $\C$-valued descent data on objects $\cO$, and let $\Dat(\C)$ be the category of pairs $(\cO,D)$, for $\cO$ a set and $D=(C,G)$ a descent datum on objects $\cO$. Write $\Dat:= \Dat(\Set, \by)$, and similarly for $\Dat_{\cO}$. Note that there is a functor $\Dat(\C) \to \Cat(\C)$ to $\C$-enriched categories, sending $(\cO,D)$ to the category with objects  $\cO$ and morphisms $G(x,y)$.
\end{definition}

\begin{definition}\label{qudesc}
Given a monoidal category $\C$ and a set $\cO$, let a $\C$-valued quasi-descent datum on objects $\cO$ consist of:
\begin{enumerate}
		\item objects $X(a,b) \in \C^{\Delta_{**}}$ for all $a,b \in \cO$;
		\item morphisms $X(a,b)^m \ten X(b,c)^n \xra{*} X(a,c)^{m+n}$ making the following diagram commute for all $a,b,c \in \cO$
		$$
		\begin{CD} \Delta_{**} \by \Delta_{**} @>{ X(a,b)\ten X(b,c)}>> \C  \\
		@V{\by }VV @VV{*}V\\
		\Delta_{**} @>{X(a,c) }>> \C.
		\end{CD}
		$$
		\item morphisms $1 \to X(a,a)^0$    for all $a \in \cO$, acting as the identity for the multiplication $*$.
		\end{enumerate}
\end{definition}

\begin{remarks}\label{quasidescentcat}
A $\C$-valued (quasi-)descent datum on the set  with one object is just a (quasi-)comonoid in $\C$. Given a  $\C$-valued descent datum $(C,G)$ on objects $\cO$, we may define a quasi-descent datum $X$ by $X(a,b)^n:= C(a)^{\ten n}\ten G(a,b)\cong G(a,b)\ten C(b)^{\ten n}$ via the transition maps, with the maps $\pd^i$ given by the coproducts on $C$. 

If the category $\C$ contains all finite coproducts, then we may define a monoidal structure on $\C^{\Delta_{**}}$ by setting
$$
(X\ten Y)^n := \coprod_{a+b=n} X^a\ten Y^b,
$$
with operations given by 
\begin{eqnarray*}
\pd^i(x \ten y) &=& \left\{\begin{matrix}(\pd^ix)\ten y & i\le a \\ x\ten (\pd^{i-a}y) & i>a; \end{matrix} \right.\\
\sigma^i(x\ten y)&=& \left\{\begin{matrix}(\sigma^ix)\ten y & i< a \\ x\ten (\sigma^{i-a}y) & i\ge a. \end{matrix} \right.
\end{eqnarray*}
Then a quasi-descent datum on objects $\cO$ is just a $\C^{\Delta_{**}}$-enriched category with objects $\cO$.
\end{remarks}

\begin{definition}
Let $Q\Dat(\C)$ be the category of $\C$-valued quasi-descent data, i.e. of pairs $(\cO,X)$ for $\cO$ a set and $X$ a quasi-descent datum on objects $\cO$. This admits a functor to the category of $\C$-enriched categories by taking the $\Hom$-space underlying $X(a,b)$ to be $X(a,b)^0$. Let $Q\Dat:= Q\Dat(\Set, \by)$ and $sQ\Dat:= Q\Dat(\bS, \by)$. 

Let $Q\Dpd$ (resp. $sQ\Dpd$) be the full subcategory of $Q\Dat$ (resp. $sQ\Dat$) consisting of 
objects whose underlying categories (resp. simplicial categories) are groupoids (resp. simplicial groupoids). In other words, all elements of $X(a,b)^0$ must be invertible under $*$.
\end{definition}

\subsection{Adjoint functors}

From now on, we will systematically make use of the identification in Remarks \ref{quasidescentcat} of quasi-descent data with  $\C^{\Delta_{**}}$-enriched categories.

We have seen that there is a functor $(-)^0: Q\Dat \to \Cat$ given by sending $\cD$ to the category $\cD^0$ with objects $\Ob \cD^0:=\Ob \cD$ and morphisms $\D^0(a,b):= \cD(a,b)^0$.

\begin{lemma}\label{alg*}
The functor $(-)^0$ has a right adjoint $\alg^*$, given by $\Ob\alg^*\C= \Ob \C$, with 
$$
(\alg^*\C)(a,b)^n= \C(a,b)
$$
for all $n$, with multiplication as in $\C$, and all operations $\pd^i, \sigma^i$ acting trivially.
\end{lemma}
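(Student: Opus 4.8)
The plan is to produce, naturally in the category $\C$ and the quasi-descent datum $\cD$, a bijection between $\Hom_{Q\Dat}(\cD,\alg^*\C)$ and $\Hom_{\Cat}(\cD^0,\C)$, thereby exhibiting $\alg^*$ as right adjoint to $(-)^0$. Everything hinges on one observation about $\Delta_{**}$: the object $\mathbf{0}$ is terminal. Indeed a morphism $f\co\mathbf{n}\to\mathbf{0}$ must satisfy $f(0)=0=f(n)$, forcing $f$ to be constant, so $\Hom_{\Delta_{**}}(\mathbf{n},\mathbf{0})$ is a singleton. Hence for every object $X$ of $\C^{\Delta_{**}}$ there is a canonical collapse map $s^n\co X^n\to X^0$, namely $X$ evaluated on the unique arrow $\mathbf{n}\to\mathbf{0}$ (concretely, an iterated codegeneracy $\sigma^0\cdots\sigma^0$), with $s^0=\id$.

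First I would confirm that $\alg^*\C$ really is a quasi-descent datum: associativity and unitality of $*$ are inherited from composition and identities in $\C$, while the cosimplicial identities (1)--(3) and the compatibilities (4)--(7) of Lemma \ref{qmlemma} all hold trivially because every $\pd^i$ and $\sigma^i$ on $\alg^*\C$ is an identity. I would then build the bijection. Given a morphism $F\co\cD\to\alg^*\C$, its degree-zero components $F^0\co\cD(a,b)^0\to\C(Fa,Fb)$, together with the object map of $F$, assemble into a functor $\cD^0\to\C$, since $F$ preserves $*$ and units and these restrict in degree $0$ to composition and identities. Conversely, given a functor $G\co\cD^0\to\C$, let $\hat G$ act as $G$ on objects and set $\hat G^n:=G\circ s^n\co\cD(a,b)^n\to\C(Ga,Gb)$.

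The work is in checking that $\hat G$ is a morphism of quasi-descent data. Compatibility with codegeneracies is immediate, since $\sigma^i$ acts trivially on $\alg^*\C$ and $s^{n-1}\circ\sigma^i=s^n$ by terminality of $\mathbf{0}$; compatibility with the cofaces follows identically from $s^{n+1}\circ\pd^i=s^n$. The one substantive point, which I expect to be the main obstacle, is multiplicativity: I must verify $s^{m+n}(x*y)=(s^m x)*(s^n y)$ in degree $0$, whence $\hat G^{m+n}(x*y)=G\bigl((s^m x)*(s^n y)\bigr)=G(s^m x)*G(s^n y)=\hat G^m(x)*\hat G^n(y)$, the middle equality being functoriality of $G$. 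This is precisely the naturality of $*$ with respect to $\Delta_{**}$ (condition (2) of Definition \ref{qudesc}) applied to the unique arrows $\mathbf{m}\to\mathbf{0}$ and $\mathbf{n}\to\mathbf{0}$, together with the factorisation of the unique arrow $\mathbf{m+n}\to\mathbf{0}$ as their monoidal product via $\mathbf{0}\ten\mathbf{0}=\mathbf{0}$; unwound explicitly it is a repeated application of relations (6) and (7) of Lemma \ref{qmlemma}.

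Finally I would check the two assignments are mutually inverse. Starting from $F$, compatibility of $F$ with the trivial codegeneracies of $\alg^*\C$ forces $F^n=F^{n-1}\circ\sigma^i=\cdots=F^0\circ s^n$, so $\widehat{F^0}=F$; starting from $G$, we get $\hat G^0=G\circ s^0=G$. Naturality of the bijection in $\cD$ and $\C$ is transparent from these formulas, since both restriction to degree $0$ and precomposition with the canonical maps $s^n$ are compatible with morphisms on either side. This establishes the adjunction $(-)^0\dashv\alg^*$.
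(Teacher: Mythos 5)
Your proof is correct and is essentially the paper's own argument in expanded form: your collapse map $s^n$, induced by the unique arrow $\mathbf{n}\to\mathbf{0}$ in $\Delta_{**}$, is exactly the paper's $(\sigma^0)^n$, and the paper likewise establishes the bijection by sending a functor $G\co\cD^0\to\C$ to $G\circ(\sigma^0)^n$ and noting that triviality of $\sigma^0$ on $\alg^*\C$ forces every morphism $\cD\to\alg^*\C$ to arise this way. The terminality-of-$\mathbf{0}$ framing and the explicit check of multiplicativity via relations (6) and (7) are just a more careful packaging of the same verification the paper leaves implicit.
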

\begin{proof}
Given a functor  $F: \cD^0\to \C$, the associated morphism $\cD \to \alg^*\C $ in $Q\Dat$ is given by $F$ on objects, with morphisms $ F \circ (\sigma^0)^n:  \cD(a,b)^n \to \C(a,b)= (\alg^*\C)(a,b)^n $. That all morphisms $\cD \to \alg^*\C $ arise in this way follows because $\sigma^0$ acts trivially on $(\alg^*\C)(a,b)$.
\end{proof}

\begin{definition}\label{alg}
Define $\alg:  Q\Dat \to \Cat$ as follows. The objects of  $\alg(\cD)$ are pairs $(D, \omega)$, for $D \in \Ob \cD$ and $\omega \in \mc(\cD(D,D))$, for $\mc$ as in Definition \ref{mcdef}. Morphisms from $(D, \omega)$ to $(D', \omega')$ are given by  $f \in \cD(D,D')^0$ such that
$$
f*\omega = \omega'*f \in \cD(D, D')^1.
$$
\end{definition}

\begin{lemma}\label{algadj}
The functor $\alg^*$ is left adjoint to $\alg$.
\end{lemma}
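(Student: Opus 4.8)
The plan is to construct a bijection, natural in $\C$ and $\cD$,
$$\Hom_{Q\Dat}(\alg^*\C,\cD)\;\cong\;\Hom_{\Cat}(\C,\alg\cD),$$
by unwinding each side into explicit data and matching them; naturality will then be visible from the formulas.

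First I would describe the two sides. Because all the operations $\pd^i,\sigma^i$ act trivially on $\alg^*\C$ and $(\alg^*\C)(a,b)^n=\C(a,b)$ for every $n$ (Lemma \ref{alg*}), a morphism $\phi\colon\alg^*\C\to\cD$ consists of an object assignment $u\mapsto Du:=\phi(u)$ together with elements $\phi^n(g)\in\cD(Du,Du')^n$, one for each $g\in\C(u,u')$ and each $n$, satisfying $\pd^i\phi^n(g)=\phi^{n+1}(g)$, $\sigma^i\phi^n(g)=\phi^{n-1}(g)$, the product rule $\phi^m(g_2)*\phi^n(g_1)=\phi^{m+n}(g_2\circ g_1)$, and the unit condition $\phi^0(\id_u)=1$. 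On the right-hand side, a functor $F\colon\C\to\alg\cD$ assigns to each $u$ a pair $(Du,\omega_u)$ with $\omega_u\in\mc(\cD(Du,Du))$, and to each $g\colon u\to u'$ an element $F(g)\in\cD(Du,Du')^0$ with $F(g)*\omega_u=\omega_{u'}*F(g)$, compatibly with composition and identities (Definition \ref{alg}).

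Second, I would write down the two comparisons. Passing from $\phi$ to $F$, I set $F(u):=(Du,\,\phi^1(\id_u))$ and $F(g):=\phi^0(g)$; the key check is that $\omega_u:=\phi^1(\id_u)$ is Maurer--Cartan, which is immediate from the structure relations, since $\sigma^0\omega_u=\phi^0(\id_u)=1$ and $\pd^1\omega_u=\phi^2(\id_u)=\phi^1(\id_u)*\phi^1(\id_u)=\omega_u*\omega_u$ are exactly the equations of Definition \ref{mcdef}, while the intertwining identity for $F(g)$ holds because both $F(g)*\omega_u$ and $\omega_{u'}*F(g)$ compute $\phi^1(g)$ via the product rule. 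Passing from $F$ to $\phi$, I set $\psi^n(g):=\omega_{u'}^{*n}*F(g)$; iterating the intertwining identity shows $\omega_{u'}^{*n}*F(g)=F(g)*\omega_u^{*n}$, so the formula is symmetric, and $\psi^0(g)=F(g)$, $\psi^1(g)=\omega_{u'}*F(g)$.

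The main work, and the step I expect to be most delicate, is verifying that $\psi$ is genuinely a morphism of quasi-descent data, i.e.\ that $\psi^n(g)=\omega_{u'}^{*n}*F(g)$ is compatible with all face maps, degeneracies, products and units. Here I would use the compatibility relations (4)--(7) of Lemma \ref{qmlemma} between $*$ and the operations $\pd^i,\sigma^i$: since $F(g)$ has degree $0$ and so carries no interior faces or degeneracies, each $\pd^i$ (for $1\le i\le n$) and $\sigma^i$ (for $0\le i<n$) acts only on the factor $\omega_{u'}^{*n}$. A short induction on $n$ using relations (4),(5) and the Maurer--Cartan equation $\pd^1\omega_{u'}=\omega_{u'}*\omega_{u'}$ gives $\pd^i\omega_{u'}^{*n}=\omega_{u'}^{*(n+1)}$, and dually relations (6),(7) with $\sigma^0\omega_{u'}=1$ give $\sigma^i\omega_{u'}^{*n}=\omega_{u'}^{*(n-1)}$; these yield $\pd^i\psi^n(g)=\psi^{n+1}(g)$ and $\sigma^i\psi^n(g)=\psi^{n-1}(g)$. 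The product rule for $\psi$ then follows from associativity of $*$, the intertwining identity, and functoriality of $F$, while the unit condition is $\psi^0(\id_u)=F(\id_u)=1$. Finally I would confirm the two constructions are mutually inverse: starting from $F$, $\psi^1(\id_u)=\omega_u*F(\id_u)=\omega_u$ recovers the Maurer--Cartan element and $\psi^0(g)=F(g)$; starting from $\phi$, the identity $\omega_{u'}^{*n}=\phi^n(\id_{u'})$ gives $\psi^n(g)=\phi^n(\id_{u'})*\phi^0(g)=\phi^n(g)$. Naturality in $\C$ and $\cD$ is then immediate from these formulas.
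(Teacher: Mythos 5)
Your proposal is correct and follows essentially the same route as the paper: the paper's proof sends $F$ to the morphism $G(x):=\omega_{F(b)}^{n}*F(x)=F(x)*\omega_{F(a)}^{n}$ and recovers $F$ from the degree-$0$ part together with the elements $G(\id_a)\in\cD(a,a)^1$, which are exactly your formulas for $\psi^n$ and for $(Du,\phi^1(\id_u))$. You have simply supplied the routine verifications (compatibility with $\pd^i$, $\sigma^i$, $*$ and units) that the paper leaves implicit.
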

\begin{proof}
Given a functor $F:\C \to \alg(\cD)$, construct  the corresponding morphism $G: \alg^*\C  \to \cD$ as follows. For  $a \in \Ob C$, set $G(a) \in \Ob D$ to be the object underlying $F(a)$, and then define $G: \alg^*\C(a,b)^n \to \cD(Ga,Gb)^n$ by $G(x):= \omega_{F(b)}^n * F(x)= F(x)*\omega_{F(a)}^n$, where $F(a)= (G(a), \omega_{F(a)})$.  Conversely,  any morphism $ G: \alg^*\C  \to \cD$ is determined by $\alg^*\C^0 \to \cD^0$, together with the elements $G(\id_a) \in \cD(a,a)^1$.   
\end{proof}

\begin{remark}
Note that the functor $\alg^*$ is fully faithful, so $(\alg^*\C)^0 \simeq \C \simeq \alg \alg^*\C$.
\end{remark}

\begin{lemma}\label{idemptyset}
The functor $(-)^0: Q\Dat \to \Cat$ has a left adjoint.
\end{lemma}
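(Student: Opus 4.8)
The plan is to construct the left adjoint $L\colon \Cat \to Q\Dat$ explicitly, by recognising $(-)^0$ as the functor corepresented by the monoidal unit. Recall from Remarks \ref{quasidescentcat} that, since $\Set$ has all coproducts, $Q\Dat$ is the category of $\Set^{\Delta_{**}}$-enriched categories for the monoidal structure $(X\ten Y)^n = \coprod_{a+b=n} X^a\by Y^b$. Write $I$ for the monoidal unit of $(\Set^{\Delta_{**}}, \ten)$; unwinding the formula (or directly identifying $I$ with the representable $\Delta_{**}(\oO,-)$) shows that $I$ is the diagram with $I^0 = \bt$ and $I^n = \emptyset$ for $n>0$, all operations being forced. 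The key observation is that evaluation at degree $0$ is corepresented by $I$: for any $Y \in \Set^{\Delta_{**}}$ there is a natural bijection $\Hom_{\Set^{\Delta_{**}}}(I, Y) \cong Y^0$, because $I$ is empty in positive degrees and the only morphism in $\Delta_{**}$ with source $\oO$ is the identity, so a natural transformation $I \to Y$ is the same as a choice of element of $Y^0$.

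First I would define $L$. On objects, set $\Ob L\C := \Ob\C$. For each pair $a,b$, let $(L\C)(a,b) := \coprod_{\C(a,b)} I$ be the copower, i.e. the diagram with $(L\C)(a,b)^0 = \C(a,b)$ and $(L\C)(a,b)^n = \emptyset$ for $n>0$. Composition and identities are inherited from $\C$: using the idempotency $I\ten I\cong I$ (immediate from the formula), the composite $\coprod_{\C(a,b)} I \ten \coprod_{\C(b,c)} I \cong \coprod_{\C(a,b)\by\C(b,c)} I \to \coprod_{\C(a,c)} I$ is the copower of $\id_I$ along the composition map of $\C$, and the unit $I \to (L\C)(a,a)$ picks out $\id_a$. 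Associativity and unitality of this composition are exactly those of $\C$, and $(L\C)(a,b)$ is an honest object of $\Set^{\Delta_{**}}$ by construction, so $L\C$ is a genuine quasi-descent datum.

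Second I would establish the adjunction $\Hom_{Q\Dat}(L\C, \cD)\cong \Hom_{\Cat}(\C, \cD^0)$. A morphism $L\C \to \cD$ consists of a function $F\colon \Ob\C \to \Ob\cD$ together with, for each $a,b$, a $\Set^{\Delta_{**}}$-morphism $\coprod_{\C(a,b)} I \to \cD(Fa,Fb)$ compatible with $\ten$ and the unit. By the copower adjunction together with the corepresentability of $(-)^0$ above,
$$
\Hom_{\Set^{\Delta_{**}}}\Big(\coprod_{\C(a,b)} I,\ \cD(Fa,Fb)\Big) \cong \Hom_{\Set}\big(\C(a,b),\, \cD(Fa,Fb)^0\big),
$$
so the hom-component is simply a function $\C(a,b) \to \cD(Fa,Fb)^0$. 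Since the sources $\coprod I$ are concentrated in degree $0$, the compatibility with the multiplication and unit is a condition visible only in degree $0$, namely preservation of composition and identities in $\cD^0$; thus the whole datum is precisely a functor $\C \to \cD^0$. Naturality in $\C$ and $\cD$ is immediate.

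There is no real obstacle here: everything is forced once one identifies the monoidal unit $I$ and checks $\Hom_{\Set^{\Delta_{**}}}(I,-)\cong(-)^0$. The only points needing a moment's care are the idempotency $I\ten I\cong I$, which makes the composition of $L\C$ well defined, and the vacuousness of the higher-degree compatibilities, which holds because $I$, hence $L\C$, is empty in positive degrees. Alternatively, since limits in $Q\Dat$ are computed levelwise so that $(-)^0$ preserves them, and both categories are locally presentable, the existence of a left adjoint also follows formally from the adjoint functor theorem; but the explicit description above is the more useful one.
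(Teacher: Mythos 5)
Your construction is exactly the paper's: the left adjoint $(\id/\emptyset)$ sends $\C$ to the quasi-descent datum with the same objects and with $(\id/\emptyset)(\C)(a,b)^0=\C(a,b)$, $(\id/\emptyset)(\C)(a,b)^n=\emptyset$ for $n>0$, which coincides with your $\coprod_{\C(a,b)}I$. The paper merely states the formula, whereas you additionally verify the adjunction via the corepresentability $\Hom_{\Set^{\Delta_{**}}}(I,-)\cong(-)^0$; this is a correct and complete elaboration of the same argument.
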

\begin{proof}
The left adjoint is the functor $(\id/\emptyset)$ given by $\Ob(\id/\emptyset)(\C)= \Ob \C$, with morphisms 
$$
(\id/\emptyset)(\C)(a,b)^n = \left\{\begin{matrix} \C(a,b) & n=0 \\ \emptyset & n>0.\end{matrix}\right. 
$$
\end{proof}

\subsection{Quasi-bicomonoids and diagonals}

\begin{definition}\label{qmmdef}
Given a monoidal category $\C$, let the category  $QMM^*(\C)$ of quasi-bicomonoids consist of lax monoidal functors
$$
X : \Delta_{**} \by \Delta_{**} \to \C.
$$
\end{definition}

\begin{definition}
Define a functor $\diag : QMM^*(\C) \to QM^*(\C)$ via the diagonal functor $\Delta_{**} \to \Delta_{**} \by \Delta_{**}$. Explicitly, $(\diag E)^n=E^{nn}$, with the same product and identity as $E$, and operations $\pd^i=\pd^i_h\pd^i_v$ and $\sigma^i= \sigma^i_h\sigma^i_v$.

The functor $\diag$ preserves all limits, so by the Special Adjoint Functor Theorem (\cite{mac} Theorem V.8.2), it will have  a left adjoint $\diag^*$ for all the categories $\C$ which we will encounter, since they all satisfy the solution set condition. \end{definition}

\begin{definition}
The category of quasi-comonoids in $\Set \by \Set$ is isomorphic to $\Set \by \Set$, with comultiplication necessarily given by the diagonal $(X,Y) \to (X \by X, Y\by Y)$.
Inclusion of comonoids in
quasi-comonoids  then gives a  functor $\iota: \Set\by \Set \to QMM^*(\Set)$, with  $\iota(X,Y)^{m,n}= X^m\by Y^n$
\end{definition}

\begin{lemma}\label{diagset}
For $E^{\bt,\bt} \in QMM^*(\Set)$,
$$
\mc(\diag E) \cong \{(\alpha, \beta) \in \mc(E^{\bt,0}) \by \mc(E^{0, \bt})\,:\, \alpha*\beta=\beta*\alpha \in E^{11}\} 
$$
\end{lemma}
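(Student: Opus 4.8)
The plan is to prove the isomorphism by writing down explicit maps in both directions and checking that they are mutually inverse. Here $E^{\bt,0}$ and $E^{0,\bt}$ denote the quasi-comonoids obtained by restricting $E$ along $\Delta_{**}\cong \Delta_{**}\by\{\oO\} \into \Delta_{**}\by\Delta_{**}$ and along $\{\oO\}\by\Delta_{**}$, so that they carry the horizontal operations $(\pd^i_h,\sigma^i_h)$ and the vertical operations $(\pd^i_v,\sigma^i_v)$ respectively. I define $\Phi(\omega):=(\sigma^0_v\omega,\, \sigma^0_h\omega)$, noting $\sigma^0_v\omega\in E^{1,0}$ and $\sigma^0_h\omega\in E^{0,1}$, and in the other direction $\Psi(\alpha,\beta):=\alpha*\beta\in E^{1,1}=(\diag E)^1$. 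Throughout I use the quasi-bicomonoid form of Lemma \ref{qmlemma}: the horizontal and vertical families each satisfy (1)--(3) and commute with each other, while $*$ satisfies (4)--(7) separately in each direction, with the understanding that whether a coface or degeneracy acts on the first or second tensor factor is decided by comparing its index with the relevant horizontal or vertical degree.

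First I would verify that $\Psi$ is well defined. Unwinding Definition \ref{mcdef} for $\diag E$, an element $\omega\in E^{1,1}$ lies in $\mc(\diag E)$ iff $\sigma^0_h\sigma^0_v\omega=1$ and $\pd^1_h\pd^1_v\omega=\omega*\omega$. For $\omega=\alpha*\beta$ the degeneracy condition drops out of $\sigma^0_v(\alpha*\beta)=\alpha*\sigma^0_v\beta=\alpha*1=\alpha$ together with $\sigma^0_h\alpha=1$, using the Maurer--Cartan conditions on $\alpha,\beta$. For the product condition I push $\pd^1_h\pd^1_v$ onto the factors via (4)--(5): since $\alpha$ has vertical degree $0$ and $\beta$ horizontal degree $0$, this gives $\pd^1_h\pd^1_v(\alpha*\beta)=(\pd^1_h\alpha)*(\pd^1_v\beta)=(\alpha*\alpha)*(\beta*\beta)$, whereas $(\alpha*\beta)*(\alpha*\beta)=\alpha*(\beta*\alpha)*\beta$ by associativity; the two coincide exactly because $\alpha*\beta=\beta*\alpha$. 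Hence $\Psi$ lands in $\mc(\diag E)$.

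The core of the argument is the single identity $\omega=\sigma^0_v\omega*\sigma^0_h\omega$, valid for every $\omega\in\mc(\diag E)$, which simultaneously shows $\Phi$ is valued in the right-hand set and that $\Psi\circ\Phi=\id$. I obtain it by applying $\sigma^1_h\sigma^0_v$ to the Maurer--Cartan relation $\omega*\omega=\pd^1_h\pd^1_v\omega$: relations (6)--(7) turn the left side into $\sigma^0_v\omega*\sigma^0_h\omega$, while on the right the identities $\sigma^0_v\pd^1_v=\id$ and $\sigma^1_h\pd^1_h=\id$ (the $i=j{+}1$ and $i=j$ cases of (3), applied vertically and horizontally) collapse $\sigma^1_h\sigma^0_v\pd^1_h\pd^1_v\omega$ to $\omega$. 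The mirror computation, applying $\sigma^1_v\sigma^0_h$, yields $\sigma^0_h\omega*\sigma^0_v\omega=\omega$ as well, so $\sigma^0_v\omega$ and $\sigma^0_h\omega$ commute; and the Maurer--Cartan conditions $\pd^1_h\sigma^0_v\omega=\sigma^0_v\omega*\sigma^0_v\omega$, $\sigma^0_h\sigma^0_v\omega=1$ (together with their vertical analogues) come out of the same relation after applying $\sigma^0_v\sigma^0_v$ (respectively $\sigma^0_h\sigma^0_h$) and simplifying with (3). This places $\Phi(\omega)$ in the right-hand set. Finally $\Phi\circ\Psi=\id$ reduces to $\sigma^0_v(\alpha*\beta)=\alpha$ and $\sigma^0_h(\alpha*\beta)=\beta$, already computed above.

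I expect the only genuine difficulty to be bookkeeping rather than conceptual: correctly tracking the index shifts in (4)--(7) as $*$ meets each coface or degeneracy, and remembering that in a bare monoid there is no cancellation, so both composites $\Psi\circ\Phi$ and $\Phi\circ\Psi$ must be checked outright rather than deduced from a one-sided equation. Once the key identity $\omega=\sigma^0_v\omega*\sigma^0_h\omega$ is established, the remaining verifications are mechanical.
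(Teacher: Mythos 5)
Your proof is correct and follows essentially the same route as the paper: extract $\alpha=\sigma^0_v\omega$, $\beta=\sigma^0_h\omega$, apply the degeneracies $(\sigma^0_v)^2$, $\sigma^1_h\sigma^0_v$, $\sigma^1_v\sigma^0_h$, $(\sigma^0_h)^2$ to the Maurer--Cartan relation to get $\alpha*\alpha=\pd^1_h\alpha$, $\alpha*\beta=\omega=\beta*\alpha$, $\beta*\beta=\pd^1_v\beta$, and invert via $(\alpha,\beta)\mapsto\alpha*\beta$. You are in fact slightly more complete than the paper, which leaves the verification that $\alpha*\beta$ satisfies the Maurer--Cartan equation (and that both composites are the identity) implicit.
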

\begin{proof}
Given $\omega \in \mc(\diag E)$, set $\alpha:= \sigma^0_v\omega,\, \beta:=\sigma^0_h\omega$. Since $\sigma^0\omega=1$, we have $\sigma^0_h\alpha= \sigma^0_v\beta=1$. Applying the operations $(\sigma^0_v)^2, \sigma^1_h\sigma^0_v, \sigma^1_h\sigma^0_v, (\sigma^0_h)^2$ (respectively) 
to the equation $\omega*\omega=\pd^1\omega$ gives the equations
$$
\alpha*\alpha= \pd^1_h\alpha, \quad \alpha*\beta =\omega,\quad \beta*\alpha = \omega, \quad \beta*\beta=\pd^1_v\beta.
$$

This shows that the function $\omega \mapsto (\alpha, \beta)$ is well-defined on the sets above, and has inverse $(\alpha, \beta) \mapsto \alpha*\beta$. 
\end{proof}

\begin{corollary}\label{iotadiag}
$\diag^*\iota\bt=\iota(\bt,\bt)$.
\end{corollary}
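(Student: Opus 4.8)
The plan is to establish the isomorphism by the Yoneda lemma, using the universal property that defines $\diag^*$. By the adjunction $\diag^* \dashv \diag$ and Definition \ref{mcdef}, the quasi-bicomonoid $\diag^*\iota\bt$ co-represents the functor
$$
F \longmapsto \Hom_{QMM^*(\Set)}(\diag^*\iota\bt, F) \cong \Hom_{QM^*(\Set)}(\iota\bt, \diag F) = \mc(\diag F)
$$
on $QMM^*(\Set)$. It therefore suffices to produce an isomorphism $\Hom_{QMM^*(\Set)}(\iota(\bt,\bt), F) \cong \mc(\diag F)$ that is natural in $F$.

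First I would rewrite the target using Lemma \ref{diagset}, which identifies $\mc(\diag F)$ with the set of pairs $(\alpha,\beta) \in \mc(F^{\bt,0}) \by \mc(F^{0,\bt})$ satisfying $\alpha*\beta = \beta*\alpha \in F^{11}$. I would then compute the source directly. Since $\iota(\bt,\bt)^{m,n} = \bt$ is a one-point set for all $m,n$, a morphism $\phi \colon \iota(\bt,\bt) \to F$ amounts to a choice of elements $\omega^{m,n} \in F^{m,n}$. As $\iota(\bt,\bt)$ is a genuine comonoid (its products are isomorphisms), lax monoidality forces $\omega^{m+m',n+n'} = \omega^{m,n}*\omega^{m',n'}$, so that each $\omega^{m,n}$ is a $*$-product of copies of $\alpha := \omega^{1,0}$ and $\beta := \omega^{0,1}$; comparing the two factorisations of $\omega^{1,1}$ through $F^{1,0}\by F^{0,1} \to F^{11}$ and $F^{0,1}\by F^{1,0}\to F^{11}$ gives $\alpha*\beta = \beta*\alpha$. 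Naturality of $\phi$ with respect to $\sigma^0_h$ and $\pd^1_h$ (respectively $\sigma^0_v$ and $\pd^1_v$) yields exactly the Maurer-Cartan conditions placing $\alpha$ in $\mc(F^{\bt,0})$ (respectively $\beta$ in $\mc(F^{0,\bt})$). Conversely, a pair $(\alpha,\beta)$ satisfying these conditions reconstructs $\phi$ via $\omega^{m,n} := \alpha^{*m}*\beta^{*n}$, so the source is the same set, naturally in $F$.

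Combining the two computations gives the required natural isomorphism, whence Yoneda delivers $\diag^*\iota\bt \cong \iota(\bt,\bt)$. The step I expect to be the main obstacle is the well-definedness direction of the direct computation: one must verify that the low-degree data $(\alpha,\beta)$ --- the two Maurer-Cartan conditions together with the commutativity relation --- genuinely assemble into a lax monoidal natural transformation compatible with all of the operations $\pd^i_h, \pd^i_v, \sigma^i_h, \sigma^i_v$ in every bidegree. This runs in close parallel to the proof of Lemma \ref{diagset}, where the generating relations were shown to force full compatibility, so it should reduce to a routine check of the coherence diagrams.
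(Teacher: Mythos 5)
Your proof is correct and follows essentially the same route as the paper: the paper likewise reduces the claim to the observation that a morphism $\iota(\bt,\bt)\to E$ is determined by the images $\alpha,\beta$ of the unique elements of $\iota(\bt,\bt)^{1,0}$ and $\iota(\bt,\bt)^{0,1}$, which generate $\iota(\bt,\bt)$ subject to precisely the conditions of Lemma \ref{diagset}, and then concludes by Yoneda. Your write-up merely makes explicit the generators-and-relations verification that the paper leaves implicit.
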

\begin{proof}
Given a morphism $f: \iota(\bt,\bt) \to E$ in $QMM^*(\Set)$, let $\alpha, \beta$ be the images of the unique elements of $\iota(\bt,\bt)^{1,0}, \iota(\bt,\bt)^{0,1}$. These generate $\iota(\bt,\bt)$, subject to  the conditions of Lemma \ref{diagset}.
\end{proof}

\subsection{Nerves}
We will now see how to associate quasi-comonoids to small diagrams. In Remark \ref{shdiagram}, this will give rise to the notion of strong homotopy diagrams.

\subsubsection{Categories}

Given a set $\cO$, we will write $\Cat_{\cO}$ for the category of categories with object set $\cO$.

\begin{definition}
 Given $K\in \bS$,  define $P_K: \Cat_{K_0} \to QM^*(\Set)$ by 
$$
P_K(\C)^a= \prod_{x \in K_a}\C((\pd_0)^{a}x, (\pd_1)^{a}x), 
$$ 
with operations 
\begin{eqnarray*}
\pd^i(e)(x)&:=& e(\pd_i x)\\
\sigma^j(e)&:=& e(\sigma_j y),\\ 
(f*e)(z)&:=& f((\pd_{a+1})^bz)\circ e((\pd_{0})^az),
\end{eqnarray*}
for $f \in P_K(\C)^a, e \in P_K(\C)^b$.

Let $P_n:= P_{\Delta^n}$.
\end{definition}

Let $\Cat_{n}$ be the category of categories on (the $n+1$) objects $[0,n]$.
\begin{lemma}\label{pnleft}
The functor $P_n:\Cat_n \to QM^*(\Set)$ has a left adjoint $P_n^*$, given by
$$
(P_n^*E)(i,j) = \left\{ \begin{matrix} E^{j-i} & j \ge i \\ \emptyset & j < i, \end{matrix} \right.
$$
with multiplication given by $*$, and identities $ 1 \in E^0$.
\end{lemma}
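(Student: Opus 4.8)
The statement to prove is Lemma~\ref{pnleft}: the functor $P_n \co \Cat_n \to QM^*(\Set)$ has a left adjoint $P_n^*$ given by the formula displayed. The plan is to exhibit the natural bijection
$$
\Hom_{\Cat_n}(P_n^*E, \C) \cong \Hom_{QM^*(\Set)}(E, P_n(\C)),
$$
functorial in $E \in QM^*(\Set)$ and $\C \in \Cat_n$, by unwinding both sides explicitly. First I would verify that $P_n^*E$ as defined really is a category on objects $[0,n]$: composition is the product $*$, which sends $(P_n^*E)(i,j) \ten (P_n^*E)(j,k) = E^{j-i} \by E^{k-j} \to E^{(j-i)+(k-j)} = E^{k-i} = (P_n^*E)(i,k)$; associativity and unitality of this composition are exactly the associativity of $*$ and the identity property of $\zeta^0 \co 1 \to E^0$ from Definition~\ref{qmdef}/Lemma~\ref{qmlemma}. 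The empty hom-sets for $j<i$ cause no problem since there is nothing to compose there.

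\textbf{The correspondence.}
Given a functor $F \co P_n^*E \to \C$ in $\Cat_n$ (so $F$ is the identity on objects), I would extract the associated quasi-comonoid map $\phi \co E \to P_n(\C)$. Recall $P_n(\C)^a = \prod_{x \in (\Delta^n)_a} \C((\pd_0)^a x,(\pd_1)^a x)$, and the $a$-simplices of $\Delta^n$ are the non-decreasing sequences $x = (x_0 \le \dots \le x_a)$ in $[0,n]$, with $(\pd_0)^a x = x_0$ and $(\pd_1)^a x = x_a$. For $w \in E^a$, I set $\phi(w)$ to be the element of the product whose value at a simplex $x$ is $F$ applied to the image of $w$ under the iterated-degeneracy/identity structure — concretely, using that $w \in E^a = (P_n^*E)(x_0, x_a)$ whenever $x_a - x_0 = a$, the natural candidate is $\phi(w)(x) = F(\sigma^{\bullet} w)$, where the string of $\sigma^j$ operations collapses $w \in E^a$ down to the element of $E^{x_a - x_0}$ dictated by the degeneracies encoded in the simplex $x$. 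Conversely, given $\phi \co E \to P_n(\C)$, I reconstruct $F$ on a morphism $w \in (P_n^*E)(i,j) = E^{j-i}$ by evaluating $\phi(w)$ at the nondegenerate simplex $(i \le i{+}1 \le \dots \le j) \in (\Delta^n)_{j-i}$.

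\textbf{Compatibility and the main obstacle.}
I would then check that these two assignments are mutually inverse and respect all the structure: that $\phi$ commutes with $\pd^i, \sigma^j$ and with $*$ (the first two from the cosimplicial-identity clauses (1)--(3) and the product clauses (4)--(7) of Lemma~\ref{qmlemma}, matched against the operations $\pd^i(e)(x)=e(\pd_i x)$, $\sigma^j(e)=e(\sigma_j y)$, $(f*e)(z)=f((\pd_{a+1})^b z)\circ e((\pd_0)^a z)$ defining $P_n$), and that $F$ is genuinely functorial. The key point linking the two is that the composition law $(f*e)(z)$ in $P_n(\C)$ is built from $\pd$-operations on simplices in precisely the way that makes $\phi$ multiplicative iff $F$ preserves composition. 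I expect the \emph{main obstacle} to be bookkeeping: verifying that the recipe $\phi(w)(x) = F(\sigma^{\bullet}w)$ is well-defined independent of how one factors a degenerate simplex $x$ through degeneracies — this requires the simplicial identities (2) and (3) relating the various $\sigma^i$, together with compatibility (6)--(7) between $\sigma^i$ and $*$ — and in checking that multiplicativity of $\phi$ matches functoriality of $F$ under the slightly asymmetric formula for $f*e$. Once the evaluation-on-nondegenerate-simplices and filling-in-by-degeneracies dictionary is set up carefully, all seven axioms of Lemma~\ref{qmlemma} correspond term-by-term to the defining data of a functor into $\C$, and naturality in both variables is immediate from the construction.
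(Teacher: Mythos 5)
Your overall strategy --- exhibiting the adjunction by an explicit bijection between functors $F\co P_n^*E \to \C$ and maps $\phi\co E \to P_n\C$, with $F$ recovered by evaluating $\phi$ at the spine simplices $(i,i+1,\ldots,j)$ --- is viable, and the verification that $P_n^*E$ is a category is correct. But the formula you give for the other direction has a genuine gap. You propose $\phi(w)(x)=F(\sigma^{\bullet}w)$, with codegeneracies of $E$ accounting for the repetitions in the simplex $x=(x_0\le\cdots\le x_a)$. This only covers simplices that are degeneracies of spine simplices; it breaks down for simplices that \emph{skip} vertices, for which $x_a-x_0$ can exceed $a$. Already for $n=2$, the nondegenerate $1$-simplex $(0,2)$ of $\Delta^2$ contributes a factor indexed by the pair $(0,2)$ to $P_2(\C)^1$, so $\phi(w)((0,2))$ for $w\in E^1$ must lie in $\C(0,2)$, which $F$ reaches only from $(P_2^*E)(0,2)=E^2$; no string of $\sigma$'s maps $E^1$ to $E^2$, and compatibility of $\phi$ with the operation $\pd^1$ in fact forces $\phi(w)((0,2))=F(\pd^1w)$. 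Your ``dictionary'' of evaluating on nondegenerate simplices and filling in by degeneracies therefore does not determine $\phi$ from $F$: the nondegenerate simplices of $\Delta^n$ are all strictly increasing sequences, not just the spines.

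The repair is to use the full $\Delta_{**}$-functoriality of $E$: an $a$-simplex $x$ of $\Delta^n$ with endpoints $i\le j$ is precisely a morphism $\bar{x}\co\mathbf{a}\to\mathbf{j-i}$ in $\Delta_{**}$ (after translating by $i$), and the correct formula is $\phi(w)(x)=F(E(\bar{x})w)$, involving cofaces as well as codegeneracies; your well-definedness check must then cover factorisations of $\bar{x}$ through both kinds of generators. With this change your argument goes through and is an element-level transcription of the paper's proof, which packages exactly this point structurally: writing $U\Delta^n\cong\coprod_{0\le i\le j\le n}\nabla^{j-i}$ with $(\nabla^m)_r=\Hom_{\Delta_{**}}(\mathbf{r},\mathbf{m})$, the co-Yoneda lemma gives $\Hom_{\Set^{\Delta_{**}}}(E,\C(i,j)^{\nabla^{j-i}})\cong\Hom_{\Set}(E^{j-i},\C(i,j))$ in one stroke, leaving only the multiplicative bookkeeping that you describe correctly.
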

\begin{proof}
Define $\nabla^n \in \Set^{\Delta_{**}^{\op}}$ by $(\nabla^n)_r :=\Hom_{\Delta_{**}}(\mathbf{r}, \mathbf{n})$. If $U: \bS \to \Set^{\Delta_{**}^{\op}}$ is the forgetful functor, then define $b: UK \to K_0 \by K_0$ by $x \mapsto ((\pd_1)^nx, (\pd_0)^nx)$, for $x \in K_n$. Then
$$
U\Delta^n = \coprod_{(i,j) \in \Delta^n_0} b^{-1}(i,j)\cong \coprod_{0\le i \le j \le n} \nabla^{j-i}.
$$

For  $\C \in \Cat_n$, $P_n\C$ decomposes in $\Set^{\Delta_{**}} $ as  
$$
P_nE= \prod_{0\le i \le j \le n} \C(i,j)^{\nabla^{j-i}},
$$
where, for $K \in \Set^{\Delta_{**}^{\op}}$  and a set $S$, we define $S^K \in \Set^{\Delta_{**}}$ by $(S^K)_r= S^{K_r}$.

For $E \in QM^*(\Set)$ and $\C \in \Cat_n$, this implies that 
\begin{eqnarray*}
\Hom_{\Set^{\Delta_{**}}}(E, P_n\C) &=& \prod_{0\le i \le j \le n} \Hom_{\Set^{\Delta_{**}}}(E,\C(i,j)^{\nabla^{j-i}})\\
& =& \prod_{0\le i \le j \le n} \Hom_{\Set}(E^{j-i}, \C(i,j)).
\end{eqnarray*}
Analysis of the product now gives the required result.
\end{proof}

\begin{definition}
Given a category $\C$, a set $\cO$ and a morphism $f: \cO \to \Ob \C$, define $f^{-1}\C$ to be the category with objects $\cO$ and morphisms $(f^{-1}\C)(a,b)= \C(f a, f b)$.
\end{definition}


\begin{lemma}\label{pnnerve}
For any category $\C$, there is a natural isomorphism
$$
(B\C)_n \cong \coprod_{f \in (\Ob\C)^{[0,n]}}\mc(P_nf^{-1}\C).
$$
\end{lemma}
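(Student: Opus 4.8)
The plan is to reduce each Maurer--Cartan set to a representability statement via the adjunction of Lemma \ref{pnleft}, and then to recognise the representing quasi-comonoid as the poset category $[0,n]$. Recall from Definition \ref{mcdef} that $\mc(E) = \Hom_{QM^*(\Set)}(\iota\bt, E)$, where $\iota\bt$ is the quasi-comonoid associated to the one-point set; since the monoidal product on $(\Set,\by)$ is cartesian, $(\iota\bt)^m = \bt$ for every $m$. Fixing $f \in (\Ob\C)^{[0,n]}$ and writing $\C' := f^{-1}\C$, I would therefore compute
$$
\mc(P_n\C') = \Hom_{QM^*(\Set)}(\iota\bt, P_n\C') \cong \Hom_{\Cat_n}(P_n^*\iota\bt, \C'),
$$
the isomorphism being the adjunction $P_n^* \dashv P_n$ of Lemma \ref{pnleft}.

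First I would identify $P_n^*\iota\bt$. By the explicit formula for $P_n^*$, one has $(P_n^*\iota\bt)(i,j) = (\iota\bt)^{j-i} = \bt$ when $j \ge i$ and $\emptyset$ otherwise, with composition $*$ and identities forced, since every hom-set is a singleton or empty. Thus $P_n^*\iota\bt$ is precisely the poset category $\on = [0,n]$, with a unique morphism $i \to j$ whenever $i \le j$. Next I would unwind $\Hom_{\Cat_n}(\on, \C')$: a morphism in $\Cat_n$ is a functor that is the identity on the object set $[0,n]$, so such a functor assigns to each generating arrow $i \to i{+}1$ a morphism in $\C'(i,i{+}1) = \C(f(i), f(i{+}1))$, with the remaining arrows determined as composites and no further relations imposed (the source is a total order). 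Hence $\Hom_{\Cat_n}(\on, \C')$ is canonically the set of composable chains $f(0) \to f(1) \to \cdots \to f(n)$ with the prescribed objects. Taking the coproduct over all object assignments $f$ then collects exactly the functors $[0,n]\to\C$, which is by definition $(B\C)_n$; this yields the required bijection. Naturality in $\C$ is immediate, since $B$, the assignment $f \mapsto f^{-1}\C$, and $\mc\circ P_n$ are all functorial and the adjunction isomorphism is natural.

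As a cross-check, and to make the bijection explicit, I would also unwind $\mc(P_n\C')$ directly from Definition \ref{mcdef}: an element $\omega \in (P_n\C')^1$ is a family $(\omega_{ij})_{i\le j}$ of morphisms of $\C$ between the objects $f(i)$ and $f(j)$; the condition $\sigma^0\omega = 1$ forces $\omega_{ii} = \id$, and the condition $\pd^1\omega = \omega*\omega$ unwinds, using the formulas for $\pd^i$ and $*$ in the definition of $P_K$, to the cocycle identity $\omega_{ik} = \omega_{ij}\circ\omega_{jk}$ for all $i\le j\le k$. This is again precisely the data of a composable chain, confirming the adjunction computation.

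The one point demanding care --- and the main obstacle --- is the bookkeeping of the (co)simplicial conventions: tracking which vertex is $(\pd_0)^a x$ and which is $(\pd_1)^a x$, and hence the variance of the hom-sets and the direction of composition, so that the cocycle identity comes out with its indices in the order matching the nerve. This is purely a matter of consistently applying the conventions fixed in the definition of $P_K$ and in Lemma \ref{pnleft}, and carries no genuine difficulty beyond careful index-chasing.
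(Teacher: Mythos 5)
Your proposal is correct and follows essentially the same route as the paper: use $\mc(E)=\Hom_{QM^*(\Set)}(\iota\bt,E)$ and the adjunction $P_n^*\dashv P_n$ to rewrite the right-hand side as $\Hom_{\Cat}(P_n^*\iota\bt,\C)$, identify $P_n^*\iota\bt\cong\on$ from the explicit formula in Lemma \ref{pnleft}, and conclude via $(B\C)_n=\Hom_{\Cat}(\on,\C)$. The direct unwinding of the Maurer--Cartan equation is a sound cross-check but not needed beyond the adjunction argument.
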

\begin{proof}
Note that the right-hand side is just $\Hom_{\Cat}(P_n^*\iota\bt, \C)$. Since $(\iota\bt)^n=\bt$ for all $n$, we have $ P_n^*\iota\bt \cong \on$ by Lemma \ref{pnleft}, where we regard $\on$ as a category (with objects $[0,n]$, and a single morphism $i\to j$ whenever $i \le j$).  This completes the proof,  since the nerve is given by $(B\C)_n= \Hom_{\Cat}(\mathbf{n}, \C)$.
\end{proof}

We now generalise these results to more general  categories.

\begin{definition}\label{uleft}
Given $E \in \Set^{\Delta_{**}}$ and $X \in \Set^{\Delta_{**}^{\op}}$, define the set $X\uleft{\by} E$ to be the quotient of 
$
\{ (x, e) \in \coprod_n X_n\by E^n\}$ by the equivalence relation generated by
$$
 (x, \pd^ie)\sim (\pd_ix,e),\, (x, \sigma^ie)\sim (\sigma_ix,e).
$$
\end{definition}

\begin{definition}
Given $K \in \bS$ and $a,b \in K_0$, define $K(a,b) \in \Set^{\Delta_{**}^{\op}}$ by setting 
$$
K(a,b)_n:= \{x \in K_n\,:\, (\pd_1)^nx=a,\, (\pd_0)^nx=b\}.
$$
\end{definition}

\begin{lemma}\label{pkleft}
Given a (small) category $\bI$, the left adjoint $P_{B\bI}^*$ to the functor $P_{B\bI}:\Cat_{\Ob \bI} \to QM^*(\Set)$ is given by 
$$
(P_{B\bI}^*E)(a,b) = (B\bI)(a,b) \uleft{\by}E 
$$
for $a, b \in \Ob \bI$,  with product given by
$$
(x, e)\circ (y,f):= (x\star y, e*f),
$$
where $\star$ denotes concatenation of strings of morphisms in $\bI$, and $*$ is the product on $E$.
 \end{lemma}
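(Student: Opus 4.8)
The plan is to establish the adjunction directly, by producing a natural bijection
$$
\Hom_{\Cat_{\Ob\bI}}(P_{B\bI}^*E,\C)\ \cong\ \Hom_{QM^*(\Set)}(E,P_{B\bI}\C),
$$
generalising the argument of Lemma \ref{pnleft} from $\Delta^n=B\on$ to an arbitrary nerve. First I would record the decomposition of $P_{B\bI}\C$ corresponding to the one for $P_n$. The assignments $x\mapsto(\pd_1)^nx$ and $x\mapsto(\pd_0)^nx$ are left unchanged by the interior operations $\pd^i,\sigma^i$, so the endpoint map $b\colon U(B\bI)\to(\Ob\bI)\by(\Ob\bI)$ is a morphism in $\Set^{\Delta_{**}^{\op}}$ and exhibits $U(B\bI)=\coprod_{a,b}(B\bI)(a,b)$. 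Evaluating $P_{B\bI}\C$ fibre by fibre over this decomposition then gives, in $\Set^{\Delta_{**}}$,
$$
P_{B\bI}\C\ \cong\ \prod_{a,b\in\Ob\bI}\C(a,b)^{(B\bI)(a,b)},
$$
with the conventions for $\C(a,b)$ and for the power $S^K$ exactly as in Lemma \ref{pnleft}.

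Next I would invoke the tensor–hom adjunction between $\uleft{\by}$ and the power $S^{(-)}$, namely $\Hom_{\Set^{\Delta_{**}}}(E,S^K)\cong\Hom_\Set(K\uleft{\by}E,S)$ for $K\in\Set^{\Delta_{**}^{\op}}$. This is immediate from Definition \ref{uleft}: the relations $(x,\pd^ie)\sim(\pd_ix,e)$ and $(x,\sigma^ie)\sim(\sigma_ix,e)$ defining $\uleft{\by}$ are precisely the naturality (balancing) conditions for a map in $\Set^{\Delta_{**}}$, so $K\uleft{\by}E$ is the coend $\int^{\Delta_{**}}K\by E$. Applied factorwise this yields a bijection on underlying sets
$$
\Hom_{\Set^{\Delta_{**}}}(E,P_{B\bI}\C)\ \cong\ \prod_{a,b}\Hom_\Set\big((B\bI)(a,b)\uleft{\by}E,\ \C(a,b)\big),
$$
so a map $E\to P_{B\bI}\C$ of $\Delta_{**}$-objects is exactly a family of maps $\bar\phi_{a,b}\colon(B\bI)(a,b)\uleft{\by}E\to\C(a,b)$.

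It then remains to match the monoidal data on the two sides, which is the only point genuinely beyond Lemma \ref{pnleft}. I would first check that the product $(x,e)\circ(y,f):=(x\star y,\,e*f)$ descends to the quotients $\uleft{\by}$: this comes down to the compatibilities (4)–(7) of Lemma \ref{qmlemma} between $*$ and the operations $\pd^i,\sigma^i$, which mirror exactly the way the interior faces and degeneracies of a concatenated string $x\star y$ split off as operations on the front factor $x$ or the back factor $y$. Associativity, together with the unit $1\in E^0$ on the constant strings, then make $P_{B\bI}^*E$ a genuine object of $\Cat_{\Ob\bI}$. Finally, I would unwind the multiplicativity condition $\phi^{m+n}(e*f)=\phi^m(e)*\phi^n(f)$ through the product formula $(g*h)(z)=g((\pd_{m+1})^nz)\circ h((\pd_0)^mz)$ of $P_{B\bI}\C$, using that every $(m+n)$-simplex $z$ of $(B\bI)(a,c)$ is the concatenation of its front $m$-face $(\pd_{m+1})^nz\in(B\bI)(a,b)$ and its back $n$-face $(\pd_0)^mz\in(B\bI)(b,c)$ across the middle vertex $b=z_m$. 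This shows that a multiplicative $\phi$ corresponds precisely to a family $\{\bar\phi_{a,b}\}$ respecting composition, i.e. to an identity-on-objects functor $P_{B\bI}^*E\to\C$ with the stated composition law; naturality in $E$ and $\C$ is then routine.

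The step I expect to be the real work is this last matching: verifying that the Alexander–Whitney-style front/back decomposition underlying the product $*$ on $P_{B\bI}\C$ is intertwined, under the coend bijection, with honest concatenation $\star$ of strings of composable morphisms. In the poset case of Lemma \ref{pnleft} this was trivial, since a string in $B\on$ is determined by its endpoints and the product merely added lengths; for general $\bI$ one must genuinely track how the interior faces of $x\star y$ distribute over the two factors, which is exactly what relations (4)–(7) encode, and check that the middle vertex $b$ is correctly recovered so that composability is preserved.
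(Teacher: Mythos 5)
Your proof is correct, and it takes a genuinely different (though closely parallel) route from the paper's. The paper first describes $P_K^*E$ for an \emph{arbitrary} simplicial set $K$: it is the category on objects $K_0$ whose morphisms are generated under composition by the sets $K(a,b)\uleft{\by}E$, subject to the relation $(z,e*f)\sim((\pd_{m+1})^nz,e)\circ((\pd_0)^mz,f)$; it then observes that for $K=B\bI$ the nerve (Segal) isomorphism $((\pd_{m+1})^n,(\pd_0)^m)\colon K_{m+n}\to K_m\by_{K_0}K_n$ makes every composite of generators again a generator, so the generators-and-relations description collapses to the stated formula with composition given by concatenation. You instead verify the adjunction bijection directly: you decompose $P_{B\bI}\C$ as $\prod_{a,b}\C(a,b)^{(B\bI)(a,b)}$, invoke the coend/power adjunction for $\uleft{\by}$ exactly as in Lemma \ref{pnleft}, and then match the monoidal data, with the Segal condition entering only at the final step when the front/back face decomposition underlying the product on $P_{B\bI}\C$ is identified with concatenation. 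Both arguments rest on the same two ingredients --- the coend description of $\uleft{\by}$ and the nerve condition --- but the paper's version buys a description of $P_K^*$ for general $K$ (useful since $P_{\pd\Delta^n}$ and more general $K$ appear later), whereas yours is more self-contained for this statement and makes the universal property explicit. Your observation that relations (4)--(7) of Lemma \ref{qmlemma} are precisely what makes the product descend to the quotient $\uleft{\by}$ is correct and is left implicit in the paper.
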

\begin{proof}
For $K \in \bS$, the category $P_K^*E$ has  objects $K_0$.  Morphisms are generated  under composition by $ K(a,b) \uleft{\by}E$ in $(P_{K}^*E)(a,b)$ for $a, b \in K_0$, subject to the condition that for $e \in E^m, f\in E^n$ and $x \in K_{m+n}$, 
$$
(x, e*f) \sim ((\pd_{m+1})^nx, e)\circ (\pd_0)^mx, f).
$$

When $K = \bI$, the map  $((\pd_{m+1})^n, (\pd_0)^m): K_{m+n} \to K_{m}\by_{(\pd_0)^m,K_0,(\pd_{1})^n}K_n$ is an isomorphism, so any product of generators is a generator, giving the required result.
\end{proof}

\begin{lemma}\label{pknerve}
For any  category $\C$, there is a natural isomorphism
$$
\Hom_{\Cat}(\bI, \C) \cong \coprod_{f \in (\Ob\C)^{\Ob\bI}}\mc(P_{B\bI}f^{-1}\C).
$$
\end{lemma}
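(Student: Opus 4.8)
The plan is to mirror the proof of Lemma \ref{pnnerve}, replacing the identification $P_n^*\iota\bt \cong \on$ there by an identification $P_{B\bI}^*\iota\bt \cong \bI$. Concretely, I would first reformulate the right-hand side via the adjunction of Lemma \ref{pkleft}, and then reduce the whole statement to computing the left adjoint $P_{B\bI}^*$ on the terminal quasi-comonoid $\iota\bt$.

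First, by Definition \ref{mcdef} we have $\mc(P_{B\bI}f^{-1}\C) = \Hom_{QM^*(\Set)}(\iota\bt, P_{B\bI}f^{-1}\C)$, and the adjunction $P_{B\bI}^* \dashv P_{B\bI}$ of Lemma \ref{pkleft} rewrites this as $\Hom_{\Cat_{\Ob\bI}}(P_{B\bI}^*\iota\bt, f^{-1}\C)$. A morphism in $\Cat_{\Ob\bI}$ from $P_{B\bI}^*\iota\bt$ to $f^{-1}\C$ is a functor fixing the object set $\Ob\bI$; combined with the choice of object map $f : \Ob\bI \to \Ob\C$, this is precisely a functor $P_{B\bI}^*\iota\bt \to \C$ inducing $f$ on objects. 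Taking the coproduct over all $f$ therefore gives
$$
\coprod_{f \in (\Ob\C)^{\Ob\bI}} \mc(P_{B\bI}f^{-1}\C) \cong \Hom_{\Cat}(P_{B\bI}^*\iota\bt, \C),
$$
exactly as in the first line of the proof of Lemma \ref{pnnerve}.

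It then remains to prove $P_{B\bI}^*\iota\bt \cong \bI$, which I expect to be the main obstacle. By Lemma \ref{pkleft}, $(P_{B\bI}^*\iota\bt)(a,b) = (B\bI)(a,b) \uleft{\by} \iota\bt$. Since $(\iota\bt)^n = \bt$ with all operations acting trivially, this set is the quotient of the strings $a \to x_1 \to \dots \to x_{n-1} \to b$ in $B\bI$ by the relations coming from the inner faces (which compose adjacent morphisms) and the degeneracies (which insert identities); the outer faces, which would drop the first or last morphism, are absent because we work in $\Set^{\Delta_{**}^{\op}}$. The map sending a string to its total composite in $\bI$ is constant on these relations, so it descends to a map onto $\bI(a,b)$; it is surjective (via the length-one strings) and injective, since the total composite is a well-defined invariant of each class while every string is equivalent, through inner faces, to the length-one string given by its composite. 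Hence $(B\bI)(a,b) \uleft{\by} \iota\bt \cong \bI(a,b)$. Under this bijection the product $(x,e)\circ(y,f) = (x\star y, e*f)$ of Lemma \ref{pkleft} becomes composition in $\bI$ (concatenate strings, then compose), and $1 \in (\iota\bt)^0$ becomes $\id_a$, so the bijection is an isomorphism of categories.

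Combining the two displays yields $\coprod_f \mc(P_{B\bI}f^{-1}\C) \cong \Hom_{\Cat}(\bI, \C)$. Naturality in $\C$ (and in $\bI$) is automatic, since each step is an instance of a natural adjunction isomorphism together with the fixed identification $P_{B\bI}^*\iota\bt \cong \bI$. The only genuinely delicate point is the injectivity in the coend computation, i.e.\ checking that the relations impose nothing beyond ``reduce to the total composite''; this holds because the total-composite map is an honest invariant of the equivalence classes and exactly inverts the inclusion of the length-one strings.
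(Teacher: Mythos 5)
Your proposal is correct and follows essentially the same route as the paper: the paper's proof simply says that the argument of Lemma \ref{pnnerve} carries over once one notes $P_{B\bI}^*\iota\bt \cong \bI$, which is exactly the reduction you make via the adjunction of Lemma \ref{pkleft}. Your explicit verification that $(B\bI)(a,b)\uleft{\by}\iota\bt \cong \bI(a,b)$ (total composite as a complete invariant of the equivalence classes) just fills in detail the paper leaves implicit.
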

\begin{proof}
The proof of Lemma \ref{pnnerve} carries over, noting that $P_{B\bI}^*\iota\bt \cong \bI$.
\end{proof}

\subsubsection{Quasi-descent data}

\begin{definition}\label{pnset}
 Given $K \in \bS$, define $P_K: Q\Dat_{K_0} \to QMM^*(\Set)$ by 
$$
P_K(\cD)^{a,b}= \prod_{x \in K_b}\cD((\pd_0)^{b}x, (\pd_1)^{b}x)^a, 
$$ 
with operations 
\begin{eqnarray*}
\pd_v^i(e)(x)&:=& e(\pd_i x)\\
\sigma_v^j(e)&:=& e(\sigma_j y),\\ 
(f*e)(z)&:=& f((\pd_{b+1})^{b'}z)* e((\pd_{0})^bz).
\end{eqnarray*}
for $f \in P_K(\C)^{a,b}, \, e \in P_K(\C)^{a',b'}$. The horizontal operations are $\pd^i_h=\pd^i_{\cD}, \sigma^i_h=\sigma^i_{\cD}$. 

Note that $P_n= P_{\Delta^n}$.
\end{definition}

Let $Q\Dat_{n}$ be the category of quasi-descent data on the $n+1$ objects $[0,n]$.
\begin{lemma}\label{pnleft2}
The functor $P_n:Q\Dat_n \to QMM^*(\Set)$ has a left adjoint $P_n^*$, given by
$$
(P_n^*)(i,j)^a = \left\{ \begin{matrix} E^{a,j-i} & j \ge i \\ \emptyset & j < i, \end{matrix} \right.
$$
with multiplication given by $*$, identities $ 1 \in E^{00}$, and operations $\pd^i_h, \sigma^i_h$.
\end{lemma}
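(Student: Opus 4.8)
The plan is to run the proof of Lemma \ref{pnleft} essentially verbatim, carrying the extra ``horizontal'' copy of $\Delta_{**}$ (the one governing the quasi-comonoid structure of $\cD$) along as a passive index throughout, and only doing genuine work in the ``vertical'' (diagram) direction. Recall from that proof the objects $\nabla^m \in \Set^{\Delta_{**}^{\op}}$ with $(\nabla^m)_r = \Hom_{\Delta_{**}}(\mathbf{r},\mathbf{m})$, and the decomposition $U\Delta^n = \coprod_{0 \le i \le j \le n} \nabla^{j-i}$ of the underlying object, where each summand is the fibre of $b\colon U\Delta^n \to \Delta^n_0 \by \Delta^n_0$ over a pair of endpoints. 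Substituting this into Definition \ref{pnset} reindexes the defining product and decomposes $P_n\cD$, exactly as $P_n\C$ was decomposed for categories, as
$$
P_n\cD \;=\; \prod_{0 \le i \le j \le n} \cD(i,j)^{\nabla^{j-i}}
$$
in the bicosimplicial category $\Set^{\Delta_{**} \by \Delta_{**}}$ underlying $QMM^*(\Set)$. Here $\cD(i,j) \in \Set^{\Delta_{**}}$ is the horizontal quasi-comonoid, and for $M \in \Set^{\Delta_{**}}$ and $N \in \Set^{\Delta_{**}^{\op}}$ I write $M^N$ for the object with $(M^N)^{a,b}= (M^a)^{N_b}$, the horizontal operations coming from $M$ and the vertical ones from $N$.

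Next I would apply the cotensor--hom adjunction in the vertical direction alone. Just as in Lemma \ref{pnleft}, naturality in the vertical index together with the co-Yoneda identity $\nabla^{m}\uleft{\by} E^{a,\bt} \cong E^{a,m}$ (valid since $\nabla^m$ is representable, cf. Definition \ref{uleft}) gives, for each horizontal $M$,
$$
\Hom_{\Set^{\Delta_{**}\by\Delta_{**}}}(E, M^{\nabla^{m}}) \;\cong\; \Hom_{\Set^{\Delta_{**}}}(E^{\bt, m}, M),
$$
where $E^{\bt,m}=(E^{a,m})_a \in \Set^{\Delta_{**}}$ is obtained by freezing the vertical degree at $m$ and keeping the horizontal operations. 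Feeding in the decomposition above yields
$$
\Hom_{\Set^{\Delta_{**}\by\Delta_{**}}}(E, P_n\cD) \;\cong\; \prod_{0 \le i \le j \le n} \Hom_{\Set^{\Delta_{**}}}(E^{\bt, j-i}, \cD(i,j)),
$$
so an underlying bicosimplicial map $E \to P_n\cD$ is exactly a family of horizontal maps $\phi_{ij}\colon E^{\bt, j-i} \to \cD(i,j)$, each compatible with $\pd^i_h, \sigma^i_h$.

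Finally I would perform the ``analysis of the product'' converting an underlying map into a genuine morphism of quasi-bicomonoids, and match it with the data of a morphism $P_n^*E \to \cD$ in $Q\Dat_n$. Unwinding the product of Definition \ref{pnset}, the requirement that $\{\phi_{ij}\}$ respect $*$ becomes $\phi_{ik}(e*f) = \phi_{ij}(e)*\phi_{jk}(f)$ for $e \in E^{\bt,j-i}$, $f \in E^{\bt,k-j}$ (the right-hand $*$ now being composition in $\cD$), while the unit condition becomes $\phi_{ii}(1)=\id$ for $1 \in E^{00}$. These are precisely the conditions defining a morphism out of the quasi-descent datum $P_n^*E$ with $(P_n^*E)(i,j)^a = E^{a,j-i}$, composition $*$, identities $1 \in E^{00}$ and horizontal operations $\pd^i_h, \sigma^i_h$; associativity of $*$ and the quasi-comonoid relations among the $\pd^i_h,\sigma^i_h$ (which make $P_n^*E$ a well-defined object) are all inherited directly from $E$, and naturality of the bijection in $E$ and $\cD$ is routine. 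The step needing real care, and the main obstacle, is this last matching: since the product of Definition \ref{pnset} simultaneously concatenates in the vertical (diagram) direction and multiplies in the horizontal ($\cD$) direction, one must verify that under the decomposition by endpoint pairs the vertical concatenation restricts exactly to composition along intervals $i \le j \le k$, so that no cross terms survive and the horizontal product is recorded faithfully.
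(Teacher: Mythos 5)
Your argument is exactly the adaptation the paper intends: its proof of this lemma is the single line ``The proof of Lemma \ref{pnleft} adapts to this generality,'' and your decomposition $P_n\cD \cong \prod_{0\le i\le j\le n}\cD(i,j)^{\nabla^{j-i}}$ with the horizontal $\Delta_{**}$-index carried along passively, followed by the vertical co-Yoneda step and the endpoint analysis of the product, is precisely that adaptation spelled out. The point you flag at the end (that $((\pd_{b+1})^{b'},(\pd_0)^b)$ identifies $\Delta^n_{b+b'}$ with the fibre product over vertices, so vertical concatenation restricts to composition along $i\le j\le k$ with no cross terms) is the same observation the paper makes explicitly in the proof of Lemma \ref{pkleft}, so the proposal is correct and complete.
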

\begin{proof}
The proof of Lemma \ref{pnleft} adapts to this generality.
\end{proof}

\begin{definition}
Given a quasi-descent datum $\cD$, a set $\cO$ and a morphism $f: \cO \to \Ob \cD$, define $f^{-1}\cD$ to be the quasi-descent datum with objects $\cO$ and morphisms $(f^{-1}\cD)(a,b)^i= \cD(f a, f b)^i$.
\end{definition}

\begin{lemma}\label{pndiag}
For any quasi-descent datum  $\cD$, there is a natural isomorphism
$$
(B\alg\cD)_n \cong \coprod_{f \in (\Ob\cD)^{[0,n]}}\mc(\diag P_nf^{-1}\cD).
$$
\end{lemma}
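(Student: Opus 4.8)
The plan is to follow the proof of Lemma \ref{pnnerve} almost verbatim, inserting the diagonal adjunction where needed and replacing the category-level left adjoints with their quasi-descent-datum counterparts. First I would rewrite the right-hand side as a single $\Hom$-set. By Definition \ref{mcdef} each summand is $\mc(\diag P_nf^{-1}\cD)=\Hom_{QM^*(\Set)}(\iota\bt, \diag P_nf^{-1}\cD)$. Applying the adjunction $\diag^*\dashv\diag$ together with Corollary \ref{iotadiag}, which gives $\diag^*\iota\bt=\iota(\bt,\bt)$, turns this into $\Hom_{QMM^*(\Set)}(\iota(\bt,\bt), P_nf^{-1}\cD)$, and the adjunction $P_n^*\dashv P_n$ of Lemma \ref{pnleft2} turns it into $\Hom_{Q\Dat_n}(P_n^*\iota(\bt,\bt), f^{-1}\cD)$. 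Since $P_n^*\iota(\bt,\bt)$ has object set $[0,n]$, a morphism out of it in $Q\Dat$ is the data of an object map $f\in(\Ob\cD)^{[0,n]}$ together with a morphism in $Q\Dat_n$ into $f^{-1}\cD$; taking the coproduct over $f$ therefore identifies the whole right-hand side with $\Hom_{Q\Dat}(P_n^*\iota(\bt,\bt),\cD)$.

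The one substantive step --- the exact analogue of the identification $P_n^*\iota\bt\cong\mathbf{n}$ used in Lemma \ref{pnnerve} --- is to check that $P_n^*\iota(\bt,\bt)\cong\alg^*\mathbf{n}$. Here $\iota(\bt,\bt)^{m,n}=\bt$ for all $m,n$, so the formula of Lemma \ref{pnleft2} makes $(P_n^*\iota(\bt,\bt))(i,j)^a$ a one-point set for $i\le j$ and $\emptyset$ for $i>j$; every structure map (the horizontal operations $\pd^i_h,\sigma^i_h$, the product $*$, the unit) is then forced, being a map into a one-point set or one involving $\emptyset$. This is precisely the description of $\alg^*\mathbf{n}$ coming from Lemma \ref{alg*}, since $\mathbf{n}(i,j)=\bt$ for $i\le j$ and $\emptyset$ otherwise and $\alg^*$ makes all operations trivial. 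With this identification in hand, the adjunction $\alg^*\dashv\alg$ of Lemma \ref{algadj} gives $\Hom_{Q\Dat}(\alg^*\mathbf{n},\cD)\cong\Hom_{\Cat}(\mathbf{n},\alg\cD)$, which equals $(B\alg\cD)_n$ by definition of the nerve. Naturality in $\cD$ is automatic, since every isomorphism in the chain is induced by an adjunction unit/counit or by a fixed map of object sets.

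I expect the identification $P_n^*\iota(\bt,\bt)\cong\alg^*\mathbf{n}$ to be the only place requiring care; everything else is formal bookkeeping of adjunctions already established in the excerpt. The subtle point in that identification is that the horizontal operations on $P_n^*\iota(\bt,\bt)$ must collapse to the trivial operations of $\alg^*\mathbf{n}$, which holds precisely because all nonempty $\Hom$-objects are singletons. As an informal check that the bookkeeping matches the intended meaning (this is not needed for the argument), Lemma \ref{diagset} decomposes an element of $\mc(\diag P_nf^{-1}\cD)$ into a horizontal Maurer-Cartan element $\alpha$, a vertical one $\beta$, and the relation $\alpha*\beta=\beta*\alpha$; these correspond respectively to the Maurer-Cartan data $\omega$ defining the objects of $\alg\cD$, the underlying maps of a functor $\mathbf{n}\to\alg\cD$, and the condition $f*\omega=\omega'*f$ defining morphisms in $\alg\cD$ via Definition \ref{alg}.
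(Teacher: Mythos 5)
Your proposal is correct and follows essentially the same route as the paper: both reduce each side to a $\Hom$-set via the adjunctions $\alg^*\dashv\alg$, $P_n^*\dashv P_n$ and $\diag^*\dashv\diag$ (using Corollary \ref{iotadiag}), and both hinge on the identification $P_n^*\iota(\bt,\bt)\cong\alg^*\on$ from the formula of Lemma \ref{pnleft2}. Your extra observation that the structure maps are forced because all nonempty $\Hom$-objects are singletons is exactly the point the paper leaves implicit.
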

\begin{proof}
By Lemma \ref{algadj}, the left-hand side is $\Hom_{\Cat}(\mathbf{n}, \alg\cD) \cong \Hom_{Q\Dat}( \alg^*\mathbf{n},\cD)$. Meanwhile, the right-hand side is $\Hom_{Q\Dat}(P_n^*\diag^*\iota\bt, \cD)$, so it suffices to show that $\alg^*\mathbf{n} \cong P_n^*\diag^*\iota\bt$. By Corollary \ref{iotadiag}, $P_n^*\diag^*\iota\bt= P_n^*\iota(\bt,\bt)$, and Lemma \ref{pnleft2} shows that $(P_n^*\iota(\bt,\bt))(i,j)^a$ is $\bt$ for $j \ge i$ and $\emptyset$ for $j<i$, so  $P_n^*\iota(\bt,\bt)= \alg^*\on $.
\end{proof}

\begin{lemma}\label{pkleft2}
Given a category $\bI$, the left adjoint $P_{B\bI}^*$ to the functor $P_{B\bI}:Q\Dat_{\Ob \bI} \to QMM^*(\Set)$ is given by 
$$
(P_{B\bI}^*E)(a,b)^n = (B\bI)(a,b) \uleft{\by}E^n 
$$
for $a, b \in \Ob \bI$,  with product given by
$$
(x, e)\circ (y,f):= (x\star y, e*f),
$$
where $\star$ denotes concatenation of strings of morphisms in $\bI$, and $*$ is the product on $E$.
 \end{lemma}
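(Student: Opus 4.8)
The plan is to follow the proof of Lemma~\ref{pkleft} almost verbatim, the only new feature being the passage from the single $\Delta_{**}$-grading of a quasi-comonoid to the bigrading of a quasi-bicomonoid. Under the identification of Remarks~\ref{quasidescentcat}, a quasi-descent datum is a $\Set^{\Delta_{**}}$-enriched category, and in $(P_{B\bI}^*E)(a,b)^n=(B\bI)(a,b)\uleft{\by}E^n$ the simplicial grading of $B\bI$ is matched by $\uleft{\by}$ against the \emph{vertical} index of $E$ (so $E^n:=E^{n,\bullet}\in\Set^{\Delta_{**}}$), while the superscript $n$ is the \emph{horizontal} index carried along by the operations $\pd^i_h,\sigma^i_h$ inherited from $E$.

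First I would describe the left adjoint $P_K^*$ for an arbitrary $K\in\bS$ exactly as in Lemma~\ref{pkleft}. The quasi-descent datum $P_K^*E$ has object set $K_0$; in horizontal degree $n$ its morphisms $(P_K^*E)(a,b)^n$ are generated under the composition $\circ$ by $K(a,b)\uleft{\by}E^n$, with horizontal operations $\pd^i_h,\sigma^i_h$ acting on the $E$-factor, subject only to the relation that for $e,f$ of vertical degrees $p,q$ and $x\in K(a,c)_{p+q}$,
$$
(x,e*f)\sim\bigl((\pd_{p+1})^qx,\,e\bigr)\circ\bigl((\pd_0)^px,\,f\bigr),
$$
where $*$ is the full quasi-bicomonoid product of $E$ (additive in both indices). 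The defining bijection $\Hom_{Q\Dat_{K_0}}(P_K^*E,\cD)\cong\Hom_{QMM^*}(E,P_K\cD)$ then follows as in Lemma~\ref{pnleft2}: a map $P_K^*E\to\cD$ is determined by its values $\phi_x(e)\in\cD((\pd_0)^px,(\pd_1)^px)^n$ on generators $(x,e)$; well-definedness over $\uleft{\by}$ is precisely compatibility with the vertical operations of $P_K\cD$ from Definition~\ref{pnset}, and compatibility with $\circ$ and with $\pd^i_h,\sigma^i_h$ is exactly compatibility with the product and the horizontal operations.

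Next I would specialize to $K=B\bI$ and invoke the Segal condition. For a nerve the inner-face map
$$
\bigl((\pd_{p+1})^q,(\pd_0)^p\bigr):(B\bI)_{p+q}\xra{\ \sim\ }(B\bI)_p\by_{(\pd_0)^p,\,\Ob\bI,\,(\pd_1)^q}(B\bI)_q
$$
is a bijection, so every string $x\in(B\bI)(a,c)_{p+q}$ factors uniquely as $x_1\star x_2$ with $x_1\in(B\bI)(a,b)_p$ and $x_2\in(B\bI)(b,c)_q$. Consequently every composite $((\pd_{p+1})^qx,e)\circ((\pd_0)^px,f)$ is already a single generator $(x,e*f)$, so the relation above introduces nothing new and the product of two generators is again a generator. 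Hence $(P_{B\bI}^*E)(a,b)^n$ equals $(B\bI)(a,b)\uleft{\by}E^n$, with composition $(x,e)\circ(y,f)=(x\star y,\,e*f)$ and the inherited operations $\pd^i_h,\sigma^i_h$, as asserted.

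The main obstacle I anticipate is purely bookkeeping: keeping the vertical grading (simplicial degree of $B\bI$, absorbed by $\uleft{\by}$) cleanly separated from the horizontal grading (the superscript $n$ and the operations $\pd^i_h,\sigma^i_h$), and checking that the quasi-bicomonoid product $e*f$ adds the indices in each direction independently, so that $(x,e)\circ(y,f)$ lands in the correct horizontal and vertical degrees. Once the general-$K$ adjunction is set up with this bigraded bookkeeping in place, the specialization to $B\bI$ is formal and relies only on the Segal isomorphism, exactly as in Lemma~\ref{pkleft}; the sole genuinely new point over that lemma is that the horizontal operations commute with $\uleft{\by}$ and with $\star$, which is immediate since they act only on the $E$-factor.
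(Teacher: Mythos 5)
Your proposal is correct and follows essentially the same route as the paper, whose proof of this lemma is literally the observation that the proof of Lemma \ref{pkleft} adapts to the bigraded setting; you have simply spelled out that adaptation, with the vertical index of $E$ absorbed by $\uleft{\by}$ against $B\bI$ and the horizontal index carried by $\pd^i_h,\sigma^i_h$, and the Segal isomorphism $(B\bI)_{p+q}\cong (B\bI)_p\by_{\Ob\bI}(B\bI)_q$ ensuring that products of generators are generators. No gaps.
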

\begin{proof}
The proof of Lemma \ref{pkleft} adapts to this generality.
\end{proof}

\begin{lemma}\label{sdcdiagramsub}
For any quasi-descent datum  $\cD$ and a category $\bI$, there is a natural isomorphism

$$
\Hom_{\Cat}(\bI, \alg\cD) \cong \coprod_{f \in (\Ob\cD)^{\Ob\bI}}\mc(\diag P_{B\bI} f^{-1}\cD).
$$
\end{lemma}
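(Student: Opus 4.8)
The plan is to follow the proof of Lemma \ref{pndiag} verbatim, replacing the ordered category $\on$ by the arbitrary small category $\bI$ --- precisely the passage by which Lemma \ref{pknerve} generalises Lemma \ref{pnnerve}. The whole statement will follow from transporting the identity isomorphism along three adjunctions, once we identify a single quasi-descent datum. I will rewrite the two sides of the claimed isomorphism separately and then compare them term by term over the indexing set $(\Ob\cD)^{\Ob\bI}$.

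For the left-hand side, the adjunction $\alg^*\dashv\alg$ of Lemma \ref{algadj} gives
$$
\Hom_{\Cat}(\bI, \alg\cD)\cong \Hom_{Q\Dat}(\alg^*\bI, \cD).
$$
A morphism $\alg^*\bI\to\cD$ in $Q\Dat$ is the datum of an object map $f\colon \Ob\bI\to\Ob\cD$ together with a morphism $\alg^*\bI\to f^{-1}\cD$ in $Q\Dat_{\Ob\bI}$, so the right-hand side decomposes as $\coprod_{f}\Hom_{Q\Dat_{\Ob\bI}}(\alg^*\bI, f^{-1}\cD)$, indexed by $f\in(\Ob\cD)^{\Ob\bI}$.

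For the right-hand side, recall from Definition \ref{mcdef} that $\mc(F)=\Hom_{QM^*(\Set)}(\iota\bt, F)$; applying the adjunction $\diag^*\dashv\diag$ and then the adjunction $P_{B\bI}^*\dashv P_{B\bI}$ of Lemma \ref{pkleft2} yields, for each fixed $f$,
$$
\mc(\diag P_{B\bI}f^{-1}\cD)\cong \Hom_{QMM^*(\Set)}(\diag^*\iota\bt,\, P_{B\bI}f^{-1}\cD)\cong \Hom_{Q\Dat_{\Ob\bI}}(P_{B\bI}^*\diag^*\iota\bt,\, f^{-1}\cD).
$$
Comparing this with the decomposition above, it remains only to produce a natural isomorphism $P_{B\bI}^*\diag^*\iota\bt\cong\alg^*\bI$ of quasi-descent data on $\Ob\bI$. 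Here Corollary \ref{iotadiag} gives $\diag^*\iota\bt=\iota(\bt,\bt)$, and Lemma \ref{pkleft2} computes $(P_{B\bI}^*\iota(\bt,\bt))(a,b)^n=(B\bI)(a,b)\uleft{\by}\iota(\bt,\bt)^n$. Since every $\iota(\bt,\bt)^{m,n}$ is a one-point set, the tensor construction $\uleft{\by}$ collapses the nerve string under its inner face and degeneracy relations exactly as in the identification $P_{B\bI}^*\iota\bt\cong\bI$ used in the proof of Lemma \ref{pknerve}, contracting each string of composable arrows from $a$ to $b$ to its single composite. This yields $(P_{B\bI}^*\diag^*\iota\bt)(a,b)^n\cong\bI(a,b)$ for every $n$, with the horizontal operations trivial, which is precisely $(\alg^*\bI)(a,b)^n$ as described in Lemma \ref{alg*}.

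The main point requiring care is the bookkeeping of the two gradings carried by $P_{B\bI}$: the tensor $\uleft{\by}$ absorbs the simplicial (nerve) direction, while the surviving grading $n$ is the one inherited from $\cD$, and one must check that the trivially-acting structure left on this grading matches the trivial operations $\pd^i,\sigma^i$ defining $\alg^*$. Once the bigraded indices are tracked consistently, the statement is the formal transport of the identity along the three displayed adjunctions, and no further computation is needed.
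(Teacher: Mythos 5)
Your proof is correct and follows essentially the same route as the paper's: the paper reduces the statement to the isomorphism $P_{B\bI}^*\iota(\bt,\bt)\cong\alg^*\bI$ via the same chain of adjunctions ($\alg^*\dashv\alg$, $\diag^*\dashv\diag$, $P_{B\bI}^*\dashv P_{B\bI}$) used in Lemma \ref{pndiag}. The only difference is that you spell out the collapse of $(B\bI)(a,b)\uleft{\by}\iota(\bt,\bt)^{n}$ to $\bI(a,b)$, which the paper leaves implicit by citing the analogous identification in Lemma \ref{pknerve}.
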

\begin{proof}
The proof of Lemma \ref{pndiag} carries over, noting that $P_{B\bI}^*\iota(\bt,\bt) \cong \alg^*\bI$.
\end{proof}

\section{Algebras, coalgebras and bialgebras}\label{algsn}

\subsection{Algebras and coalgebras}

\begin{definition}
A monad (or triple) on a category $\cB$ is a monoid in the category of endofunctors of $\cB$ (with the monoidal structure given by composition of functors). A comonad (or cotriple) is a comonoid in the category of endofunctors of $\cB$.
\end{definition}

\begin{lemma}\label{adjnmonad}
Take an adjunction
$$
\xymatrix@1{\cA \ar@<1ex>[r]^G_{\top} & \cE \ar@<1ex>[l]^F}
$$
with unit $\eta:\id \to GF$ and co-unit $\vareps:FG \to \id$. Then $\top:=GF$ is a monad on $\cE$ with unit $\eta$ and multiplication $\mu:=G\vareps F$, while $\bot:= FG$ is a comonad on $\cA$, with  co-unit $\vareps$ and comultiplication $\Delta:=F\eta G$.
\end{lemma}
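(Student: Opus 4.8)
The plan is to verify the monoid axioms for $\top=GF$ and, dually, the comonoid axioms for $\bot=FG$, reducing each identity either to naturality of the (co)unit or to one of the two triangle identities
$$
(\vareps F)(F\eta)=\id_F, \qquad (G\vareps)(\eta G)=\id_G
$$
of the adjunction.

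First I would handle the monad $\top=GF$, with unit $\eta$ and multiplication $\mu=G\vareps F$. Writing the two bracketings of the triple product in components at $X\in\cE$, associativity amounts to
$$
G\bigl(\vareps_{FX}\circ FG\vareps_{FX}\bigr)= G\bigl(\vareps_{FX}\circ \vareps_{FGFX}\bigr),
$$
and the inner equality $\vareps_{FX}\circ FG(\vareps_{FX})=\vareps_{FX}\circ\vareps_{FGFX}$ is exactly naturality of $\vareps\co FG\to\id$ applied to the morphism $\vareps_{FX}\co FGFX\to FX$. For the unit laws, the composite $\mu\circ(\eta\top)$ has component $G\vareps_{FX}\circ\eta_{GFX}$, which is $\id_{GFX}$ by the triangle identity $(G\vareps)(\eta G)=\id_G$ at $FX$; and $\mu\circ(\top\eta)$ has component $G(\vareps_{FX}\circ F\eta_X)$, which becomes $\id_{GFX}$ on applying $G$ to the other triangle identity $(\vareps F)(F\eta)=\id_F$.

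The comonad $\bot=FG$, with counit $\vareps$ and comultiplication $\Delta=F\eta G$, is then obtained by the formally dual argument: coassociativity is naturality of $\eta\co\id\to GF$ applied to the component $\eta_{GY}$, and the two counit laws are once more the two triangle identities. Equivalently, one may simply note that passing to $\cE^{\op}$ and $\cA^{\op}$ turns the given data into an adjunction with the roles of $\eta$ and $\vareps$ interchanged, under which a comonad becomes a monad, so the second assertion is the first applied to the opposite adjunction. There is no genuine obstacle here --- every line is a single instance of naturality or of a triangle identity --- and the only point requiring care is the bookkeeping of the whiskered composites $\mu\top$ versus $\top\mu$ (dually $\Delta\bot$ versus $\bot\Delta$), that is, keeping straight whether a given component of $\vareps$ or $\eta$ is whiskered on the left or on the right.
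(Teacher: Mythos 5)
Your verification is correct: associativity reduces to naturality of $\vareps$ at the component $\vareps_{FX}$, the unit laws are the two triangle identities whiskered by $G$, and the comonad statement follows by passing to opposite categories. The paper itself simply cites Mac Lane \S VI.1 and duality for this, and your argument is precisely the standard one contained in that reference, so the two proofs coincide in substance.
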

\begin{proof}
For the monad $\top$, this is \cite{mac} \S VI.1, with the comonadic results following by duality. 
\end{proof}

\begin{definition}
Given a monad $(\top,\mu, \eta) $ on a category $\cE$, 
define the category $\cE^{\top}$ of $\top$-algebras to have objects
$$
\top E \xra{\theta} E
$$ 
(for $E \in \cE$),
such that $\theta\circ \eta_E=\id$ and $\theta \circ \top \theta= \theta \circ  \mu_E$.
	
A morphism 
$$
g: ( \top E_1 \xra{\theta} E_1 ) \to  (\top E_2 \xra{\phi} E_2)  
$$	
of $\top$-algebras is a morphism $g:E_1 \to E_2$ in $\cE$ such that $\phi\circ \top g= g \circ \theta$.
\end{definition}

We define the comparison functor $K:\cA \to \cE^{\top}$ by
$$
B \mapsto ( GFGB \xra{G\vareps_B}  GB )
$$
on objects, and $K(g)=G(g)$ on morphisms.

\begin{definition}
The adjunction 
$$
\xymatrix@1{\cA \ar@<1ex>[r]^G_{\top} & \cE \ar@<1ex>[l]^F},
$$
is said to be  monadic (or tripleable) if $K:\cA \to \cE^{\top}$ is an equivalence.
\end{definition}

\begin{examples}
Intuitively, monadic adjunctions correspond to algebraic theories, such as the adjunction
$$
\xymatrix@1{\Ring \ar@<1ex>[r]^U_{\top} &\Set  \ar@<1ex>[l]^{\Z[-]},}
$$
 between rings and sets, $U$ being the forgetful functor. Other examples are $k$-algebras   over $k$-vector spaces, or groups over sets.
\end{examples}

\begin{definition}
Dually, given a comonad $(\bot, \Delta, \vareps)$ on a category $\cA$, we  define the category $\cA_{\bot}$ of $\bot$-coalgebras by 
$$
(\cA_{\bot})^{\op} := (\cA^{\op})^{\bot},
$$
noting that $\bot$ is a monad on $\cA^{\op}$. The adjunction of Lemma \ref{adjnmonad} is said to be comonadic (or cotripleable) if the adjunction on opposite categories is monadic.
\end{definition}

\begin{examples}
If $X$ is a topological space (or any site with enough points) and $X'$ is the set of points of $X$, let  $u:X'\to X$ be the associated morphism. 
Then the adjunction
$$
\xymatrix@1{\Shf(X') \ar@<1ex>[r]^{u_*}_{\top} &\Shf(X)  \ar@<1ex>[l]^{u^{-1}},}
$$
 on the associated categories of sheaves  is comonadic, so  $\Shf(X)$ is equivalent  $u^{-1}u_*$-coalgebras in  the  category $\Shf(X')$ of sheaves  (or equivalently presheaves)  on $X'$.

A more prosaic example is that for any ring $A$, the category of $A$-coalgebras is comonadic over the category of $A$-modules.
\end{examples}

\subsection{Quasi-descent data from monads}

Given a monad  $(\top, \mu, \eta)$ on a category $\cB$, and an object $B \in \cB$,  there is a  quasi-comonoid $E(B)$ given by
$$
E^n(B)= \Hom_{\cB}(\top^n B, B)
$$
in $(\Set, \by)$, 
with product
$g*h=g\circ\top^n h$, and  for $g \in E^n(B)$,
\begin{eqnarray*}
\pd^i(g) &=& g \circ \top^{i-1}\mu_{\top^{n-i}B}\\
\sigma^i(g) &=& g \circ \top^{i}\eta_{\top^{n-i-1}B}.
\end{eqnarray*}

If we replace $\cB$ with a simplicial category, then $E(B)$ becomes a quasi-comonoid in $(\bS, \by)$.
Note that these constructions also all work for a comonad $(\bot, \Delta, \vareps)$, by contravariance. 

\begin{lemma} Given an object $B \in \cB$, the set of $\top$-algebra structures on $B$ is
$
\mc(E(B))
$.
\end{lemma}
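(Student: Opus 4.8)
The plan is to unwind both sides of the claimed identity and check they describe the same data. Recall from Definition \ref{mcdef} that
$$
\mc(E(B)) = \{\omega \in E^1(B)\,:\, \sigma^0\omega =1,\, \pd^1\omega = \omega *\omega\}.
$$
An element $\omega \in E^1(B)$ is by definition a morphism $\theta \co \top B \to B$ in $\cB$. So the first step is purely a matter of translating the two defining equations of $\mc(E(B))$, written in terms of the quasi-comonoid operations $\sigma^0, \pd^1, *$, into the two equations $\theta\circ\eta_B = \id_B$ and $\theta\circ\top\theta = \theta\circ\mu_B$ appearing in the definition of a $\top$-algebra structure on $B$.

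Concretely, I would compute each operation on $\omega=\theta$ directly from the formulas defining $E(B)$. For the unit condition, $\sigma^0$ acting on $E^1(B)$ is $\sigma^0(g) = g\circ\top^0\eta_{\top^0 B} = g\circ\eta_B$, so $\sigma^0\theta = \theta\circ\eta_B$; the element $1 \in E^0(B) = \Hom_{\cB}(B,B)$ is $\id_B$. Hence $\sigma^0\omega = 1$ is exactly $\theta\circ\eta_B = \id_B$, the counit/unit axiom. For the associativity condition I would expand both sides of $\pd^1\omega = \omega*\omega$ in $E^2(B) = \Hom_{\cB}(\top^2 B, B)$: the product gives $\omega*\omega = \theta\circ\top\theta$ (since $g*h = g\circ\top^{|g|}h$ with $|g|=1$), while $\pd^1(g) = g\circ\top^0\mu_{\top^{1}B} = g\circ\mu_B$ gives $\pd^1\omega = \theta\circ\mu_B$. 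Equating these yields $\theta\circ\top\theta = \theta\circ\mu_B$, the associativity axiom.

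Having matched both equations, I would conclude that the assignment $\omega\mapsto(\top B \xrightarrow{\omega} B)$ is a bijection between $\mc(E(B))$ and the set of $\top$-algebra structures on $B$, since the underlying sets $E^1(B)$ and $\Hom_{\cB}(\top B, B)$ are literally equal and the cut-out conditions coincide.

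I do not expect a serious obstacle here; the entire argument is bookkeeping, checking that the indices in the formulas for $\pd^i$, $\sigma^i$ and $*$ specialise correctly at the low degrees $n=1,2$. The only point requiring mild care is keeping straight the exponents of $\top$ in the product formula $g*h = g\circ\top^{n}h$ (where $n$ is the degree of $g$) and in $\pd^i(g) = g\circ\top^{i-1}\mu_{\top^{n-i}B}$, so that $\top\theta$ rather than $\theta$ or $\top^2\theta$ appears; once those are pinned down the identification is immediate.
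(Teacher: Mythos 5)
Your proof is correct and matches the paper's, which simply states that the lemma follows immediately from the explicit description of $\mc$ in Definition \ref{mcdef}; you have just spelled out the index bookkeeping that the paper leaves implicit. The specialisations $\sigma^0\theta=\theta\circ\eta_B$, $\pd^1\theta=\theta\circ\mu_B$ and $\theta*\theta=\theta\circ\top\theta$ are all computed correctly.
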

\begin{proof}
This follows immediately from the explicit description in Definition \ref{mcdef}.
\end{proof}

\begin{proposition}\label{enrichtop}
Given a monad  $(\top, \mu, \eta)$ (resp. a comonad $(\bot, \Delta, \vareps)$) on a category $\cB$, there is a natural structure of a  $\Set^{\Delta_{**}}$-enriched category on $\cB$, i.e. a quasi-descent datum on objects $\Ob \cB$.
\end{proposition}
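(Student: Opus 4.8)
The plan is to promote the single-object quasi-comonoid $E(B)$ constructed just above to a two-variable gadget, and to recognise the result, via Remarks \ref{quasidescentcat}, as a $\Set^{\Delta_{**}}$-enriched category on the object set $\Ob\cB$. First I would define, for $a,b\in\Ob\cB$, the object $\cD(a,b)\in\Set^{\Delta_{**}}$ by $\cD(a,b)^n:=\Hom_\cB(\top^n b,a)$, with operations $\pd^i(g)=g\circ\top^{i-1}\mu_{\top^{n-i}b}$ for $1\le i\le n$ and $\sigma^i(g)=g\circ\top^i\eta_{\top^{n-i-1}b}$ for $0\le i<n$, chosen so that the diagonal recovers the earlier construction: $\cD(b,b)=E(b)$. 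Since $\pd^i$ and $\sigma^i$ only precompose $g$ with an endomorphism of the source $\top^\bullet b$ and never touch the target $a$, the identities (1)--(3) of Lemma \ref{qmlemma} for $\cD(a,b)$ are formally identical to those for $E(b)$; they express the associativity $\mu\circ\top\mu=\mu\circ\mu\top$ and unit $\mu\circ\top\eta=\id=\mu\circ\eta\top$ axioms together with the naturality of $\mu$ and $\eta$, so each $\cD(a,b)$ is a genuine functor $\Delta_{**}\to\Set$.

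Next I would define the composition $\cD(a,b)^m\by\cD(b,c)^n\to\cD(a,c)^{m+n}$, for $g\in\cD(a,b)^m$ and $h\in\cD(b,c)^n$, by $g*h:=g\circ\top^m h$, with unit $1\to\cD(a,a)^0$ selecting $\id_a\in\Hom_\cB(a,a)$; both extend the product and unit of $E$. Associativity of $*$ is immediate from the functoriality identity $\top^m(\phi\circ\psi)=\top^m\phi\circ\top^m\psi$, and the unit axioms from $\top^0=\id$ and $\top^m\id=\id$. The substance of the proof is then to check that $*$ is a morphism in $\Set^{\Delta_{**}}$ for the convolution monoidal structure of Remarks \ref{quasidescentcat}, i.e. that the compatibilities (4)--(7) of Lemma \ref{qmlemma} hold. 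I expect this bookkeeping to be the main obstacle, though it is conceptually forced.

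Unwinding (4), the relation $(\pd^i g)*h=\pd^i(g*h)$ for $1\le i\le m$ comes down to the equality $\top^{i-1}\mu_{\top^{m-i}b}\circ\top^{m+1}h=\top^m h\circ\top^{i-1}\mu_{\top^{m+n-i}c}$, which is precisely the naturality of $\mu$ applied to the morphism $\top^{m-i}h$; dually (6) reduces to the naturality of $\eta$. By contrast (5) and (7), which carry the index shift by $m=\deg g$ coming from the placement of the second factor, require only the functoriality of $\top^m$. I would present (4) as the model computation and remark that (5)--(7) are strictly analogous, so that every identity is forced by the monad axioms and the naturality of its structure maps; the only real risk is keeping the exponents of $\top$ and the index shifts straight.

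Finally, the comonad case follows by contravariance, exactly as for $E$: a comonad $(\bot,\Delta,\vareps)$ on $\cB$ is a monad on $\cB^{\op}$, and the construction above then yields the quasi-descent datum with $\cD(a,b)^n=\Hom_\cB(a,\bot^n b)$ and the dualised operations. The word ``natural'' is accounted for by observing that a functor intertwining two monads induces a morphism of the associated quasi-descent data, so that $(\cB,\top)\mapsto\cD(\cB,\top)$ is functorial; this last point is routine once the structure is in place.
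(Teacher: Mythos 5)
Your proposal is correct and follows the same route as the paper: the paper's proof simply sets $\cHom(B,B')^n:=\Hom_{\cB}(\top^n B,B')$ with the product $g*h=g\circ\top^{\bullet}h$ and the operations $\pd^i,\sigma^i$ inherited verbatim from the single-object quasi-comonoid $E(B)$, exactly as you do (up to the harmless transposition of which variable carries $\top^n$). Your explicit verification that axioms (1)--(3) are the monad identities and (4)--(7) reduce to naturality of $\mu,\eta$ and functoriality of $\top^m$ is the bookkeeping the paper leaves implicit, and it is carried out correctly.
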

\begin{proof}
Set 
$$
\cHom(B,B')^n:= \Hom_{\cB}(\top^n B, B') \quad \text{(resp. } \Hom_{\cB}( B,\bot^n B') \text{),}
$$
with product and operations as above.
\end{proof}

\begin{proposition}\label{algworks}
The category $\cB^{\top}$ (resp. $\cB_{\bot}$) of $\top$-algebras (resp. $\bot$-coalgebras) on $\cB$ is isomorphic to the image under the functor $\alg: Q\Dat \to \Cat$ (Definition \ref{alg}) of the quasi-descent datum on $\cB$ given in Proposition \ref{enrichtop}.
\end{proposition}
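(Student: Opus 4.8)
The plan is to unwind the two conditions defining $\alg$ (Definition \ref{alg}) — the Maurer--Cartan equations cutting out the objects, and the intertwining equation cutting out the morphisms — and to check that they reproduce \emph{verbatim} the defining equations of a $\top$-algebra. Throughout, write $\cD$ for the quasi-descent datum of Proposition \ref{enrichtop}, so that $\cD(B,B')=\cHom(B,B')$ with $\cHom(B,B')^n=\Hom_{\cB}(\top^n B,B')$, and the operations $\pd^i,\sigma^i$ and product $*$ are those recorded just before Proposition \ref{enrichtop}. Since $\cD(B,B)$ is exactly the quasi-comonoid $E(B)$, the object-level statement is essentially the lemma preceding Proposition \ref{enrichtop}.

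First, the objects. An object of $\alg(\cD)$ is a pair $(B,\omega)$ with $B\in\Ob\cB$ and $\omega\in\mc(\cD(B,B))$, where $\omega\in\cHom(B,B)^1=\Hom_{\cB}(\top B,B)$. I would compute the two equations of $\mc$ (Definition \ref{mcdef}) directly from the explicit formulas: $\sigma^0\omega=\omega\circ\eta_B$, $\pd^1\omega=\omega\circ\mu_B$, and $\omega*\omega=\omega\circ\top\omega$. Then the condition $\sigma^0\omega=1$ becomes $\omega\circ\eta_B=\id_B$, and $\pd^1\omega=\omega*\omega$ becomes $\omega\circ\mu_B=\omega\circ\top\omega$; these are precisely the unit and associativity axioms for a $\top$-algebra structure $\omega\colon\top B\to B$. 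Hence objects of $\alg(\cD)$ are in natural bijection with $\top$-algebras.

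Next, the morphisms. A morphism $(B,\omega)\to(B',\omega')$ in $\alg(\cD)$ is an $f\in\cD(B,B')^0=\Hom_{\cB}(B,B')$ with $f*\omega=\omega'*f$ in $\cD(B,B')^1=\Hom_{\cB}(\top B,B')$. Here the left-hand product is the only composite of $f$ and $\omega$ landing in $\Hom_{\cB}(\top B,B')$, namely $\top B\xra{\omega}B\xra{f}B'$, i.e.\ $f\circ\omega$; while the right-hand product is $\top B\xra{\top f}\top B'\xra{\omega'}B'$, i.e.\ $\omega'\circ\top f$. Thus the condition reads $f\circ\omega=\omega'\circ\top f$, which is exactly the requirement that $f$ be a morphism of $\top$-algebras from $(\top B\xra{\omega}B)$ to $(\top B'\xra{\omega'}B')$. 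Since composition and identities on both sides are inherited from $\cB$ (composition in $\alg(\cD)$ is composition in $\cD^0=\cB$, and likewise for $\top$-algebra morphisms, with identity $\id_B$ in each case), these bijections on objects and morphisms assemble into an isomorphism of categories $\alg(\cD)\cong\cB^{\top}$. The comonad case then follows by duality: the construction of Proposition \ref{enrichtop} for $\bot$ on $\cB$ is the monad construction for $\bot$ on $\cB^{\op}$ read contravariantly, and $\cB_{\bot}=((\cB^{\op})^{\bot})^{\op}$ by definition, so the same dictionary yields $\alg(\cD)\cong\cB_{\bot}$.

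The only real subtlety — and the step I would take most care over — is the bookkeeping of composition orders. One must check that the product $*$ and the cosimplicial operations $\pd^i,\sigma^i$ inherited from the monad translate into $\circ$, $\mu_B$, $\eta_B$ and $\top(-)$ on exactly the correct sides, so that the Maurer--Cartan equations match the unit/associativity laws and the morphism equation matches the intertwining relation $f\circ\omega=\omega'\circ\top f$, rather than their opposites; once the arities are pinned down there is a unique sensible composite in each case, so this amounts to a careful but routine verification.
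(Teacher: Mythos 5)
Your proof is correct and takes the same route as the paper, whose entire proof of this proposition is the single sentence ``This follows immediately from the definitions'' (relying on the preceding lemma identifying $\top$-algebra structures on $B$ with $\mc(E(B))$). Your explicit unwinding --- $\sigma^0\omega=\omega\circ\eta_B$, $\pd^1\omega=\omega\circ\mu_B$, $\omega*\omega=\omega\circ\top\omega$, and $f*\omega=f\circ\omega$, $\omega'*f=\omega'\circ\top f$ --- is precisely the routine verification the paper elides, and your composition-order bookkeeping checks out against the formulas given before Proposition \ref{enrichtop}.
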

\begin{proof}
This follows immediately from the definitions.
\end{proof}

\subsection{Bialgebras }
We now show how a bialgebraic structure on a category gives rise to a quasi-descent datum.

As in \cite{osdol} \S IV, take a category $\cB$ equipped with both a monad $(\top, \mu, \eta)$ and a comonad $(\bot, \Delta, \vareps)$, together with a distributivity transformation
$\lambda: \top\bot \abuts \bot \top$
for which the following diagrams commute:
$$
\xymatrix{
\top\bot \ar@{=>}[rr]^{\lambda} \ar@{=>}[d]_{\top\Delta}& &  \bot \top\ar@{=>}[d]_{\Delta\top} \\
\top\bot^2 \ar@{=>}[r]^{\lambda\bot} &\bot\top\bot \ar@{=>}[r]^{\bot\lambda} & \bot^2\top
}
\quad
\xymatrix{
\top\bot \ar@{=>}[rr]^{\lambda} & &  \bot \top \\
\top^2\bot\ar@{=>}[u]_{\mu\bot}  \ar@{=>}[r]^{\top\lambda} & \top\bot\top \ar@{=>}[r]^{\lambda\top}& \bot\top^2\ar@{=>}[u]_{\bot\mu}
}
$$

$$
\xymatrix{
\top\bot \ar@{=>}[rr]^{\lambda} \ar@{=>}[dr]_{\top\vareps}& &  \bot \top\ar@{=>}[dl]^{\vareps\top} \\
& \top} 
\quad
\xymatrix{
\top\bot \ar@{=>}[rr]^{\lambda} & & \bot \top \\
& \bot\ar@{=>}[ul]^{\eta\bot}  \ar@{=>}[ur]_{\bot\eta},}
$$

\begin{definition}\label{bialgdef}
Given a distributive monad-comonad pair $(\top, \bot)$ on a category $\cB$, define the category $\cB^{\top}_{\bot}$ of bialgebras as follows. The objects of  $\cB^{\top}_{\bot}$ are triples $(\alpha, B, \beta)$ with $(\top B \xra{\alpha} B)$ an object of $\cB^{\top}$ and $B \xra{\beta} \bot B$ an object of $\cB_{\bot}$, such that the composition  $(\beta \circ \alpha):\top B \to \bot B$ agrees with the composition 
$$
\top B \xra{\top \beta} \top\bot B \xra{\lambda} \bot \top B \xra{\bot \alpha} \bot B. 
$$  

A morphism $f:(\alpha, B, \beta) \to (\alpha', B', \beta')$ is a morphism $f: B \to B'$ in $\cB$ such that $\alpha'\circ \top f = f \circ \alpha$ and $\beta' \circ f = \bot f \circ \beta$.  
\end{definition}

\begin{proposition}\label{enrichtopbot}
The data above give $\cB$ the natural structure of an  $\Set^{\Delta_{**}}$-enriched category, with 
$$
\cHom_{\cB}(B,B')^n= \Hom_{\cB}(\top^n B, \bot^n B').
$$
\end{proposition}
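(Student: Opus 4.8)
The plan is to construct the quasi-descent datum of Definition \ref{qudesc} by hand, writing down operations $\pd^i,\sigma^i$ and a product $*$ on
$$
\cHom_\cB(B,B')^n=\Hom_\cB(\top^n B,\bot^n B'),
$$
and then verifying the relations of Lemma \ref{qmlemma}. First I would define the operations by pairing the monad cobar differential on the source with the comonad cobar differential on the target; for $g\colon\top^nB\to\bot^nB'$ a representative formula is
$$
\pd^i(g)=\bot^{\,n-i}\Delta_{\bot^{\,i-1}B'}\circ g\circ\top^{\,i-1}\mu_{\top^{\,n-i}B},
$$
with $\sigma^i$ defined in the same way using $\eta$ on the source and $\vareps$ on the target. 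The target indices are reversed relative to the source so that the two cobar directions are co-oriented; setting $\bot=\id$ recovers the quasi-comonoid of Proposition \ref{enrichtop} for $\top$, and setting $\top=\id$ recovers its evident dual for $\bot$.

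Next I would define the product $*\colon\cHom(B,B')^m\ten\cHom(B',B'')^n\to\cHom(B,B'')^{m+n}$ by using $\lambda$ to thread the $n$ source copies of $\top$ contributed by $h$ past the $m$ target copies of $\bot$ contributed by $g$. Explicitly, for $g\colon\top^mB\to\bot^mB'$ and $h\colon\top^nB'\to\bot^nB''$ I would set $g*h$ to be the composite
$$
\top^{m+n}B\xra{\ \top^{n}g\ }\top^{n}\bot^{m}B'\xra{\ \lambda^{(n,m)}_{B'}\ }\bot^{m}\top^{n}B'\xra{\ \bot^{m}h\ }\bot^{m+n}B'',
$$
where $\lambda^{(n,m)}\colon\top^{n}\bot^{m}\Rightarrow\bot^{m}\top^{n}$ is the evident $mn$-fold iterate of $\lambda$. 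This iterate is well defined because any two ways of factoring it into single applications of $\lambda$ agree by the naturality (interchange) of $\lambda$; the unit $1\to\cHom(B,B)^0$ is $\id_B$, which one checks is a two-sided identity for $*$ since $\lambda^{(n,0)}$ and $\lambda^{(0,m)}$ are identities.

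The relations of Lemma \ref{qmlemma} that involve only $\pd^i$ and $\sigma^i$, namely (1)--(3), split into a source part and a target part acting on disjoint tensor factors, so they reduce to the cosimplicial identities for the monad $\top$ (associativity and unitality of $\mu,\eta$) and those for the comonad $\bot$ (coassociativity and counitality of $\Delta,\vareps$); these are exactly Proposition \ref{enrichtop} and its dual. The real content lies in the four relations (4)--(7) relating $*$ to the operations, and in the associativity of $*$.

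The main obstacle is precisely these product--operation compatibilities together with associativity, since each of them requires commuting one of $\mu,\Delta,\eta,\vareps$ past the braiding $\lambda^{(n,m)}$. The key observation is that the four commuting diagrams imposed on $\lambda$ in Definition \ref{bialgdef} are exactly the atomic instances of these commutations: the hexagons for $\top\Delta$ and $\mu\bot$ record how $\lambda$ interacts with $\Delta$ and $\mu$ and therefore control the faces in (4)--(7), while the triangles for $\top\vareps$ and $\eta\bot$ control the degeneracies. The full relations then follow by induction on $m$ and $n$, peeling off one strand of $\lambda^{(n,m)}$ at a time, and associativity of $*$ reduces to the agreement of the two ways of threading three consecutive blocks of strands, which is again a formal consequence of the naturality of $\lambda$. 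I expect the genuine difficulty to be organisational rather than conceptual: keeping track of which of the $m+n$ source and target strands each structure map acts on, and matching the index shift $i\mapsto i+m$ in relations (5) and (7) against the reversal built into the target operations.
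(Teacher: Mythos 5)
Your proposal is correct and follows essentially the same route as the paper: you write down the same operations $\pd^i,\sigma^i$ and the same product built from the canonical iterate $\lambda_m^n$ (well-defined by naturality of $\lambda$), and observe that the axioms of Lemma \ref{qmlemma} reduce to the (co)simplicial identities for $\top$ and $\bot$ together with the four distributivity diagrams. The paper's own proof is in fact terser, merely recording these formulas and deferring the verification to \cite{paper2} and \cite{osdol}, so your sketch of the inductive check of relations (4)--(7) and of associativity is a faithful expansion of what is left implicit there (up to the harmless reversal of the composition order in $*$).
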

\begin{proof}
We follow \cite{paper2} in describing the operations. Since $\lambda$ is natural, 
$$
(\lambda \bot\top)\circ(\top\bot \lambda)=(\bot\top \lambda)\circ (\lambda \top\bot).
$$
Therefore any composition of $\lambda$'s gives us the same canonical map
$$
\lambda_{m}^{n}:\top^m\bot^n \to \bot^n\top^m,
$$
and we define the product on $\cHom(B',B'')^m \by \cHom(B,B')^n \to \cHom(B, B'') ^{m+n}$ by
$$
g*h= \bot^n(g)\circ\lambda_m^{n}\circ \top^m(h).
$$

The other operations are given by
\begin{eqnarray*}
\pd^i(g) &=&    \bot^{n-i}\Delta_{\bot^{i-1}B}\circ	g \circ \top^{i-1}\mu_{\toph^{n-i}B}\\
\sigma^i(g) &=&	 \bot^{n-i-1}\eps_{\bot^{i}B}\circ	g \circ \top^{i}\eta_{\top^{n-i-1}B}.
\end{eqnarray*}
\end{proof}

To understand how the data $(\top, \bot, \eta, \mu, \vareps, \Delta, \lambda)$ above occur naturally, note that by \cite{osdol} \S IV or \cite{paper2} \S \ref{paper2-gensdc}, these data are equivalent to a diagram
$$
\xymatrix@=8ex{
\cD \ar@<1ex>[r]^{U}_{\top} \ar@<-1ex>[d]_{V} 
&\ar@<1ex>[l]^{F} \cE  \ar@<-1ex>[d]_{V} 
\\
\ar@<-1ex>[u]_{G}^{\dashv}	\cA \ar@<1ex>[r]^{U}_{\top} 
&\ar@<1ex>[l]^{F} \ar@<-1ex>[u]_{G}^{\dashv} \cB,  
}
$$
with $F\dashv U$ monadic, $G\vdash V$ comonadic and $U,V$ commuting with everything (although $G$ and $F$ need not commute). The associated monad on $\cB$ is $\top=UF$, and the comonad $\bot= VG$. Distributivity ensures that $\cD \simeq \cE^{\top}\simeq (\cB_{\bot})^{\top}$ and $\cD\simeq \cA_{\bot}\simeq (\cB^{\top})_{\bot}$. In other words, $\cD \simeq \cB^{\top}_{\bot}$. The functors $F$ are both free $\top$-algebra functors, while the functors $G$ are  both cofree $\bot$-coalgebra functors.

\begin{example}
If $X$ is a topological space (or any site with enough points) and $X'$ is the set of points of $X$, let $\cD$ be the category of sheaves  of rings on $X$. If $\cB$ is the category of sheaves (or equivalently presheaves) of sets on $X'$, then the description above characterises $\cD$ as a category of bialgebras over $\cB$, with the comonad being $u^{-1}u_*$ for $u:X'\to X$, and the monad  being the free polynomial  functor. 
\end{example}

\begin{proposition}\label{bialgworks}
The category $\cB^{\top}_{\bot}$ $(\top,\bot)$-algebras  on $\cB$ is isomorphic to the image under the functor $\alg: Q\Dat \to \Cat$ (Definition \ref{alg}) of the quasi-descent datum $\tilde{\cB}$ on $\cB$ given in Proposition \ref{enrichtopbot}.
\end{proposition}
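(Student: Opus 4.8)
The plan is to construct an explicit isomorphism of categories $\alg(\tilde\cB)\cong\cB^{\top}_{\bot}$ sending a bialgebra $(\alpha,B,\beta)$ to the pair $(B,\beta\circ\alpha)$, and to read off its inverse from the diagonal functor $\diag$ together with Lemma \ref{diagset}. The technical engine is the observation that the quasi-comonoid $\tilde\cB(B,B)$ of Proposition \ref{enrichtopbot} is the diagonal $\diag E_B$ of a quasi-bicomonoid $E_B\in QMM^*(\Set)$ with
$$
E_B^{m,n}=\Hom_\cB(\top^m B,\bot^n B),
$$
whose vertical operations $\pd^i_v(g)=g\circ\top^{i-1}\mu_{\top^{m-i}B}$, $\sigma^i_v(g)=g\circ\top^i\eta_{\top^{m-i-1}B}$ encode the monad, whose horizontal operations $\pd^i_h(g)=\bot^{n-i}\Delta_{\bot^{i-1}B}\circ g$, $\sigma^i_h(g)=\bot^{n-i-1}\vareps_{\bot^i B}\circ g$ encode the comonad, and whose product is $g*h=\bot^{n'}(g)\circ\lambda_m^{n'}\circ\top^m(h)$ for $g\in E_B^{m,n}$, $h\in E_B^{m',n'}$. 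Since $\pd^i=\pd^i_h\pd^i_v$ and $\sigma^i=\sigma^i_h\sigma^i_v$, the operations and product of $\diag E_B$ agree on the nose with those of Proposition \ref{enrichtopbot}; the content here is verifying that $E_B$ is a genuine lax monoidal functor $\Delta_{**}\by\Delta_{**}\to\Set$, which follows from the coherence squares for the distributivity transformation $\lambda$ exactly as in \cite{paper2}.

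\textbf{Objects.} With this identification the object-level bijection is almost formal. The restriction $E_B^{\bt,0}=\Hom_\cB(\top^\bt B,B)$ is the monad quasi-comonoid, so by the (unlabelled) Lemma preceding Proposition \ref{enrichtop} its Maurer--Cartan set $\mc(E_B^{\bt,0})$ is the set of $\top$-algebra structures $\alpha$ on $B$; dually, by contravariance on $\cB^{\op}$, $\mc(E_B^{0,\bt})$ is the set of $\bot$-coalgebra structures $\beta$. Lemma \ref{diagset} then gives
$$
\mc(\diag E_B)\cong\{(\alpha,\beta)\,:\,\alpha*\beta=\beta*\alpha\in E_B^{1,1}\}.
$$
A short computation with the product shows $\alpha*\beta=\bot\alpha\circ\lambda_B\circ\top\beta$ and $\beta*\alpha=\beta\circ\alpha$, so the equation $\alpha*\beta=\beta*\alpha$ is precisely the compatibility condition of Definition \ref{bialgdef}. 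Hence $\mc(\tilde\cB(B,B))$ is exactly the set of bialgebra structures $(\alpha,B,\beta)$, and the inverse map $(\alpha,\beta)\mapsto\alpha*\beta$ of Lemma \ref{diagset} shows that the associated $\omega$ factors as $\omega=\beta\circ\alpha$, with $\alpha=\sigma^0_h\omega=\vareps_B\circ\omega$ and $\beta=\sigma^0_v\omega=\omega\circ\eta_B$ recovered by the degeneracies.

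\textbf{Morphisms.} By Definition \ref{alg}, a morphism $(B,\omega)\to(B',\omega')$ in $\alg(\tilde\cB)$ is an $f\in\tilde\cB(B,B')^0=\Hom_\cB(B,B')$ with $f*\omega=\omega'*f$, and unwinding the product collapses this to the single equation $\bot f\circ\omega=\omega'\circ\top f$. Using the factorization $\omega=\beta\circ\alpha$ together with naturality of $\eta$ and $\vareps$, I will show this is equivalent to the defining pair $\alpha'\circ\top f=f\circ\alpha$ and $\beta'\circ f=\bot f\circ\beta$ of a bialgebra morphism: postcomposing with $\vareps_{B'}$ extracts the algebra condition and precomposing with $\eta_B$ extracts the coalgebra condition, while conversely the two conditions recombine to give $\bot f\circ\omega=\omega'\circ\top f$. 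Thus $(\alpha,B,\beta)\mapsto(B,\beta\circ\alpha)$ is bijective on objects and on every Hom-set, hence an isomorphism of categories, which is the assertion.

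\textbf{Main obstacle.} The only substantive step is the first: establishing that $E_B$ is a quasi-bicomonoid and that $\tilde\cB(B,B)=\diag E_B$, everything downstream being a formal consequence of Lemma \ref{diagset} and the monad- and comonad-only identifications behind Proposition \ref{algworks}. I expect the difficulty to be bookkeeping rather than conceptual, namely checking that the four coherence squares for $\lambda$ yield associativity and unit compatibility of $g*h=\bot^{n'}(g)\circ\lambda_m^{n'}\circ\top^m(h)$ in both tensor directions at once, rather than merely along the diagonal as treated in Proposition \ref{enrichtopbot}.
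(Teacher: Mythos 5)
Your proposal is correct and follows essentially the same route as the paper's own (much terser) proof: realise $\tilde\cB(B,B)$ as the diagonal of the quasi-bicomonoid $E_B^{m,n}=\Hom_\cB(\top^mB,\bot^nB)$, apply Lemma \ref{diagset} together with the monad/comonad case (Proposition \ref{algworks}) to decompose a Maurer--Cartan element $\omega$ into the pair $(\alpha,\beta)$ with $\alpha*\beta=\beta*\alpha$ recovering the compatibility condition of Definition \ref{bialgdef}, and unwind $f*\omega=\omega'*f$ for the morphisms. Your write-up merely supplies the verifications (coherence of $E_B$ from the $\lambda$-squares, the $\vareps$/$\eta$ extraction of the two morphism conditions) that the paper leaves implicit by citing \cite{paper2}.
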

\begin{proof}
This is essentially \cite{paper2} Theorem \ref{paper2-main}.
Note that $\alg(\tilde{\cB})$ arises naturally as the diagonal of a $\Set^{\Delta_{**}\by \Delta_{**}}$-enriched category.  By Lemma \ref{diagset} and Proposition \ref{algworks}, the objects of $\alg(\tilde{\cB})$ over $B \in \cB$ correspond to pairs $(\alpha, \beta)$, for $(\top B \xra{\alpha} B) \in \cB^{\top}$ and $(B \xra{\beta} \bot B) \in \cB_{\bot}$ satisfying the conditions of Definition \ref{bialgdef}. The description of the morphisms follows similarly.
\end{proof}

\section{Simplicial structures}\label{simplicialsn}
\subsection{Simplicial quasi-comonoids}\label{comonoid}

We now study the structure of the category of simplicial quasi-comonoids introduced in Definition \ref{qmdef}.

\begin{proposition}\label{reedymodel}
There is cofibrantly generated Reedy simplicial model structure on $\bS^{\Delta_{**}}$ in which a map $f:X \to Y$ is 
\begin{enumerate}
\item a weak equivalence if the maps $f^n:X^n\to Y^n$ are all weak equivalences,
\item a cofibration if the maps $f^n:X^n\to Y^n$ are all injective.
\end{enumerate}
\end{proposition}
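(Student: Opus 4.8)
The plan is to realise the asserted model structure as the Reedy model structure on $\bS^{\Delta_{**}}$ and then to identify the Reedy cofibrations with the levelwise injections. First I would record that $\Delta_{**}$ is a Reedy category: give $\mathbf{n}$ the degree $n$, take the direct subcategory $\Delta_{**}^+$ to be the monomorphisms (those generated by the $\pd^i$) and the inverse subcategory $\Delta_{**}^-$ to be the epimorphisms (those generated by the $\sigma^i$). Unique factorisation is inherited from the epi-mono factorisation in $\Delta$, since the endpoint conditions $f(0)=0$, $f(m)=n$ defining $\Delta_{**}$ are preserved by both factors; non-identity monos strictly raise and non-identity epis strictly lower the degree, and $\Delta_{**}^+\cap\Delta_{**}^-$ consists of identities. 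Equivalently, by Remark \ref{cflein1} we have $\Delta_{**}\cong \Delta_0^{\op}$ for $\Delta_0=\Delta\sqcup\emptyset$ the augmented simplex category, under which the monomorphisms $\pd^i$ correspond to the degeneracies $\sigma_{i-1}$ and the epimorphisms $\sigma^i$ to the faces $\pd_i$; thus $\bS^{\Delta_{**}}$ is simply the category of augmented simplicial spaces, and this identification makes the Reedy structure manifest.

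Next I would invoke the standard existence theorem for Reedy model structures: for any Reedy category $\cC$ and model category $\cM$, the category $\cM^{\cC}$ carries a model structure whose weak equivalences are the levelwise ones and whose cofibrations (resp. fibrations) are the maps whose relative latching (resp. matching) maps are cofibrations (resp. fibrations). Taking $\cM=\bS$ gives part (1) immediately. Since $\bS$ is cofibrantly generated, the Reedy structure is as well (the generators being the latching extensions of the generators of $\bS$), and since $\bS$ is a simplicial model category the Reedy structure is simplicial, the pushout-product axiom following objectwise from that in $\bS$ using the objectwise tensoring $(K\otimes X)^n=K\times X^n$.

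The substantive point is part (2). That every Reedy cofibration is a levelwise cofibration, hence levelwise injective, is automatic, so the work is the converse. Here I would compute the latching object: under the identification with augmented simplicial spaces, $L^nX$, the colimit over the non-identity monomorphisms $\mathbf{m}\hookrightarrow\mathbf{n}$ of $\Delta_{**}$, corresponds exactly to the subobject of degenerate simplices at the relevant level. The Eilenberg--Zilber lemma then shows both that $L^nX\to X^n$ is injective and that, for a levelwise injection $f\colon X\to Y$, the subobjects $X^n$ and $L^nY$ of $Y^n$ meet precisely in $L^nX$; consequently the relative latching map $X^n\sqcup_{L^nX}L^nY\to Y^n$ is again injective, so $f$ is a Reedy cofibration. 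This is precisely the classical argument identifying the Reedy cofibrations of (augmented) simplicial spaces with the monomorphisms, and it is the only step requiring genuine input. I therefore expect the main obstacle to be this latching-object analysis --- establishing the Eilenberg--Zilber decomposition in the augmented indexing, where one must note that the augmentation level $\mathbf{0}$ carries no degeneracies, so its latching object is initial and contributes only the levelwise condition $X^0\to Y^0$.
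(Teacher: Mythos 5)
Your proposal is correct and follows essentially the same route as the paper: identify $\Delta_{**}$ as a Reedy category, transport to augmented simplicial spaces via $\Delta_{**}\cong\Delta_0^{\op}$, and reduce the identification of cofibrations with levelwise injections to the classical latching-object/degenerate-simplex (Eilenberg--Zilber) argument, noting that the augmentation level contributes only the plain injectivity condition. The only cosmetic difference is that the paper deduces cofibrant generation by observing $\Delta_{**,-}=\Delta_-$ and comparing matching objects with $\bS^{\Delta}$, whereas you appeal directly to the general fact that Reedy structures over cofibrantly generated model categories are cofibrantly generated; both are standard.
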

\begin{proof}
The category ${\Delta_{**}}$ naturally has the structure of a Reedy category, with $\Delta_{**,+}$ and $\Delta_{**,-}$ the  subcategories of injective and surjective maps, thus  giving $\bS^{\Delta_{**}}$ a Reedy model structure. Since $\Delta_{**,-}$ =$\Delta_-$, the matching objects in $\C^{\Delta}$ and $\C^{\Delta_{**}}$ are isomorphic, so it follows that this  model structure on $\bS^{\Delta_{**}}$ is cofibrantly generated, from the  corresponding result for $\bS^{\Delta}$  (as in  \cite{sht} \S VII.4).

It only remains to describe the cofibrations. A morphism $f$ is a cofibration if the $\Delta_{**}$-latching maps are cofibrations in $\bS$. This is equivalent to saying that, for all $i$, the $\Delta_{**}$-latching maps
$$
L^n(f_i):X^{n+1}_i\cup_{ L^n(X_i)} L^n(Y_i)\to Y^{n+1}_i
$$
are injective.

Now, under the comparison $\Delta_{**}\cong \Delta_0^{\op}$, $X_i \in \Set^{\Delta_{**}}$ corresponds to an augmented simplicial set $\breve{(X_i)}$. Thus injectivity of the latching maps says that  $\breve{(X_i)}_{\ge 0} \to \breve{(Y_i)}_{\ge 0}$ is a cofibration of simplicial sets, and that $\breve{(X_i)}_{-1} \to \breve{(Y_i)}_{-1}$ is injective. Since cofibrations in $\bS$ are precisely levelwise injective maps, this is equivalent to saying that the maps $f^n_i: X^n_i\to Y^n_i$ are all injective.
\end{proof}

\begin{lemma}\label{qmsmodel}
There is a cofibrantly generated  simplicial model structure on $QM^*(\bS)$ for which a morphism $f$ is a fibration or weak equivalence whenever the underlying map in $\bS^{\Delta_{**}}$ is so.
\end{lemma}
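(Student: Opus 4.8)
The plan is to obtain the structure by transfer along the free--forgetful adjunction
$$
F: \bS^{\Delta_{**}} \rightleftarrows QM^*(\bS) : U ,
$$
starting from the cofibrantly generated Reedy structure of Proposition \ref{reedymodel}. By Lemma \ref{qmlemma} a quasi-comonoid is exactly a monoid for the Day convolution on $\bS^{\Delta_{**}}$ induced by $\ten$ on $\Delta_{**}$ and $\by$ on $\bS$; equivalently $QM^*(\bS)$ is the category of algebras over the finitary monad $T=UF$. Consequently $QM^*(\bS)$ is locally presentable, hence complete and cocomplete, and $U$ creates limits and filtered colimits. I would then declare $f$ to be a weak equivalence or fibration precisely when $Uf$ is one, and take $FI$, $FJ$ as the generating (trivial) cofibrations, for $I,J$ the generators of $\bS^{\Delta_{**}}$.

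The two formal inputs to the transfer theorem (as in the cofibrantly generated framework of \cite{sht}) are then immediate: completeness and cocompleteness were just noted, and the smallness needed to run the small object argument for $FI$ and $FJ$ holds because $\bS^{\Delta_{**}}$ is locally presentable and $U$ preserves filtered colimits, so every object of $QM^*(\bS)$ is small. This reduces the whole proof to a single acyclicity condition: that $U$ sends every relative $FJ$-cell complex to a weak equivalence.

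The heart of the matter is to verify this by a path-object argument. The crucial observation is that $QM^*(\bS)$ is cotensored over $\bS$ with the cotensor computed levelwise: for $K \in \bS$ the functor $n\mapsto (X^n)^K$ is again a quasi-comonoid, since $\by$ commutes with cotensoring and so $(X^m)^K \by (X^n)^K \cong (X^m\by X^n)^K \xra{(\zeta^{mn})^K} (X^{m+n})^K$ supplies the product. Taking $K=\Delta^1$ yields a functorial path object
$$
X \lra X^{\Delta^1} \lra X\by X ,
$$
induced by $\Delta^1 \to \Delta^0$ and $\partial\Delta^1 \into \Delta^1$. Since $U$ commutes with this cotensor, $U(X^{\Delta^1})=(UX)^{\Delta^1}$, so $U(X\to X^{\Delta^1})$ is levelwise the homotopy equivalence $X^n\to (X^n)^{\Delta^1}$, hence a Reedy weak equivalence; and when $UX$ is Reedy-fibrant, $U(X^{\Delta^1}\to X\by X)$ is a Reedy fibration by SM7 in $\bS^{\Delta_{**}}$ applied to the cofibration $\partial\Delta^1\into\Delta^1$ and the fibration $UX\to *$. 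As the small object argument for $FJ$ furnishes a functorial fibrant replacement, the availability of these path objects on fibrant objects is exactly the hypothesis of Quillen's path-object criterion (in the form used by Berger--Moerdijk), which then delivers the required acyclicity.

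It remains to record the simplicial structure: the levelwise cotensor, together with its adjoints, the tensor $X\ten K$ and the enrichment $\HHom(X,Y)_n = \Hom_{QM^*(\bS)}(X\ten \Delta^n, Y)$ (which exist because $QM^*(\bS)$ is locally presentable), make $QM^*(\bS)$ a simplicial category, and since $U$ creates fibrations and weak equivalences and commutes with the cotensor, the SM7 axiom for $QM^*(\bS)$ reduces to the one already available in the simplicial model category $\bS^{\Delta_{**}}$. I expect the main obstacle to be precisely the acyclicity step, since that is where the free quasi-comonoid functor $F$ interacts with the homotopy theory; the point of the levelwise cotensor with $\Delta^1$ is that it makes the path-object argument go through without ever needing an explicit description of $F$.
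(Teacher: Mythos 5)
Your overall route is the same as the paper's: transfer the Reedy structure of Proposition \ref{reedymodel} along the free--forgetful adjunction using the small object argument (the paper invokes \cite{Hirschhorn} Theorem 11.3.2, handling smallness via finiteness of the generators rather than local presentability; that difference is immaterial). You are also right that the acyclicity condition --- that $U$ sends relative $FJ$-cell complexes to weak equivalences --- is where the real content lies; the paper's own proof passes over it in silence. The problem is that your verification of acyclicity is circular.

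The path-object criterion in the Quillen/Berger--Moerdijk form needs a fibrant replacement functor $X\to RX$ for which $U(X\to RX)$ is \emph{already known} to be a weak equivalence: that is how the final two-out-of-three step closes (in Schwede--Shipley's version this is secured by assuming every object is fibrant; in Berger--Moerdijk's applications to simplicial operads it is secured by an auxiliary product-preserving construction such as $\mathrm{Ex}^{\infty}$, precisely because the small object argument does not suffice). The replacement produced by the small object argument on $FJ$ only tells you that $X\to RX$ is a relative $FJ$-cell complex --- and ``relative $FJ$-cell complexes are weak equivalences'' is exactly the statement you are trying to prove. Nor can you fall back on all objects being fibrant: fibrant objects of $QM^*(\bS)$ are those whose underlying $\Delta_{**}$-diagram is Reedy fibrant, and most objects of $\bS^{\Delta_{**}}$ are not; levelwise $\mathrm{Ex}^{\infty}$ preserves finite products and hence acts on quasi-comonoids, but it only yields levelwise Kan objects, not Reedy fibrant ones. (Note also that a simplicial-deformation-retract argument is unavailable: the horn inclusions $\Lambda^n_k\into\Delta^n$ generating the trivial cofibrations admit no simplicial retraction at all, e.g.\ $\Delta^2\to\Lambda^2_1$ does not exist.) A non-circular proof seems to require exactly what you hoped to avoid, namely the interaction of $F$ with pushouts: since quasi-comonoids are monoids for the Day convolution $(X\ten Y)^n=\coprod_{a+b=n}X^a\by Y^b$, every object of $\bS^{\Delta_{**}}$ is cofibrant, and the pushout-product axiom holds levelwise, the monoid axiom of Schwede--Shipley is satisfied and their theorem on monoids in monoidal model categories gives the transferred structure; equivalently, one filters a pushout along $F(j)$ to exhibit $U$ of it as a transfinite composition of pushouts of levelwise trivial cofibrations of the form $j\ten\id$.
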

\begin{proof}
Since the forgetful functor $QM^*(\bS)\to \bS^{\Delta_{**}}$ preserves filtered direct limits  and has a  left adjoint $F$, for any finite object $I \in \bS^{\Delta_{**}}$, the object $FI$ is finite in $QM^*(\bS)$, so \emph{a fortiori} permits the small object argument.
The model structure on $\bS^{\Delta_{**}}$ is cofibrantly generated by finite objects, so  \cite{Hirschhorn} Theorem 11.3.2 gives the required model structure on $QM^*(\bS)$.
\end{proof}

\begin{remark}\label{iota}
Observe that the category of comonoids in $(\bS,\by)$ is just $\bS$ itself, since the comultiplication $\Delta:X \to X \by X$ is necessarily given by the diagonal. Thus there is a functor $\iota: \bS \to QM^*(\bS)$, given by $\iota(X)^m= \overbrace{X\by X \by \ldots \by X}^m$, sending the comonoid $X$ to its associated quasi-comonoid. 
\end{remark}

The following follows immediately from Definition \ref{mcdef}
\begin{lemma}
If $E \in QM^*(\bS)$, and  $\bt$ denotes the constant simplicial set on one element, then 
$$
\mc(E_0) \cong \Hom_{QM^*(\bS)}(\iota\bt,E).
$$
\end{lemma}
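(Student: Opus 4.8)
The plan is to reduce the computation of $\Hom_{QM^*(\bS)}(\iota\bt, E)$ to the explicit $\Set$-level description of $\mc$ in Definition \ref{mcdef}, by observing that a morphism out of $\iota\bt$ is entirely controlled by the image of a single generating $0$-simplex in degree $1$. The elementary fact driving everything is that the constant simplicial set $\bt$ equals the representable $\Delta^0$, so that $\Hom_{\bS}(\bt, Y) \cong Y_0$ for every $Y \in \bS$; in particular $\Hom_{\bS}(\bt, E^m) \cong (E^m)_0$, which is by definition the degree-$m$ term $(E_0)^m$ of the quasi-comonoid $E_0 \in QM^*(\Set)$ obtained by applying the (finite-product-preserving, hence strong monoidal) functor $(-)_0 : (\bS,\by) \to (\Set,\by)$ levelwise.

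First I would record that $(\iota\bt)^m = \bt^{\by m} = \bt$ for all $m$ (Remark \ref{iota}), so that a morphism $f : \iota\bt \to E$ in $QM^*(\bS)$ amounts, by Lemma \ref{qmlemma}, to a family of maps $f^m : \bt \to E^m$ compatible with the operations $\pd^i, \sigma^i$ and the product $*$. Using $\Hom_{\bS}(\bt, E^m) \cong (E^m)_0 = (E_0)^m$, each $f^m$ is the same datum as an element $\omega^m \in (E_0)^m$, and the compatibility conditions transport verbatim, so that these elements assemble into exactly a morphism $\iota\bt \to E_0$ in $QM^*(\Set)$. This yields a natural bijection $\Hom_{QM^*(\bS)}(\iota\bt, E) \cong \Hom_{QM^*(\Set)}(\iota\bt, E_0)$.

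Next I would invoke Definition \ref{mcdef} directly: the right-hand side is by definition $\mc(E_0)$, and its explicit description shows that the whole family $(\omega^m)$ is determined by $\omega := \omega^1 = f^1$, subject only to $\sigma^0\omega = 1$ (from the structure map $\sigma^0 : (\iota\bt)^1 \to (\iota\bt)^0$ together with the unit $\zeta^0$) and $\pd^1\omega = \omega * \omega$ (since the generator in degree $2$ is simultaneously $\pd^1$ of, and the product of, the degree-$1$ generator). Chaining the two identifications gives the claimed $\mc(E_0) \cong \Hom_{QM^*(\bS)}(\iota\bt, E)$.

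There is essentially no obstacle here; as the surrounding text remarks, the result follows immediately from Definition \ref{mcdef}. The only point demanding any care is the identification $\Hom_{\bS}(\bt, E^m) \cong (E_0)^m$ together with the verification that it is compatible with all of the $\Delta_{**}$-operations and with the lax monoidal product, so that a bijection of $0$-simplices upgrades to a bijection of quasi-comonoid morphisms. This is routine once one notes that $\bt = \Delta^0$ is both terminal and representable, so that maps out of it merely select $0$-simplices naturally in $E$.
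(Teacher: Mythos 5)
Your argument is correct and matches the paper, which offers no proof beyond the remark that the statement follows immediately from Definition \ref{mcdef}. Your elaboration — identifying $\Hom_{\bS}(\bt,E^m)$ with $(E_0)^m$ and checking compatibility with the $\Delta_{**}$-operations and the lax monoidal product — is exactly the routine verification being left implicit there.
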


\begin{definition}\label{mcdefqm}
Define $\mmc: QM^*(\bS) \to \bS$ by 
$$
\mmc(E) \subset \prod_{n\ge 0} (E^{n+1})^{I^n}
$$
(where $I= \Delta^1 \in \bS$), consisting of those $\underline{\omega}$ satisfying: 
 \begin{eqnarray*}
\omega_m(s_1,\ldots, s_m)*\omega_n(t_1,\ldots, t_n)&=&\omega_{m+n+1}(s_1,\ldots, s_m,0,t_1,\ldots, t_n);\\ 
\pd^i\omega_n(t_1,\ldots,t_n)&=&\omega_{n+1}(t_1, \ldots,t_{i-1},1,t_i,\ldots,t_n);\\ 
\sigma^i\omega_n(t_1,\ldots,t_n)&=&\omega_{n-1}(t_1, \ldots,t_{i-1},\min\{t_i,t_{i+1}\},t_{i+2},\ldots, t_n);\\
\sigma^0\omega_n(t_1,\ldots,t_n)&=&\omega_{n-1}(t_2, \ldots,t_n);\\
\sigma^{n}\omega_n(t_1,\ldots,t_n)&=&\omega_{n-1}(t_1, \ldots,t_{n-1}),\\ 
\sigma^0\omega_0&=&1.
\end{eqnarray*}
We will refer to these as the higher Maurer-Cartan relations.

Define $\mc :QM^*(\bS) \to \Set$ by $\mc(E)=\mmc(E)_0$, noting that this agrees with Definition \ref{mcdef} when $E \in QM^*(\Set)$. Also note that we can recover $\mmc$ from $\mc$, since $\mmc(E)_n = \mc(E^{\Delta^n})$.
\end{definition}

\begin{remark}\label{shob}
Given a distributive monad-comonad pair $(\top, \bot)$ on a simplicial category $\cB$, and an object $B$ of $\cB$,  Proposition \ref{enrichtopbot} gives $\cHHom_{\cB}(B,B) \in QM^*(\bS)$, and we then regard $\mmc(\cHHom_{\cB}(B,B) )$ as being the space of strong homotopy $(\top, \bot)$-bialgebras over $B$. If $\bot$ is trivial, this is essentially the same as Lada's definition of the space of strong homotopy $\top$-bialgebras from \cite{loop} (see Remark \ref{ladark} for differences).
\end{remark}

\begin{definition}\label{sdcmatchsub}
We now define matching objects for $E \in QM^*(\bS)$  by $M^{0}E:=\bullet$, $M^1E:=E^0$, and for $n\ge 2$
$$
M^nE=\{(e_0,e_1,\ldots,e_{n-1}) \in (E^{n-1})^{n}\,|\, \sigma^ie_j=\sigma^{j-1}e_i\, \forall i<j \}.
$$
These correspond to the matching objects $M^{n-1}E$ of  \cite{sht} Lemma VII.4.9, but we have renumbered for consistency with the wider theory of Reedy categories.
\end{definition}

The definition of the Reedy model structure on $\bS^{\Delta_{**}}$ implies the following. 
\begin{lemma}
A morphism $f:E \to F$ in $ QM^*(\bS)$ is a fibration (resp. a trivial fibration) if and only if the relative matching maps 
$$
E^n \to F^n\by_{M^{n}F}M^{n}E
$$
are fibrations (resp.  trivial fibrations) in $\bS$.
\end{lemma}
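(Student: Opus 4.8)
The plan is to reduce the whole statement to the Reedy model structure on $\bS^{\Delta_{**}}$ and then invoke the standard Reedy characterisation of fibrations. First I would observe that, by Lemma \ref{qmsmodel}, a map $f$ in $QM^*(\bS)$ is a fibration exactly when its underlying map in $\bS^{\Delta_{**}}$ is a Reedy fibration, and a weak equivalence exactly when the underlying map is a levelwise weak equivalence (Proposition \ref{reedymodel}); since a trivial fibration is a fibration that is also a weak equivalence, $f$ is a trivial fibration in $QM^*(\bS)$ precisely when its underlying map is a trivial Reedy fibration. The relative matching maps $E^n \to F^n \times_{M^n F} M^n E$ depend only on the underlying $\Delta_{**}$-diagrams, so it suffices to prove the assertion for the Reedy model structure of Proposition \ref{reedymodel}.

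Next I would quote the general theory of Reedy categories (e.g. \cite{Hirschhorn} \S 15.3): for any Reedy category $R$ and any model category $\cM$, a morphism $X \to Y$ of $R$-diagrams in $\cM$ is a Reedy fibration (resp. a trivial Reedy fibration) if and only if, for every object $r$ of $R$, the relative matching map $X_r \to Y_r \times_{M_r Y} M_r X$ is a fibration (resp. a trivial fibration) in $\cM$. Applying this with $R = \Delta_{**}$ and $\cM = \bS$, the entire statement follows once I identify the abstract Reedy matching object $M_{\mathbf{n}} E$ with the explicit object $M^n E$ of Definition \ref{sdcmatchsub}.

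This identification is the only point requiring care, and it is where the real work sits. The Reedy matching object $M_{\mathbf{n}} E$ is the limit of $E$ over the category of non-identity degree-lowering maps out of $\mathbf{n}$, and hence depends only on the subcategory $\Delta_{**,-}$ of surjections. As already observed in the proof of Proposition \ref{reedymodel}, $\Delta_{**,-} = \Delta_-$, so this limit coincides with the matching object of the associated cosimplicial object. The latter is computed explicitly in \cite{sht} Lemma VII.4.9 as the subobject of $(E^{n-1})^n$ cut out by the codegeneracy relations $\sigma^i e_j = \sigma^{j-1} e_i$ for $i<j$, these being exactly the simplicial identities (2) of Lemma \ref{qmlemma}; this is precisely $M^n E$, the sole discrepancy being the renumbering already flagged in Definition \ref{sdcmatchsub}. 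The hard part is therefore purely combinatorial bookkeeping — matching the indexing of the codegeneracies $\sigma^0, \ldots, \sigma^{n-1}$ out of $\mathbf{n}$ with the $n$ factors of the product and verifying that the limit relations reduce to the stated ones — and since this computation is already supplied by the cited reference, no genuine obstruction remains.
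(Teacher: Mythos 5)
Your proof is correct and is essentially the paper's own argument: the paper disposes of this lemma with the single sentence ``The definition of the Reedy model structure on $\bS^{\Delta_{**}}$ implies the following,'' relying on Lemma \ref{qmsmodel} to transfer (trivial) fibrations to $\bS^{\Delta_{**}}$, on the standard Reedy characterisation via relative matching maps, and on the identification $\Delta_{**,-}=\Delta_-$ (already used in Proposition \ref{reedymodel}) together with the renumbering remark in Definition \ref{sdcmatchsub} to match $M^nE$ with the cosimplicial matching object of \cite{sht} Lemma VII.4.9. You have simply written out the details the paper leaves implicit.
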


\begin{lemma}\label{mcqsub}
For any trivial fibration $E \to F$ in $ QM^*(\bS)$, the map
$$
\mmc(E) \to \mmc(F)
$$
is a trivial fibration.
\end{lemma}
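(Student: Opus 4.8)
The plan is to characterise trivial fibrations in $\bS$ by the right lifting property against the boundary inclusions $\partial\Delta^k\into\Delta^k$, and to build a lift one cube-component at a time, reducing each step to the relative matching map
$$
E^{n+1}\to F^{n+1}\by_{M^{n+1}F}M^{n+1}E,
$$
which is a trivial fibration by the preceding lemma. By Definition \ref{mcdefqm} a $k$-simplex of $\mmc(E)$ is a family of maps $\omega_n\co I^n\by\Delta^k\to E^{n+1}$ (with $I=\Delta^1$) satisfying the higher Maurer--Cartan relations, so a lifting square
$$
\begin{CD}
\partial\Delta^k @>>> \mmc(E)\\
@VVV @VVV\\
\Delta^k @>>> \mmc(F)
\end{CD}
$$
amounts to maps $\omega_n^F\co I^n\by\Delta^k\to F^{n+1}$ (the bottom map) together with maps $\omega_n^{\partial}\co I^n\by\partial\Delta^k\to E^{n+1}$ lifting them (the top map). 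I would construct a compatible lift $\omega_n^E\co I^n\by\Delta^k\to E^{n+1}$ by induction on $n$, and assemble the $\omega_n^E$ into the desired diagonal $\Delta^k\to\mmc(E)$.

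First I would isolate how the Maurer--Cartan relations constrain $\omega_n$ once $\omega_{<n}$ is fixed. Relation (2) of Definition \ref{mcdefqm} expresses $\omega_n$ on the facet $\{t_i=1\}$ as $\pd^i\omega_{n-1}$, while relation (1) expresses $\omega_n$ on the facet $\{t_i=0\}$ as a product $\omega_{i-1}*\omega_{n-i}$; together these prescribe $\omega_n$ on the whole cube boundary $\partial I^n$ in terms of the already-constructed $\omega_{<n}^E$. Meanwhile relations (3)--(6) express each $\sigma^i\omega_n$ as $\omega_{n-1}$ precomposed with a collapse map $I^n\to I^{n-1}$, so they assert precisely that the composite $I^n\by\Delta^k\xra{\omega_n}E^{n+1}\to M^{n+1}E$ into the matching object (Definition \ref{sdcmatchsub}) equals a map $g$ determined by $\omega_{n-1}^E$, which lands in $M^{n+1}E$ by identity (2) of Lemma \ref{qmlemma}.

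Thus the inductive step becomes a single lifting problem. Writing $A:=(\partial I^n\by\Delta^k)\cup(I^n\by\partial\Delta^k)$, the prescribed boundary values on $A$ (from relations (1)--(2) and from $\omega_n^{\partial}$), together with $\omega_n^F$ and the map $g$, give a square
$$
\begin{CD}
A @>>> E^{n+1}\\
@VVV @VVV\\
I^n\by\Delta^k @>{(\omega_n^F,\,g)}>> F^{n+1}\by_{M^{n+1}F}M^{n+1}E,
\end{CD}
$$
whose left-hand map is a monomorphism, hence a cofibration in $\bS$, and whose right-hand map is a trivial fibration by the preceding lemma; a lift $\omega_n^E$ therefore exists. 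By construction it restricts correctly on $\partial I^n\by\Delta^k$ (relations (1)--(2)) and has image $g$ in the matching object (relations (3)--(6)), so it satisfies every Maurer--Cartan relation involving index $n$ with lower ones. The case $n=0$ is the same problem with $\partial I^0=\emptyset$ and $g$ constant at the unit $\zeta^0$, using $M^1E=E^0$.

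The main obstacle is to check that this square is well-defined and commutes, which is the only place the axioms of Lemma \ref{qmlemma} enter. Concretely one must verify: that the facet descriptions of relations (1) and (2) agree on the codimension-two faces of $\partial I^n$ (associativity of $*$ on $\{t_i=0,t_j=0\}$, the cosimplicial identity $\pd^j\pd^i=\pd^i\pd^{j-1}$ on $\{t_i=1,t_j=1\}$, and the mixed identities (4)--(5) of Lemma \ref{qmlemma} on $\{t_i=0,t_j=1\}$); that these values agree with $\omega_n^{\partial}$ on $\partial I^n\by\partial\Delta^k$ (automatic, since $\omega_n^{\partial}$ together with $\omega_{<n}^E|_{\partial\Delta^k}$ forms a genuine element of $\mmc(E)$ and $\omega_{<n}^E$ extends $\omega_{<n}^{\partial}$ by induction); that they lift $\omega_n^F$ (since $E\to F$ is a morphism of quasi-comonoids); and that applying each $\sigma^i$ reproduces $g|_A$ (from identities (3),(6),(7) of Lemma \ref{qmlemma}). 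These are all routine, but must be organised so that the induction hypothesis---that $\omega_{<n}^E$ already satisfies every Maurer--Cartan relation among indices $<n$---is exactly what is needed on each face.
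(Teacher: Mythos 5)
Your proof is correct and follows essentially the same route as the paper's: both decompose $\mmc(E)$ as the inverse limit of the truncations $\mmc(E)^n$ and reduce the $n$th stage to the relative matching map $E^{n+1}\to F^{n+1}\by_{M^{n+1}F}M^{n+1}E$ being a trivial fibration. The only difference is presentational — you phrase the step as an explicit lifting problem against $(\pd I^n\by\Delta^k)\cup(I^n\by\pd\Delta^k)\into I^n\by\Delta^k$, which is the adjoint form of the paper's cotensor statement that $X^{I^n}\to X^{\pd I^n}\by_{Y^{\pd I^n}}Y^{I^n}$ is a trivial fibration, and you spell out the codimension-two compatibility checks that the paper leaves implicit.
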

\begin{proof}
The idea is to write $\mmc(E)$ as $\Lim \mmc(E)^n$, where we define $\mmc(E)^{n} \subset \prod_{0 \le r \le n} (E^{r+1})^{I^r}$ satisfying the relations of Definition \ref{mcdefqm} above (to level $n$).  We can summarise the Maurer-Cartan relations involving  $\pd^j$ and $*$ as defining a function $f:\mmc(E)^{n-1}\to (E^{n+1})^{\pd I^n}$, where $\pd I^n$ is the boundary of the simplicial set $I^n$. The relations involving  $\sigma^j$ define a function  $g: \mmc(E)^{n-1}\to (M^{n+1}E)^{I^n}$. If we set $\mmc(E)^{-1}=\bullet$, this allows us to write $\mmc(E)^{n}$ as the fibre product
$$
\xymatrix{
\mmc(E)^{n} \ar[r]\ar[d]&\mmc(E)^{n-1} \ar[d]^{(f,g)}\\
(E^{n+1})^{I^n} \ar[r] & (E^{n+1})^{\pd I^n}\by_{(M^{n+1}E)^{\pd I^n}} (M^{n+1}E)^{I^n}.
}
$$

Since the pullback of a trivial fibration is a trivial fibration, it suffices to show that 
$$
(E^{n+1})^{I^n} \to  [(E^{n+1})^{\pd I^n}\by_{(M^{n+1}E)^{\pd I^n}} (M^{n+1}E)^{I^n}]\by_{[ (F^{n+1})^{\pd I^n}\by_{(M^{n+1}F)^{\pd I^n}} (M^{n+1}F)^{I^n}] }(F^{n+1})^{I^n}
$$
is a trivial fibration. By definition, the maps 
$$
E^{n+1} \to  M^{n+1}E\by_{M^{n+1}F}F^{n+1}
$$ 
are trivial fibrations.
 If $X \to Y$ is a trivial fibration, then $X^{I^n} \to X^{\pd I^n}\by_{Y^{\pd I^n}}Y^{I^n} $ is a trivial fibration, since $\pd I^n \to I^n$ is a cofibration in $\bS$ (using the simplicial structure of $\bS$). This gives the required result. 
\end{proof}

\begin{definition}\label{Xidef}
Define  an object $\Xi \in QM^*(\bS) $ by  $\Xi^0=\bt$ and $\Xi^{n+1}=I^{n}$ for $n\ge 0$, where $I=\Delta^1$, with operations
\begin{eqnarray*}
(s_1,\ldots, s_m)*(t_1,\ldots, t_n)&=&(s_1,\ldots, s_m,0,t_1,\ldots, t_n);\\ 
\pd^i(t_1,\ldots,t_n)&=&(t_1, \ldots,t_{i-1},1,t_i,\ldots,t_n);\\ 
\sigma^i(t_1,\ldots,t_n)&=&(t_1, \ldots,t_{i-1},\min\{t_i,t_{i+1}\},t_{i+2},\ldots, t_n);\\
\sigma^0(t_1,\ldots,t_n)&=&(t_2, \ldots,t_n);\\
\sigma^{n}(t_1,\ldots,t_n)&=&(t_1, \ldots,t_{n-1}).
\end{eqnarray*}
\end{definition}

\begin{proposition}\label{rhommc}
For $E \in QM^*(\bS)$ fibrant, there is a natural weak equivalence
$$
\oR \HHom_{QM^*(\bS)}(\iota\bt, E) \simeq \mmc(E)
$$
in $\bS$.
\end{proposition}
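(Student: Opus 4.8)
The plan is to exhibit the object $\Xi$ of Definition \ref{Xidef} as a cofibrant replacement of $\iota\bt$ in $QM^*(\bS)$, to identify the simplicial mapping space $\HHom_{QM^*(\bS)}(\Xi,E)$ with $\mmc(E)$, and then to deduce the statement from the general formalism of simplicial model categories (Lemma \ref{qmsmodel}).

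First I would check that $\Xi$ corepresents $\mc$. A morphism $\Xi \to E'$ in $QM^*(\bS)$ is a compatible family of simplicial maps $\Xi^m \to (E')^m$ respecting $*,\pd^i,\sigma^i$ and the unit $\zeta^0$. Since $\Xi^0=\bt$ and $\Xi^{n+1}=I^n$, and since a morphism must send unit to unit, such a family amounts precisely to a collection $\omega_n\co I^n \to (E')^{n+1}$ with $\omega_0\in (E')^1$, and compatibility with the operations of Definition \ref{Xidef} translates verbatim into the higher Maurer-Cartan relations of Definition \ref{mcdefqm}. Hence $\Hom_{QM^*(\bS)}(\Xi,E')\cong \mc(E')=\mmc(E')_0$, naturally in $E'$. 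Applying this to the cotensor $E^{\Delta^n}$, whose levels are $(E^{\Delta^n})^m=(E^m)^{\Delta^n}$, and using the identity $\mmc(E)_n=\mc(E^{\Delta^n})$ noted in Definition \ref{mcdefqm}, yields a natural isomorphism $\HHom_{QM^*(\bS)}(\Xi,E)\cong \mmc(E)$ of simplicial sets.

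Next I would show that $\Xi$ is cofibrant, which is where Lemma \ref{mcqsub} does the essential work. It suffices to verify that $\emptyset \to \Xi$ has the left lifting property against every trivial fibration $p\co E \to F$, i.e. that $\Hom_{QM^*(\bS)}(\Xi,E)\to \Hom_{QM^*(\bS)}(\Xi,F)$ is surjective. Under the identification above this is the map $\mc(E)=\mmc(E)_0 \to \mmc(F)_0=\mc(F)$. But Lemma \ref{mcqsub} asserts that $\mmc(E)\to \mmc(F)$ is a trivial fibration of simplicial sets, and a trivial fibration is surjective on $0$-simplices, so the required surjectivity follows and $\Xi$ is cofibrant. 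Separately, $\iota\bt$ is the terminal quasi-comonoid (each of its levels is $\bt$), so there is a unique map $\Xi \to \iota\bt$; it is a levelwise weak equivalence because each $\Xi^{n+1}=I^n=(\Delta^1)^n$ is contractible, hence a weak equivalence in $QM^*(\bS)$ by Lemma \ref{qmsmodel} and Proposition \ref{reedymodel}.

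Finally I would assemble these facts. Choosing a cofibrant replacement $q\co Q \xra{\sim} \iota\bt$ by a trivial fibration and lifting the weak equivalence $\Xi \to \iota\bt$ through $q$ (possible since $\Xi$ is cofibrant) produces a weak equivalence $\Xi \to Q$ between cofibrant objects. As $E$ is fibrant, the functor $\HHom_{QM^*(\bS)}(-,E)$ carries weak equivalences between cofibrant objects to weak equivalences (Ken Brown's lemma applied to axiom SM7 for the simplicial model structure of Lemma \ref{qmsmodel}), so
$$
\oR\HHom_{QM^*(\bS)}(\iota\bt,E)=\HHom_{QM^*(\bS)}(Q,E)\simeq \HHom_{QM^*(\bS)}(\Xi,E)\cong \mmc(E),
$$
naturally in $E$. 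The main point needing care is the bookkeeping in the first step, verifying that the defining relations for maps out of $\Xi$ coincide exactly with the higher Maurer-Cartan relations, so that $\Xi$ genuinely corepresents $\mmc$; the cofibrancy, which would otherwise be the obstacle, is supplied directly by the already-established Lemma \ref{mcqsub}.
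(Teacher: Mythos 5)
Your proposal is correct and follows essentially the same route as the paper: identify $\HHom_{QM^*(\bS)}(\Xi,E)$ with $\mmc(E)$ directly from Definitions \ref{mcdefqm} and \ref{Xidef}, use Lemma \ref{mcqsub} to deduce that $\Xi$ has the left lifting property against trivial fibrations and is therefore a cofibrant replacement of the weakly equivalent terminal object $\iota\bt$, and conclude by the simplicial model category formalism. The only difference is that you spell out the final assembly (Ken Brown's lemma and SM7) more explicitly than the paper does, which is harmless.
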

\begin{proof}
It follows from Definition \ref{mcdefqm} that
$$
\mmc(E) = \HHom_{ QM^*(\bS)}(\Xi,E).
$$

Next, observe that the unique  map $\Xi \to \iota\bt$ is  a  weak equivalence, since the maps $I^n \to \bt$ are weak equivalences. For any  trivial fibration $E \to F$ in $ QM^*(\bS)$, Lemma \ref{mcqsub} shows that 
$$
\Hom_{ QM^*(\bS)}(\Xi, E) \to \Hom_{ QM^*(\bS)}(\Xi,F)
$$
is surjective, so $\Xi$ has the left lifting property with respect to trivial fibrations, making   it a cofibrant replacement for $\iota\bt$.
Thus, for $E$ fibrant,
$$
\mmc(E)\simeq\oR \HHom_{QM^*(\bS)}(\iota\bt, E).
$$
\end{proof}

\begin{corollary}\label{mcquillen}
The functor $\mmc: QM^*(\bS) \to \bS$ is right Quillen.
\end{corollary}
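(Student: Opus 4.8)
The plan is to deduce this directly from Proposition \ref{rhommc} together with the simplicial model structure of Lemma \ref{qmsmodel}. Recall that a functor between model categories is right Quillen precisely when it is a right adjoint that preserves fibrations and trivial fibrations. The essential input is the identification, established in the course of proving Proposition \ref{rhommc}, that $\mmc(E) = \HHom_{QM^*(\bS)}(\Xi, E)$ naturally in $E$, where $\Xi$ is the object of Definition \ref{Xidef}, shown there to be cofibrant.

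First I would exhibit the left adjoint. Since $QM^*(\bS)$ is a simplicial model category, it is tensored and cotensored over $\bS$, and for the fixed object $\Xi$ the enriched mapping-space functor $\HHom_{QM^*(\bS)}(\Xi, -)\colon QM^*(\bS) \to \bS$ is right adjoint to the tensoring functor $\Xi \otimes (-) \colon \bS \to QM^*(\bS)$, via the defining adjunction $\Hom_{QM^*(\bS)}(\Xi \otimes K, E) \cong \Hom_{\bS}(K, \HHom_{QM^*(\bS)}(\Xi, E))$. Hence $\mmc$ is a right adjoint.

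It then remains to check that $\mmc$ preserves fibrations and trivial fibrations. I would obtain this from the pushout-product axiom (SM7) for the simplicial model category $QM^*(\bS)$: applied to the cofibration $\emptyset \to \Xi$ (a cofibration precisely because $\Xi$ is cofibrant) and an arbitrary fibration $p \colon E \to F$, it asserts that the map
$$
\HHom_{QM^*(\bS)}(\Xi, E) \to \HHom_{QM^*(\bS)}(\emptyset, E) \times_{\HHom_{QM^*(\bS)}(\emptyset, F)} \HHom_{QM^*(\bS)}(\Xi, F)
$$
is a fibration, and a trivial fibration whenever $p$ is. Since $\HHom_{QM^*(\bS)}(\emptyset, -)$ is the one-point simplicial set, the target collapses to $\HHom_{QM^*(\bS)}(\Xi, F) = \mmc(F)$, so this says exactly that $\mmc(p) \colon \mmc(E) \to \mmc(F)$ is a fibration, trivial when $p$ is.

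I do not expect a serious obstacle, since all the required structure has already been assembled in the preceding results; the only points demanding care are the naturality of the identification $\mmc = \HHom_{QM^*(\bS)}(\Xi, -)$ (immediate from Definition \ref{mcdefqm}) and the cofibrancy of $\Xi$ (established in Proposition \ref{rhommc} through its left lifting property against trivial fibrations, which in turn rests on Lemma \ref{mcqsub}). As a sanity check, the trivial-fibration case of the displayed map recovers Lemma \ref{mcqsub} directly.
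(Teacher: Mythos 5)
Your proposal is correct and is essentially the paper's own argument: the proof of Corollary \ref{mcquillen} likewise identifies $\mmc$ with $\HHom_{QM^*(\bS)}(\Xi,-)$ and invokes the simplicial model structure on $QM^*(\bS)$ together with the cofibrancy of $\Xi$ established in Proposition \ref{rhommc}. You merely spell out the left adjoint and the SM7 instance more explicitly than the paper does (note only that the initial object of $QM^*(\bS)$ is not literally $\emptyset$ but the quasi-comonoid generated by the unit, which does not affect the argument).
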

\begin{proof}
Given a (trivial) fibration $E \to F$, the morphism $\mmc(E) \to \mmc(F)$ is  $\HHom_{ QM^*(\bS)}(\Xi,E)\to\HHom_{ QM^*(\bS)}(\Xi,F) $. This is a (trivial) fibration, since  $QM^*(\bS)$ is a simplicial model category.
\end{proof}

\begin{remarks}\label{ladark}
Lada's definition of a strong homotopy algebra in \cite{loop} differs from Definition \ref{mcdefqm} in that it omits all of the degeneracy conditions except $\sigma^0\omega_0=0$. Proposition \ref{rhommc} would not be true if we omitted those degeneracy conditions. 

Now, consider a $\top$-algebra $A$ in topological spaces, and a retraction $r:A \to X$, with section $s$. Given a homotopy $h$ from $sr$ to $\id_A$, Lada constructs a system $\{\omega_n: \top^{n+1}X\by |I|^n \to X\}$ by setting 
$$
\omega_n(a_1, \ldots a_n):= r\circ \vareps_A * h(a_1) * \vareps_A * h(a_2) * \ldots * h(a_n)*\vareps_A \circ s. 
$$ 

If we impose the additional conditions that $r\circ h(a)= r$, $h(a)\circ s = s$ and $h(a)\circ h(b) = h(\min(a,b))$, for all $a, b \in [0,1]$, then $\omega$ satisfies the higher  Maurer-Cartan relations of Definition \ref{mcdefqm}.

Moreover, a similar description holds for retracts $X$ of $(\top,\bot)$-bialgebras $A$, replacing $\vareps_A$ with $\eta_A \circ \vareps_A: \top A \to \bot A$. 

Another way to look at this is that $|\Xi|$ is a cofibrant resolution of  $\iota(|\Delta^0|)$ in $QM^*(\Top)$. An alternative (and possibly more natural) cofibrant replacement $\Phi$ of $\iota(|\Delta^0|)$ is given by $\Phi^0=|\Delta^0|$ and $\Phi^{n+1}=|I|^{n}$, with the same operations as $\Xi$, except that we replace
the map $\min: |I|\by|I| \to |I|$ with the map $(a,b)\mapsto ab$. Thus, for $E \in QM^*(\Top)$ and $\Sing: \Top \to \bS$, the space  $\mmc(\Sing E)$ is equivalent to the  subset of  $\prod_{n\ge 0} \HHom_{\Top}(|I|^n,E^{n+1})$ satisfying the conditions of Definition \ref{mcdefqm}, except that we replace $\min$ with multiplication. The procedure above then allows us to  construct a point $\omega$ of this space from a deformation retract, provided we modify the conditions above by requiring that the homotopy satisfies $h(a)\circ h(b) = h(ab)$.
\end{remarks}

\begin{definition}\label{ddefdefsub}
For $E \in QM^*(\bS)$ with $E^0$ a group (rather than just a monoid), there is an adjoint action of $E^0$ on $\mmc(E)$, given by $(g,\omega)\mapsto g^{-1}*\omega*g$. 
We then define $\ddel(E)$ to be the homotopy quotient (or Borel construction) $\ddel(E)= [\mmc(E)/^hE^0]= \mmc(E)\by_{E^0}WE^0$, for $W$ the universal cover of $BE^0=\bar{W}E^0$, as in \cite{sht} Ch.V.4.
\end{definition}

\subsection{Simplicial categories}

\begin{definition}\label{pi0C}
Given a simplicial category $\C$, recall from \cite{bergner} that the category $\pi_0\C$ is defined to have the same objects as $\C$, with morphisms 
$$
\Hom_{\pi_0\C}(x,y)=\pi_0\HHom_{\C}(x,y). 
$$
A morphism in $\HHom_{\C}(x,y)_0$ is said to be a homotopy equivalence if its image in $\pi_0\C$ is an isomorphism.
\end{definition}

\begin{lemma}\label{catmodel}
There is a model structure on the category $s\Cat$ of simplicial categories, in which a morphism $f:\C \to \D$ is 
\begin{enumerate}
\item[(W)] a  weak equivalence whenever
\begin{enumerate} 
\item[(W1)] for any objects $a_1$ and $a_2$ in $\C$, the map
$\HHom_{\C}(a_1, a_2)\to \HHom_{\cD}(fa_1, fa_2)$ 
is a weak equivalence of simplicial sets;
\item[(W2)] the induced functor $\pi_0f : \pi_0\C \to \pi_0\cD$ is an equivalence of
categories.
\end{enumerate}

\item[(F)] a  fibration whenever
\begin{enumerate} 
\item[(F1)] for any objects $a_1$ and $a_2$ in $\C$, the map
$\HHom_{\C}(a_1, a_2)\to \HHom_{\cD}(fa_1, fa_2)$ 
is a fibration of simplicial sets;
\item[(F2)] for any objects $a_1 \in\C$, $b \in \cD$, and homotopy equivalence $e :
fa_1 \to b$ in $\cD$, there is an object $a_2 \in \C$ and a homotopy equivalence
$d : a_1 \to a_2$ in $\C$ such that $fd = e$.
\end{enumerate}
\end{enumerate}
\end{lemma}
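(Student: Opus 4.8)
The plan is to recognize this as the model structure on $s\Cat$ constructed by Bergner in \cite{bergner}: the class (W) is exactly her class of Dwyer--Kan equivalences and (F) her class of fibrations, so the statement is a verbatim restatement of her main theorem and may simply be cited. For orientation I describe the strategy by which it is established from scratch.

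First I would declare the cofibrations to be the maps with the left lifting property against every map lying in both (W) and (F), and aim to produce a cofibrantly generated model structure via a recognition theorem such as \cite{Hirschhorn} Theorem 11.3.1. This reduces the problem to four points: (i) that (W) satisfies two-out-of-three and is closed under retracts, immediate from the corresponding facts for simplicial sets together with the elementary behaviour of $\pi_0$; (ii) the existence of sets $I$ and $J$ of generating maps with small domains, so that the small object argument applies; (iii) that the maps with the right lifting property against $I$ are precisely those lying in (W) and having the right lifting property against $J$; and (iv) that every relative $J$-cell complex lies in (W).

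The sets $I$ and $J$ are assembled from two families. The first adjoins cells to a single mapping space: from the two-object simplicial category whose only nontrivial $\HHom$-complex is a simplicial set $K$, one forms the maps induced by the boundary inclusions $\partial\Delta^n \to \Delta^n$ (for $I$) and by the horn inclusions $\Lambda^n_k \to \Delta^n$ (for $J$). The second family consists of the object-level map $\emptyset \to \{*\}$ (in $I$) and a map adjoining a homotopy equivalence between two objects (in $J$), the latter encoding conditions (F2) and (W2). With these choices (iii) unwinds to the assertions that the $I$-injectives are exactly the trivial fibrations and the $J$-injectives exactly the fibrations.

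The hard part will be the acyclicity condition (iv). For pushouts along the first family one must control how the homotopy type of each $\HHom$-complex changes when a simplex is attached freely along a trivial cofibration of simplicial sets; because composition manufactures new morphisms, this is not a direct computation but requires filtering the pushout mapping spaces by word length in the freely adjoined generator and analysing each filtration quotient. For pushouts along the homotopy-equivalence generator, one must verify that freely inverting a map up to homotopy leaves the existing mapping spaces weakly equivalent while rendering the two objects equivalent in $\pi_0$ --- the delicate point where (W1) and (W2) interact. Establishing this acyclicity is the crux; the remaining model axioms then follow formally from the recognition theorem.
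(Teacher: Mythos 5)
Your proposal is correct and matches the paper exactly: the paper's entire proof of this lemma is the citation to \cite{bergner} Theorem 1.1, which is precisely what you lead with. Your further sketch of Bergner's recognition-theorem argument (generating sets, the two families of generators, and the acyclicity of pushouts along the homotopy-equivalence generator) is an accurate summary of her proof but is not reproduced in the paper.
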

\begin{proof}
\cite{bergner} Theorem 1.1.
\end{proof}

\subsection{Simplicial quasi-descent data}

\begin{lemma}\label{sqdato}
For a fixed set $\cO$, there is a cofibrantly generated simplicial model category structure on $sQ\Dat_{\cO}$ for which a morphism $f:\cD \to \cD'$ is a fibration or a weak equivalence if and only if for all $a,b \in \cO$, the map
$$
f:\cHHom_{\cD}(a,b) \to \cHHom_{\cD'}(a,b)
$$
is a Reedy fibration or a levelwise weak equivalence in $\bS^{\Delta_{**}}$.
\end{lemma}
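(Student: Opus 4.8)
The plan is to identify $sQ\Dat_\cO$, via Remarks~\ref{quasidescentcat}, with the category of $\cV$-enriched categories having fixed object set $\cO$, where $\cV := \bS^{\Delta_{**}}$ carries the convolution monoidal product $\ten$ and the Reedy simplicial model structure of Proposition~\ref{reedymodel}, and then to transfer the model structure along a free--forgetful adjunction exactly as in the one-object case of Lemma~\ref{qmsmodel}. The forgetful functor $U\colon sQ\Dat_\cO \to \cV^{\cO\by\cO}$ records the collection of hom-objects $\{\cHHom_\cD(a,b)\}_{a,b\in\cO}$, and it has a left adjoint $F$, the free $\cV$-category, whose hom-object from $a$ to $b$ is $\coprod_{k\ge 0}\coprod_{a=c_0,\dots,c_k=b} G(c_0,c_1)\ten\cdots\ten G(c_{k-1},c_k)$ (the $k=0$ summand being the unit $1$ when $a=b$), with composition given by concatenation. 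The product category $\cV^{\cO\by\cO}$ inherits a cofibrantly generated simplicial model structure from $\cV$, with generating (trivial) cofibrations given by those of $\cV$ placed in a single slot $(a,b)$ and the identity on the initial object elsewhere. I would apply \cite{Hirschhorn} Theorem~11.3.2 to this adjunction, which reduces everything to the smallness and acyclicity hypotheses.

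Smallness follows as in Lemma~\ref{qmsmodel}: the functor $U$ preserves filtered colimits, since the convolution tensor $(X\ten Y)^n = \coprod_{a+b=n} X^a\by Y^b$ is assembled levelwise from finite products and coproducts in $\bS$, all of which commute with filtered colimits; hence the images under $F$ of the (finite) generators of $\cV^{\cO\by\cO}$ are finite in $sQ\Dat_\cO$ and a fortiori permit the small object argument.

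The main obstacle is the acyclicity hypothesis, that $U$ sends every relative $F(J)$-cell complex to a weak equivalence. I would verify this through the monoid-axiom estimate for $(\cV,\ten)$. The key observation is that tensoring a levelwise trivial cofibration $j$ with an arbitrary object, on either side, is again a levelwise trivial cofibration: by the convolution formula this reduces in each degree to maps of the form $j^a\by Z$ and $Z\by j^b$, which are trivial cofibrations because $\bS$ is cartesian and every simplicial set is cofibrant, and coproducts of trivial cofibrations are trivial cofibrations. Pushing out $F(j)$ for $j$ supported in a single slot alters the hom-objects of the enriched category by inserting copies of the pushout of $j$ into the tensor words above; each insertion is a levelwise trivial cofibration by the preceding estimate, and since $U$ preserves the filtered colimits presenting these hom-objects, the comparison maps are levelwise trivial cofibrations, hence weak equivalences. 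Closure of weak equivalences under the pushouts and transfinite composites occurring in a cell complex then yields acyclicity.

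Finally, the simplicial structure is inherited hom-wise. For $K\in\bS$ and $\cD\in sQ\Dat_\cO$ one sets $\cHHom_{\cD^K}(a,b):=\cHHom_\cD(a,b)^K$, reconstructing composition from that of $\cD$ via the diagonal $K\to K\by K$, which supplies the map $\cHHom_\cD(a,b)^K\ten\cHHom_\cD(b,c)^K\to(\cHHom_\cD(a,b)\ten\cHHom_\cD(b,c))^K$; indeed in $\cV$ one has the compatibility $(K\cdot X)\ten(L\cdot Y)\cong(K\by L)\cdot(X\ten Y)$, so the tensoring and cotensoring over $\bS$ interact strictly with $\ten$. The pushout-product axiom SM7 for $sQ\Dat_\cO$ then reduces slotwise to that of the simplicial model category $\cV=\bS^{\Delta_{**}}$ of Proposition~\ref{reedymodel}. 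This produces the required cofibrantly generated simplicial model structure, in which, by the transfer theorem, a map $f$ is a fibration (resp. weak equivalence) precisely when each $\cHHom_\cD(a,b)\to\cHHom_{\cD'}(a,b)$ is a Reedy fibration (resp. levelwise weak equivalence) in $\bS^{\Delta_{**}}$.
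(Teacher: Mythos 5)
Your proposal is correct and follows essentially the same route as the paper: the paper's proof likewise transfers the model structure via \cite{Hirschhorn} Theorem 11.3.2 along the (free $\dashv$ forgetful) adjunction with the product-of-hom-objects functor $\cD \mapsto \prod_{a,b}\cHHom_{\cD}(a,b)$ to $\bS^{\Delta_{**}}$, exactly as in Lemma \ref{qmsmodel}, and defines the simplicial structure hom-wise by $\cHHom_{\cD^K}(a,b):=\cHHom_{\cD}(a,b)^K$. The only difference is that you spell out the smallness and acyclicity hypotheses (the latter via the convolution-tensor estimate on levelwise trivial cofibrations), which the paper leaves implicit in its citation of the transfer theorem.
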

\begin{proof}
Applying \cite{Hirschhorn} Theorem 11.3.2 (as in  the proof of Lemma \ref{qmsmodel}) to the right adjoint functor $sQ\Dat_{\cO}\to \bS^{\Delta_{**}}$ given by $\cD \mapsto \prod_{a, b \in \cO} \cHHom_{\cD}(a,b)$ shows that this is a model structure. 

The simplicial structure comes from the simplicial structure on $\bS^{\Delta_{**}}$, so $\cHHom_{\cD^K}(a,b):= \cHHom_{\cD}(a,b)^K$.
\end{proof}

We now consider the whole category $sQ\Dat$, not just the subcategories on fixed objects.

\begin{lemma}\label{sqdatadjoints}
The functor $(-)^0: sQ\Dat\to s\Cat$ is both a left and a right adjoint. The functor $\cD \mapsto \prod_{a,b \in \Ob \cD} \cHHom_{\cD}(a,b)$ from $sQ\Dat \to \bS^{\Delta_{**}}$ is a right adjoint.
\end{lemma}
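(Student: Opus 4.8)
The plan is to treat all three assertions as object-preserving, levelwise-in-$\bS$ extensions of the constructions already established for $Q\Dat$, proving the two statements about $(-)^0$ by exhibiting explicit adjoints and then reducing the product-of-homs functor to the fibrewise situation of Lemma \ref{sqdato}.

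First I would handle $(-)^0 : sQ\Dat \to s\Cat$. For the right adjoint I would take the simplicial analogue of the functor $\alg^*$ of Lemma \ref{alg*}: set $\Ob\alg^*\C = \Ob\C$ and $(\alg^*\C)(a,b)^n = \C(a,b)$ for all $n$, with product induced by composition in $\C$, unit the identity of $\C$, and every operation $\pd^i,\sigma^i$ acting as the identity. One checks this is a simplicial quasi-descent datum (the unit and product axioms of Definition \ref{qudesc} are inherited from $\C$), and since $\sigma^0$ acts trivially any morphism $\cD \to \alg^*\C$ is determined by its degree-zero component $\cHHom_\cD(a,b)^0 = \cD^0(a,b) \to \C(a,b)$, giving $\Hom_{sQ\Dat}(\cD,\alg^*\C) \cong \Hom_{s\Cat}(\cD^0,\C)$ word for word as in Lemma \ref{alg*}. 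For the left adjoint I would use the simplicial version of $(\id/\emptyset)$ from Lemma \ref{idemptyset}, namely $(\id/\emptyset)(\C)(a,b)^0 = \C(a,b)$ and $(\id/\emptyset)(\C)(a,b)^n = \emptyset$ for $n>0$, with $\emptyset$ the empty (initial) simplicial set. In $\Delta_{**}$ there are no operations out of degree zero and the only operation into it is the canonical $\emptyset = X^1 \to X^0$, so this is a well-defined datum, and a morphism $(\id/\emptyset)(\C) \to \cD$ carries no data above degree zero and is therefore exactly a simplicial functor $\C \to \cD^0$. Both verifications are the arguments of Lemmas \ref{alg*} and \ref{idemptyset} performed levelwise, the only extra point being that $\bt$ is the $\by$-unit and $\emptyset$ is initial.

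For the functor $\cD \mapsto \prod_{a,b \in \Ob\cD}\cHHom_\cD(a,b)$ into $\bS^{\Delta_{**}}$, the engine is the fibrewise statement: on a fixed object set $\cO$ this is exactly the right adjoint $sQ\Dat_\cO \to \bS^{\Delta_{**}}$ already used in the proof of Lemma \ref{sqdato}. Its left adjoint sends $Y \in \bS^{\Delta_{**}}$ to the free $\bS^{\Delta_{**}}$-enriched category (in the sense of Remarks \ref{quasidescentcat}) on the $\cO$-graph carrying a copy of $Y$ between each ordered pair of objects; this is built from the free quasi-comonoid functor $F$ left adjoint to $QM^*(\bS) \to \bS^{\Delta_{**}}$ (as in the proof of Lemma \ref{qmsmodel}) together with the monoidal product on $\bS^{\Delta_{**}}$. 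I would record this left adjoint on each fibre and then organise the fibres over the object-set functor $\Ob : sQ\Dat \to \Set$, which itself has adjoints on both sides (the discrete and codiscrete data).

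The hard part is precisely this passage from a fixed $\cO$ to the whole category. Because the functor indexes a \emph{product} over pairs of objects, while products in $sQ\Dat$ multiply the underlying object sets, the functor does not commute with products of quasi-descent data having different object sets, so the Special Adjoint Functor Theorem cannot be applied to it globally in a naive way; the substantive content, and all that the subsequent model-structure and function-complex arguments actually require, is the fibrewise right-adjointness over each $\cO$. I would therefore carry out the proof fibrewise and phrase the global statement through the fibration $\Ob$, checking that the fibrewise free constructions are compatible with cartesian reindexing along maps of object sets. This object-set bookkeeping is where essentially all the difficulty lies; the simplicial and enriched structure contributes nothing beyond the cocompleteness of $\bS^{\Delta_{**}}$ and the availability of the free functor $F$.
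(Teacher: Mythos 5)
Your treatment of $(-)^0$ coincides with the paper's: the right adjoint is $\alg^*$ and the left adjoint is $(\id/\emptyset)$, with the arguments of Lemmas \ref{alg*} and \ref{idemptyset} applied levelwise, so there is nothing to add there. For the second assertion you take a genuinely different route. You work fibrewise over a fixed object set $\cO$, build the free $\bS^{\Delta_{**}}$-enriched category on the complete $\cO$-graph carrying a copy of $Y$ on every edge, and then propose to assemble the fibres over the object-set functor $\Ob$. The paper instead exhibits a single explicit left adjoint $U:\bS^{\Delta_{**}}\to sQ\Dat$, sending $X$ to the quasi-descent datum with exactly two objects $x,y$, with $\cHHom(x,x)=\cHHom(y,y)$ the unit in degree $0$ and empty above, $\cHHom(x,y)=X$ and $\cHHom(y,x)=\emptyset$. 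A map $U(X)\to\cD$ is then precisely a choice of an ordered pair $(a,b)$ of objects of $\cD$ together with a map $X\to\cHHom_{\cD}(a,b)$, so $U(X)$ is the universal probe for a single hom-object; this is exactly what drives the lifting-property computations in Proposition \ref{sqdatmodel}, where the generating (trivial) cofibrations $(I1)$ and $(J1)$ are defined as images under $U$. Your reservation about the global functor not being a right adjoint in any naive sense is legitimate, and in fact applies to the paper's own phrasing as well: $\Hom_{sQ\Dat}(U(X),\cD)$ is a coproduct over pairs of objects rather than a product, so the displayed adjunction is really to be read fibrewise over the choice of pair. But the two-object construction dissolves the object-set bookkeeping you anticipate --- no cartesian reindexing along maps of object sets is needed, and the free category on the complete $\cO$-graph (a much larger object) never enters. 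The net effect is that your argument re-establishes the fibrewise adjointness already implicit in Lemma \ref{sqdato}, whereas the paper's proof produces the specific corepresenting object $U(X)$ that the rest of Section \ref{simplicialsn} actually consumes; if you follow your route you will still need to extract $U$ (or an equivalent two-object probe) before Proposition \ref{sqdatmodel}.
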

\begin{proof}
$(-)^0$ has left adjoint $(\id/ \emptyset)$ and right adjoint $\alg^*$, with the same formulae and reasoning as in Lemmas \ref{idemptyset} and \ref{alg*}. The left adjoint to $\cD \mapsto \prod_{a,b \in \Ob \cD} \cHHom_{\cE}(a,b)$ is the  functor $U: \bS^{\Delta_{**}} \to sQ\Dat$ given by sending  $X$ to the category with two objects $x,y$, and morphisms
$$
\cHHom(x,x)^n=\cHHom(y,y)^n:=\left\{\begin{matrix} 1 & n=0 \\ \emptyset & n>0, \end{matrix}\right. 
$$
 $\cHHom(x,y):= X$ and $\cHHom(y,x):=\emptyset$.
\end{proof}

\begin{proposition}\label{sqdatmodel}
There is a cofibrantly generated model structure on $sQ\Dat$  for which a morphism $f:\cD \to \cE$ is 
\begin{enumerate}

\item[$(W)$]  a weak equivalence if and only if 
\begin{enumerate}
\item[$(W1)$] for all $a,b \in \Ob \cD$, the map
$$
f:\cHHom_{\cD}(a,b) \to \cHHom_{\cE}(fa,fb)
$$
is a weak equivalence  in $\bS^{\Delta_{**}}$,  and

\item[$(W2)$] the morphism $\pi_0(f^0): \pi_0(\cD^0) \to \pi_0(\cE^0)$ is an equivalence of categories (for $\pi_0\C$ as in Definition \ref{pi0C});

\end{enumerate}

\item[$(F)$] a fibration  if and only if 
\begin{enumerate}
\item[$(F1)$] for all $a,b \in \Ob \cD$, the map
$$
f:\cHHom_{\cD}(a,b) \to \cHHom_{\cE}(a,b)
$$
is a Reedy fibration  in $\bS^{\Delta_{**}}$,  and

\item[$(F2)$]  for any objects $a_1 \in \cD, b\in \cE$ and a homotopy equivalence $e:fa_1 \to b$ in $\cE^0$, there exist an object $a_2 \in \cE$ and a homotopy equivalence
$d : a_1\to a_2$ in $\cD^0$ such that $f^0d=e$.
\end{enumerate}

\end{enumerate}
\end{proposition}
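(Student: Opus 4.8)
The plan is to verify the model-category axioms by mirroring Bergner's proof of Lemma \ref{catmodel} step for step, feeding in the fixed-object structures of Lemma \ref{sqdato} wherever Bergner invokes the Dwyer--Kan theorem, and transporting her generating objects along the adjunctions of Lemma \ref{sqdatadjoints}. The observation that makes this work is that conditions $(W2)$ and $(F2)$ depend only on the underlying simplicial categories $\cD^0,\cE^0$ and on $f^0=(-)^0f$, so they are \emph{literally} Bergner's conditions for $f^0$, while $(W1)$ and $(F1)$ merely refine her hom-wise conditions from $\bS$ to $\bS^{\Delta_{**}}$. Thus the proposed structure lies over the Bergner structure via $(-)^0$. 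Crucially, by Lemma \ref{sqdatadjoints} the functor $(-)^0$ is \emph{both} a left and a right adjoint, so it preserves all limits and colimits; moreover its left adjoint $(\id/\emptyset)$ and right adjoint $\alg^*$ are both fully faithful, giving $(-)^0(\id/\emptyset)\cong\id$, so that the object- and $\pi_0$-level behaviour of everything built by $(\id/\emptyset)$ is governed exactly by Lemma \ref{catmodel}.

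First I would record that $sQ\Dat$ is complete and cocomplete and that the domains of the intended generators are small, so the small object argument applies: limits are computed in the $\bS^{\Delta_{**}}$-structure of Proposition \ref{reedymodel}, and cocompleteness follows from the description of quasi-descent data as enriched categories over the (closed, cocomplete) convolution-monoidal category $\bS^{\Delta_{**}}$ of Remarks \ref{quasidescentcat}. The class $(W)$ is plainly closed under retracts and satisfies two-out-of-three, combining two-out-of-three for levelwise weak equivalences in $\bS^{\Delta_{**}}$ (for $(W1)$) with two-out-of-three for equivalences of categories (for $(W2)$); similarly $(F)$ is retract-closed. I then take for generating cofibrations $I$ the images under the two-object functor $U\co \bS^{\Delta_{**}}\to sQ\Dat$ of Lemma \ref{sqdatadjoints} of a set of generating Reedy cofibrations of $\bS^{\Delta_{**}}$, together with the object-adjoining map $\emptyset\to\ast=(\id/\emptyset)(\bt)$; and for generating trivial cofibrations $J$ the $U$-images of the generating trivial Reedy cofibrations together with the single map $(\id/\emptyset)(\bt\to\cH)$, where $\cH$ is Bergner's free-living homotopy equivalence.

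Next I would identify the two lifting classes. Right lifting against the $U$-part of $I$ says, by Lemma \ref{sqdato}, that every relative matching map of $f$ (in the sense of Definition \ref{sdcmatchsub}) is a trivial fibration in $\bS$, i.e. that $f$ satisfies $(F1)$ and $(W1)$; lifting against $\emptyset\to\ast$ forces surjectivity on objects. Together with $(W2)$ and $(F2)$ (which one recovers for object-surjective hom-wise trivial fibrations exactly as in Bergner, since these are Bergner trivial fibrations on $f^0$), this shows $I\text{-inj}=(F)\cap(W)$. For $J$, lifting against the $U$-part gives $(F1)$, again by Lemma \ref{sqdato}; and since $\Hom_{sQ\Dat}((\id/\emptyset)\cH,\cD)\cong\Hom_{s\Cat}(\cH,\cD^0)$ by the adjunction $(\id/\emptyset)\dashv(-)^0$, lifting against $(\id/\emptyset)(\bt\to\cH)$ is precisely the homotopy-equivalence extension property $(F2)$. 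Hence $J\text{-inj}=(F)$. The cofibrantly generated recognition theorem (\cite{Hirschhorn} Theorem 11.3.1) then assembles the factorizations and lifting axioms once the remaining hypothesis, that $J$-cell complexes lie in $(W)\cap I$-cof, is checked, and the simplicial enrichment is inherited from $\bS^{\Delta_{**}}$ exactly as in Lemma \ref{sqdato}.

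The main obstacle is this last point: showing that every $J$-cell complex is a weak equivalence. Here I would split the two halves. For $(W2)$ there is a clean argument: since $(-)^0$ preserves pushouts and transfinite compositions (being both adjoints), it carries a $J$-cell complex to a cell complex built from $(-)^0U(j)=U_{\bS}(j^0)$ and $(-)^0(\id/\emptyset)(\bt\to\cH)=(\bt\to\cH)$, i.e. from Bergner's generating trivial cofibrations, which by Lemma \ref{catmodel} is a Bergner weak equivalence; this yields $(W2)$ for free. The genuinely new content is $(W1)$ in \emph{all} $\Delta_{**}$-degrees, which is invisible to $(-)^0$. For the object-fixing pushouts (of the $U(j)$) this follows from the fact that Lemma \ref{sqdato} is an honest model structure, so its trivial cofibrations are hom-wise weak equivalences in $\bS^{\Delta_{**}}$; the delicate case is a pushout along $(\id/\emptyset)(\bt\to\cH)$, which attaches an object homotopy-equivalent to an existing one and alters the higher-degree hom-objects. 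I expect the crux of the whole proof to be an explicit (bar-type) computation of this pushout together with a left-properness argument for the fixed-object structures of Lemma \ref{sqdato}, showing that the new hom-objects remain levelwise weakly equivalent to the old ones; this is the quasi-descent-datum analogue of Bergner's hardest lemma, and once it is in place the proof is complete.
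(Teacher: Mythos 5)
Your proposal follows essentially the same route as the paper: the same generating sets $I$ and $J$ (the $U$-images of the generating (trivial) Reedy cofibrations of $\bS^{\Delta_{**}}$, the object-adjoining map $\emptyset\to\oO$, and the $(\id/\emptyset)$-image of Bergner's class $(A2)$), the same identification of the $I$- and $J$-injectives via the adjunctions of Lemma \ref{sqdatadjoints}, and the same appeal to the recognition theorem. Two small remarks: Bergner's $(A2)$ is a \emph{set} of inclusions $\{x\}\to\cH$ rather than a single free-living homotopy equivalence, and the step you flag as the crux --- that pushouts along $(\id/\emptyset)(\bt\to\cH)$ are hom-wise weak equivalences in all $\Delta_{**}$-degrees --- is precisely the point the paper dispatches by citing ``the corresponding properties in $s\Cat$ and $\bS^{\Delta_{**}}$'', i.e.\ Bergner's pushout analysis transported along the degree-preserving convolution product with degree-$0$ hom-objects; your proposed bar-type computation is the right way to make that explicit.
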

\begin{proof}
Note that these conditions are equivalent to saying that $f$ is a weak equivalence or fibration provided that both $f^0:\cD^0 \to \cE^0$ and the maps 
$\cHHom_{\cD}(a, b)\to \cHHom_{\cE}(fa, fb)$ are weak equivalences or fibrations. This follows because  the functor $\bS^{\Delta_{**}} \to \bS$ given by $X \mapsto X^0$ preserves both weak equivalences and fibrations (by definition of the Reedy model structure).

For $U$ as in the proof of Lemma \ref{sqdatadjoints}, define $(I1)$  to be the class consisting of the images under $U$ of the generating cofibrations from Proposition \ref{reedymodel}, and let $(I2)$ be the single morphism $\emptyset \to \oO$ from the category with no objects to the category with one object and no non-identity morphisms. Define $(I):= (I1) \cup (I2)$.

Define  $(J1)$  to be the class consisting of the images under $U$ of the generating trivial cofibrations from Proposition \ref{reedymodel}, and let $(J2)$ be the image of the class $(A2)$ from \cite{bergner} under the functor $(\id/\emptyset):s\Cat \to sQ\Dat$ of Lemma \ref{sqdatadjoints}. Define $(J):=(J1) \cup (J2)$.

For a class $C$ of morphisms, say that a morphism $f$ is $S$-injective if it has the left lifting property (LLP) with respect to S. 
From the adjoint property of the functor $U$, it follows that 
a morphism  $f: \cD \to \cE$ is  $(J1)$-injective, (resp. $(I1)$-injective) if and only if $f$ satisfies $(F1)$ (resp. $(F1)$ and $(W1)$). The  morphism $f$ is  $(I2)$-injective if and only if it is surjective on objects. By Lemma \ref{sqdatadjoints}, $f$ is $(J2)$-injective if and only if $f^0: \cD^0 \to \cE^0$ is $(A2)$-injective.

Since cofibrations in $\bS$, concentrated in degree $0$ in $\bS^{\Delta_{**}}$, become cofibrations in the model structure of Proposition \ref{reedymodel},  the images  under $(\id/\emptyset)$ of the classes $(C1)$, $(C2)$, $(A1)$ and $(A2)$ from \cite{bergner} lie in $(I1)$, $(I2)$, $(J1)$ and $(J2)$ respectively. It thus follows from \cite{bergner} Theorem 1.1 that for any  $J$-injective (resp. $I$-injective) morphism $f: \cD \to \cE$, the morphism  $f^0: \cD^0 \to \cE^0$ is a fibration (resp. a trivial fibration) in $s\Cat$. Looking at the LLP in $\bS^{\Delta_{**}}$, we then deduce that $J$-injectives  (resp. $I$-injectives) are precisely $(F)$  (resp. $(F) \cap (W)$ ) in $sQ\Dat$.

We now verify the conditions of \cite{hovey} Theorem 2.1.19. It is immediate that the class $(W)$ has the two out of three property and is closed under retracts. The domains of $(I2)$ and $(J2)$ are small, and similarly to the proof of \cite{bergner} Theorem 1.1, the smallness of the generating (trivial) cofibrations in $\bS^{\Delta_{**}}$ means that the domains of $(I1)$ and $(J1)$ are small relative to $(I1)$-cells and $(J1)$-cells, respectively. Thus the domains of  $(I)$ and $(J)$  are small relative to $(I)$-cells and $(J)$-cells, respectively. We have shown that the class of $(I)$-injectives is the intersection of $(W)$ with the class of $(J)$-injectives.

It remains only to show that all $(J)$-cells are in $(W)$ and are $(I)$-cofibrations. Since cofibrations in $\bS$, concentrated in degree $0$ in $\bS^{\Delta_{**}}$, become cofibrations in the model structure of Proposition \ref{reedymodel}, the functor $(\id/\emptyset)$ maps cofibrations in $s\Cat$ to $(I)$-cofibrations. Thus $(J2)$-cells are $(I)$-cofibrations, since $(\id/\emptyset)$ preserves all colimits. Likewise, $U$  preserves all colimits and maps cofibrations to $(I)$-cofibrations, so   $(J1)$-cells are $(I)$-cofibrations, and therefore $(J)$-cells are  $(I)$-cofibrations.
Since $(J) \subset (W)$, it follows immediately from the definitions and the corresponding properties in $s\Cat$ and $\bS^{\Delta_{**}}$ that all $(J)$-cells also lie in $(W)$. 
\end{proof}

Note that the functor $(-)^0: sQ\Dat\to s\Cat$ of Lemma \ref{sqdatadjoints} is then both left and right Quillen, while the functor $\cD \mapsto \prod_{a,b \in \Ob \cD} \cHHom_{\cD}(a,b)$ from $sQ\Dat \to \bS^{\Delta_{**}}$ is right Quillen.

\begin{definition}\label{dpdmodel}
We define model structures on $sQ\Dpd, sQ\Dpd_{\cO}$ by requiring that a morphism $f$ is a weak equivalence or a fibration whenever the underlying map in $sQ\Dat, sQ\Dat_{\cO}$ is so. \cite{Hirschhorn} Theorem 11.3.2 shows that these are indeed  model structures, since the forgetful functors preserve filtered colimits, and have  left adjoints (denoted by $\cD \mapsto \cD^{\gpd}$), given by formally inverting morphisms in $\cD^0$.
\end{definition}

\subsection{Simplicial bicomonoids}

\begin{lemma}
There is a cofibrantly generated  simplicial model structure on $QMM^*(\bS)$ for which a morphism $f$ is a fibration or weak equivalence whenever the underlying map in the Reedy model category $\bS^{\Delta_{**}\by \Delta_{**}}$ is so.
\end{lemma}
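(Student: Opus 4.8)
The plan is to construct this model structure by transfer along the forgetful functor $U\colon QMM^*(\bS) \to \bS^{\Delta_{**}\by\Delta_{**}}$, following verbatim the strategy used for $QM^*(\bS)$ in the proof of Lemma \ref{qmsmodel}. The only inputs needed are a cofibrant generation of the Reedy model structure on the target, plus the existence of a well-behaved left adjoint to $U$.

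First I would record that $\Delta_{**}\by\Delta_{**}$ is again a Reedy category, being a product of two copies of the Reedy category $\Delta_{**}$ featuring in the proof of Proposition \ref{reedymodel}; consequently $\bS^{\Delta_{**}\by\Delta_{**}}$ carries a Reedy simplicial model structure, with weak equivalences and fibrations detected objectwise. Since the inverse subcategory $(\Delta_{**}\by\Delta_{**})_-$ coincides with $\Delta_-\by\Delta_-$, the relevant matching objects are computed exactly as in $\bS^{\Delta\by\Delta}$, so (as in Proposition \ref{reedymodel}, citing \cite{sht} \S VII.4) this Reedy structure is cofibrantly generated by \emph{finite} objects.

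Next I would verify the two hypotheses of \cite{Hirschhorn} Theorem 11.3.2. A quasi-bicomonoid is a lax monoidal functor $\Delta_{**}\by\Delta_{**}\to\bS$, and $U$ forgets the lax structure; the free such functor $F$ exists because $U$ is monadic for a finitary monad $T=UF$ on $\bS^{\Delta_{**}\by\Delta_{**}}$ (the structure maps $\zeta$ are assembled from finite products, so $T$ and hence $U$ preserve filtered colimits, computed objectwise in $\bS$). It follows that $F$ takes finite objects to finite objects, so for each generating (trivial) cofibration $I$ of $\bS^{\Delta_{**}\by\Delta_{**}}$ the image $FI$ is finite in $QMM^*(\bS)$ and a fortiori permits the small object argument. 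Granting the second hypothesis, Theorem 11.3.2 then yields a cofibrantly generated model structure on $QMM^*(\bS)$ with fibrations and weak equivalences exactly those maps $f$ for which $Uf$ is a Reedy fibration, respectively levelwise weak equivalence; the simplicial structure is inherited objectwise from $\bS^{\Delta_{**}\by\Delta_{**}}$, so that $(E^K)^{m,n}:=(E^{m,n})^K$, precisely as for $QM^*(\bS)$ and as recorded for $sQ\Dat$ in Lemma \ref{sqdato}.

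The step requiring genuine care, which I expect to be the main obstacle, is the acyclicity condition of Theorem 11.3.2: that $U$ sends relative $FJ$-cell complexes to weak equivalences. This is the same point silently used in Lemma \ref{qmsmodel}, and I would dispatch it by the path-object argument, which applies here because $QMM^*(\bS)$ is tensored and cotensored over $\bS$. Concretely, the cotensor $E^{\Delta^1}$ supplies a functorial (very good) path object for every object $E$, and combined with the fibrant replacements furnished by the small object argument this forces every $FJ$-cell complex to become a weak equivalence after applying $U$. Once this is in place, the remainder of the argument is formal and identical to Lemma \ref{qmsmodel}.
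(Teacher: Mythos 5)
Your proposal is correct and follows essentially the same route as the paper, whose proof is simply that the transfer argument of Lemma \ref{qmsmodel} (lifting the cofibrantly generated Reedy structure along the forgetful functor via \cite{Hirschhorn} Theorem 11.3.2, with smallness supplied by finiteness of the free objects $FI$) carries over verbatim. Your explicit treatment of the acyclicity condition via the path-object argument fills in a step the paper leaves implicit, but it is a refinement of, not a departure from, the paper's approach.
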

\begin{proof}
The proof of Lemma \ref{qmsmodel} carries over.
\end{proof}

\begin{lemma}\label{diagquillen}
The diagonal functor $\diag: QMM^*(\bS) \to QM^*(\bS)$ is right Quillen.
\end{lemma}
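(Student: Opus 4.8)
===PROOF PROPOSAL===

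The plan is to show that $\diag\colon QMM^*(\bS) \to QM^*(\bS)$ is right Quillen by verifying that it preserves fibrations and trivial fibrations, and exhibiting a left adjoint. The left adjoint $\diag^*$ already exists (by the Special Adjoint Functor Theorem, as recorded just after Definition \ref{qmmdef}), so the adjointness half of ``right Quillen'' is free; what remains is the preservation statement. Both model structures on $QMM^*(\bS)$ and $QM^*(\bS)$ are the ones transferred from the Reedy structures on $\bS^{\Delta_{**}\by\Delta_{**}}$ and $\bS^{\Delta_{**}}$ respectively (Lemma \ref{qmsmodel} and its bicomonoid analogue), in which fibrations and weak equivalences are created by the forgetful functors to these diagram categories. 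Hence it suffices to work at the level of the underlying diagrams and show that the diagonal restriction functor $\bS^{\Delta_{**}\by\Delta_{**}} \to \bS^{\Delta_{**}}$ along $\Delta_{**}\xra{\mathrm{diag}}\Delta_{**}\by\Delta_{**}$ sends Reedy fibrations to Reedy fibrations and Reedy trivial fibrations to Reedy trivial fibrations.

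First I would reduce to the level of underlying $\Delta_{**}$-diagrams: since $\diag$ on $QMM^*(\bS)$ is computed by $(\diag E)^n = E^{nn}$ with the product and identity unchanged (Definition just after Definition \ref{qmmdef}), and since fibrations/weak equivalences in both quasi-(bi)comonoid categories are detected on the underlying objects of $\bS^{\Delta_{**}\by\Delta_{**}}$ and $\bS^{\Delta_{**}}$, it is enough to treat the restriction functor on these Reedy diagram categories. Weak equivalences are immediate: a levelwise weak equivalence $f^{m,n}$ in $\bS^{\Delta_{**}\by\Delta_{**}}$ restricts to the levelwise weak equivalence $f^{n,n}$, so $\diag$ preserves weak equivalences. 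The real content is in the fibrations, which are governed by relative matching maps.

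The key step is therefore to compare matching objects. For a Reedy fibration in $\bS^{\Delta_{**}\by\Delta_{**}}$, the relative matching map $E^{m,n} \to F^{m,n}\by_{M^{m,n}F}M^{m,n}E$ is a fibration for all $(m,n)$, where $M^{m,n}$ is the matching object for the product Reedy category. What I need is that the diagonal matching map for $\diag$, namely $(\diag E)^n \to (\diag F)^n \by_{M^n(\diag F)}M^n(\diag E)$, factors through (or is a pullback/retract of) an appropriate product-Reedy matching map at $(n,n)$. Concretely, the matching object $M^n(\diag E)$ is built from the degeneracies $\sigma^i = \sigma^i_h\sigma^i_v$ of the diagonal, and I would identify the canonical comparison map from $M^{n,n}E$ to $M^n(\diag E)$ induced by restricting the indexing category of matching data. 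The cleanest route is to observe that $\Delta_{**}$ is a Reedy category whose degeneracies generate the inverse part, and that the diagonal embedding $\Delta_{**}\to\Delta_{**}\by\Delta_{**}$ is a Reedy functor in the sense that it respects the degree filtrations and sends degeneracies to degeneracies; restriction along such a functor is right Quillen for the Reedy structures precisely when the induced maps on latching/matching objects are well behaved.

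The main obstacle I anticipate is the matching-object comparison: unlike the plain product $\Delta\by\Delta$ (where the diagonal restriction to a bisimplicial set yields the diagonal simplicial set and matching objects are classical), here the absence of $\pd^0,\pd^{n+1}$ in $\Delta_{**}$ and the interaction of the horizontal and vertical degeneracies in $\sigma^i=\sigma^i_h\sigma^i_v$ means one must check directly that the diagonal matching map is a (trivial) fibration whenever the product matching maps are. I would handle this by factoring the diagonal degeneracy system through the product degeneracy system and using that a pullback or composite of fibrations is a fibration, together with the standard fact that a map inducing fibrations on all relative matching objects is a Reedy fibration. Once this factorization is established, preservation of fibrations and trivial fibrations follows formally, and combined with the existing left adjoint $\diag^*$ this proves $\diag$ is right Quillen.
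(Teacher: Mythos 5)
Your setup is exactly right and coincides with the paper's: both model structures are created by the forgetful functors to $\bS^{\Delta_{**}\by\Delta_{**}}$ and $\bS^{\Delta_{**}}$, the left adjoint $\diag^*$ exists by the Special Adjoint Functor Theorem, and everything reduces to showing that $\diag$ sends Reedy (trivial) fibrations of diagrams to Reedy (trivial) fibrations. From that point on, however, the proposal stops short of a proof. The comparison between $M^n(\diag E)$ (built from the composite degeneracies $\sigma^i_h\sigma^i_v$) and the product-Reedy matching objects $M^{i,j}E$ is the entire mathematical content of the lemma, and you explicitly defer it as ``the main obstacle I anticipate,'' offering only the plan of ``factoring the diagonal degeneracy system through the product degeneracy system.'' Carrying that out requires constructing the comparison map $M^{n,n}E\to M^n(\diag E)$ and then proving, by an induction on matching objects of matching objects, that maps of the form $M^{n,n}E \to M^{n,n}F\by_{M^n(\diag F)}M^n(\diag E)$ are fibrations whenever $E\to F$ is a Reedy fibration; none of this is supplied. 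The appeal to restriction along a Reedy functor being right Quillen ``precisely when the induced maps on latching/matching objects are well behaved'' is not a usable criterion as stated, so as written the argument has a genuine gap at its crucial step.

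The paper avoids the matching-object analysis entirely by working on the adjoint side: since Reedy cofibrations in $\bS^{\Delta_{**}}$ are just levelwise injections (Proposition \ref{reedymodel}), and the generating (trivial) cofibrations are built from objects $\Theta_n,\pd\Theta_n$ corepresenting $X^n$ and $M^nX$, it suffices to observe that the set-level left adjoint $\diag^*\co\Set^{\Delta_{**}}\to\Set^{\Delta_{**}\by\Delta_{**}}$ preserves monomorphisms (as for bisimplicial sets, \cite{sht} Theorem IV.3.15); this makes $\diag^*$ left Quillen and hence $\diag$ right Quillen. If you wish to keep the direct approach you must actually execute the matching-object induction; otherwise I would recommend switching to the cofibration side, where the relevant class of maps is far easier to control.
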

\begin{proof}
The Special Adjoint Functor Theorem (\cite{mac} Theorem V.8.2) implies that $\diag: QMM^*(\bS) \to QM^*(\bS)$ has a  left adjoint $\diag^*$. It therefore suffices to show that for any (trivial) fibration $f$   in  $\bS^{\Delta_{**}\by \Delta_{**}}$, the map $\diag f$ is a (trivial) fibration in $\bS^{\Delta_{**}}$. 

Now, let $\Theta_n, \pd\Theta_n \in \Set^{\Delta_{**}}$ be given by $\Hom_{\Set^{\Delta_{**}}}(\Theta_n, X)= X^n$ and $\Hom_{\Set^{\Delta_{**}}}(\pd\Theta_n, X)= M^nX$, and similarly let $\Theta_{ij}, \pd\Theta_{ij} \in \Set^{\Delta_{**}\by\Delta_{**}}$ be given by $\Hom_{\Set^{\Delta_{**}\by\Delta_{**}}}(\Theta_{ij}, X)= X^{i,j}$ and $\Hom_{\Set^{\Delta_{**}\by\Delta_{**}}}(\pd\Theta_{ij}, X)= M^{i,j}X$. Latching object arguments (adapting \cite{sht} Proposition VII.1.7) show that the maps $\pd\Theta_n \subset \Theta_n$ generate all  monomorphisms in $\Set^{\Delta_{**}}$, and likewise the maps $\pd\Theta_{ij} \subset \Theta_{ij}$  generate all  monomorphisms in $\Set^{\Delta_{**}\by\Delta_{**}}$.

The diagonal functor $\diag: \Set^{\Delta_{**}\by\Delta_{**}} \to \Set^{\Delta_{**}}$ has a left adjoint $\diag^{*}$, and we just observe that this preserves monomorphisms (much like the case of bisimplicial sets considered in \cite{sht} Theorem IV.3.15). Therefore the functor $\diag^{*}: \bS^{\Delta_{**}}\to \bS^{\Delta_{**}\by \Delta_{**}}$ preserves Reedy (trivial) cofibrations, so is left Quillen, making $\diag$ right Quillen.
\end{proof}

Note that we may regard $QM^*(\bS)$ and $QMM*(\bS)$ as being simplicial diagrams in $QM^*(\Set)$ and $QMM^*(\Set)$, respectively. This will allow us to extend many of the constructions of \S \ref{leinrev} to the simplicial case.

\subsection{Nerves}\label{snerves}
In Remark \ref{shob}, we saw how $\mmc$ enables us to define the space of s.h. bialgebras over a fixed object. However, as was first noted in  \cite{loop}, there is no satisfactory general way to define morphisms of s.h. algebras. The bar construction of \cite{loop} gives a definition when the monad is an operad, but does not generalise to s.h. bialgebras. Instead, we will now introduce a space of s.h. $\bI$-diagrams of s.h. bialgebras for any small category $\bI$, allowing us to mimic the nerve construction and thus to construct a simplicial space of s.h. bialgebras.

\subsubsection{$\cMC$}

\begin{definition}\label{pkdefn}
Given $K \in \bS$, define the functor $P_K : sQ\Dat_{K_0} \to QMM^*(\bS)$ by extending Definition \ref{pnset} to simplicial sets. Let $P_n:= P_{\Delta^n}$; as in Lemma \ref{pnleft2}, $P_n$ has a left adjoint $P_n^*$. 
\end{definition}

\begin{proposition}\label{pnquillen}
The functor $P_K: sQ\Dat_{K_0} \to QMM^*(\bS)$ is right Quillen.
\end{proposition}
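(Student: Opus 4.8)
The plan is to exhibit a left adjoint $P_K^*$ and then check that $P_K$ carries fibrations to fibrations and trivial fibrations to trivial fibrations; by Lemma \ref{sqdato} and the definition of the model structure on $QMM^*(\bS)$ this amounts to showing that $P_K(f)$ is a Reedy (trivial) fibration in $\bS^{\Delta_{**}\by\Delta_{**}}$ whenever, for all $a,b$, the map $\cHHom_{\cD}(a,b)\to\cHHom_{\cE}(a,b)$ is a Reedy (trivial) fibration in $\bS^{\Delta_{**}}$. The left adjoint exists: for $K=\Delta^n$ it is Lemma \ref{pnleft2}, and for general $K$ it may be obtained either by the Special Adjoint Functor Theorem (\cite{mac} V.8.2, since $P_K$ preserves all limits, products and powers doing so) or by a direct construction generalising Lemmas \ref{pnleft2} and \ref{pkleft2}.

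First I would record a structural description of $P_K$. Writing $U:\bS\to\Set^{\Delta_{**}^{\op}}$ for the forgetful functor and $b:UK\to K_0\by K_0$ as in Lemma \ref{pnleft}, one has $UK\cong\coprod_{(i,j)\in K_0\by K_0}K(i,j)$, where $K(i,j)\in\Set^{\Delta_{**}^{\op}}$ is the sub-presheaf of simplices from $i$ to $j$. Hence, generalising $P_n\cD=\prod_{0\le i\le j\le n}\cD(i,j)^{\nabla^{j-i}}$ of Lemma \ref{pnleft}, one obtains
$$
P_K(\cD)=\prod_{(i,j)\in K_0\by K_0}\cD(i,j)^{K(i,j)},
$$
where the cotensor is taken in the vertical variable: for $A\in\bS^{\Delta_{**}}$ and $S\in\Set^{\Delta_{**}^{\op}}$ we set $(A^{S})^{a,b}=(A^{a})^{S_b}$, the $S_b$-fold power of the simplicial set $A^{a}$. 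The horizontal $\Delta_{**}$-structure is that of $\cD$ (as in Definition \ref{pnset}), and the vertical one is induced from $S$.

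The key step is then to analyse the relative matching maps of $P_K(f)$. Using the standard identification of the product Reedy structure on $\bS^{\Delta_{**}\by\Delta_{**}}$ with the iterated Reedy structure on $(\bS^{\Delta_{**}})^{\Delta_{**}}$, with $\bS^{\Delta_{**}}$ carrying the Reedy model structure of Proposition \ref{reedymodel}, it suffices to show that each relative vertical matching map is a (trivial) fibration in $\bS^{\Delta_{**}}$. Since powering turns the latching colimits of the presheaf $S=K(i,j)$ into matching limits, the vertical matching object of $A^{S}$ at degree $b$ is $A^{L_bS}$, where $L_bS\hookrightarrow S_b$ is the latching inclusion of $S$ (an injection of sets, as $K(i,j)$ is a sub-presheaf of a simplicial set). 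Consequently the relative vertical matching map is the pullback-power
$$
A^{S_b}\to B^{S_b}\by_{B^{L_bS}}A^{L_bS}
$$
of $A\to B$ along $L_bS\hookrightarrow S_b$. Splitting $S_b=L_bS\sqcup(S_b\setminus L_bS)$ identifies this with $\id_{A^{L_bS}}\by(A\to B)^{S_b\setminus L_bS}$, which is a Reedy (trivial) fibration as soon as $A\to B$ is, because Reedy (trivial) fibrations in $\bS^{\Delta_{**}}$ are stable under powers by sets and under products. Taking $A\to B$ to be $\cD(i,j)\to\cE(i,j)$ (a Reedy, resp.\ trivial Reedy, fibration by hypothesis via Lemma \ref{sqdato}) and then the product over $(i,j)$ shows $P_K(f)$ is a Reedy (trivial) fibration, as required.

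I expect the main obstacle to be the bookkeeping of the third paragraph: correctly computing the vertical matching objects of the cotensor and verifying that the relative matching map of $P_K(f)$ is \emph{exactly} the pullback-power along the latching inclusion of $K(i,j)$. This rests on the decomposition $UK\cong\coprod_{(i,j)}K(i,j)$ and on the compatibility of the product over $x\in K_b$ with the vertical Reedy structure; once that is in place, the closure properties (powers and products of (trivial) fibrations) are routine. An equivalent route, which I would mention as a cross-check, is to verify directly that $P_K^*$ preserves cofibrations and trivial cofibrations, as in the left-Quillen argument of Lemma \ref{diagquillen}.
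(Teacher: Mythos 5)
Your proposal is correct and follows essentially the same route as the paper: both compute the vertical matching objects of $P_K(\cD)$ via the simplicial latching objects $L_bK$ (equivalently $L_bS$ for $S=K(i,j)$), and both use injectivity of the latching inclusion to split the relative matching map as a product of (trivial) fibrations, the only cosmetic differences being your use of the iterated Reedy structure $(\bS^{\Delta_{**}})^{\Delta_{**}}$ in place of the paper's explicit fibre-product formula for $M^{ij}$, and your more explicit attention to the existence of the left adjoint.
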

\begin{proof}
Since $P_K$ is defined as a limit, it preserves arbitrary limits, so we just need to show that it preserves (trivial) fibrations. We may regard an object $X$ of  $\bS^{\Delta_{**}\by \Delta_{**}}$ as a $\Delta_{**}$ diagram in $\bS^{\Delta_{**}}$, by $i \mapsto X^{i, \bt}$. Denote the associated Reedy matching objects by $M^i_{\hor}X  \in \bS^{\Delta_{**}}$. Similarly, there is  a diagram $j \mapsto X^{ \bt, j}$, and we denote the associated matching objects by $M^j_{\ver}X \in \bS^{\Delta_{**}}$. Note that the Reedy matching objects in $\bS^{\Delta_{**}\by \Delta_{**}}$ are then given by 
$$
M^{ij}X = (M^i_{\hor}X)^j\by_{M^i_{\hor}M^j_{\ver}X}(M^j_{\ver}X)^i,
$$
as an immediate consequence of the characterisation of matching objects in \cite{hovey} Definition 5.2.2.

Now, for $\cD \in sQ\Dat_{K_0}$, the horizontal matching object $M^{i}_{\hor}P_K(\cD)$ in $\bS^{\Delta_{**}}$ is given by 
$$
(M^i_{\hor}P_K(\cD))^j= M^i(P_K(\cD)^j)=\prod_{x \in K_j} M^i\cD((\pd_0)^{j}x, (\pd_1)^{j}x).
$$
Next, observe that for $X,K \in \bS$ the object $S \in \bS^{\Delta_{**}}$ given by $S^n= X^{K_n}$, the matching object $M^nS$ is given by 
$$
M^nS=\{(f_0,f_1,\ldots,f_{n-1}) \in (X^{K_{n-1}})^{n}\,|\, \sigma^if_j=\sigma^{j-1}f_i \in X^{K_{n-2}}\, \forall i<j \}\cong  X^{L_nK}, 
$$  
where $L_nK$ is the $n$th simplicial latching object of $K$ (of \cite{sht} \S VII.1). 
Thus  the  vertical matching object
$M^{j}_{\ver}P_K(\cD)$ is given by
$$
(M^{j}_{\ver}P_K(\cD))^i= \prod_{x \in L_jK} \cD((\pd_0)^{j}x, (\pd_1)^{j}x)^i.
$$

Therefore,  since $L_jK \to K_j$ is always injective, 
$$
M^{ij}P_K(\cD) \cong (\prod_{x \in L_jK}\cD((\pd_0)^{j}x, (\pd_1)^{j}x)^i)  \by (\prod_{x \in K_j - L_jK} M^i\cD((\pd_0)^{j}x, (\pd_1)^{j}x)),
$$
which yields the required result. 

In fact, we may adapt this further to say that for any cofibration $i:J \into K$ in $\bS$  and any (trivial) fibration $\cD \to \cE$ in $sQ\Dat_{K_0}$, the map
$$
P_K(\cD) \to P_K(\cE)\by_{P_J(i_0^{-1}\cE)}P_J(i_0^{-1}\cD)
$$
is a (trivial) fibration.
\end{proof}

\begin{definition}\label{MCdef}
Define a functor $\cMC: sQ\Dat \to s\bS$ to bisimplicial sets  by 
$$
\cMC(\cD)_{(n)}:= \coprod_{f: [0,n] \to \Ob \cD} \mmc(\diag P_n(f^{-1}\cD)) \in \bS.
$$ 
\end{definition}

\begin{definition}
Given a simplicial set $X$, we define $X \in s\bS$ to be the constant space $X_{(n)} := X$ for all $n$. By contrast, we define $X^{\hor}$ by $X^{\hor}_{(n)}:=X_n$. 
\end{definition}

\begin{remark}\label{shdiagram}
Assume $\cD$ comes from a distributive monad-comonad pair $(\top, \bot)$ on a simplicial category $\cB$, as in Proposition \ref{enrichtopbot}. Lemma \ref{pknerve}  then allows us   to think of $\cMC(\cD)_{(n)}$ as being the space of s.h. $\on$-diagrams of s.h. $(\top, \bot)$-bialgebras over $\cB$, so $\cMC$ is a kind of nerve construction. More generally, for any small category $\bI$, we think of 
$ \coprod_{f: \Ob \bI \to \Ob \cD} \mmc(\diag P_{B\bI}(f^{-1}\cD))$ as the  space of s.h. $\bI$-diagrams of s.h. $(\top, \bot)$-bialgebras over $\cB$, noting that this is just $\HHom_{s\bS}((B\bI)^{\hor}, \cMC(\cD))$.
\end{remark}

\begin{lemma}\label{MCfib}
Given a (trivial) fibration $f:\cD \to \cE$ in $sQ\Dat$, the morphism 
$$
\cMC(\cD) \to \cMC(\cE)\by_{\cosk_0(\Ob \cE)^{\hor}}\cosk_0(\Ob \cD)^{\hor}
$$
is a (trivial) fibration in the Reedy category  $s\bS= \bS^{\Delta^{\op}}$, where $\cosk_0: \Set \to \bS$ denotes the $0$-coskeleton (\cite{sht} \S IV.3).
\end{lemma}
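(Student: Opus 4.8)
The plan is to verify the Reedy condition in $s\bS = \bS^{\Delta^{\op}}$ level by level: at each level $n$ I would identify the relative matching map, show it splits as a coproduct indexed by the vertex-labellings $\phi\colon[0,n]\to\Ob\cD$, and then recognize each summand as the image under $\mmc\diag$ of a map supplied by the strengthened form of Proposition \ref{pnquillen}.

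First I would record the structural description of $\cMC(\cD)$ as a simplicial space. Writing $\cMC(\cD)_{(n)}=\coprod_{\phi\colon[0,n]\to\Ob\cD}\mmc(\diag P_n(\phi^{-1}\cD))$, the assignment $\phi\mapsto\phi$ is the natural projection to $\cosk_0(\Ob\cD)^{\hor}$, since $\cosk_0(\Ob\cD)_n=(\Ob\cD)^{[0,n]}$ is precisely the indexing set; the map of the statement is the induced map to the fibre product, which over a fixed $\phi$ is $\mmc\diag P_n(\phi^{-1}\cD)\to\mmc\diag P_n(\psi^{-1}\cE)$, where $\psi=g\circ\phi$ for $g\colon\Ob\cD\to\Ob\cE$ the map of objects underlying $f$. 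Since $P_n=P_{\Delta^n}$ is built as a limit over the simplices of $\Delta^n$ and $\mmc\diag$ preserves limits, I would identify the matching object $M_n\cMC(\cD)$ with $\coprod_{\phi\colon[0,n]\to\Ob\cD}\mmc(\diag P_{\partial\Delta^n}(\phi^{-1}\cD))$ for $n\ge 1$, the boundary $\partial\Delta^n$ carrying the same vertices as $\Delta^n$.

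Next I would observe that the object-labelling factor contributes nothing to the relative matching: because $\cosk_0(S)$ is $0$-coskeletal its matching maps are isomorphisms for $n\ge 1$, so the same holds for $\cosk_0(S)^{\hor}$ in $s\bS$, and the labelling $\phi$ is rigidly determined. Consequently the relative matching map at level $n$ decomposes as the coproduct over $\phi$ of the maps
$$
\mmc\diag P_{\Delta^n}(\phi^{-1}\cD)\to \mmc\diag P_{\Delta^n}(\psi^{-1}\cE)\times_{\mmc\diag P_{\partial\Delta^n}(\psi^{-1}\cE)}\mmc\diag P_{\partial\Delta^n}(\phi^{-1}\cD).
$$
To see each is a (trivial) fibration, I would first note that $f$ being a (trivial) fibration in $sQ\Dat$ forces $\phi^{-1}\cD\to\psi^{-1}\cE$ to be a (trivial) fibration in $sQ\Dat_{[0,n]}$, by conditions $(F1)$, $(W1)$ of Proposition \ref{sqdatmodel} together with Lemma \ref{sqdato}. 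Applying the strengthened conclusion of Proposition \ref{pnquillen} to the cofibration $\partial\Delta^n\into\Delta^n$ then shows that $P_{\Delta^n}(\phi^{-1}\cD)\to P_{\Delta^n}(\psi^{-1}\cE)\times_{P_{\partial\Delta^n}(\psi^{-1}\cE)}P_{\partial\Delta^n}(\phi^{-1}\cD)$ is a (trivial) fibration in $QMM^*(\bS)$. As $\diag$ (Lemma \ref{diagquillen}) and $\mmc=\HHom_{QM^*(\bS)}(\Xi,-)$ (Corollary \ref{mcquillen}, Proposition \ref{rhommc}) are right Quillen, their composite preserves (trivial) fibrations and, being a right adjoint, preserves the pullback, yielding the displayed map as a (trivial) fibration in $\bS$. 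Coproducts of (trivial) fibrations in $\bS$ remain such, and the case $n=0$, where $M_0$ is a point and the map is $\coprod_{B}$ of $\mmc(\cD(B,B))\to\mmc(\cE(gB,gB))$, follows directly from Corollary \ref{mcquillen}. Hence all relative matching maps are (trivial) fibrations and the morphism is a Reedy (trivial) fibration.

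The main obstacle I anticipate is the bookkeeping of the second and third paragraphs: correctly identifying $M_n\cMC(\cD)$ with the $P_{\partial\Delta^n}$-data and checking that $0$-coskeletality of the $\cosk_0$-factor makes the relative matching map split cleanly over $\phi\colon[0,n]\to\Ob\cD$. Once this identification is secured, the (trivial) fibration property is a formal consequence of feeding the strengthened Proposition \ref{pnquillen} through the limit-preserving right Quillen functors $\diag$ and $\mmc$.
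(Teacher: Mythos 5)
Your proposal is correct and follows essentially the same route as the paper's own proof: identify the relative Reedy matching map, use $(\pd\Delta^n)_0=(\Delta^n)_0$ (equivalently, $0$-coskeletality) to split it as a coproduct over labellings $\phi\colon[0,n]\to\Ob\cD$ of maps of the form $\mmc\diag P_{\Delta^n}(\phi^{-1}\cD)\to \mmc\diag P_{\Delta^n}(\psi^{-1}\cE)\by_{\mmc\diag P_{\pd\Delta^n}(\psi^{-1}\cE)}\mmc\diag P_{\pd\Delta^n}(\phi^{-1}\cD)$, and then feed the strengthened form of Proposition \ref{pnquillen} through the limit-preserving right Quillen functors $\diag$ and $\mmc$. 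No gaps; the extra care you take over the $\cosk_0$ factor and the $n=0$ case is consistent with what the paper leaves implicit.
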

\begin{proof}
For any simplicial set $K$, $\Hom_{\bS}(K, \cosk_0S) = S^{K_0}$; since $(\pd\Delta^n)_0= (\Delta^n)_0= [0,n]$ (for $\pd\Delta^n \subset \Delta^n$ the boundary),  
the $n$th Reedy matching map of the morphism above is given by taking the coproduct over all $g:[0, n] \to \Ob \cD$ of
$$
\mmc(\diag P_n g^{-1}\cD) \to \mmc(\diag P_n g^{-1}(\Ob f)^{-1}\cE)\by_{\mmc(\diag P_{\pd \Delta^n} g^{-1}(\Ob f)^{-1}\cE)}\mmc(\diag P_{\pd \Delta^n} g^{-1}\cD).
$$

Assume that $f$ is a (trivial) fibration. By  the proof of Proposition \ref{pnquillen}, 
$$
P_n g^{-1}\cD \to  (P_n g^{-1}(\Ob f)^{-1}\cE)\by_{ P_{\pd \Delta^n} g^{-1}(\Ob f)^{-1}\cE) }( P_{\pd \Delta^n} g^{-1}\cD) 
$$
is a (trivial) fibration in $QMM^*(\bS)$, so  
Lemma \ref{diagquillen} shows that
$$
\diag P_n g^{-1}\cD\to (\diag P_n g^{-1}(\Ob f)^{-1}\cE)\by_{\diag P_{\pd \Delta^n} g^{-1}(\Ob f)^{-1}\cE) }(\diag P_{\pd \Delta^n} g^{-1}\cD) 
$$
is  a (trivial) fibration in $QM^*(\bS)$,
 so Corollary \ref{mcquillen} shows that the map above is a (trivial) fibration whenever $f$ is so.
\end{proof}

Note that considering level $0$ shows  that $\cMC: sQ\Dat\to s\bS$ is not right Quillen, since a fibration only maps to a fibration when it is surjective on objects, and a trivial fibration only maps to a trivial fibration when it is an isomorphism on objects.

\subsubsection{$\ddel$}

\begin{definition}
Let $s\Gpd$ be the category of simplicial groupoids, i.e. the full subcategory of $(\Gpd)^{\Delta^{\op}}$ consisting of those $\Gamma$ for which the simplicial set $\Ob \Gamma$ of objects is constant. As in \cite{sht} \S V.7, this has a model structure in which a morphism is a weak equivalence or fibration whenever the corresponding morphism in $s\Cat$ is so (for the model structure of Lemma \ref{catmodel}), although the description simplifies considerably, since all morphisms  in $\Gamma$ (and in particular homotopy equivalences) are isomorphisms.
\end{definition}

\begin{definition}
Given $\Gamma \in s\Gpd$, define $\bS(\Gamma)$ to be the category of simplicial $\Gamma$-representations. An object $X \in \bS(\Gamma)$ consists of $X(a) \in \bS$ for all objects $a$ of $\Gamma$, and distributive morphisms
$$
\Gamma(a,b) \by X(b) \to X(a)
$$
in $\bS$.
\end{definition}

\begin{lemma}
For $\Gamma \in sQ\Dpd$, there is a natural $\Gamma^0$-representation in $\bS$, given by mapping $a \in \Ob \Gamma$ to $\mmc(\Gamma(a,a))$. Denote this representation by $\mmc(\Gamma) \in \bS(\Gamma^0)$.
\end{lemma}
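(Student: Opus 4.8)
The plan is to define the action by \emph{conjugation} and then verify the representation axioms directly. For objects $a,b\in\Ob\Gamma$ and an element $g\in\Gamma^0(a,b)=\Gamma(a,b)^0$ --- which is invertible under $*$, with inverse $g^{-1}\in\Gamma(b,a)^0$, precisely because $\Gamma\in sQ\Dpd$ --- I set
$$
(g\cdot\omega)_n(t_1,\dots,t_n):=g*\omega_n(t_1,\dots,t_n)*g^{-1}
$$
for $\omega\in\mmc(\Gamma(b,b))$. The variance is forced by type-checking of $*$: the factors $g\in\Gamma(a,b)^0$, $\omega_n(t_1,\dots,t_n)\in\Gamma(b,b)^{n+1}$ and $g^{-1}\in\Gamma(b,a)^0$ compose to an element of $\Gamma(a,a)^{n+1}$, so this extends the adjoint action of Definition \ref{ddefdefsub} from the one-object case to the groupoid $\Gamma^0$. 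Three things then require checking: that $g\cdot\omega$ again satisfies the higher Maurer--Cartan relations, so that it lands in $\mmc(\Gamma(a,a))$; that the resulting map is a morphism of simplicial sets; and that it defines a genuine distributive, unital $\Gamma^0$-action.

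The main obstacle, and the heart of the argument, is the first point. I would verify each relation of Definition \ref{mcdefqm} for $g\cdot\omega$ by sliding the operations past the degree-zero elements $g,g^{-1}$ via Lemma \ref{qmlemma}(4)--(7). Since $g$ and $g^{-1}$ have degree $0$, relations (4)--(5) give $\pd^i(g*x*g^{-1})=g*(\pd^ix)*g^{-1}$ and relations (6)--(7) give $\sigma^i(g*x*g^{-1})=g*(\sigma^ix)*g^{-1}$, so the $\pd^i$- and $\sigma^i$-relations for $g\cdot\omega$ follow at once from those for $\omega$. For the product relation I use that $g^{-1}*g=1_b$ is the unit of $\Gamma(b,b)$, so the inner pair cancels:
$$
(g*\omega_m(\underline{s})*g^{-1})*(g*\omega_n(\underline{t})*g^{-1})=g*\omega_m(\underline{s})*\omega_n(\underline{t})*g^{-1}=g*\omega_{m+n+1}(\underline{s},0,\underline{t})*g^{-1};
$$
likewise the normalisation $\sigma^0(g\cdot\omega)_0=g*(\sigma^0\omega_0)*g^{-1}=g*g^{-1}=1_a$ is just the unit axioms.

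Conceptually these checks say exactly that, for a vertex $g$, conjugation $c_g\colon x\mapsto g*x*g^{-1}$ is an isomorphism of quasi-comonoids $\Gamma(b,b)\xrightarrow{\sim}\Gamma(a,a)$ with inverse $c_{g^{-1}}$, and applying the functor $\mmc$ produces the map $\mmc(c_g)\colon\mmc(\Gamma(b,b))\to\mmc(\Gamma(a,a))$ in the required direction. To promote this to the full simplicial action $\Gamma^0(a,b)\by\mmc(\Gamma(b,b))\to\mmc(\Gamma(a,a))$ I would carry the simplex parameter along: the products $*$ and the inversion $g\mapsto g^{-1}$ are morphisms of simplicial sets, so the displayed formula is levelwise simplicial once a diagonal on $\Delta^k$ is used to feed the parameter to the three factors (equivalently, $\mmc=\HHom_{QM^*(\bS)}(\Xi,-)$ is a simplicial functor, so it preserves the enrichment). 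Finally, distributivity and unitality are formal: using associativity of $*$ and $(g*h)^{-1}=h^{-1}*g^{-1}$ in the groupoid $\Gamma^0$, one gets $(g*h)\cdot\omega=g*h*\omega*h^{-1}*g^{-1}=g\cdot(h\cdot\omega)$ and $1_a\cdot\omega=\omega$. Naturality in $\Gamma$ is immediate, since any morphism in $sQ\Dpd$ commutes with $*$ and with inversion, hence with conjugation, and $\mmc$ is functorial.
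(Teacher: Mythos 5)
Your proposal is correct and takes essentially the same route as the paper, which simply invokes the adjoint action of Definition \ref{ddefdefsub} (written there as $(\omega,g)\mapsto g^{-1}*\omega*g$, the same conjugation action up to replacing $g$ by $g^{-1}$, harmless since $\Gamma^0$ is a groupoid). The paper leaves the verification that conjugation preserves the higher Maurer--Cartan relations implicit; your explicit check via Lemma \ref{qmlemma}(4)--(7) and the cancellation $g^{-1}*g=1$ is exactly the content being relied upon.
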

\begin{proof}
We just need to define an associative action  $\mmc(a)\by\Gamma^0(a,b)\to \mmc(b)$. As in Definition \ref{ddefdefsub}, the adjoint action  $(\omega,g)\mapsto g^{-1}*\omega*g$ suffices.
\end{proof}

\begin{definition}
Given a simplicial groupoid $\Gamma$, define the $\Gamma$-representation $W\Gamma$ by 
$$
(W\Gamma)_n(a) := \coprod_{x_0, x_1, \ldots , x_n}\Gamma(a,x_n)_n\by\Gamma(x_{n}, x_{n-1})_{n-1}\by \ldots \by \Gamma(x_1, x_0)_0,
$$
for $a \in \Ob \Gamma$. As in \cite{sht} \S V.4 (which considered only simplicial groups),  this has  operations:
\begin{eqnarray*}
\pd_i(v_n,v_{n-1}, \ldots,v_0)&=& \left\{ \begin{matrix} (\pd_iv_n,\pd_{i-1}v_{n-1},\ldots, (\pd_0v_{n-i})v_{n-i-1}, v_{n-i-2}, \ldots, v_0) & i<n,\\ (\pd_nv_n,\pd_{n-1}v_{n-1}, \ldots, \pd_1v_1) & i=n, \end{matrix} \right.\\
\sigma_i(v_n,v_{n-1}, \ldots , v_0)&=& (\sigma_iv_n,\sigma_{i-1}v_{n-1},\ldots, \sigma_0v_{n-i}, 1 ,  v_{n-i-1}, \ldots, v_0),
\end{eqnarray*}
For $h \in \Gamma(a,b)$, the action is given by 
$$
h(v_n, v_{n-1}, \ldots , v_0)=(hv_n,v_{n-1}, \ldots ,v_0).
$$
\end{definition}

\begin{definition}
Given $\Gamma \in s\Gpd$, define $\ho\LLim_{\Gamma}: \bS(\Gamma) \to (\bS \da \bar{W}\Gamma)$ by $X \mapsto X\by_{\Gamma}W\Gamma$, where $\bar{W}\Gamma:= \bt\by_{\Gamma}W\Gamma$ is a model for the classifying space of $\Gamma$ (\cite{sht} \S V.7).
\end{definition}

\begin{lemma}\label{holimquillen}
The functor $\ho\LLim_{\Gamma}: \bS(\Gamma) \to (\bS \da \bar{W}\Gamma)$ is right Quillen, where fibrations and weak equivalences in $\bS(\Gamma)$ are defined objectwise.
\end{lemma}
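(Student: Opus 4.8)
The plan is to exhibit $\ho\LLim_{\Gamma}$ as a right adjoint and then verify that it preserves fibrations and trivial fibrations; together these give the right Quillen property.

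First I would identify the left adjoint. Write $\pi\co W\Gamma \to \bar W\Gamma$ for the canonical map and, for an object $(Y \xra{p} \bar W\Gamma)$ of $(\bS \da \bar W\Gamma)$, set
$$
P(Y) := Y\by_{\bar W\Gamma} W\Gamma,
$$
regarded as a $\Gamma$-representation via the action on the second factor (well-defined since $\bar W\Gamma = \bt\by_\Gamma W\Gamma$, so $\Gamma$ acts fibrewise over $\bar W\Gamma$). As in \cite{sht} \S V.4 and \S V.7, the map $\pi$ is a principal $\Gamma$-fibration; in particular it is degreewise surjective and its fibres are $\Gamma$-torsors. Using this, the assignment sending a $\Gamma$-map $\phi$ to $y \mapsto [\phi(y,w),w]$, for any $w$ over $p(y)$ (the choice being immaterial by freeness of the action), sets up a natural bijection
$$
\Hom_{\bS(\Gamma)}(P(Y), X) \cong \Hom_{(\bS\da\bar W\Gamma)}(Y, X\by_\Gamma W\Gamma),
$$
so that $P \dashv \ho\LLim_\Gamma$. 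Equivalently, one checks directly that $X \mapsto X\by_\Gamma W\Gamma$ carries products to fibre products over $\bar W\Gamma$ and preserves limits, whence a left adjoint exists by the adjoint functor theorem.

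It then remains to show that $\ho\LLim_\Gamma$ preserves fibrations and trivial fibrations. Since fibrations and weak equivalences in $(\bS\da\bar W\Gamma)$ are created by the forgetful functor to $\bS$, it suffices to prove that if $f\co X\to X'$ is an objectwise fibration (resp.\ trivial fibration) of $\Gamma$-representations, then $X\by_\Gamma W\Gamma \to X'\by_\Gamma W\Gamma$ is a fibration (resp.\ trivial fibration) of simplicial sets. The square
$$
\begin{CD}
X\by W\Gamma @>{\pr}>> W\Gamma\\
@VVV @VV{\pi}V\\
X\by_\Gamma W\Gamma @>>> \bar W\Gamma
\end{CD}
$$
is a pullback, the isomorphism $(X\by_\Gamma W\Gamma)\by_{\bar W\Gamma}W\Gamma \cong X\by W\Gamma$ coming from freeness of the $\Gamma$-action; thus $X\by_\Gamma W\Gamma \to \bar W\Gamma$ is the twisted cartesian product (associated bundle) of the principal fibration $\pi$ with fibre $X$. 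Base-changing $\ho\LLim_\Gamma(f)$ along $\pi$ therefore yields $f\by\id_{W\Gamma}\co X\by W\Gamma \to X'\by W\Gamma$, which is a fibration (resp.\ trivial fibration) because $f$ is.

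The main obstacle is the final descent step: concluding that $\ho\LLim_\Gamma(f)$ is itself a fibration from the fact that its base-change along $\pi$ is one. For this I would invoke the twisted-cartesian-product machinery of \cite{sht} \S V.3--V.4 (generalised to groupoids as in \S V.7), by which a map of associated bundles over $\bar W\Gamma$ for the principal fibration $\pi$ is a fibration (resp.\ trivial fibration) exactly when the induced map on fibres is, such bundles being simplicially locally trivial over the principal fibration. Alternatively, one reduces to the case of a single simplicial group by decomposing $\Gamma$ over its connected components, where the statement is precisely \cite{sht} \S V.4. This establishes preservation of fibrations and trivial fibrations, completing the proof that $\ho\LLim_\Gamma$ is right Quillen.
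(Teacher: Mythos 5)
Your proof is correct and follows essentially the same route as the paper: the paper simply observes that the proof of \cite{sht} Lemma VI.4.6 carries over to simplicial groupoids and records the same left adjoint $Y \mapsto Y\by_{\bar{W}\Gamma}W\Gamma$, while you have written out the details that citation delegates (the adjunction via the principal $\Gamma$-fibration $W\Gamma \to \bar{W}\Gamma$, and preservation of (trivial) fibrations via the associated-bundle/twisted-cartesian-product machinery of \cite{sht} Ch.~V). The only loose point is the "alternative" reduction to a single simplicial group by passing to connected components, since a connected groupoid is only equivalent, not isomorphic, to its vertex group; but that is offered as a side remark and your main argument does not rely on it.
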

\begin{proof}
The proof of \cite{sht} Lemma  VI.4.6, which takes the case when $\Gamma$ is a discrete groupoid, carries over to this generality. The left adjoint is given by $X \mapsto   X\by_{\bar{W}\Gamma}W\Gamma$.
\end{proof}

\begin{definition}
Define a functor $\ddel: sQ\Dpd \to \bS$ by 
$$
\ddel(\Gamma):= \ho\LLim_{\Gamma^0}\mmc(\Gamma),
$$
making use of the forgetful functor $\bS \da  \bar{W}\Gamma \to \bS$.
Note that if $\Gamma$ has one object, then it may be regarded as an object of $QM^*(\bS)$, and this definition is consistent with Definition \ref{ddefdefsub} in this case. 
\end{definition}

\begin{proposition}\label{ddefquillen}
The functor $\ddel: sQ\Dpd \to \bS$ is right Quillen.
\end{proposition}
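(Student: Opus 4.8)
The plan is to verify the two defining properties of a right Quillen functor: the existence of a left adjoint, and the preservation of fibrations and trivial fibrations. For the first, I would follow the template used for $\diag^*$ in Section~\ref{leinrev}: once one checks that $\ddel$ preserves limits (products are immediate from the defining formula, and the general case reduces to the limit-preservation of $\mmc$, of $(-)^0$, and of the $W$-construction, together with the base-change identity below), the Special Adjoint Functor Theorem (\cite{mac} Theorem~V.8.2) supplies a left adjoint. The substance of the proof is the second property, and it is powered entirely by the two facts that $\mmc\colon QM^*(\bS)\to\bS$ is right Quillen (Corollary~\ref{mcquillen}) and that, for a \emph{fixed} simplicial groupoid $\Gamma$, the functor $\ho\LLim_{\Gamma}$ is right Quillen (Lemma~\ref{holimquillen}).

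So let $f\colon\Gamma\to\Gamma'$ be a fibration (resp. trivial fibration) in $sQ\Dpd$, and set $\phi:=f^0\colon\Gamma^0\to\Gamma'^0$. Since $(-)^0\colon sQ\Dat\to s\Cat$ is right Quillen (the remark following Proposition~\ref{sqdatmodel}, transported to groupoids via Definition~\ref{dpdmodel}), $\phi$ is a fibration (resp. trivial fibration) of simplicial groupoids. I would factor $\ddel(f)$ as
$$
\ho\LLim_{\Gamma^0}\mmc(\Gamma)\lra \ho\LLim_{\Gamma^0}\phi^*\mmc(\Gamma')\lra \ho\LLim_{\Gamma'^0}\mmc(\Gamma')=\ddel(\Gamma').
$$
The first arrow is $\ho\LLim_{\Gamma^0}$ applied to the morphism $\mmc(\Gamma)\to\phi^*\mmc(\Gamma')$ of $\Gamma^0$-representations, which is an objectwise fibration (resp. trivial fibration): on the object $a$ it is $\mmc$ of the map $\Gamma(a,a)\to\Gamma'(fa,fa)$, a fibration (resp. trivial fibration) in $QM^*(\bS)$ by condition $(F1)$ (resp. $(F1)$ and $(W1)$) of Proposition~\ref{sqdatmodel}, so Corollary~\ref{mcquillen} applies. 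Lemma~\ref{holimquillen} then makes the first arrow a fibration (resp. trivial fibration) in $\bS\da\bar W\Gamma^0$, hence in $\bS$ after forgetting.

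The second arrow is the change-of-groupoid map, and handling it is the main obstacle. The key point I would establish is a base-change identity for the Borel construction: for any map $\phi\colon G\to G'$ of simplicial groupoids and any $Y\in\bS(G')$,
$$
\phi^*Y\by_{G}WG\;\cong\;\bigl(Y\by_{G'}WG'\bigr)\by_{\bar W G'}\bar W G ,
$$
naturally in $Y$. Conceptually this says that the flat $Y$-bundle over $\bar W G$ whose monodromy factors through $\phi$ is the pullback of the corresponding bundle over $\bar W G'$; I would prove it as a \emph{strict} isomorphism by direct inspection of the explicit simplicial formulas for $W$ and $\bar W$ from \cite{sht} \S V.4 and \S V.7. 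Granting it with $Y=\mmc(\Gamma')$, the second arrow is exactly the base change of $\bar W\phi\colon\bar W\Gamma^0\to\bar W\Gamma'^0$ along $\ddel(\Gamma')\to\bar W\Gamma'^0$. Since $\bar W$ is right Quillen (the Dwyer--Kan equivalence of \cite{sht} \S V.7) and $\phi$ is a fibration (resp. trivial fibration) of simplicial groupoids, $\bar W\phi$ is a fibration (resp. trivial fibration), and therefore so is its base change. Thus $\ddel(f)$ is a composite of two fibrations (resp. trivial fibrations), as required.

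The delicate part is precisely that the square above must be a strict pullback rather than merely a homotopy pullback: only then does base-changing an honest fibration yield an honest fibration, which is what a right Quillen functor needs to produce. This is what forces the hands-on verification with the bar-construction formulas, rather than a soft homotopy-invariance argument; once the strict identity is in hand, everything else is a formal assembly of Corollary~\ref{mcquillen}, Lemma~\ref{holimquillen}, and the right-Quillen property of $(-)^0$ and $\bar W$.
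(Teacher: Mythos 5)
Your proof is correct and follows essentially the same route as the paper: the paper factors $f$ through the intermediate object $\cF:=\cE\by_{\alg^*\cE^0}\alg^*\cD^0$, handles the first leg exactly as you do via Corollary \ref{mcquillen} and Lemma \ref{holimquillen}, and obtains your base-change identity for free from limit-preservation of $\ddel$ together with the computation $\ddel(\alg^*\cG)=\bar{W}\cG$, finishing with \cite{sht} Theorem V.7.8 just as you propose. The only difference is that you verify the strict pullback square for the Borel construction by hand where the paper derives it formally, which is a matter of packaging rather than substance.
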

\begin{proof}
Since $\ddel$  clearly preserves limits, we need only show that it preserves (trivial) fibrations. Given a  (trivial) fibration $f:\cD \to \cE$ in $sQ\Dpd$, set $\cF:= \cE\by_{\alg^*\cE^0}\alg^*\cD^0$, and observe that $f$ factors as the composition of the (trivial) fibrations $g: \cD \to \cF$, $h: \cF \to \cE$. 

Now, $\mmc(\cD) \to \mmc(\cF)$ is a morphism in $\bS(\cD^0)$ which is a (trivial) fibration by Corollary \ref{mcquillen}. Lemma \ref{holimquillen} then implies that $\ddel(\cD) \to \ddel(\cF)$ is a (trivial) fibration in $\bS$, since $\Ob(\cD) = \Ob(cF)$. The morphism $\ddel(\cF) \to \ddel(\cE)$ is a pullback of $\ddel(\alg^*\cD^0) \to \ddel(\alg^*\cE^0)$, so it remains only to show that the latter is a (trivial) fibration.  

Given $E \in QM^*(\bS)$ with $E^n=E^0$ for all $n$, studying the degeneracy operations shows that $\mmc(E) =\{1\}$. Therefore $\mmc(\alg^*\cD^0)$ is the constant $\cD^0$-representation on the one-point set $\bt$, so 
$$
\ddel(\alg^*\cD^0)= \bt \by_{\cD^0}W\cD^0=  \bar{W}\cD^0,
$$ 
and similarly for $\ddel(\alg^*\cE^0)=\bar{W}\cE^0 $. The morphism $\cD^0 \to \cE^0$ is a (trivial) fibration in $s\Gpd$, so $\bar{W}\cD^0 \to \bar{W}\cE^0$ is a (trivial) fibration in $\bS$, by  \cite{sht} Theorem V.7.8.
\end{proof}
 
\begin{definition}\label{hKdef}
Given $K \in \bS$ and $\cD \in sQ\Dat$, define $h(K,\cD)\in sQ\Dat$ by $\Ob(h(K, \cD))= \Hom_{\Set}(K_0, \Ob\cD)$ and
$$
h(K,\cD)(a,b)^n:=\prod_{x \in K_n}\C(a((\pd_0)^{n}x), b((\pd_1)^{n}x)), 
$$
and note that $\cMC(\cD)_n= \mmc(h(\Delta^n, \cD))$.
\end{definition}

\begin{definition}\label{Defdef}
Define $\cDel: sQ\Dpd \to s\bS$ by $\cDel(\Gamma)_{(n)}:= \ddel(h(\Delta^n, \Gamma))$.
\end{definition}

\begin{corollary}\label{Defquillen}
The functor $\cDel: sQ\Dpd \to s\bS$ is right Quillen.
\end{corollary}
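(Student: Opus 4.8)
The plan is to follow the template of Lemma \ref{MCfib}, but to replace the coproduct-over-objects ingredient of $\cMC$ by the homotopy-quotient ingredient of $\cDel$, exploiting the fact that $\ddel$ is genuinely right Quillen (Proposition \ref{ddefquillen}) whereas the bare coproduct was not. First I would observe that $\cDel$ preserves limits: it is defined degreewise by $\cDel(\Gamma)_{(n)}=\ddel(h(\Delta^n,\Gamma))$, and both $\ddel$ and $h(\Delta^n,-)$ preserve limits, so $\cDel$ does too and hence admits a left adjoint by the Special Adjoint Functor Theorem. Since $s\bS=\bS^{\Delta^{\op}}$ carries the Reedy model structure over $\bS$, it then suffices to show that $\cDel$ sends a (trivial) fibration $f:\Gamma\to\Gamma'$ of $sQ\Dpd$ to a Reedy (trivial) fibration, i.e. that its relative matching maps are (trivial) fibrations in $\bS$.

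The key computation is the identification of the matching objects. Because $(\pd\Delta^n)_0=(\Delta^n)_0=[0,n]$, the cofibration $\pd\Delta^n\hookrightarrow\Delta^n$ induces a map $h(\Delta^n,\Gamma)\to h(\pd\Delta^n,\Gamma)$ in $sQ\Dpd$ that is the identity on the object set $(\Ob\Gamma)^{[0,n]}$. Since $h(-,\Gamma)$ is built from the functors $K\mapsto\prod_{x\in K_m}(-)$, it turns colimits in the simplicial-set variable into limits, and $\pd\Delta^n$ is the colimit of its boundary faces; as $\ddel$ preserves limits, I would deduce
$$
M_n\cDel(\Gamma)\cong\ddel(h(\pd\Delta^n,\Gamma)),
$$
with the canonical projection $\cDel(\Gamma)_{(n)}\to M_n\cDel(\Gamma)$ being $\ddel$ applied to $h(\Delta^n,\Gamma)\to h(\pd\Delta^n,\Gamma)$. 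Consequently the $n$-th relative matching map of $\cDel(f)$ is $\ddel$ applied to
$$
\phi\co h(\Delta^n,\Gamma)\lra h(\Delta^n,\Gamma')\by_{h(\pd\Delta^n,\Gamma')}h(\pd\Delta^n,\Gamma).
$$

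It then remains to prove that $\phi$ is a (trivial) fibration in $sQ\Dpd$, for then Proposition \ref{ddefquillen} makes $\ddel(\phi)$ a (trivial) fibration in $\bS$, as required. I expect this to be the main obstacle, and it splits into the two conditions of Proposition \ref{sqdatmodel}. For $(F1)$ I would note that the $\cHHom$-quasi-comonoids of $h(\Delta^n,\cD)$ are the diagonals of the $QMM^*(\bS)$-objects $P_n(\cD)$, so the required Reedy-fibration statement for $\phi$ follows by feeding the \emph{relative} fibration property of $P_K$ established at the end of Proposition \ref{pnquillen} (for $\pd\Delta^n\hookrightarrow\Delta^n$ and the fibration $f$) through the right Quillen functor $\diag$ of Lemma \ref{diagquillen}. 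Condition $(F2)$, on the other hand, is vacuous: since $\pd\Delta^n$ and $\Delta^n$ have the same $0$-simplices, the level-$0$ hom-objects are unchanged, so $\phi^0$ is the identity functor on $(h(\Delta^n,\Gamma))^0$, and homotopy equivalences lift trivially. This last point is precisely the conceptual reason $\cDel$ succeeds where $\cMC$ failed: the classifying-space ingredient $\bar W(-)^0$ inside $\ddel$ handles the varying object sets correctly, so no coskeletal correction term as in Lemma \ref{MCfib} is needed.

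\begin{proof}
See the discussion above; the argument is a homotopy-quotient analogue of Lemma \ref{MCfib}, using Proposition \ref{ddefquillen} in place of Corollary \ref{mcquillen}.
\end{proof}
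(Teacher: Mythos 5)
Your argument is correct and is essentially the paper's own proof: both reduce the statement to showing that $h(\Delta^n,\Gamma)\to h(\Delta^n,\Gamma')\by_{h(\pd\Delta^n,\Gamma')}h(\pd\Delta^n,\Gamma)$ is a (trivial) fibration in $sQ\Dat$ — obtained from the relative fibration property in the proof of Proposition \ref{pnquillen} together with Lemma \ref{diagquillen} — and then apply Proposition \ref{ddefquillen}. Your extra remarks (the matching-object identification and the vacuity of $(F2)$ because $(\pd\Delta^n)_0=(\Delta^n)_0$) just make explicit what the paper leaves implicit.
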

\begin{proof}
This  just combines Proposition \ref{ddefquillen} with the observation that for any (trivial) fibration $f:\cD \to \cE$ in  $sQ\Dpd$, the morphism 
$$
h(\Delta^n, \cD)\to h(\Delta^n,\cE)\by_{h(\pd\Delta^n, \cE)}h(\pd\Delta^n, \cD)
$$
is a (trivial) fibration in $sQ\Dat  $  for all $n \ge 0$, which follows by combining Lemma \ref{diagquillen} with the proof of Proposition \ref{pnquillen}.
\end{proof}

\section{Abelian groups and cohomology}\label{absn}
In this section, we will investigate quasi-comonoids in abelian groups and in groupoids. The main motivation for this is that we can detect whether a  simplicial set $X$ is contractible  just by looking at $\pi_fX$ and $\H^*(X, \Z)$, and we will now develop the corresponding notions for $QM^*(\bS)$.

\subsection{Cosimplicial abelian groups}
\begin{lemma}\label{abqm}
There is an equivalence between the category $QM^*(\Ab, \by)$ of quasi-comonoids in $(\Ab, \by)$, and the category $c\Ab$ of cosimplicial complexes of abelian groups.
\end{lemma}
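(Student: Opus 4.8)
The plan is to exploit the fact that the monoidal structure $\by$ on $\Ab$ is cartesian, so that $X^m \by X^n$ is the biproduct $X^m\oplus X^n$ and the unit object is the zero group $0$. Consequently the lax structure map $\zeta^{mn}\colon X^m\oplus X^n \to X^{m+n}$ of a quasi-comonoid $X$, being a homomorphism out of a biproduct, splits canonically as $\zeta^{mn}(x,y)=L^{mn}(x)+R^{mn}(y)$, with $L^{mn}\colon X^m\to X^{m+n}$ and $R^{mn}\colon X^n\to X^{m+n}$, while the unit $\zeta^0$ merely picks out $0\in X^0$. This splitting is the crux of the matter: it is available precisely because we work in an additive category (it would fail in $(\Set,\by)$), and it converts the single multiplication $*$ into the two ``front'' and ``back'' coface families that a cosimplicial object carries. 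The left and right unit laws read $R^{0,n}=\id$ and $L^{m,0}=\id$.

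First I would build $\Phi\colon QM^*(\Ab,\by)\to c\Ab$. Set $A^n:=X^n$ and keep the operations $\pd^i$ ($1\le i\le n$) and $\sigma^i$ ($0\le i<n$) already present; these are exactly the codegeneracies of a cosimplicial abelian group, so nothing need be adjusted there. Define the two missing cofaces by $\pd^0 y:= 0_{X^1}*y$ (i.e. $R^{1,\bullet}$) and $\pd^{n+1}x:= x*0_{X^1}$ (i.e. $L^{\bullet,1}$). In the reverse direction, $\Psi\colon c\Ab\to QM^*(\Ab,\by)$ sends a cosimplicial group $A$ to the quasi-comonoid with the same $X^n$, $\pd^i$, $\sigma^i$, unit $0\in X^0$, and product
$$
x*y:= \pd^{m+n}\cdots\pd^{m+1}(x)+(\pd^0)^m(y)\qquad (x\in A^m,\ y\in A^n),
$$
the additive analogue of the Alexander--Whitney product.

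The substance is then checking that these assignments land in the stated categories and are mutually inverse. Unwinding the lax-monoidal axioms, associativity of $*$ is equivalent to the multiplicativity relations $R^{a+b,c}=R^{a,b+c}R^{b,c}$ and $L^{a,b+c}=L^{a+b,c}L^{a,b}$, which by induction force $R^{m,n}=(\pd^0)^m$ and $L^{m,n}=\pd^{m+n}\cdots\pd^{m+1}$; this simultaneously justifies the product formula defining $\Psi$ and yields $\Psi\Phi=\id$ and $\Phi\Psi=\id$ (the latter since $R^{1,n}$ and $L^{n,1}$ recover $\pd^0$ and $\pd^{n+1}$, using that cofaces are homomorphisms and kill $0$). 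The remaining cosimplicial identities involving $\pd^0$ or $\pd^{n+1}$ each drop out of one axiom of Lemma \ref{qmlemma} after substituting the unit $0_{X^1}$: relation (5) gives $\pd^0\pd^{i}=\pd^{i+1}\pd^0$, relation (4) gives $\pd^{m+2}\pd^i=\pd^i\pd^{m+1}$, the left unit law with relation (6) gives $\sigma^0\pd^0=\id$, relation (7) gives $\sigma^j\pd^0=\pd^0\sigma^{j-1}$, and associativity gives $\pd^{n+2}\pd^0=\pd^0\pd^{n+1}$, with the boundary cases such as $\pd^1\pd^0=\pd^0\pd^0$ coming from relation (4) applied to $0_{X^1}$. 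Functoriality is automatic, since a map respecting all kept operations respects $*$ if and only if it respects $\pd^0$ and $\pd^{n+1}$.

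The hard part will be purely organisational rather than conceptual: assembling a complete dictionary between the seven quasi-comonoid relations (together with associativity and the unit laws) and the full list of coface/codegeneracy identities, so as to confirm that every identity mentioning $\pd^0$ or $\pd^{n+1}$ is derivable and that no extra relation is being silently imposed. Once the splitting $\zeta^{mn}=L^{mn}+R^{mn}$ is in hand, each verification is a one-line substitution, which is why the result can fairly be presented as a straightforward check.
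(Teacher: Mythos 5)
Your proposal is correct and takes essentially the same route as the paper: both exploit additivity of $\zeta^{mn}$ to split the product as $x*y = x*0_n + 0_m*y$, define the missing cofaces by $\pd^0 y = 0_1*y$ and $\pd^{m+1}x = x*0_1$, derive the remaining cosimplicial identities from the quasi-comonoid axioms, and observe that the product is then forced to be the Alexander--Whitney formula $x*y=(\pd^{m+1})^n x+(\pd^0)^m y$. Your explicit description of the inverse functor only makes precise what the paper leaves implicit in the phrase ``uniquely determined by the cosimplicial structure.''
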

\begin{proof}
Take $A \in QM^*(\Ab, \by)$. 
The operations $\pd^i, \sigma^i$ on $A$ are necessarily $\Z$-linear, and we enhance this to a cosimplicial structure by setting $\pd^0 a = 0_1*a, \, \pd^{m+1}a= a*0_1$, for $a \in A^m$ and $0_1$ the group identity  in $A^1$. To see that this satisfies the  cosimplicial axioms, note that the properties of $*$ give that 
\begin{eqnarray*}
\pd^{i+1}\pd^0a &=& \pd^0\pd^ia \quad \forall 1 \le i \le m\\
\pd^{i}\pd^{m+1}a &=& \pd^{m+2}\pd^ia \quad \forall 1 \le i \le m\\
\sigma^{i+1}\pd^0a&=& \pd^0\sigma^ia \quad \forall 0 \le i < m\\
\sigma^{i}\pd^{m+1}a&=& \pd^{m}\sigma^ia \quad \forall 0 \le i < m.
\end{eqnarray*}
We also have $\pd^0\pd^{m+1} a= 0_1*a*0_1= \pd^{m+2}\pd^0a$, so it only remains to show that $\sigma^0\pd^0=\id$ and $\sigma^m\pd^{m+1}=\id$, and that $\pd^0\pd^0= \pd^1\pd^0$ and $\pd^{m+1}\pd^{m+1}=\pd^{m+2}\pd^{m+1}$. The first two conditions follow because $\sigma^00_1=0_0$, which is the identity for $*$, and the second two follow because $0_1*0_1=0_2= \pd^10_1$. 

Since $A^m \by A^n \xra{*} A^{m+n}$ is linear, for   $a, a' \in A^m$,  $b,b' \in A^n$ we then have
$$
a*b'+ a'*b= (a+a')*(b'+b),
$$  
so setting $a'=0_m, b'=0_n$ gives $a*b = a*0_n +0_m*b$, and $0_n= 0_1^{*n}$, so the product is necessarily the Alexander-Whitney cup product
$$
a*b= (\pd^{m+1})^na + (\pd^0)^m b,
$$
which is uniquely determined by the cosimplicial structure.
\end{proof}

\begin{definition}
Let $s\Ab$ be the category of simplicial abelian groups, and $cs\Ab$ be the category of cosimplicial simplicial abelian groups.
\end{definition}

\begin{definition}\label{cotdef}
Denote the left adjoint to the inclusion functor $cs\Ab \to QM^*(\bS)$ by $\cot$. This is  left  Quillen, and we denote the associated left-derived functor on homotopy categories by $\oL \cot$.
\end{definition}

\begin{lemma}\label{modelqmsab}
There is a cofibrantly generated simplicial model structure on $QM^*(s\Ab,\by)$ in which a morphism is a fibration or a weak equivalence whenever the underlying map in $QM^*(\bS)$ is so. 
\end{lemma}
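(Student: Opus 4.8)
The plan is to lift the model structure on $QM^*(\bS)$ from Lemma \ref{qmsmodel} along the forgetful functor $U\co QM^*(s\Ab,\by) \to QM^*(\bS)$, following the pattern of Lemmas \ref{qmsmodel} and \ref{sqdato}. Since the forgetful functor $(s\Ab,\by) \to (\bS,\by)$ is strong monoidal, composing with it carries a lax monoidal functor $\Delta_{**} \to (s\Ab,\by)$ to one $\Delta_{**} \to (\bS,\by)$, so $U$ is just the levelwise forgetful functor, keeping the quasi-comonoid structure but discarding the abelian group structure on each $A^n$. Being levelwise, $U$ preserves all limits, and it preserves filtered colimits because finite products commute with filtered colimits in both $s\Ab$ and $\bS$, so that such colimits of quasi-comonoids are computed levelwise.

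First I would exhibit the left adjoint $F$ to $U$. As $U$ preserves limits, the Special Adjoint Functor Theorem (\cite{mac} Theorem V.8.2), invoked exactly as for $\diag^*$ in \S\ref{leinrev}, produces $F$. Because $U$ preserves filtered colimits, $F$ takes objects with small domains to objects with small domains; so, writing $I$ and $J$ for the generating cofibrations and generating trivial cofibrations of $QM^*(\bS)$ (which have finite domains by Lemma \ref{qmsmodel}), the sets $F(I)$ and $F(J)$ admit the small object argument. This verifies the smallness hypothesis of \cite{Hirschhorn} Theorem 11.3.2.

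The hard part is the acyclicity hypothesis of that theorem: that $U$ carries every relative $F(J)$-cell complex to a weak equivalence in $QM^*(\bS)$. I would deduce this by the path object argument. A functorial fibrant replacement is obtained by applying the small object argument to $F(J)$, so it suffices to construct, for each fibrant $A \in QM^*(s\Ab,\by)$, a path object. Here I would take the levelwise cotensor $A^{\Delta^1}$, available because each $A^n$ is cotensored over $\bS$: the map $A \to A^{\Delta^1}$ induced by $\Delta^1 \to \Delta^0$ is a levelwise weak equivalence, while $A^{\Delta^1} \to A^{\pd\Delta^1} = A \by A$ is a fibration, since $U$ commutes with cotensors and $QM^*(\bS)$ is a simplicial model category in which $UA$ is fibrant. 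Crucially, the forgetful functor $(s\Ab,\by) \to (\bS,\by)$ \emph{creates} fibrations and weak equivalences --- a map of simplicial abelian groups is a Kan fibration (resp.\ a weak equivalence) exactly when its underlying map of simplicial sets is, and every simplicial abelian group is a Kan complex --- so these path objects are detected by $U$. The path object argument then yields the transferred cofibrantly generated model structure, with fibrations and weak equivalences created by $U$ as claimed. Finally, the simplicial structure is the levelwise one inherited from $s\Ab$, and the pushout-product axiom descends from the Reedy simplicial structure of Proposition \ref{reedymodel}.
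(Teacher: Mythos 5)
Your overall strategy coincides with the paper's: Lemma \ref{modelqmsab} is proved there by transferring the structure of Lemma \ref{qmsmodel} along the forgetful functor via \cite{Hirschhorn} Theorem 11.3.2, producing the left adjoint by the Special Adjoint Functor Theorem and using preservation of filtered colimits for the small object argument. Where you go further is in trying to verify the acyclicity hypothesis of that theorem (that $U$ sends relative $F(J)$-cell complexes to weak equivalences), which the paper leaves implicit; this is the right instinct, but your execution of the path object argument has a circularity. The version of that argument which only requires path objects for \emph{fibrant} objects needs a fibrant replacement functor $A \to RA$ that is \emph{already known} to be an underlying weak equivalence. The replacement you propose, obtained by the small object argument applied to $F(J)$, makes $A \to RA$ a relative $F(J)$-cell complex — and whether such maps are underlying weak equivalences is precisely the acyclicity statement you are in the middle of proving. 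Nothing in your argument breaks this circle, and you cannot substitute path objects for arbitrary objects either: your candidate $A^{\Delta^1} \to A \by A$ is only guaranteed to be a fibration in $QM^*(\bS)$ when $UA$ is Reedy fibrant, since it is the pullback-cotensor of the matching maps of $A$ against $\pd\Delta^1 \into \Delta^1$.

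The gap is easy to close in this particular case, and the closure is worth making explicit: \emph{every} object of $QM^*(s\Ab,\by)$ is fibrant in the proposed structure. By Lemma \ref{abqm} (applied levelwise) the underlying $\Delta_{**}$-diagram of any $A \in QM^*(s\Ab,\by)$ extends canonically to a cosimplicial simplicial abelian group, and the dual Dold--Kan decomposition $A^n \cong N_c^nA \oplus (M')^nA$ with $(M')^nA \cong M^nA$ (dualising \cite{W} Lemma 8.3.7, exactly as used in the proof of Proposition \ref{mctansub}) shows that each matching map $A^n \to M^nA$ is a split epimorphism of simplicial abelian groups, hence a Kan fibration; since the $\Delta$- and $\Delta_{**}$-matching objects agree, $UA$ is Reedy fibrant. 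With all objects fibrant, no fibrant replacement is needed (a relative $F(J)$-cell complex $j\co A \to B$ admits a retraction $r$ with $rj=\id$, and your cotensor path object on $B$ supplies a right homotopy $jr \simeq \id_B$, whence $Uj$ is a weak equivalence by two-out-of-six), and your verification of the pushout-product axiom then stands. Alternatively, the whole lemma follows from the subsequent identification of $QM^*(s\Ab,\by)$ with $cs\Ab$ carrying its Reedy model structure, which sidesteps the transfer theorem entirely.
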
 
\begin{proof}
We may  apply \cite{Hirschhorn} Theorem 11.3.2 to the forgetful functor $QM^*(s\Ab,\by)\to  QM^*(\bS)$.  This functor satisfies the Special Adjoint Functor Theorem (\cite{mac} Theorem V.8.2), so  has a left adjoint (analogous to the free module generated by a set). It also preserves filtered direct limits, so admits the small object argument where necessary.  
\end{proof}

\begin{lemma}
There is an equivalence
$$
QM^*(s\Ab,\by) \simeq cs\Ab
$$
of model categories, where $cs\Ab$ is given the Reedy model structure for cosimplicial objects in $s\Ab$ (with its standard model structure). 
\end{lemma}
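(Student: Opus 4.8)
The plan is to deduce the statement from the simplicial-direction version of Lemma \ref{abqm}, and then to match the two model structures by comparing weak equivalences and matching objects.

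First I would establish the equivalence of underlying categories. Since the monoidal product $\by$ on $s\Ab$ is computed levelwise in the simplicial direction, a lax monoidal functor $\Delta_{**} \to s\Ab$ is the same datum as a simplicial object in lax monoidal functors $\Delta_{**} \to \Ab$; that is, $QM^*(s\Ab, \by) \cong s(QM^*(\Ab, \by))$ (this is the observation, noted just before the statement, that $QM^*(\bS)$ may be regarded as a simplicial diagram in $QM^*(\Set)$, applied with $\Set$ replaced by $\Ab$). Applying Lemma \ref{abqm} levelwise then gives $s(QM^*(\Ab, \by)) \simeq s(c\Ab) \cong cs\Ab$, since simplicial objects in cosimplicial abelian groups are canonically the same as cosimplicial simplicial abelian groups. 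This is the required equivalence of categories; it is the identity on the bidegree objects $X^n_k$, and under it the quasi-comonoid product $*$ becomes the Alexander--Whitney cup product, while the missing cofaces $\pd^0, \pd^{n+1}$ are recovered as $\pd^0 x = 0_1*x$ and $\pd^{n+1}x = x*0_1$, exactly as in Lemma \ref{abqm}.

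Next I would match the weak equivalences. Unwinding Lemmas \ref{modelqmsab} and \ref{qmsmodel} together with Proposition \ref{reedymodel}, a map $f$ in $QM^*(s\Ab,\by)$ is a weak equivalence iff each $f^n:X^n \to Y^n$ is a weak equivalence of simplicial sets. On the other side, the weak equivalences of the Reedy structure on $cs\Ab$ are the levelwise (cosimplicial degree $n$) weak equivalences in $s\Ab$, and a map of simplicial abelian groups is a weak equivalence iff its underlying map of simplicial sets is; so the two classes of weak equivalences coincide under the equivalence.

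The main point, and the one I expect to be the crux, is matching the fibrations. Both notions are characterised by relative matching maps: in $QM^*(s\Ab,\by)$, $f$ is a fibration iff each $X^n \to Y^n \by_{M^nY} M^nX$ is a fibration in $\bS$, where $M^n$ is the quasi-comonoid matching object of Definition \ref{sdcmatchsub}; in the Reedy structure on $cs\Ab$, $f$ is a fibration iff each $X^n \to Y^n \by_{M^nY} M^nX$ is a fibration in $s\Ab$, where $M^n$ is the cosimplicial matching object. The key observation is that these two matching objects agree. A matching object depends only on the degree-lowering morphisms of the Reedy category, which for cosimplicial objects are exactly the codegeneracies $\sigma^i$, and these are precisely the operations retained by a quasi-comonoid; the extra cofaces $\pd^0, \pd^{n+1}$ supplied by Lemma \ref{abqm} are degree-raising and so contribute only to latching objects, never to matching objects. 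Hence the $M^n$ of Definition \ref{sdcmatchsub} coincides with the cosimplicial Reedy matching object. Finally, since the forgetful functor $s\Ab \to \bS$ preserves limits (so commutes with the formation of matching objects and fibre products) and detects fibrations, a relative matching map is a fibration in $s\Ab$ iff its underlying map is a fibration in $\bS$; thus the two classes of fibrations also coincide. Since the weak equivalences and fibrations determine the (trivial) cofibrations, the equivalence of categories carries one model structure to the other, yielding the asserted equivalence of model categories.
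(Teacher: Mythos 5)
Your proposal is correct and follows essentially the same route as the paper: the equivalence of categories comes from applying Lemma \ref{abqm} levelwise in the simplicial direction, and the model structures are identified by observing that both have levelwise weak equivalences and that the matching objects for $\Delta$ and $\Delta_{**}$ coincide (because the degree-lowering subcategories agree, as already noted in Proposition \ref{reedymodel}). Your additional remarks — that the extra cofaces only affect latching objects, and that the forgetful functor $s\Ab\to\bS$ detects fibrations — simply make explicit what the paper leaves implicit.
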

\begin{proof}
 Lemma \ref{abqm} gives the  equivalence of categories, by passing to simplicial diagrams. Now, $f$ is a weak equivalence in $QM^*(s\Ab,\by)$  whenever each $f^n$ is  a weak equivalence, and a fibration whenever $f$ is a  Reedy  fibration in $\bS^{\Delta_{**}}$. Since the matching objects for $\Delta$ and $\Delta_{**}$ are the same, this means that the  model structure of Lemma \ref{modelqmsab} is just the
Reedy model structure on $(s\Ab)^{\Delta}$, as they have the same weak equivalences and fibrations.
\end{proof}

\begin{definition}\label{doldkan}
Let $N^s$ denote simplicial normalisation from simplicial abelian groups to non-negatively graded chain complexes, given by  $N^s(V)_n:=\bigcap_{i>0}\ker (\pd_i: V_n \to V_{n-1})$, with differential $\pd_0$.
Let $N_c$ denote cosimplicial conormalisation from cosimplicial abelian groups to  non-negatively graded cochain complexes, given by $N_c(V)^n :=\bigcap_{i\ge 0}\ker (\sigma^i: V^n \to V^{n-1})$, with differential $\sum_i (-1)^i \pd^i$. By the Dold-Kan correspondence (\cite{W} Theorem 8.4.1, passing to opposite categories and using \cite{W} Lemma 8.3.7 in the cosimplicial case), these functors are both equivalences;  
 let $D_c$ be the  cosimplicial denormalisation functor, inverse to $N_c$.
\end{definition}

\begin{definition}
Set $I=\Delta^1$, and for $n \ge 0$, let 
$$
\gimel^n:= (\{1\}\by I^{n-1})\cup \bigcup_{j>0}(I^j\by\{0,1\}\by I^{n-1-j}) \subset I^n \in \bS;
$$ 
for $n\ge 2$ this is given 
by removing the interior of $0\by I^{n-1}$ from  the boundary $\pd I^n$, while $\gimel^1=\{1\}$ and $\gimel^0=\emptyset$.

Let
$$
\Z(I^n/\gimel^n):=\Z(I^n) /\Z(\gimel^n) \in s\Ab,
$$
where $\Z(S)$ is the free $\Z$-module generated by the set $S$, 
and
let  $\delta$ be  the canonical map $ \Z(I^{n-1}/\gimel^{n-1})\to \Z(I^n/\gimel^n) $  arising from the map $I^{n-1} \to I^n$ given by $x \mapsto (0,x)$.

For any simplicial abelian group $W$, write
$$
W^{I^n/\gimel^n}:=\ker (W^{I^n} \to W^{\gimel^n}) = \HHom_{s\Ab}(\Z(I^n/\gimel^n), W)
$$
and
let  $\delta$ be  the canonical map $W^{I^n/\gimel^n} \to W^{I^{n-1}/\gimel^{n-1}}$ dual to the map $\delta$ above.
\end{definition}

\begin{proposition}\label{mctansub} Given an abelian group object $E$  in $QM^*(\bS)$,  corresponding under Lemma \ref{abqm} to the cosimplicial simplicial abelian group $\CC(E)$, there is an isomorphism
$$
\mmc(E) \cong \{\eta \in \prod_{n=0}^{\infty} N_c^{n+1} \CC(E)^{I^{n}/\gimel^{n}}: d_c\eta_{n-1}=\delta \eta_{n}\}.
$$
\end{proposition}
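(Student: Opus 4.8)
The plan is to linearise everything using the simplicial form of Lemma~\ref{abqm} and then to exhibit an explicit, triangular change of variables between the data $\underline\omega$ of Definition~\ref{mcdefqm} and the normalised cochains $\underline\eta$ on the right.

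Since $E$ is an abelian group object, Lemma~\ref{abqm} (applied to simplicial diagrams) identifies it with the cosimplicial simplicial abelian group $\CC(E)$, with product $*$ equal to the Alexander--Whitney cup product $a*b=(\pd^{m+1})^{n}a+(\pd^{0})^{m}b$. Hence each relation in Definition~\ref{mcdefqm} becomes affine-linear in the maps $\omega_n\colon I^n\to E^{n+1}$. Writing the product relation out shows that on the face $\{t_k=0\}$ of $I^N$ one has $\omega_N|_{\{t_k=0\}}=(\pd^{k+1})^{N-k+1}\omega_{k-1}+(\pd^{0})^{k}\omega_{N-k}$, and the coface relations give $\omega_N|_{\{t_i=1\}}=\pd^i\omega_{N-1}$. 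Thus $\omega_N$ is completely determined on the boundary $\pd I^N$ by $\omega_0,\dots,\omega_{N-1}$, while the $\sigma$-relations prescribe each codegeneracy $\sigma^i\omega_N$ in terms of $\omega_{N-1}$.

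I would then build the isomorphism by induction on $n$, matching the tower $\mmc(E)=\Lim_n\mmc(E)^n$ of Lemma~\ref{mcqsub} against the truncations of the right-hand side. At stage $n$ I set $\eta_n:=\omega_n-c_n$, where $c_n\in\CC(E)^{I^n}$ is the inclusion--exclusion combination of the coface-expressions above for the values of $\omega_n$ on the facets of $\gimel^n$, each extended to all of $I^n$ (these extensions patch on lower faces precisely because $\underline\omega$ satisfies the cosimplicial identities of Lemma~\ref{qmlemma}). Two things must then be checked: that $\eta_n$ vanishes on $\gimel^n$, so that $\eta_n\in\CC(E)^{I^n/\gimel^n}$; and that $\sigma^i\eta_n=0$ for all $i$, so that $\eta_n\in N_c^{n+1}\CC(E)^{I^n/\gimel^n}$. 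The model case is $n=1$, where $\gimel^1=\{1\}$ and $\eta_1=\omega_1-\pd^1\omega_0=\omega_1-\omega_1|_{\{1\}}$; here $\sigma^0\pd^1=\sigma^1\pd^1=\id$ (Lemma~\ref{qmlemma}(3)) together with $\sigma^0\omega_1=\sigma^1\omega_1=\omega_0$ give $\sigma^0\eta_1=\sigma^1\eta_1=0$.

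Finally, the unique facet of $I^n$ not contained in $\gimel^n$ is $\{t_1=0\}$, and by definition $\delta\eta_n=\eta_n|_{\{t_1=0\}}$. Combining $\omega_n|_{\{t_1=0\}}=(\pd^{2})^{n}\omega_0+\pd^{0}\omega_{n-1}$ with the restriction of $c_n$ to this facet should yield $\delta\eta_n=d_c\eta_{n-1}$, the alternating signs of $d_c=\sum_i(-1)^i\pd^i$ arising from the inclusion--exclusion in $c_n$. Since $c_n$ depends only on $\omega_0,\dots,\omega_{n-1}$ (equivalently on $\eta_0,\dots,\eta_{n-1}$), the assignment $\underline\omega\mapsto\underline\eta$ is triangular and inverts to $\omega_n=\eta_n+c_n$, giving the bijection; as everything is natural and additive in the simplicial variable, it is an isomorphism of simplicial abelian groups. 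The main obstacle is the simultaneous verification of these three points: that the single correction $c_n$ makes $\eta_n$ vanish on $\gimel^n$, renders it conormalised, and produces exactly the differential $d_c\eta_{n-1}$ on $\{t_1=0\}$. This is a bookkeeping problem combining the combinatorics of the faces of the cube $I^n$ with the cosimplicial identities (1)--(3) of Lemma~\ref{qmlemma}, the delicate part being to confirm that the alternating contributions assemble precisely into $d_c$ rather than into the full conormalisation projection, which would wrongly annihilate the $\pd^{0}\omega_{n-1}$ term carrying the differential.
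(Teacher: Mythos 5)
Your overall strategy---linearise via Lemma \ref{abqm}, filter along the tower $\mmc(E)=\Lim_n\mmc(E)^n$, and make a triangular change of variables $\eta_n=\omega_n-c_n$ with $c_n$ depending only on lower data---is exactly the shape of the paper's argument, but your choice of correction term is wrong, and the verification you defer as ``bookkeeping'' genuinely fails rather than being merely delicate. The defect is that your $c_n$ is built entirely from the restrictions of $\omega_n$ to the facets of $\gimel^n$, cylindrically extended, whereas conormalisation requires matching the \emph{codegeneracies} $\sigma^i\omega_n$; for $0<i<n$ these are prescribed by the higher Maurer--Cartan relations to be $\omega_{n-1}(t_1,\ldots,\min\{t_i,t_{i+1}\},\ldots,t_n)$, which factor through the $\min$ maps $I^n\to I^{n-1}$ and are not determined by facet data. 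Concretely, at $n=2$ any $\Z$-linear combination of cylinder extensions of facet restrictions has $\sigma^1c_2(t_1,t_2)=A(t_1)+B(t_2)+\mathrm{const}$, whose alternating sum over the four vertices of $I^2$ vanishes; but $\sigma^1\omega_2=\omega_1\circ\min$ has alternating vertex sum $\omega_1(1)-\omega_1(0)=\pd^1\omega_0-\pd^0\omega_0-\pd^2\omega_0=-d_c\omega_0$, which is nonzero for a general point of $\mmc(E)$ (it is precisely the class that $\omega_1$ is a homotopy killing). Hence $\sigma^1\eta_2\neq 0$ and your $\eta_2$ does not lie in $N_c^3\CC(E)^{I^2/\gimel^2}$. (A secondary problem: the facets $I^j\by\{0\}\by I^{n-1-j}$ and $I^j\by\{1\}\by I^{n-1-j}$ of $\gimel^n$ are disjoint and parallel, so inclusion--exclusion of cylinder extensions does not even reproduce the prescribed values on $\gimel^n$; already $\eta_2(t_1,0)=\pd^1\omega_1(1)-\pd^2\omega_1(t_1)\neq 0$ at points of $\gimel^2$.) The case $n=1$ that you check is misleading, since there $\gimel^1$ is a point and both codegeneracies are ``outer'', so no $\min$ appears.

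The repair, which is what the paper does, is to take the correction from the degeneracy data rather than the boundary data. Using the decomposition $\CC^{n+1}(E)=N_c^{n+1}\CC(E)\oplus (M')^{n+1}\CC(E)$ with $(M')^{n+1}=\sum_{i\geq 1}\pd^i\CC^n(E)$ (dual Dold--Kan), one lets $c_n$ be the unique element of $(M')^{n+1}\CC(E)^{I^n}$ whose codegeneracies equal the prescribed $\sigma^i\omega_n$; then $\eta_n=\omega_n-c_n$ is conormalised by construction, and the matching constraints on $\omega_n$ are absorbed entirely into $c_n$. What remains is to apply the projection onto $N_c^{n+1}$ along $(M')^{n+1}$ to the prescribed boundary values of $\omega_n$: every coface $\pd^i$ with $i\geq 1$ is annihilated, so $\eta_n$ vanishes on $\gimel^n$, while on the one remaining facet $\{t_1=0\}$ only the term $\pd^0\omega_{n-1}$ survives, and the identity $\pr\circ\pd^0=d_c$ on conormalised cochains (\cite{W} Lemma 8.3.7) produces $\delta\eta_n=d_c\eta_{n-1}$. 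Your instinct that the differential arises from the $\pd^0$-term on $\{t_1=0\}$ is correct, but the alternating signs of $d_c$ come from this conormalisation projection applied to $\pd^0$, not from an inclusion--exclusion over the facets of the cube.
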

\begin{proof} 
As in Lemma \ref{mcqsub}, write $\mmc(E)= \Lim_n \mmc(E)^n$, and
assume that we are given an element 
$$
(\omega_0, \ldots,\omega_{n-1}) \in \mmc(E)^{n-1}.
$$
The proof of Lemma \ref{mcqsub}  then gives rise to the data  
$$
\beta_{n-1} \in M^{n+1}\CC(E)^{I^n}, \quad \alpha_{n-1} \in \CC^{n+1}(E)^{\pd I^n}
$$ 
(in the notation of Definition \ref{sdcmatchsub}).
By Lemma \ref{mcqsub}, the fibre of $\mmc(E)^{n}\to \mmc(E)^{n-1}$ over $(\omega_0, \ldots,\omega_{n-1})$ is given by $\omega_n \in \CC^{n+1}(E)^{I^n}$ compatibly lifting  $\alpha_{n-1}, \beta_{n-1}$ in the following diagram: 
$$
\xymatrix{\CC^{n+1}(E)^{I^n}   \ar[r] \ar[d]& \CC^{n+1}(E)^{\pd I^n}  \ar[d]\\
 M^{n+1}\CC(E)^{I^n}\ar[r]  &   M^{n+1}\CC(E)^{\pd I^n}. }
$$
 
For any abelian cosimplicial abelian group $C^{\bullet}$, dualising \cite{W} Lemma 8.3.7 gives  a decomposition of the associated cochain complex as $C^n=N_c^n(C) \oplus (M')^n(C)$, where $N_c^n(C)=\cap_{i=0}^{n-1}\ker \sigma^i$, and $(M')^n(C)=\sum_{i=1}^n\pd^i C^{n-1}$, so the commutative diagram becomes
$$
\xymatrix{(N_c^{n+1}\CC(E)^{I^n})\oplus ((M')^{n+1}\CC(E)^{I^n})   \ar[r] \ar[d]& (N_c^{n+1}\CC(E)^{\pd I^n})\oplus ((M')^{n+1}\CC(E)^{\pd I^n})  \ar[d]\\
 M^{n+1}\CC(E)^{I^n}\ar[r]  &   M^{n+1}\CC(E)^{\pd I^n}. }
$$
 Moreover, $\underline{\sigma}:(M')^{n}C \to M^{n}C$ is an isomorphism, and we will denote the inverse by $a \mapsto \tilde{a}$. Thus the problem of constructing $\omega_n$
reduces to seeking
an element  $\eta_{n} =\omega_{n}-\tilde{\beta}_{n-1} \in N_c^{n+1}\CC(E)^{I^n}$ lifting $\pr_N(\alpha_{n-1}) \in  N_c^{n+1}\CC(E)^{\pd I^n}$, where $\pr_N:C \to N_c(C) $ is the projection given by annihilating $(M')^C$ .

Now, $\alpha_{n-1}$ is defined by
\begin{eqnarray*}
\alpha_{n-1}(t_1,\ldots, t_{i-1},0,t_{i+1},\ldots, t_n)&=&(\pd^{i+1})^{n-i+1}\omega_{i-1}(t_1,\ldots, t_{i-1})+(\pd^0)^i\omega_{n-i}(t_{i+1},\ldots, t_n);\\ 
\alpha_{n-1}(t_1,\ldots, t_{i-1},1,t_{i+1},\ldots, t_n)&=&\pd^i\omega_{n-1}(t_1,\ldots,t_{i-1},t_{i+1},\ldots,t_n).
\end{eqnarray*}
Therefore
\begin{eqnarray*}
\pr_N\alpha_{n-1}(0,t_2,\ldots, t_n)&=&\pr_N\pd^0\eta_{n-1}(t_2,\ldots, t_n);\\ 
\pr_N\alpha_{n-1}(t_1,\ldots, t_{i-1},0,t_{i+1},\ldots, t_n)&=&0\quad \text{for } i>1;\\ 
\pr_N\alpha_{n-1}(t_1,\ldots, t_{i-1},1,t_{i+1},\ldots, t_n)&=&0,
\end{eqnarray*}
since all other terms lie in $(M')^{n+1}$ (the span of  $\{\pd^i\,:\,i>0\}$). 

Since $\pr_N\pd^0= \pr_N d_c$, it follows from   \cite{W} Lemma 8.3.7 that  on $N_c^n$, 
$$
\pr_N\pd^0=d_c= \sum_{i=0}^{n+1}(-1)^i \pd^i.
$$
This implies that 
$$
\eta_n \in  N_c^{n+1}\CC(E)^{I^n/ \gimel^n}, 
$$
and the condition for $\eta_n$ to lift $\pr_N(\alpha_{n-1})$  is precisely that  
$\delta(\eta_{n})=d_c\eta_{n-1}$.
\end{proof} 

\begin{corollary}\label{cot2} A representative for $\oL\cot (\iota\bt) $ is given by
 $\iota(\Z) \cong  \cE D_c(\Z^{[-1]})$, for $\cE: cs\Ab \to  QM^*(s\Ab,\by)$ as in Lemma \ref{abqm}.
\end{corollary}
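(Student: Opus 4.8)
The plan is to combine the cofibrant replacement of $\iota\bt$ supplied by Proposition \ref{rhommc} with the adjunction defining $\cot$, and then pin down the answer by a corepresentability (Yoneda) argument driven by Proposition \ref{mctansub}. Since $\cot$ is left Quillen (Definition \ref{cotdef}) and $\Xi$ is a cofibrant replacement of $\iota\bt$, we have $\oL\cot(\iota\bt) \simeq \cot(\Xi)$. The inclusion $cs\Ab \to QM^*(\bS)$ is the right adjoint of $\cot$ and preserves all weak equivalences, so the derived adjunction gives, for every fibrant $E \in cs\Ab$,
$$
\oR\HHom_{cs\Ab}(\oL\cot(\iota\bt), E) \simeq \oR\HHom_{QM^*(\bS)}(\iota\bt, E) \simeq \mmc(E),
$$
the last equivalence being Proposition \ref{rhommc}.

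Next I would reinterpret Proposition \ref{mctansub}. Writing $R^\bullet$ for the cochain complex of simplicial abelian groups with $R^{n+1} = \Z(I^n/\gimel^n)$ and differential $\delta$, the proposition exhibits $\mmc(E)$ as the simplicial abelian group of chain maps $R^\bullet \to N_c\CC(E)$: the displayed compatibility $d_c\eta_{n-1}=\delta\eta_n$ is precisely the chain-map condition, once one checks that the contravariant $\delta$ on the mapping groups is precomposition with the covariant $\delta\colon \Z(I^{n-1}/\gimel^{n-1}) \to \Z(I^n/\gimel^n)$. It therefore suffices to show that $R^\bullet$ is a cofibrant resolution of $\Z^{[-1]}$ in $\mathrm{Ch}^{\ge 0}(s\Ab) \simeq cs\Ab$; granting this, $\mmc(E) \simeq \oR\HHom_{cs\Ab}(\cE D_c(\Z^{[-1]}), E)$, so the two corepresentable functors agree naturally in $E$ and Yoneda yields $\oL\cot(\iota\bt) \simeq \cE D_c(\Z^{[-1]})$.

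To establish the resolution claim, observe that each $\Z(I^n/\gimel^n)$ is degreewise free, hence cofibrant in $s\Ab$, and the short exact sequences $0 \to \Z(\gimel^n) \to \Z(I^n) \to \Z(I^n/\gimel^n) \to 0$ compute its homotopy type. Since $\gimel^n$ is exactly the union of all faces of the cube $I^n$ except $0\by I^{n-1}$, both $I^n$ and $\gimel^n$ are contractible and the inclusion is an isomorphism on $H_0$, so $\Z(I^n/\gimel^n)$ is acyclic for $n \ge 1$, while $\Z(I^0/\gimel^0)=\Z$. Filtering by cohomological degree then shows the subcomplex $R^{\ge 2}$ has trivial total homotopy, so the quotient $R^\bullet \to \Z^{[-1]}$ (the copy of $\Z$ in cohomological degree $1$) is a weak equivalence exhibiting $R^\bullet$ as a cofibrant resolution of $\Z^{[-1]}$.

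Finally, the isomorphism $\iota(\Z) \cong \cE D_c(\Z^{[-1]})$ is a direct Dold--Kan computation: under Lemma \ref{abqm} the comonoid $\iota(\Z)$ corresponds to the cosimplicial abelian group with $m$th term $\Z^{\by m}$, whose conormalization $N_c$ vanishes outside degree $1$ and equals $\Z$ there, so $\iota(\Z) = \cE D_c(\Z^{[-1]})$. The main obstacle is the resolution claim of the third paragraph: the genuinely substantive points are matching the indexing and differential $\delta$ of Proposition \ref{mctansub} to the complex $R^\bullet$, and verifying that the levelwise acyclicity of the $\Z(I^n/\gimel^n)$ assembles into an honest resolution of $\Z^{[-1]}$ rather than merely a complex of acyclic objects.
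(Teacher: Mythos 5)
Your proposal is correct and follows essentially the same route as the paper: replace $\iota\bt$ by $\Xi$ so that $\oL\cot(\iota\bt)\simeq\cot(\Xi)$, use Proposition \ref{mctansub} to recognise the corepresenting object as the cochain complex $R^{\bt}$ with $R^{n+1}=\Z(I^n/\gimel^n)$, observe that each term with $n\ge 1$ is acyclic so that $R^{\bt}\to\Z^{[-1]}$ is a levelwise weak equivalence, and identify $\cE D_c(\Z^{[-1]})$ with $\iota(\Z)$ by Dold--Kan. The one imprecision is that degreewise freeness only gives cofibrancy of each level in $s\Ab$, not Reedy cofibrancy of $R^{\bt}$ in $cs\Ab$ (which your derived-Yoneda step needs); this is repaired at once by noting that the same representability argument identifies $R^{\bt}$ with $N_c\CC(\cot\Xi)$, which is cofibrant because $\cot$ is left Quillen and $\Xi$ is cofibrant.
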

\begin{proof}
First observe that, for  $\Xi$ from Definition \ref{Xidef}, $\oL \cot (\iota\bt)\simeq \oL \cot(\Xi)$, since  $\Xi \to \iota\bt$ is a cofibrant replacement.
 
 Since $\Xi$ represents $\mmc$,  for any simplicial cochain complex $V$ (in non-negative cochain degrees) we have
$$
\Hom(N_c\CC(\cot \Xi), V) \cong \mc(\cE(D_cV)),
$$
where $\Hom$ is taken in the category of simplicial cochain complexes.
Since $\CC$ is inverse to $\cE$, and $N_c$ is inverse to $D_c$,  Proposition \ref{mctansub} can then be rephrased to say that
$$
N_c^n\CC(\cot(\Xi))\cong \left\{ \begin{matrix} \Z (I^{n-1}/\gimel^{n-1}) & n \ge 1\\ 0 & n=0, \end{matrix} \right.
$$
with differential $d_c= \delta$.

Thus  the  bicomplex $N^sN_c\CC(\cot \Xi)$ is  weakly equivalent (in the Reedy model category of cochain diagrams in chain complexes) to the  bicomplex $\Z^{[-1]}$, consisting of $\Z$ concentrated in cochain degree $1$, chain degree $0$. This means that $\cot \Xi$ is weakly equivalent to $\cE D_c(\Z^{[-1]})$ (with constant simplicial structure), but this is isomorphic to $\iota(\Z)$ (having $n$ copies of $\Z$ in level $n$).
\end{proof}

\begin{definition}
Given a cochain complex $V$, denote the brutal truncation in degrees $\ge n$ by $\sigma^{\ge n} V$, so
$$
(\sigma^{\ge n} V)^i= \left\{ \begin{matrix} V^i & i \ge n \\ 0 & i<n.  \end{matrix} \right. 
$$
\end{definition}

\begin{definition}\label{totprod}
Define the total complex  functor $\Tot^{\Pi}$ from chain cochain complexes (i.e. bicomplexes) to chain complexes by
$$
(\Tot^{\Pi} V)_n := \prod_{a-b=n} V^b_a,
$$
with differential $d:=d^s +(-1)^a d_c$ on $V^b_a$.
\end{definition}

\begin{proposition}\label{cotcoho}
For $A \in cs\Ab$,
$$
\pi_n\mmc(A)  \cong \H_{n-1}(\Tot^{\Pi}\sigma^{\ge 1}N^sN_cA).
$$
\end{proposition}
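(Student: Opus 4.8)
The plan is to read off the homotopy type of $\mmc(A)$ from Proposition \ref{mctansub} and then collapse the resulting complex using Corollary \ref{cot2}. Since $\mmc$ is right Quillen (Corollary \ref{mcquillen}) it preserves products, so for an abelian group object $A$ the space $\mmc(A)$ is again a simplicial abelian group and $\pi_n\mmc(A)=H_n(N^s\mmc(A))$; thus it suffices to identify $N^s\mmc(A)$ up to quasi-isomorphism. I would assume $A$ Reedy fibrant throughout, which is harmless because both sides are homotopy invariants of $A$ (the right-hand side is manifestly a quasi-isomorphism invariant of the bicomplex $C:=N^sN_cA$).

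The first key step is to recognise the flat-section description of Proposition \ref{mctansub} as a complex of \emph{cochain maps}. Writing $Q:=N_c\CC(\cot\Xi)$, Corollary \ref{cot2} computes $Q$ explicitly: $Q^{n}\cong\Z(I^{n-1}/\gimel^{n-1})$ for $n\ge 1$ and $Q^0=0$, with cochain differential $d_c=\delta$. After the reindexing $Q^{n+1}=\Z(I^n/\gimel^n)$, the Maurer--Cartan flatness condition $d_c\eta_{n-1}=\delta\eta_n$ of Proposition \ref{mctansub} is \emph{exactly} the condition that the family $(\eta_n)$ define a degree-zero map of cochain complexes $Q\to C$ (strictly commuting with the cosimplicial differential $d_c$), the simplicial direction of the $\eta_n$ supplying the internal grading. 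Consequently $N^s\mmc(A)$ is naturally the product total complex of the \emph{cochain-direction} Hom complex of $Q$ and $C$. Because $Q$ is concentrated in cochain degrees $\ge 1$, a degree-zero cochain map $Q\to C$ only pairs matching cochain degrees $\ge 1$, so only $\sigma^{\ge 1}C$ enters; this is the source of the brutal truncation in the statement.

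The second key step is to collapse $Q$ to $\Z^{[-1]}$. Corollary \ref{cot2} shows that $Q$ is cochain-cofibrant, a complex of free abelian groups concentrated in cochain degrees $\ge 1$, resolving $\Z^{[-1]}$ (a copy of $\Z$ in cochain degree $1$, chain degree $0$). Hence the cochain-Hom computes the derived cochain-Hom $\oR\underline{\Hom}(\Z^{[-1]},\sigma^{\ge 1}C)$. Since $\Z^{[-1]}$ is a single free group sitting in cochain degree $1$, this derived Hom is simply a one-step degree shift, so that the homology of the product total complex in degree $n$ equals $H_{n-1}(\Tot^{\Pi}\sigma^{\ge 1}N^sN_cA)$, which is the claim. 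As a consistency check, when $C$ is concentrated in a single bidegree $(p,q)$ with $p\ge 1$ the surviving condition $\delta\eta=0$ cuts the relative chains of $(I^{p-1},\gimel^{p-1})$ down to those of $(I^{p-1},\pd I^{p-1})$, whose quotient is $S^{p-1}$; its cohomology correctly produces a single $\Z$ in degree $q-p+1$.

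The main obstacle is the homological bookkeeping in this last collapse: $Q$ is unbounded in total degree, so one cannot formally substitute its homology $\Z^{[-1]}$ into a \emph{product} totalised Hom without an argument. I would handle this by using the explicit free cofibrant resolution $Q$ of Corollary \ref{cot2} directly, together with a convergent filtration by cochain degree (an acyclic-assembly argument controlling the relevant $\varprojlim^1$ terms), so that the cochain quasi-isomorphism $Q\simeq\Z^{[-1]}$ does induce a quasi-isomorphism on the product-totalised cochain-Hom complexes. Equivalently, one can prove the result first for bicomplexes concentrated in a single bidegree (the sphere computation above) and then extend by d\'evissage along the filtration tower of $C$, again taking care to ensure convergence of the product total complex.
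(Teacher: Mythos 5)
Your proposal is correct in outline and shares its skeleton with the paper's argument: both identify $\mmc(A)$ with the (simplicial) set of strict cochain maps out of $Q=N_c\CC(\cot\Xi)$ via Proposition \ref{mctansub}, invoke Corollary \ref{cot2} to see that $Q$ is a cofibrant resolution of $\Z^{[-1]}$, and reduce to a derived Hom out of $\Z^{[-1]}$; your reading of the flatness condition $d_c\eta_{n-1}=\delta\eta_n$ as a cochain map, the appearance of $\sigma^{\ge 1}$, and the degree shift are all as in the paper. Where you diverge is the endgame. The paper never wrestles with the unbounded resolution $Q$: it passes to the Reedy model category $DG^{\ge 0}dg_{\ge 0}\Ab$ and substitutes the \emph{minimal} cofibrant replacement $L$ of $\Z^{[-1]}$, with just two copies of $\Z$ in each cochain degree ($L^n_i=\Z$ for $n=i+1,i+2$), so that $\Hom(L,B)\cong \z_{-1}(\Tot^{\Pi}\sigma^{\ge 1}B)$ is read off strictly and no convergence question ever arises --- the comparison $\HHom(Q,B)\simeq\HHom(L,B)$ is free from model category theory since both are cofibrant replacements of the same object and every object of $cs\Ab$ is Reedy fibrant. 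You instead keep $Q$ and must show that the levelwise quasi-isomorphism $Q\simeq\Z^{[-1]}$ induces a quasi-isomorphism on product-totalised Hom complexes; this is the one genuinely incomplete step, and it is worth stressing that mere exactness of the columns would \emph{not} suffice (a right half-plane double complex with exact columns can have non-acyclic $\Tot^{\Pi}$). What rescues your argument is that each $N^s\Z(I^{n-1}/\gimel^{n-1})$ for $n\neq 1$ is a \emph{bounded} acyclic complex of free abelian groups, hence contractible, so the cone of $\underline{\Hom}(\Z^{[-1]},C)\to\underline{\Hom}(Q,C)$ has columns with chosen contracting homotopies and the staircase construction produces a well-defined preimage in the product, column by column in increasing cochain degree. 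With that point made explicit your proof closes; the paper's $L$-trick is essentially a packaged form of the same observation, and buys a cleaner, computation-free finish.
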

\begin{proof}
First, note that
$$
\mmc(A) = \HHom_{QM^*(\bS)}(\Xi, A),
$$ 
so the Dold-Kan correspondences give 
$$
\mmc(A)= 
 \HHom_{cs\Ab}(\cot \Xi, A)
\simeq \HHom_{DG^{\ge 0}dg_{\ge 0}\Ab}(N^sN_c\CC(\cot \Xi), N^sN_cA),
$$
where $DG^{\ge 0}dg_{\ge 0}\Ab$ is the Reedy category of non-negatively graded cochain complexes of non-negatively graded chain complexes.

Now,  Corollary \ref{cot2} implies that $N^sN_c\CC(\cot \Xi)$ is a cofibrant replacement for $\Z^{[-1]}$ in $DG^{\ge 0}dg_{\ge 0}\Ab$, so 
$$
\mmc(A) \simeq \oR\HHom_{DG^{\ge 0}dg_{\ge 0}\Ab}(\Z^{[-1]} ,N^sN_cA)
$$ 

A simpler cofibrant replacement is the object $L$ given by
$$
L^n_i= \left\{\begin{matrix} \Z & n=i+1, i+2,\\ 0 & \text{otherwise}, \end{matrix} \right.
$$ 
with chain and cochain differentials the identity whenever possible. Thus
$$
\mmc(A) \simeq \HHom_{DG^{\ge 0}dg_{\ge 0}\Ab}(L, N^sN_cA).
$$ 

Now, a map $L \to B$ is determined by the images $b_n$ of the elements $1 \in L_{n-1}^{n}$, subject to the conditions that $d_cb_n= d^s b_{n+1}$. Thus  
$$
\Hom_{DG^{\ge 0}dg_{\ge 0}\Ab}(L, B) \cong   \z_{-1}(\Tot^{\Pi}\sigma^{\ge 1}B), 
$$
where $\z_i(V)= \ker(d:V_i \to V_{i-1})$,
and  the description of $\pi_n\mmc(A)$ follows.
\end{proof}

\begin{proposition}\label{cot3}
For $A \in cs\Ab$,
$$
\pi_n\ddel_A  \cong \H_{n-1}(\Tot^{\Pi}N^sN_cA). 
$$
\end{proposition}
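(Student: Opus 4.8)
The plan is to exploit the linearity of the abelian situation: I will identify the gauge action as a translation, realise $\ddel_A$ as a homotopy quotient of the \emph{simplicial abelian group} $\mmc(A)$, and then deduce the homology computation from that of Proposition \ref{cotcoho} by a long exact sequence argument. Throughout write $V := N^sN_cA$, so that Proposition \ref{cotcoho} reads $\pi_n\mmc(A)\cong \H_{n-1}(\Tot^{\Pi}\sigma^{\ge 1}V)$, and observe that the only difference between $\sigma^{\ge 1}V$ and $V$ is the cochain-degree-$0$ column $V^0=N^sN_c^0A=N^sA^0$.

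First I would pin down the gauge action. Since $A$ is an abelian group object, the product $*$ on level $0$ is just addition (the cup-product formula $a*b=(\pd^{m+1})^na+(\pd^0)^mb$ has no faces to apply when $m=n=0$), so $A^0$ is an abelian group under $*$ with inverse $-g$. Moreover $\mmc(A)=\HHom_{QM^*(\bS)}(\Xi,A)$ is a simplicial abelian group, being a mapping object into an abelian group object. Applying the cup-product formula to $g^{-1}=-g\in A^0$ and $\omega\in A^1$ shows that the adjoint action of Definition \ref{ddefdefsub} is the translation
$$
g^{-1}*\omega*g=\omega+(\pd^0-\pd^1)g=\omega+d_cg ,
$$
so $A^0$ acts on $\mmc(A)$ by translation along the group homomorphism $\phi:=d_c\colon A^0\to\mmc(A)$, $g\mapsto d_cg$ (note $0\in\mmc(A)$ and $\phi(g)=0+d_cg$).

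Next I would use that a translation action of a simplicial abelian group along a homomorphism has homotopy quotient equal to the mapping cone: by Definition \ref{ddefdefsub} we have $\ddel_A=\mmc(A)\by_{A^0}W(A^0)$, which fits in a fibration $\mmc(A)\to\ddel_A\to\bar{W}(A^0)$ with boundary map given by acting on the basepoint, i.e.\ by $\phi_*=(d_c)_*$; hence $\ddel_A\simeq\cone\bigl(\phi\colon A^0\to\mmc(A)\bigr)$. Passing to normalised chains $N^s$ turns this into $\cone(N^s\phi)$, and $N^s\phi$ is, under the identifications of Proposition \ref{cotcoho} and of $N^sA^0$ with the column $V^0$, exactly the internal differential $d_c\colon V^0\to V^1\hookrightarrow\Tot^{\Pi}\sigma^{\ge 1}V$. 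The long exact homology sequence of this cone then agrees term by term (in the correct degrees) with the long exact homology sequence of the brutal-truncation short exact sequence of bicomplexes
$$
0\to\sigma^{\ge 1}V\to V\to N^sA^0\to 0 ,
$$
which stays exact after $\Tot^{\Pi}$ (it is split in each degree) and whose connecting homomorphism is likewise induced by $d_c\colon V^0\to V^1$. A degree count ($V^1_a$ sitting in total degree $a-1$) matches the cone sequence with the truncation sequence set at $m=n-1$, yielding $\pi_n\ddel_A\cong\H_{n-1}(\Tot^{\Pi}V)$, as required.

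The main obstacle is the identification of the two connecting maps, i.e.\ verifying that the topological boundary map of the fibration (equivalently, the cone differential) coincides under Proposition \ref{cotcoho} and the Dold--Kan correspondence with the algebraic connecting map $d_c$ of the truncation sequence. This is precisely where the explicit computation $g^{-1}*\omega*g=\omega+d_cg$ does the work: it shows the action is translation along $\phi=d_c$, so the boundary map is $(d_c)_*$ on the nose, and the comparison of the two long exact sequences becomes an honest chain-level identification of mapping cones rather than an abstract match requiring the five lemma. The only remaining bookkeeping is to confirm the shift conventions so that $N^s\mmc(A)\simeq(\Tot^{\Pi}\sigma^{\ge 1}V)[1]$ is compatible with the column $V^0$ being adjoined via $d_c$ to form $(\Tot^{\Pi}V)[1]$.
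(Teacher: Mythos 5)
Your proposal is correct and follows essentially the same route as the paper: identify the adjoint action as translation along $d_c\colon A^0\to \z^1N_cA\into\mmc(A)$, realise $\ddel_A$ (via the Borel construction) as the mapping cone of $N^sA^0\to N^s\mmc(A)$, and identify that cone with the truncation of $\Tot^{\Pi}N^sN_cA$ shifted by one. The point you flag as the main obstacle --- compatibility of the equivalence of Proposition \ref{cotcoho} with the map from $A^0$ --- is exactly what the paper's proof checks, by observing that the equivalence preserves the image of $\HHom_{cs\Ab}(\iota(\Z),A)\cong\z^1N_cA$, through which $D$ factors.
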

\begin{proof}
We need to understand the adjoint action of $A^0$ on $\mmc(A)$. For $g \in A^0$ and $\omega \in \mmc(A)$, we  have $g^{-1}*\omega*g = g^{-1}*0*g + \omega$,  so the problem reduces to understanding the morphism $D:A^0 \to \mmc(A)$ given by $D(g)= g^{-1}*0*g$. 

If we now consider the simplicial normalisation of $[X/^hG]$, for $X$ and $G$ abelian and $[-/^h-]$ as in Definition \ref{ddefdefsub}, we see that 
\begin{eqnarray*}
 N^s_n[X/^hG] &\cong&  N^s_nX \oplus N^s_{n-1}G\\
(x,g,d^sg,0,0,\ldots,0)  &\mapsfrom&  (x,g).
\end{eqnarray*}
The corresponding differential is  given by $d^s(x,g)=(d^sx+g\cdot 0, dg)$, so $N^s[X/^hG]$ is isomorphic to the mapping cone of the morphism $N^sG \xra{\cdot 0} N^sX$.

If $X= \mmc(A)$ and $G=A^0$, then the gauge action on $0$ is given by $g\cdot 0= D g$,  so $N^s\ddel_A$ is isomorphic to the cone complex of  the morphism 
$$
N^sA^0 \xra{N^sD} N^s\mmc(A).
$$

Now, 
$$
\HHom_{cs\Ab}(\iota(\Z),A) \cong \z^1N_cA:= \ker(d_c: N_c^1A \to N_c^2A),
$$ 
with the map $\cot(\Xi) \to \iota(\Z)$ corresponding to the inclusion $f:\z^1N_cA \into \mmc(A)$ given by $f(a)_n = (\pd^1)^n(a) \in A^{n+1}\subset (A^{n+1})^{I^n}$. The key observation is that this subset is closed under the adjoint action, so in particular $D:  A^0 \to \mmc(A)$ factors through $\z^1N_cA$, via the map $d_c:A^0 \to \z^1N_cA$.

The map $L \to \Z^{[-1]}$ corresponds to the natural inclusion $(\z^1N_cA)_n \into \z_{-1}(\Tot^{\Pi}\sigma^{\ge 1}N_cA^{\Delta^n} )$. The proof of Proposition \ref{cotcoho} gives an  equivalence 
$$
N^s\mmc(A) \simeq \tau_{\ge 0}(\Tot^{\Pi}(\sigma^{\ge 1}N_cA)[-1])
$$ 
of chain complexes, where $\tau_{\ge 0}$ is good truncation in non-negative degrees  (so 
$$
\tau_{\ge 0}(V[-1])_n = \left \{ \begin{matrix}V_{n-1} & n >0\\  \z_{-1}V & n=0\\ 0 & n<0). \end{matrix} \right.
$$

Since this equivalence preserves the image of $N^s\HHom_{cs\Ab}(\iota(\Z),A)$, it also preserves the image of $A^0$, giving an equivalence between the cone complexes of 
$$
N^sA^0 \xra{N^sD} N^s\mmc(A)\quad \text{ and } \quad N^sA^0 \xra{d_c} \tau_{\ge 0}(\Tot^{\Pi}(\sigma^{\ge 1}N_cA)[-1]).
$$
The latter cone complex  is just 
$$
\tau_{\ge 0}(\Tot^{\Pi}(N^sN_cA)[-1]),
$$ 
which has the required cohomology.
\end{proof}

\subsection{Cohomology}

\begin{definition}
Given $E \in QM^*(\bS)$ and  $A \in c\Ab$, define cohomology groups of $E$ with coefficients in $A$ by
$$
\H^i(E, A):= \Hom_{\Ho(QM^*(\bS))}(E, N_s^{-1}A_{[-i]}),
$$
where $N_s^{-1}A_{[-i]}$ is the simplicial abelian group whose simplicial normalisation has $A$ concentrated in chain degree $i$.
\end{definition}

\begin{proposition}\label{btcoho} 
For $A \in c\Ab$,
$$
\H^i(\iota\bt, A) \cong \left\{ \begin{matrix} \H^{i+1}(N_cA) & i>0 \\ \z^1N_cA & i=0. \end{matrix} \right.
$$
\end{proposition}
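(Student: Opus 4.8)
The plan is to identify $\H^i(\iota\bt,A)$ with $\pi_0$ of a Maurer--Cartan space and then feed this into the explicit cohomological computation of Proposition \ref{cotcoho}. Write $\tilde A:=N_s^{-1}A_{[-i]}$; this is an abelian group object of $QM^*(\bS)$, hence lies in $cs\Ab$ by (the simplicial version of) Lemma \ref{abqm}. By definition $\H^i(\iota\bt,A)=\Hom_{\Ho(QM^*(\bS))}(\iota\bt,\tilde A)$, and Proposition \ref{rhommc} identifies $\oR\HHom_{QM^*(\bS)}(\iota\bt,\tilde A)$ with $\mmc(\tilde A)$. Taking $\pi_0$ gives
$$
\H^i(\iota\bt,A)\cong \pi_0\mmc(\tilde A).
$$

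Next I would apply Proposition \ref{cotcoho} to $\tilde A\in cs\Ab$: with $n=0$ it yields $\pi_0\mmc(\tilde A)\cong \H_{-1}(\Tot^{\Pi}\sigma^{\ge 1}N^sN_c\tilde A)$. The key identification is that the simplicial normalisation of $\tilde A$ is $A$ placed in chain degree $i$, so (the two normalisations acting in independent, commuting directions) the bicomplex $N^sN_c\tilde A$ is just the cochain complex $N_cA$ concentrated in chain degree $i$. In particular its vertical differential $d^s$ vanishes, so on $\Tot^{\Pi}\sigma^{\ge 1}N^sN_c\tilde A$ the total differential of Definition \ref{totprod} reduces to $\pm d_c$.

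Finally I would compute $\H_{-1}$ directly. The term of total degree $m$ is $(N_cA)^{i-m}$ when $i-m\ge 1$ and $0$ otherwise, with differential $\pm d_c\colon (N_cA)^{i-m}\to (N_cA)^{i-m+1}$. Hence the three relevant terms are $(N_cA)^{i}$ in degree $0$ (present only if $i\ge 1$), $(N_cA)^{i+1}$ in degree $-1$, and $(N_cA)^{i+2}$ in degree $-2$, so $\H_{-1}$ is the cohomology of $N_cA$ at position $i+1$. For $i>0$ both the incoming and outgoing differentials survive, giving $\H^{i+1}(N_cA)$; for $i=0$ the brutal truncation $\sigma^{\ge 1}$ removes the cochain degree zero term $(N_cA)^0$, so nothing is quotiented out and we are left with $\z^1N_cA$. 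This is exactly the stated formula.

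The main obstacle is bookkeeping rather than conceptual: one must keep the two gradings and the index and sign conventions of Definition \ref{totprod} straight, and in particular notice that the boundary case $i=0$ genuinely differs, precisely because $\sigma^{\ge 1}$ deletes the term that would otherwise supply the coboundaries. The only substantive point to verify is that the identification $\H^i(\iota\bt,A)\cong\pi_0\mmc(\tilde A)$ is insensitive to fibrant replacement of $\tilde A$, which holds since the computation of Proposition \ref{cotcoho} is carried out by resolving the source $\iota\bt$ (via $\Xi$, equivalently $L$) and so already computes the derived mapping space.
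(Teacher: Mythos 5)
Your proposal is correct and follows essentially the same route as the paper: identify $\H^i(\iota\bt,A)$ with $\pi_0\mmc(N_s^{-1}A_{[-i]})$ via Proposition \ref{rhommc}, apply Proposition \ref{cotcoho}, and read off $\H_{-1}$ of the total complex, where the brutal truncation $\sigma^{\ge 1}$ accounts for the case split at $i=0$. The paper states the answer uniformly as $\H^{i+1}(\sigma^{\ge 1}N_cA)$ and leaves the unravelling into the two cases implicit, but the content is identical.
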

\begin{proof}
By Proposition \ref{rhommc}, 
$
\H^i(\iota\bt, A)\cong \pi_0\mmc(N_s^{-1}A_{[-i]}),
$
which by Proposition \ref{cotcoho} is isomorphic to 
$
\H_{-1}(\Tot^{\Pi}\sigma^{\ge 1}N_cA_{[-i]}).
$
Since $\Tot^{\Pi}\sigma^{\ge 1}N_cA_{[-i]})_n= (\sigma^{\ge 1}N_cA)^{i-n}$, this is just
 $\H^{i+1}(\sigma^{\ge 1}N_cA)$,
as required.
\end{proof}

\subsection{Groupoids  }

Given $\Gamma \in QM^*(\Gpd, \by)$ (or even in $QM^*(\Cat, \by$), we have $B\Gamma \in QM^*(\bS)$, for $B:\Gpd \to \bS$ the nerve functor, and we now seek to describe the set $\mc(B\Gamma)$ (and hence the space $\mmc(B\Gamma)$).

\begin{proposition}\label{gpdcalc}
For $\C \in QM^*(\Cat, \by)$, the set $\mc(B\C)$ is isomorphic to the set of pairs $(x, a)$, for $x \in \Ob \C^1, a \in \C^2(x*x, \pd^1x)$ satisfying the following conditions:
\begin{eqnarray*}
\sigma^0x&=& 1\\
\sigma^0a=\sigma^1a &=& \id_x \in \C^1(x,x)\\
(\pd^2a)\circ (x*a)&=&(\pd^1a)\circ (a*x) \in \C^3(x*x*x, \pd^2\pd^1x),
\end{eqnarray*}
where $a*x:= a*\id_x$ and $x*a= \id_x*a$, for $\id_x \in \C^1(x,x)$ the identity morphism.
\end{proposition}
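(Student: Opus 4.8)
The plan is to identify $\mc(B\C)$ with a set of morphisms out of the cofibrant quasi-comonoid $\Xi$ and then read off the data and relations directly. By Proposition \ref{rhommc} we have $\mmc(E)=\HHom_{QM^*(\bS)}(\Xi,E)$, so taking $0$-simplices gives $\mc(B\C)=\mmc(B\C)_0=\Hom_{QM^*(\bS)}(\Xi,B\C)$. The first thing I would record is that the nerve $B\colon\Cat\to\bS$ preserves finite products and is fully faithful, hence induces a fully faithful functor $QM^*(\Cat)\to QM^*(\bS)$. Since $\Xi^{n+1}=I^n=(\Delta^1)^n=B([1]^n)$ is the nerve of the $n$-cube poset, $\Xi$ is the image $B\xi$ of the evident quasi-comonoid $\xi\in QM^*(\Cat)$ with $\xi^0=\bt$ and $\xi^{n+1}=[1]^n$. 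Thus $\mc(B\C)\cong\Hom_{QM^*(\Cat)}(\xi,\C)$, and a point of $\mc(B\C)$ is exactly a compatible family of functors $\omega_n\colon[1]^n\to\C^{n+1}$ satisfying the higher Maurer--Cartan relations of Definition \ref{mcdefqm}.

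For the forward map I would send such a family to $(x,a):=(\omega_0,\omega_1)$, with $x\in\Ob\C^1$ and $a\in\Mor\C^2$, and compute the constraints. The product relation with $m=n=0$ forces $\omega_1(0)=x*x$, while the face relation $\pd^1\omega_0=\omega_1(1)$ (valid since $\pd^i$ ranges over $1\le i\le n+1$ on $\Xi$) forces $\omega_1(1)=\pd^1x$; hence $a\in\C^2(x*x,\pd^1x)$. The degeneracy relations $\sigma^0\omega_0=1$ and $\sigma^0\omega_1=\sigma^1\omega_1=\omega_0$ give precisely $\sigma^0x=1$ and $\sigma^0a=\sigma^1a=\id_x$. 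Finally I would compute $\omega_2$ as a functor on the square $[1]^2$: its four vertices are forced by products and faces to be $x*x*x$, $(\pd^1x)*x$, $x*\pd^1x$ and $\pd^2\pd^1x$, and its four edges to be $a*x$, $x*a$, $\pd^1a$ and $\pd^2a$ (using Lemma \ref{qmlemma} to identify, e.g., $\pd^1(x*x)=(\pd^1x)*x$). Functoriality of $\omega_2$, i.e.\ commutativity of this square, is exactly the third stated relation $(\pd^2a)\circ(x*a)=(\pd^1a)\circ(a*x)$.

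For the inverse I would start from a pair $(x,a)$ satisfying the three conditions and reconstruct the family $\underline\omega$, showing it is forced and well-defined. Proceeding by induction on $n$, the product and face relations determine $\omega_n$ on every vertex and every edge of the cube $[1]^n$ as a word in $x$ and $a$ under $*$ and the $\pd^i$: a spectator coordinate equal to $0$ is removed by splitting off a product, and a spectator coordinate equal to $1$ is removed by recognising the cell as $\pd^i$ of a lower-dimensional one, the recursion terminating at $\omega_0=x$ and $\omega_1=a$. It then remains to check that $\omega_n$ is a genuine functor, i.e.\ that each $2$-face of $[1]^n$ commutes, and that the degeneracy relations hold; the former reduces, via naturality of $*$ and $\pd^i$, to applying $*$ and $\pd^i$ to the single square relation, and the latter to the conditions $\sigma^0x=1$, $\sigma^0a=\sigma^1a=\id_x$ together with the simplicial identities of Lemma \ref{qmlemma}. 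One then checks the two assignments are mutually inverse, which is immediate once well-definedness is established.

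The main obstacle is the coherence in the last paragraph: verifying that the reconstruction $\omega_n$ is independent of the order in which spectator coordinates are stripped, and that every $2$-face of every cube commutes, given only the one cocycle relation. I expect this to follow formally from the naturality of the quasi-comonoid operations and the relations of Lemma \ref{qmlemma}, since each $2$-face of $[1]^n$ is obtained from the basic square $\omega_2$ by applying a composite of products with $x$ and face maps $\pd^i$, all of which preserve commutativity; but it is the step that needs care rather than the essentially mechanical vertex/edge bookkeeping.
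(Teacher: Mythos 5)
Your proposal is correct and follows essentially the same route as the paper: the paper passes to $\Cat$ via the fundamental-category functor $\tau_1$ (left adjoint to $B$), which is the mirror image of your use of full faithfulness of $B$ together with the identification $\Xi=B\xi$, and its core step is exactly your coherence step, phrased as a presentation of $\tau_1\Xi$ by the generator $\alpha$ and the single square relation (every morphism of the cube is a composite of edge morphisms $\pd^{i_r}\cdots\pd^{i_1}(\xi^s*\alpha*\xi^t)$, and all squares of such commute). The only point to tighten is that a $2$-face whose two varying coordinates are separated by a spectator $0$ is an interchange square $u*v$, which commutes by bifunctoriality of $*$ rather than by being an image of the basic square; your appeal to ``naturality of $*$'' covers this, and the paper handles it with the same brevity.
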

\begin{proof}
First, observe that 
$$
\mc(B\C) = \Hom_{QM^*(\bS)}(\Xi, B\C)\cong \Hom_{QM^*(\Cat)}(\tau_1\Xi, \C),
$$
where $\tau_1\Xi$ is the fundamental category of $\Xi$ (in the sense of \cite{joyaltierney} \S 1).

It is  therefore equivalent to show that  $\tau_1\Xi$  is the quasi-comonoid in categories generated by an object $\xi \in \Xi_0^1$ and a morphism $\alpha \in \tau_1(\Xi^2)(\xi*\xi, \pd^1\xi)$ satisfying the conditions for $(x,a)$ above.  

Note that $\Xi_0$ is the free quasi-comonoid (in sets) generated by the unique element $\xi \in \Xi_0^1$, subject to the condition that $\sigma^0\xi =1$. Recall that $\Xi^n_0= [0,1]^{n-1}$ for $n \ge 1$. 
We may then describe $\tau_1\Xi^n$ as the category associated to the poset $[0,1]^{n-1}$. Thus $\tau_1\Xi^2$ has objects $\xi*\xi$ and $\pd^1\xi$ (corresponding to $0$ and $1$ in $[0,1]$ respectively), with a unique isomorphism $\alpha: \xi*\xi \to\pd^1\xi$. Since there is a unique morphism from $(0,0)$ to $(1,1)$ in $\tau_1\Xi^3$, the morphisms 
$$
(\pd^2\alpha) \circ (\xi*\alpha), (\pd^1\alpha)\circ (\alpha*\xi) \in \C^3(\xi*\xi*\xi, \pd^2\pd^1\xi)
$$
must be equal.

It therefore only remains to show that $\tau_1\Xi $ is isomorphic to the quasi-comonoid $\cD$ defined to have  objects $\Ob \Xi$, with morphisms freely generated by $\alpha \in \cD(\xi*\xi, \pd^1\xi)$, subject to the condition $(\pd^2\alpha) \circ (\xi*\alpha)= (\pd^1\alpha)\circ (\alpha*\xi)$. Since the condition is satisfied by $\tau_1\Xi$, there is a natural map $\cD \to \tau_1\Xi$ in $QM^*(\Cat)$.

Now, for $u \in \cD^m(x,y)$ and $v \in \cD^n(x',y')$, the fact that $\cD^m \by \cD^n \xra{*} \cD^{m+n}$ is a  functor  implies that 
$$
 (\id_y*v)\circ (u*\id_{x'})=u*v=  (u*\id_{y'})\circ (\id_x*v).
$$
Therefore every morphism in $\cD$ can be generated from $\alpha$  by the operations $\pd^i$, $\xi*$, $*\xi$ and composition. Thus every morphism
is a composition of morphisms of the form 
$$
\pd^{i_r}\cdots \pd^{i_1}(\xi^s * \alpha * \xi^t).
$$
 There are $2^{n-2}(n-1)$ such morphisms in $\cD^n$, corresponding to  edges in the $(n-1)$-cube  $\Ob \cD^n=[0,1]^{n-1}$, so we call these the edge morphisms. Since $\tau_1\Xi^{n}$ is generated by edge morphisms, this implies that the functor $\cD^{n} \to \tau_1\Xi^{n}$ is full. 

Finally, the condition $(\pd^2\alpha) \circ (\xi*\alpha)= (\pd^1\alpha)\circ (\alpha*\xi)$ implies that any square of edge morphisms in $\cD^n$ commutes. Thus $\cD^n$ is the category associated to the poset $[0,1]^{n-1}$, so $\cD \to \tau_1\Xi$ is an isomorphism, as required.
 \end{proof}

\begin{corollary}\label{btcohogpd}
For $\Gamma \in QM^*(\Gpd)$, 
$$
\Hom_{\Ho(QM^*(\bS))}(\iota\bt, B\Gamma) \cong \mc(B\Gamma)/\sim,
$$
where for  pairs  $(x, a)$ as in Proposition \ref{gpdcalc}, the   equivalence relation  $\sim$ is given by saying that  $(x,a)\sim (x', a')$ if there exists $\lambda \in \Gamma^1(x,x')$ such that 
$$
(\pd^1\lambda) \circ a = a' \circ (\lambda*\lambda), \quad \sigma^0\lambda= \id_1,
$$
where $\id_1$ is the identity morphism in $\Gamma^0(1,1)$. 
\end{corollary}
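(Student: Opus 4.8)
The plan is to identify $\Hom_{\Ho(QM^*(\bS))}(\iota\bt, B\Gamma)$ with $\pi_0\mmc(B\Gamma)$ and then to read off the path components by applying Proposition \ref{gpdcalc} both at the level of vertices and of edges. The starting point is Proposition \ref{rhommc}: for fibrant $E$ one has $\oR\HHom_{QM^*(\bS)}(\iota\bt, E) \simeq \mmc(E)$, hence $\Hom_{\Ho}(\iota\bt, E) \cong \pi_0\mmc(E)$. The one technical point is that $B\Gamma$ need not itself be Reedy fibrant, since the codegeneracies $\sigma^i\colon \Gamma^n \to M^n\Gamma$ need not be isofibrations of groupoids. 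I would therefore first arrange fibrancy by replacing $\Gamma$ with a weakly equivalent Reedy-fibrant object $\Gamma'$ of $QM^*(\Gpd)$ (so that, by Proposition \ref{reedymodel} and right-Quillenness of the nerve $B\colon\Gpd\to\bS$, $B\Gamma'$ is fibrant in $QM^*(\bS)$), and then checking that the homotopy-invariant description $\mc(B\Gamma)/\sim$ is unchanged under such a levelwise equivalence. Granting this reduction, it suffices to compute $\pi_0\mmc(B\Gamma)$.

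By Definition \ref{mcdefqm} the vertices of $\mmc(B\Gamma)$ are $\mc(B\Gamma) = \mmc(B\Gamma)_0$, which Proposition \ref{gpdcalc} describes as the pairs $(x,a)$ in the statement. The path components are the quotient of this set by the equivalence relation generated by the edges, i.e. by the vertices of $\mmc(B\Gamma)_1 = \mc\big((B\Gamma)^{\Delta^1}\big)$. The key observation is that, since $\Delta^1 = B\oI$ for $\oI$ the arrow category $0\to1$ and the nerve sends functor categories to cotensors, the cotensor $(B\Gamma)^{\Delta^1}$ is naturally $B(\Gamma^{\oI})$, where $\Gamma^{\oI}\in QM^*(\Gpd)$ denotes the exponential by $\oI$ formed levelwise, so that $(\Gamma^{\oI})^n$ is the arrow groupoid of $\Gamma^n$. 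Thus an edge of $\mmc(B\Gamma)$ is a Maurer--Cartan element of $B(\Gamma^{\oI})$.

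Applying Proposition \ref{gpdcalc} to $\Gamma^{\oI}$ then unwinds an edge into a pair $(\Lambda, A)$: here $\Lambda$ is an object of $(\Gamma^{\oI})^1$, i.e. a morphism $\lambda\colon x \to x'$ of $\Gamma^1$, and $A$ is a morphism $\Lambda * \Lambda \to \pd^1\Lambda$ in the arrow groupoid $(\Gamma^{\oI})^2$, i.e. a commuting square in $\Gamma^2$ whose source and target legs are the two endpoints $a, a'$ and whose verticals are $\lambda*\lambda$ and $\pd^1\lambda$. The face maps $\pd^0, \pd^1$ of the edge recover the endpoints $(x,a)$ and $(x',a')$; the condition $\sigma^0\Lambda = 1$ becomes $\sigma^0\lambda = \id_1$, and commutativity of the square becomes $(\pd^1\lambda)\circ a = a'\circ(\lambda*\lambda)$. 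This is exactly an instance of the relation $\sim$, and conversely any such $\lambda$ determines such an edge, so edges correspond precisely to instances of $\sim$.

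It then remains to check that $\sim$ is already an equivalence relation, so that the relation generated by the edges equals $\sim$ and $\pi_0\mmc(B\Gamma) \cong \mc(B\Gamma)/\sim$. Reflexivity uses $\lambda = \id_x$ together with $\id_x * \id_x = \id_{x*x}$; transitivity uses the composite $\mu\lambda$ with functoriality of $*$ and of $\pd^1$; and symmetry uses $\lambda^{-1}$, inverting both $\lambda$ and the square $A$. This last step is exactly where the hypothesis that $\Gamma$ is valued in groupoids, rather than merely categories as in Proposition \ref{gpdcalc}, is indispensable. I expect the main work to lie in this closure argument and in carefully matching the coherence conditions on $A$ from Proposition \ref{gpdcalc} against the endpoint data, with the fibrancy reduction of the first step being a secondary technical matter.
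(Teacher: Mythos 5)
Your proposal is correct and follows essentially the same route as the paper: the paper computes $\Hom_{\Ho(QM^*(\bS))}(\iota\bt,B\Gamma)$ as $\mc(B\Gamma)/\mc(B\Ar(\Gamma))$, using $B\Gamma\to B\Ar(\Gamma)\to B(\Gamma\by\Gamma)$ as a path object and applying Proposition \ref{gpdcalc} to the levelwise arrow groupoid $\Ar(\Gamma)=\Gamma^{\oI}$, which is exactly your identification of the edges of $\mmc(B\Gamma)$ with $\mc\bigl((B\Gamma)^{\Delta^1}\bigr)=\mc(B(\Gamma^{\oI}))$. The paper leaves implicit both the fibrancy of $B\Gamma$ and the check that $\sim$ is already an equivalence relation (via invertibility in the groupoids), so your attention to those two points is a refinement of, not a departure from, its argument.
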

\begin{proof}
Let $\Ar(\Gamma)\in QM^*(\Gpd)$ be the groupoid of arrows in $\Gamma$, defined levelwise, and observe that 
$$
B\Gamma \xra{\id} B\Ar(\Gamma) \to B(\Gamma\by \Gamma)  
$$
is a path object for $B\Gamma$. Since $\Xi$ is a cofibrant replacement for $\iota \bt$, this gives
$$
\Hom_{\Ho(QM^*(\bS))}(\iota\bt, B\Gamma) \cong \Hom_{QM^*(\bS)}(\Xi, B\Gamma)/\Hom_{QM^*(\bS)}(\Xi, B\Ar(\Gamma)),
$$
which is just $\mc(B\Gamma)/ \mc(B\Ar(\Gamma))$.

Applying  Proposition \ref{gpdcalc} to   $\Ar(\Gamma)$ shows that elements of $\mc(B\Ar(\Gamma))$ correspond to  pairs
$$
\left(x\xra{\lambda} x', \quad \begin{CD} x*x @>{\lambda*\lambda}>> x'*x'\\ @VaVV @VV{a'}V \\ \pd^1x @>{\pd^1\lambda}>> \pd^1x'\end{CD}\right), 
$$
with $(x,a), (x', a') \in \mc(B\Gamma)$ and $(\pd^1\lambda) \circ a = a' \circ (\lambda*\lambda)$, $\sigma^0\lambda= \id_1$. This gives the required description.

\end{proof}

\subsection{Linear quasi-comonoids}

Given $E \in QM^*(\bS)$,  we now wish to describe the homology groups $\H_*(E^n,\Z)$, for all $n$ --- by definition, these are just  homology groups of the simplicial abelian group $\Z\ten E$ freely generated by $E$. Applying $\Z \ten$ levelwise gives a   functor 
$$
\Z \ten \co QM^*(\bS) \to QM^*(s\Ab, \ten),
$$
where the latter category is not to be confused with the category  $QM^*(s\Ab, \by)$ considered earlier. There is a forgetful  functor, right adjoint to $\Z \ten$, and \cite{Hirschhorn} Theorem 11.3.2 allows us to put a model structure on $QM^*(s\Ab, \ten)$ for which a morphism is a weak equivalence or a fibration if the underlying map in  $QM^*(\bS)$ is so.

This means that a morphism in $QM^*(s\Ab, \ten)$  is a weak equivalence or a fibration whenever the underlying map in the Reedy category $\bS^{\Delta_{**}}$ is so. Thus the forgetful functor $QM^*(s\Ab, \ten) \to (s\Ab)^{\Delta_{**}}$ is right Quillen, where the latter category has the Reedy model structure.

Alternatively, we can forget the operations $\pd^i, \sigma^i$ and retain the multiplication. This gives us a  forgetful functor $U: QM^*(\Ab, \ten)\to G\Ring$ to simplicial (not necessarily commutative) $\N_0$-graded  rings with unit, given by $U(R)^n= R^n$.

\begin{lemma}\label{u*}
The forgetful functor $U: QM^*(s\Ab, \ten)\to sG\Ring$ is left Quillen.
\end{lemma}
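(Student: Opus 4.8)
The plan is to realise $U$ as the functor induced on monoids by a strong monoidal restriction, and then to exploit the fact that free monoid functors commute with cocontinuous strong monoidal functors. By Remarks \ref{quasidescentcat}, $QM^*(s\Ab,\ten)$ is the category of monoids in $((s\Ab)^{\Delta_{**}},\ten)$ for the convolution product $(X\ten Y)^n=\bigoplus_{a+b=n}X^a\ten Y^b$, while $sG\Ring$ is the category of monoids in $((s\Ab)^{\N_0},\ten)$ for the graded tensor product. Let $j\co \N_0\to\Delta_{**}$ be the (strict monoidal) inclusion $n\mapsto\mathbf{n}$ of the discrete monoidal category $(\N_0,+)$, and let $j^*\co(s\Ab)^{\Delta_{**}}\to(s\Ab)^{\N_0}$ be restriction, i.e. $X\mapsto(X^n)_{n\ge0}$. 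Since the convolution product restricts on $n$-components to the graded tensor product, $j^*$ is strong monoidal, and $U$ is precisely the functor it induces on monoids. The restriction $j^*$ has a right adjoint (the right Kan extension $j_*$ along $j$), which is then lax monoidal, so the resulting monoidal adjunction lifts to the categories of monoids; this exhibits $U$ as a left adjoint.

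Next I record the generating sets. Both model structures are transferred via \cite{Hirschhorn} Theorem 11.3.2 (for $sG\Ring$ this is the structure transferred from $(s\Ab)^{\N_0}$ along the forgetful functor, whose weak equivalences and fibrations are detected degreewise). Writing $F$ for the left adjoint (free monoid functor) to the forgetful functor $QM^*(s\Ab,\ten)\to(s\Ab)^{\Delta_{**}}$, and $T$ for the free graded ring (tensor algebra) functor $(s\Ab)^{\N_0}\to sG\Ring$, the generating (trivial) cofibrations of $QM^*(s\Ab,\ten)$ are the images under $F$ of the generating cofibrations $I$ and trivial cofibrations $J$ of the Reedy structure on $(s\Ab)^{\Delta_{**}}$, and those of $sG\Ring$ are the images under $T$ of the generating (trivial) cofibrations of $(s\Ab)^{\N_0}$. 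The key point is the identity $U\circ F=T\circ j^*$ as functors $(s\Ab)^{\Delta_{**}}\to sG\Ring$: this holds because free monoids commute with the cocontinuous strong monoidal functor $j^*$, and is transparent from the coproduct formula, since $(UFM)^n=\bigoplus_{k\ge0}\bigoplus_{a_1+\dots+a_k=n}M^{a_1}\ten\dots\ten M^{a_k}=(Tj^*M)^n$ with concatenation product on each side.

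It remains to check that $UF=Tj^*$ is left Quillen. The functor $T$ is left Quillen by construction of the transferred structure on $sG\Ring$, so it suffices to see that $j^*$ is left Quillen from the Reedy structure on $(s\Ab)^{\Delta_{**}}$ to $(s\Ab)^{\N_0}$. It is a left adjoint and preserves weak equivalences, as both are detected levelwise on $s\Ab$ and $j^*$ leaves the objects $X^n$ unchanged; and it preserves cofibrations because each component $f^n\co X^n\to Y^n$ of a Reedy cofibration $f$ is a cofibration in $s\Ab$, which is a standard property of Reedy cofibrations. Preserving cofibrations and weak equivalences, $j^*$ also preserves trivial cofibrations, so it is left Quillen, whence so is $UF=Tj^*$. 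Consequently $U$ sends $F(I)$ into cofibrations and $F(J)$ into trivial cofibrations of $sG\Ring$.

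Finally, being a left adjoint, $U$ preserves the retracts, pushouts and transfinite compositions out of which every (trivial) cofibration of $QM^*(s\Ab,\ten)$ is built from $F(I)$ (resp. $F(J)$); since $U$ carries those generators into (trivial) cofibrations, it preserves all (trivial) cofibrations and is therefore left Quillen. I expect the only genuinely technical point to be the verification that $j^*$ preserves cofibrations, which rests on the Reedy-theoretic fact that the components of a Reedy cofibration are themselves cofibrations; the rest is the bookkeeping that $U$ is the monoidal functor induced by the strong monoidal $j^*$ together with the identity $UF=Tj^*$.
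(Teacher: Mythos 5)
Your argument is correct, but it takes a genuinely different route from the paper. The paper works entirely on the right-adjoint side: it writes down the right adjoint $U_*$ explicitly, as $(U_*R)^n = \prod_{m,\,f:\on\to\om} R^m$, and observes that the matching maps of $U_*R$ have canonical sections, so that $U_*$ preserves fibrations and trivial fibrations and is therefore right Quillen. Your $j_*$ (the right Kan extension along $j\co\N_0\to\Delta_{**}$, lifted to monoids by doctrinal adjunction) is exactly this $U_*$ --- the indexing set in the paper's formula is the comma category computing $\mathrm{Ran}_j$ --- so the two proofs construct the same adjoint, but verify the Quillen condition on opposite sides. Your left-adjoint argument via the identity $U\circ F = T\circ j^*$ and cellularity of cofibrations is more structural and avoids the matching-object computation, at the cost of having to pin down the generating (trivial) cofibrations of $QM^*(s\Ab,\ten)$. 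On that point a small remark: the paper transfers the model structure on $QM^*(s\Ab,\ten)$ from $QM^*(\bS)$ along $\Z\ten(-)$, not from $(s\Ab)^{\Delta_{**}}$, so your generating set $F(I)$ is not literally the one the paper produces; but since fibrations and weak equivalences are in both cases detected in the Reedy category $\bS^{\Delta_{**}}$, the two transferred structures coincide, the $F(I)$-injectives are precisely the trivial fibrations, and the small object argument then shows every cofibration is a retract of a relative $F(I)$-cell complex, which is all your final step needs. The remaining ingredients --- that $j^*$ is strong monoidal for the convolution product of Remarks \ref{quasidescentcat}, that components of Reedy cofibrations are cofibrations, and that free monoids commute with cocontinuous strong monoidal functors --- are all sound.
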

\begin{proof}
We first note that the model structure which we will use for $sG\Ring$ is defined by saying that $f: R \to S$ is a weak equivalence (resp. a fibration) whenever all the maps $f^n:R^n \to S^n$ are weak equivalences (resp. fibrations) in $\bS$. That this defines a model structure follows from  \cite{Hirschhorn} Theorem 11.3.2.

We may explicitly describe the right adjoint $U_*$ by
$$
(U_*R)^n:= \prod_{\substack{m \in \N_0 \\ f: \on \to \om \text{ in } \Delta}} R^m,
$$
where for any morphism $g:n\to n'$ in $\Delta$, the map $g: (U_*R)^n \to (U_*R)^{n'}$ is given by $g(r)_f= r_{g\circ f}$, for $f:\on' \to \om$.

The matching maps of this all have canonical sections, so it follows immediately that $U_*$ preserves fibrations and trivial fibrations, hence is right Quillen.
\end{proof}

\begin{definition}
Given a (not necessarily commutative) ring $R$, and an $R$-bimodule $M$, define the set $\Der(R,M)$ of derivations  to be the set of ring homomorphisms $R \to R\oplus M\eps$ over $R$, where $\eps^2=0$ and $\eps$ commutes with everything. Equivalently, a derivation  is a morphism $f: R \to M$ such that $f(ab)= af(b) +f(a)b$.

Then (as in \cite{duboisviolette})  define the  $R$-bimodule $\Omega(R)$ to be the kernel of the multiplication map $R\ten R \to R$; this
 has the  universal property that $\Hom_{R-R}(\cot(R),M) \cong \Der(R,M)$, with the universal derivation $R \to \Omega(R)$ given by $r \mapsto r\ten 1-1\ten r$.
\end{definition}

We now also denote the forgetful functor  $QM^*(\Ab, \by) \to G\Ab $ to  $\N_0$-graded abelian groups by $U$. 
\begin{lemma}\label{cotcalc}
For $E \in QM^*(\Set)$, there is a natural isomorphism
$$
U\cot(E) \cong   U(\Z\ten\iota\bt)\ten_{U(\Z\ten E)} \Omega(U(\Z\ten E))\ten_{ U(\Z\ten E)}U(\Z\ten\iota\bt).
$$
\end{lemma}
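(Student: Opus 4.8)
The plan is to construct an explicit comparison map of graded abelian groups and prove it bijective by matching generators and relations; I do not expect the two sides to agree as cosimplicial objects, only after applying $U$. Write $R := U(\Z\ten E)$, so that $R^n = \Z[E^n]$ with product $[x][y] = [x*y]$ and unit $[1_E]$, and write $k := U(\Z\ten\iota\bt) \cong \Z[t]$ (with $t$ in degree $1$). The augmentation $E \to \iota\bt$ induces a graded ring map $\epsilon\co R \to k$, $[x] \mapsto t^{\deg x}$, and this is the $R$-bimodule structure on $k$ used on the right-hand side. Let $\phi\co E \to U\cot E$ be the unit of the adjunction of Definition \ref{cotdef}; by Lemma \ref{abqm} the product on $\cot E$ is the additive cup product, so $\cot E$ is presented as a cosimplicial abelian group by the generators $\phi(x)$ ($x \in E^n$), the relations $\phi(\pd^i x) = \pd^i\phi(x)$, $\phi(\sigma^i x) = \sigma^i\phi(x)$ (for the operations existing on $E$) and $\phi(1_E) = 0$, together with $\phi(x*y) = (\pd^{m+1})^n\phi(x) + (\pd^0)^m\phi(y)$ for $x \in E^m$, $y \in E^n$.

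First I would define $\Psi\co k\ten_R\Omega(R)\ten_R k \to U\cot E$ on generators by $\Psi(t^a\ten d[y]\ten t^b) := (\pd^0)^a(\pd^{q+1})^b\phi(y)$ for $y \in E^q$, where $d$ is the universal derivation $r \mapsto r\ten 1 - 1\ten r$ and $\pd^0$, $\pd^{q+1}$ are the front and back cofaces of $\cot E$ (available precisely because $\cot E$, unlike $E$, carries the otherwise-missing cofaces $\pd^0, \pd^{n+1}$). Well-definedness is the first check: the bimodule relations $[z]\,d[y]$ and $d[y]\,[z]$ are absorbed into the two factors of $k$ via $\epsilon$, while the Leibniz rule $d[x*y] = [x]\,d[y] + d[x]\,[y]$ maps, using the cosimplicial identity $\pd^j\pd^i = \pd^i\pd^{j-1}$ ($i<j$) to commute front and back cofaces, exactly onto the cup-product relation above. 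Surjectivity is then immediate, since any cosimplicial operation applied to a generator $\phi(x)$ either reduces to another generator (middle cofaces and all codegeneracies, by the first relations) or produces a front/back coface, so the elements $(\pd^0)^a(\pd^{q+1})^b\phi(x)$ span $U\cot E$.

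The real content is injectivity. I would construct a candidate inverse $\Phi$ on this spanning set by $(\pd^0)^a(\pd^{q+1})^b\phi(x) \mapsto t^a\ten d[x]\ten t^b$ and verify that it respects every relation among the spanning elements. All relations in $U\cot E$ are generated by the cosimplicial identities together with the three displayed families, and the decisive point is that the middle-coface relations $\phi(\pd^i x) = \pd^i\phi(x)$, once fed into the cup-product relation, collapse to precisely the image of the Leibniz rule and impose nothing more. This is already the whole story for $E = \iota\bt$: writing $\xi$ for the generator in degree $1$, the seemingly extra constraint $d_c\phi(\xi) = 0$ is nothing but $\phi(\xi*\xi) = \pd^2\phi(\xi) + \pd^0\phi(\xi)$, i.e. the Leibniz relation for $d(t^2)$, and a rank count then identifies $U\cot\iota\bt \cong \Omega(\Z[t])$, both being free of rank $n$ in degree $n$.

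I expect this verification --- that no relation beyond $\Z$-linearity, Leibniz and $d(\mathrm{unit}) = 0$ survives on the front/back-coface generating set --- to be the main obstacle, since it requires careful bookkeeping of how the codegeneracies and middle cofaces of $E$ interact with the two boundary cofaces. If the direct check proves unwieldy, an alternative is to observe that $\cot$ preserves colimits (it is a left adjoint) and that $E \mapsto k\ten_R\Omega(R)\ten_R k$ is assembled from $\Z\ten(-)$, the functor $\Omega$, and base change; one would then try to reduce, via a presentation of $E$ by free quasi-comonoids, to the free case, where both sides are computed directly and $\Psi$ is visibly an isomorphism, concluding by naturality.
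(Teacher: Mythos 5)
Your route is genuinely different from the paper's. You build an explicit comparison map out of a presentation of $\cot E$ by generators and relations, whereas the paper never writes down an inverse at all: it shows that both sides corepresent the functor $M\mapsto\Der(U(\Z\ten E),M)$ on graded $U(\Z\ten\iota\bt)$-bimodules (via the chain $\Hom(\Omega(R),M)\cong\Der(R,M)\cong\Hom_{QM^*(\Set)}(E,\cdots)\cong\Hom_{c\Ab}(\cot E,U_*M)$) and concludes by Yoneda. Your forward map $\Psi$, its well-definedness, the surjectivity argument and the sanity check for $E=\iota\bt$ are all sound, and they make concrete what the adjunction-juggling hides: the two outer cofaces of $\cot E$ realise the $U(\Z\ten\iota\bt)$-bimodule structure, and the Leibniz rule is literally the Alexander--Whitney cup-product relation.

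The gap is exactly where you place it, but it is larger than your sketch suggests. Writing $R=U(\Z\ten E)$ and $k=U(\Z\ten\iota\bt)$ as you do, the relation subobject of the free cosimplicial abelian group on $E$ is not spanned by the three displayed families applied to generators; it is the cosimplicial subobject they generate, i.e.\ it is closed under postcomposition with arbitrary cofaces and codegeneracies. So to see that your candidate inverse $\Phi$ is well defined you must check that it kills $f_*(\rho)$ for every relation $\rho$ and every operation $f$ of $\Delta$, which in effect requires knowing how every $\pd^i$ and $\sigma^i$ acts on the target $k\ten_R\Omega(R)\ten_R k$ --- and that graded bimodule carries no evident inner cosimplicial operations, since $R$ has already forgotten them. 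This is precisely the difficulty the paper's proof is built to avoid: instead of equipping the right-hand side with a cosimplicial structure, it uses the right adjoint $U_*$ of the forgetful functor to $G\Ab$, together with the identification of $c\Ab$ with $\Z\ten\iota\bt$-bimodules in $\Ab^{\Delta_{**}}$ (outer cofaces $=$ left and right multiplication), to compute $\Hom$ into an arbitrary test bimodule, so that injectivity is never an issue. If you want to keep your explicit map, the cleanest repair is to import that observation and obtain $\Phi$ on all of $\cot E$ from the universal property applied to $x\mapsto 1\ten d[x]\ten 1$, rather than by relation-chasing on a spanning set. Your colimit fallback is viable in principle ($\cot$ and $E\mapsto k\ten_R\Omega(R)\ten_R k$ both preserve the relevant colimits), but the free case is not ``visibly'' an isomorphism and would demand comparable bookkeeping.
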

\begin{proof}
Given a graded $U(\Z\ten\iota\bt)$-bimodule $M$, we have
$$
\Hom_{U(\Z\ten E)-U(\Z\ten E)}(\Omega(U(\Z\ten E)), M)= \Der(U(\Z\ten E),M),
$$
making use of the augmentation $U(\Z\ten E) \to U(\Z\ten\iota\bt)$ coming from the canonical map $E \to \iota\bt$ to the final object.

Now, 
$$
\Der(U(\Z\ten E),M) = \Hom_{G\Ring\da U(\Z\ten\iota\bt )}(U(\Z\ten E),U(\Z\ten\iota\bt)\oplus M\eps), 
$$
for $\eps^2=0$, and this is isomorphic to $\Hom_{QM^*(\Set)\da U_* U(\Z\ten \iota\bt ) }(E, U_* (U(\Z\ten \iota\bt)\oplus M\eps))$.

Now, observe that $c\Ab$ is equivalent to the category of $\Z \ten \iota\bt$-bimodules in $\Ab^{\Delta_{**}}$, with operations  $\pd^0$ and $\pd^{n+1}$ on level $n$ corresponding to left and right multiplication by the unique element of $(\iota\bt)^1$. Given a $\Z \ten \iota\bt$-bimodule $N$ in $\Ab^{\Delta_{**}}$, the equivalence $c\Ab \simeq QM^*(\Ab, \by)$ then combines with the forgetful functor $U_{\Ab}: QM^*(\Ab, \by) \to QM^*(\Set)$ to give rise to an object of $QM^*(\Set)$. Explicitly, this is 
$$
((\Z\ten\iota\bt)\oplus N\eps)\by_{(\Z\ten\iota\bt)} \iota\bt
$$
(noting that the underlying object in $\Set^{\Delta_{**}} $ is just $N$).

Next, observe that the forgetful functor $\Ab^{\Delta_{**}} \to G\Ab$ has right adjoint $U_*$, defined by the same formulae as the functor $U_*$   of Lemma \ref{u*}. Thus we get  $U_*M \in\Ab^{\Delta_{**}} $, which has a natural $\Z \ten \iota\bt$-bimodule structure, allowing us to regard it as a cosimplicial complex. Moreover, 
\begin{eqnarray*}
U_{\Ab}U_*M&=& ((\Z\ten\iota\bt)\oplus U_*M\eps)\by_{(\Z\ten\iota\bt)} \iota\bt \\
&=& ((U_*U(\Z\ten \iota\bt))\oplus U_*M\eps)\by_{U_* U(\Z\ten \iota\bt )}\iota\bt \\
&=& U_* (U(\Z\ten \iota\bt)\oplus M\eps)\by_{U_* U(\Z\ten \iota\bt )}\iota\bt 
\end{eqnarray*}
in $QM^*(\Set)$.
Since $\cot$ is left adjoint to $U_{\ab}$,
$$
\Hom_{c\Ab}(\cot E, U_*M) \cong \Hom_{QM^*(\Set)}(E, U_* (U(\Z\ten \iota\bt)\oplus M\eps)\by_{U_* U(\Z\ten \iota\bt )}\iota\bt),
$$
which we have already seen is isomorphic to $ \Der(U(\Z\ten E),M) $. 

Thus
\begin{eqnarray*}
\Hom_{U(\Z\ten E)-U(\Z\ten E)}(\Omega(U(\Z\ten E)), M) &\cong& \Hom_{c\Ab}(\cot E, U_*M)\\
&\cong& \Hom_{U(\Z\ten\iota\bt )-U(\Z\ten\iota\bt )}(U\cot E, M),
\end{eqnarray*}
as required.
\end{proof}

\begin{corollary}\label{btcot}
 The morphism $\oL\cot\iota\bt \simeq \cot(\iota\bt)$ is a weak equivalence.
\end{corollary}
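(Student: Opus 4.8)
The plan is to read the statement as the claim that the canonical comparison map
$$
\cot(\Xi) \lra \cot(\iota\bt)
$$
induced by applying the left Quillen functor $\cot$ (Definition \ref{cotdef}) to the map $\Xi \to \iota\bt$ of Proposition \ref{rhommc}, is a weak equivalence. Since $\Xi$ is cofibrant and $\Xi \to \iota\bt$ is a weak equivalence, the source computes $\oL\cot(\iota\bt)$, so this map is exactly the natural comparison whose invertibility (in the homotopy category) is being asserted; showing it is a weak equivalence proves the corollary.

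First I would compute the target directly from Lemma \ref{cotcalc}. Taking $E=\iota\bt$, all three occurrences of $\Z\ten E$ in that formula coincide, so the outer tensor factors collapse and
$$
U\cot(\iota\bt) \cong \Omega\bigl(U(\Z\ten\iota\bt)\bigr).
$$
Here $U(\Z\ten\iota\bt)$ is the graded ring that is $\Z\ten\bt=\Z$ in each degree $n$, with the product of the degree-$m$ and degree-$n$ generators equal to the degree-$(m+n)$ generator; that is, $U(\Z\ten\iota\bt)\cong\Z[t]$ with $t$ in degree $1$. Consequently $\Omega(\Z[t])=\ker(\Z[t]\ten_{\Z}\Z[t]\to\Z[t])$ is the principal ideal generated by $t\ten 1-1\ten t$ inside $\Z[t]\ten_{\Z}\Z[t]\cong\Z[t_1,t_2]$, whose degree-$n$ component is free of rank $n$. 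Thus $U\cot(\iota\bt)^n\cong\Z^n$, agreeing with the underlying graded group of $\iota(\Z)$, which by Corollary \ref{cot2} is the value of $\oL\cot(\iota\bt)$.

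The main obstacle is to confirm that the comparison map \emph{realizes} this identification, not merely that its source and target are abstractly isomorphic to $\iota(\Z)$. I would exploit the fact that $\Xi^0=(\iota\bt)^0=\bt$ and $\Xi^1=I^0=\bt=(\iota\bt)^1$, so $\Xi\to\iota\bt$ is an isomorphism in cosimplicial degrees $0$ and $1$; hence $\cot(\Xi)^1\to\cot(\iota\bt)^1$ is an isomorphism onto the degree-$1$ generator. Since $\cot(\iota\bt)\cong\Omega(\Z[t])$ is generated as a bimodule by its degree-$1$ part $dt$, with the operations $\pd^0,\pd^{n+1}$ acting by left and right multiplication by $t$ (as in the proof of Lemma \ref{abqm}), the comparison map is surjective in each cosimplicial degree. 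Forgetting via $U$, in degree $n$ it is therefore a surjection of finitely generated free $\Z$-modules of equal rank $n$, hence an isomorphism on $\pi_0$; and since $\cot(\Xi)^n$ is homologically concentrated in degree $0$ with $\H_0\cong\Z^n$ by Corollary \ref{cot2}, the map is a weak equivalence of simplicial abelian groups in each cosimplicial degree. This is precisely a Reedy weak equivalence in $cs\Ab$, giving $\oL\cot\iota\bt\simeq\cot(\iota\bt)$ as required.
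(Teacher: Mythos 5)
Your proof is correct, but it takes exactly the route the paper declines: its proof opens with ``we could use our explicit cofibrant replacement $\Xi$ to calculate $\oL\cot$, but instead we give an argument which will generalise more widely.'' The paper never touches $\Xi$ or Corollary \ref{cot2} here; it observes that $U(\Z\ten\iota\bt)\cong\Z[t]$ is a \emph{free}, hence cofibrant, object of $sG\Ring$, so for any cofibrant replacement $E\to\iota\bt$ the map $U(\Z\ten E)\to U(\Z\ten\iota\bt)$ is a weak equivalence of cofibrant graded rings; homotopy invariance of $\Omega$ and of the base changes then gives the weak equivalence directly via Lemma \ref{cotcalc}, with no need to compute either side. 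You instead compute both sides explicitly ($\cot(\iota\bt)\cong\Omega(\Z[t])$ of rank $n$ in degree $n$; $\cot(\Xi)\simeq\iota(\Z)$ by Corollary \ref{cot2}) and then identify the comparison map by a generation-in-degree-one plus rank-counting argument. Both are valid; the paper's version buys generality (the identical argument is recycled for Lemma \ref{btcotd}, the quasi-bicomonoid case $\iota(\bt,\bt)$, where no analogue of Corollary \ref{cot2} is available), while yours is more concrete but leans on the heavier Proposition \ref{mctansub}/Corollary \ref{cot2}. One step worth tightening: ``$\Xi\to\iota\bt$ is an isomorphism in cosimplicial degrees $0,1$, hence $\cot(\Xi)^1\to\cot(\iota\bt)^1$ is an isomorphism'' is not automatic, since $\cot$ is defined by a universal property rather than levelwise; it does hold here because, by Lemma \ref{cotcalc}, when $E^0=\bt$ the degree-$1$ part of $U\cot(E)$ is naturally $\Omega(U(\Z\ten E))^1\cong\Z(E^1)$ (the degree-$0$ part of $\Omega$ vanishes), and in any case only surjectivity in degree $1$ is needed for your rank argument.
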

\begin{proof}
We could use our explicit cofibrant replacement $\Xi$ to calculate $\oL\cot$, but instead we give an argument which will generalise more widely. Although $\iota\bt$ is not cofibrant, the underlying graded ring $U(\Z\ten \iota\bt)$ is freely generated by the unique element of $(\iota\bt)^1$, so is cofibrant in $sG\Ring$. If we took a cofibrant replacement $E$ of $\iota\bt$, we would then have a weak equivalence $U(\Z \ten E) \to U(\Z\ten \iota\bt)$ of cofibrant objects. Therefore 
$$
U(\Z\ten \iota\bt)\ten_{U(\Z \ten E)}\Omega(U(\Z \ten E))\ten_{U(\Z \ten E)}U(\Z\ten \iota\bt)\to \Omega(U(\Z\ten \iota\bt))
$$
would be a weak equivalence of simplicial $U(\Z\ten \iota\bt)$-modules, so Lemma \ref{cotcalc} gives a weak equivalence
$$
\cot(E) \to \cot(\iota\bt),
$$
as required.
\end{proof}

\begin{proposition}\label{adams}
Given a cofibrant object $R \in sG\Ring$, equipped with an augmentation $ R \to \Z$,   there is a spectral sequence
$$
F(\H_*(\Z\ten_R\Omega(R)\ten_R\Z)) \abuts \H_*(R), 
$$
where $F$ is the free graded (non-commutative) $\Z$-algebra functor on a graded module. This converges whenever $R^0=\Z$. 
\end{proposition}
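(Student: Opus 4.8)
The plan is to run an Adams-type (cotriple) spectral sequence coming from a free simplicial resolution of $R$. The forgetful functor $U\colon sG\Ring \to sG\Mod$ to simplicial $\N_0$-graded modules has left adjoint $F$ (the tensor algebra), and I write $\bot := FU$ for the resulting comonad on $sG\Ring$. Applying the cotriple construction to the augmentation gives an augmented simplicial object $\bot^{\bullet+1}R \to R$ in $sG\Ring$, i.e. a bisimplicial graded ring. After applying $U$ this augmented simplicial object acquires the extra degeneracy coming from the unit $\eta$, so it is contractible; hence the diagonal $\diag \bot^{\bullet+1}R \to R$ is a weak equivalence, and in particular $\H_*(\diag\bot^{\bullet+1}R) \cong \H_*(R)$.

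Next I would form the homology spectral sequence of the bisimplicial abelian group underlying $\bot^{\bullet+1}R$, filtered by resolution degree, so that it converges to $\H_*(R)$ with
$$
E^1_{s,t} = \H_t(\bot^{s+1}R).
$$
Because $R$ is cofibrant it is dimensionwise free as a graded abelian group, so each $\bot^{s+1}R = F(U\bot^s R)$ has dimensionwise free underlying module; the Eilenberg--Zilber (shuffle) map then makes $\H_*$ strong monoidal on these objects, giving $\H_*(F(V)) \cong F(\H_*(V))$ on the nose. Thus $E^1_{s,\ast} \cong F(\H_*(U\bot^s R))$, with $d^1$ the alternating sum of the cotriple faces.

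To identify the initial page with $F(\H_*(\Z\ten_R\Omega(R)\ten_R\Z))$ I would apply the indecomposables functor $Q(-) := \Z\ten_{(-)}\Omega(-)\ten_{(-)}\Z$. The computation underlying Lemma \ref{cotcalc} gives $Q(F(V)) \cong V$ (the differentials of a tensor algebra form a free bimodule), so $Q\bot^{\bullet+1}R \cong U\bot^\bullet R$, and since each $\bot^{s+1}R$ is free this simplicial module computes the derived indecomposables: $\H_*(\diag U\bot^\bullet R) \cong \H_*(L)$ for $L := \Z\ten_R\Omega(R)\ten_R\Z$. Decomposing $F$ by tensor-weight, the weight-$w$ summand of the $E^1$-complex is the $w$-fold levelwise tensor power of $\H_*(U\bot^\bullet R)$, and a second application of Eilenberg--Zilber, now in the resolution direction, identifies the homology of these tensor powers with tensor powers of $\H_*(L)$. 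Reassembling the weights yields the initial page $F(\H_*(L))$.

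Finally, for convergence: when $R^0 = \Z$ the augmentation ideal, and hence $L$ and every positive-weight piece of $F(\H_*(L))$, is concentrated in tensor-weight $\ge 1$. A class of weight $w$ in $F(\H_*(L))$ is a product of at most $w$ factors, so it is supported only in resolution (filtration) degrees $\le w$; thus in each fixed weight and total degree only finitely many filtration degrees contribute, the filtration is finite there, and the spectral sequence converges to $\H_*(R)$. The main obstacle is the identification of the initial page: one must simultaneously commute the tensor-algebra functor past passage to homology of the resolution and recognise the resolution-homology of the indecomposables as $\H_*(L)$. Both steps rest on dimensionwise flatness and the monoidality of $\H_*$ via Eilenberg--Zilber, and the delicate point is to control how these two uses of Eilenberg--Zilber interact, so that the indecomposables genuinely feed the initial page through the free functor rather than through an auxiliary (bisimplicial) spectral sequence with its own higher differentials.
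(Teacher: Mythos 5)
Your route is genuinely different from the paper's --- the paper reduces to the case where $R$ is free (a cofibrant object is a retract of a free simplicial graded ring), filters $R$ itself by powers of the augmentation ideal $I$, identifies $\gr_I R$ with $F(I/(I\cdot I)) = F(\Z\ten_R\Omega(R)\ten_R\Z)$ using freeness, and takes the spectral sequence of that filtration; convergence when $R^0=\Z$ comes from $I^0=0$, which forces $I^{\cdot n+1}\cap R^n=0$, so the filtration is bounded in each graded degree. Unfortunately your cotriple spectral sequence does not carry the information you want. The augmented simplicial object $U\bot^{\bullet+1}R \to UR$ admits the extra degeneracy $\eta$, i.e.\ a simplicial contraction in modules; applying $\H_t$ levelwise preserves that contraction, so the $E^1$-complex $s \mapsto \H_t(\bot^{s+1}R)$ with $d^1$ the alternating sum of faces is acyclic in positive resolution degree. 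Hence $E^2_{s,t}$ vanishes for $s>0$ and equals $\H_t(R)$ for $s=0$: the spectral sequence filtered by resolution degree degenerates for trivial reasons, and its initial page cannot be $F(\H_*(\Z\ten_R\Omega(R)\ten_R\Z))$. The indecomposables never enter; to see them one must filter by tensor weight (equivalently by powers of the augmentation ideal), which is exactly the paper's filtration rather than the resolution filtration.

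Two further steps would fail even if this degeneration were ignored. First, the weight decomposition of your $E^1$-complex is not a decomposition of complexes: the face $d_0=\vareps\bot^s$ is the ``multiply out'' map $F(UF(V))\to F(V)$, which sends a weight-$w$ tensor to a term of weight $\ge w$ rather than preserving weight, so the weight-$w$ summands are not subcomplexes (one only gets a filtration, i.e.\ yet another spectral sequence --- the ``delicate point'' you flag at the end is a real obstruction, not a technicality). Second, $\H_*(F(V))\cong F(\H_*(V))$ ``on the nose'' requires the K\"unneth $\Tor$-terms to vanish, which levelwise freeness of $V$ does not guarantee over $\Z$, since $\H_*(V)$ may have torsion. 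Finally, your convergence discussion is aimed at the wrong target: the resolution-degree filtration is first-quadrant and converges unconditionally, whereas the hypothesis $R^0=\Z$ is needed precisely to bound the weight ($I$-adic) filtration in each graded degree, as in the paper's argument.
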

\begin{proof}
By \cite{QHA} \S II.6, cofibrant simplicial rings are   retracts of free simplicial rings, where a simplicial ring $R_{\bt}$ is said to be free if there are free generators $C_q \subset R_q$ for all $q$, closed under the degeneracy operations of $R_{\bt}$.

Since the proposition is unchanged by taking retractions, we may assume that $R$ is free.   If $I= \ker(R \to \Z)$ is the augmentation ideal, first observe that $R= \Z \oplus I$, and that freeness gives   $\Z\ten_R\Omega(R)\ten_R\Z = I/(I \cdot I)$. There is a filtration on $R$ by powers of  $I$, with associated graded algebra $\gr_IR= \bigoplus_{n \ge 0} I^{\cdot n}/I^{\cdot n+1}$. Since $R$ is free, the canonical map $F(I/(I\cdot I)) \to \gr_IR$ is an isomorphism. The spectral sequence of the proposition is then just the spectral sequence associated to this filtration. 

Finally, if $R^0=\Z$, then $I^0=0$, so $I^{\cdot n+1}\cap R^n=0$. We may regard the spectral sequence as a direct sum of spectral sequences, using the graded decomposition. In degree $n$, this gives the spectral sequence associated to the filtration  $I^{\cdot p}\cap R^n=0$ of $R^n$. Since this filtration is bounded, the spectral sequence converges.
\end{proof}

\begin{corollary}\label{trivchar}
If $f:E \to F$ in $QM^*(\Set)$ is a morphism  with $E^0=F^0=1$, $\pi_fE \simeq \pi_fF\simeq \bt$ and $\oL\cot(E) \simeq \oL\cot(F)$, then $f$ is a weak equivalence. Moreover,  if  $\oL\cot (E) \simeq \cot(\iota\bt)$, then $E \to \iota\bt$ is a weak equivalence.
\end{corollary}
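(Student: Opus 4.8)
The plan is to detect the weak equivalence homologically, using the Adams-type spectral sequence of Proposition \ref{adams} to recover the homotopy type from the cotangent data $\oL\cot$, exactly as the homology of a simply connected space determines its weak homotopy type; here $\pi_f\simeq\bt$ plays the role of the simple-connectivity hypothesis. Throughout I read the condition ``$\oL\cot(E)\simeq\oL\cot(F)$'' as saying that $\oL\cot(f)$ is a weak equivalence.

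First I would pass to cofibrant replacements $\tilde E\to E$ and $\tilde F\to F$ in $QM^*(\bS)$, chosen so that $\tilde E^0=\tilde F^0=\bt$ (possible since $\bt$ is already cofibrant and no latching condition is imposed at level $0$). Applying the left Quillen functors $\Z\ten(-)$ and $U$ (Lemma \ref{u*}) yields cofibrant objects $R_E:=U(\Z\ten\tilde E)$ and $R_F:=U(\Z\ten\tilde F)$ in $sG\Ring$, augmented over $\Z$ through $E\to\iota\bt$, with $R_E^0=R_F^0=\Z$. Since $\tilde E\to E$ is a weak equivalence preserved by both functors, $\H_*(R_E)\cong\H_*(U(\Z\ten E))$; as $E$ is discrete this is concentrated in degree $0$, where in internal grading $n$ it is the free abelian group $\Z[E^n]$, and likewise for $F$.

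Next I would match the input of the spectral sequence with base-changed cotangent data. By the simplicial version of Lemma \ref{cotcalc}, the underlying module of $\oL\cot(E)=\cot(\tilde E)$ is $U(\Z\ten\iota\bt)\ten_{R_E}\Omega(R_E)\ten_{R_E}U(\Z\ten\iota\bt)$, so the term $\Z\ten_{R_E}\Omega(R_E)\ten_{R_E}\Z$ feeding Proposition \ref{adams} is its further base change along $U(\Z\ten\iota\bt)\to\Z$. Because $R_E$ is free, $\Omega(R_E)$ is a free $R_E$-bimodule and hence $\oL\cot(E)$ is a free $U(\Z\ten\iota\bt)$-bimodule, so this base change is homotopy-invariant and carries the weak equivalence $\oL\cot(f)$ to an isomorphism on $\H_*(\Z\ten_{R_E}\Omega(R_E)\ten_{R_E}\Z)$, and thus (applying the functor $F$) on spectral-sequence inputs. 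Since $R_E^0=\Z$, the spectral sequences converge, and in each internal degree the $I$-adic filtration is finite; the comparison theorem for these degreewise-finite filtered complexes then forces $\H_*(R_E)\to\H_*(R_F)$ to be an isomorphism. Reading off degree $0$ in grading $n$, the map $\Z[E^n]\to\Z[F^n]$ is the isomorphism $\Z[f^n]$, so $f^n$ is a bijection for every $n$, whence $f$ is a levelwise weak equivalence of (discrete) simplicial sets; the hypothesis $\pi_f E\simeq\pi_f F\simeq\bt$ supplies the simple connectivity needed for this homology-Whitehead step.

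For the second assertion I would apply the first part to the canonical map $f\co E\to\iota\bt$: Corollary \ref{btcot} gives $\oL\cot(\iota\bt)\simeq\cot(\iota\bt)$, so the hypothesis $\oL\cot(E)\simeq\cot(\iota\bt)$ makes $\oL\cot(f)$ a weak equivalence, while $\iota\bt$ trivially satisfies $(\iota\bt)^0=\bt$ and $\pi_f(\iota\bt)\simeq\bt$. The main obstacle is the spectral-sequence step: ensuring the convergence hypothesis $R^0=\Z$ survives cofibrant replacement, checking that $\oL\cot(E)$ is genuinely free over $U(\Z\ten\iota\bt)$ so that the $E_1$-page is identified with the base-changed $\oL\cot$ compatibly with $f$, and invoking the comparison theorem correctly.
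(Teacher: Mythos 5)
Your overall strategy is the paper's: pass to cofibrant replacements with $\tilde{E}^0=\tilde{F}^0=1$, feed $R_E=U(\Z\ten\tilde{E})$ into the spectral sequence of Proposition \ref{adams}, identify its input with $\oL\cot$ via Lemma \ref{cotcalc}, and use convergence (guaranteed by $R_E^0=\Z$) to compare homology; the second assertion via Corollary \ref{btcot} is also exactly as in the paper. Up to the comparison of $\H_*(R_E)$ with $\H_*(R_F)$ the argument matches the intended proof.

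The gap is in how you finish. You read ``$QM^*(\Set)$'' literally, declare $\H_*(R_E)$ concentrated in simplicial degree $0$ with value $\Z[E^n]$ in internal grading $n$, and conclude that each $f^n$ is a bijection. Two things go wrong. First, for genuinely discrete $E$ the hypothesis $\pi_fE\simeq\bt$ already forces every $E^n$ to be a single point (the fundamental groupoid of a discrete simplicial set is the discrete groupoid on its underlying set), so on that reading the corollary is vacuous; this should have signalled that the statement is meant for $QM^*(\bS)$, as its applications confirm --- the bicomonoid analogue, Corollary \ref{trivchard}, is applied in Proposition \ref{diagbt} to $\diag^*\Xi$, whose levels are genuinely simplicial and merely simply connected (Corollary \ref{diagcohogpd}), and similarly in the proof of Proposition \ref{cfmc}. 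Second, for such simplicial $E$ the ``degree $0$'' identification fails, and the conclusion that $f^n$ is a bijection is both unobtainable and stronger than what is true. The correct ending --- which your closing clause about a ``homology-Whitehead step'' gestures at but does not actually carry out, since a bijection needs no Whitehead argument --- is: the convergent spectral-sequence comparison yields isomorphisms $\H_i(\tilde{E}^n,\Z)\to\H_i(\tilde{F}^n,\Z)$ for all $i$ and $n$; since each $\tilde{E}^n,\tilde{F}^n$ is connected and simply connected by the $\pi_f$ hypothesis, the homology Whitehead theorem shows each $\tilde{E}^n\to\tilde{F}^n$ is a weak equivalence of simplicial sets, i.e.\ $f$ is a weak equivalence. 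With that replacement the proof coincides with the paper's.
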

\begin{proof}

We may choose cofibrant replacements $\tilde{E}, \tilde{F}$ of $E,F$, with $\tilde{E}^0=\tilde{F}^0=1$. Since the objects $\tilde{E}^n,\tilde{F}^n$ are simply connected for all $n$, we only need to prove that the map $\H_*(\tilde{E}^n, \Z)\to\H_*(\tilde{F}^n, \Z)$ of homology groups is an isomorphism. Now, $\H_*( \tilde{E}^n, \Z)= \H_*(\Z\ten \tilde{E}^n)$, so we may apply Lemma \ref{cotcalc} and Proposition \ref{adams} to give the required isomorphism.

For the final part, note that Corollary \ref{btcot} gives $\oL\cot(\iota\bt) \simeq \cot(\iota\bt)$.
\end{proof}

\subsection{Diagonals}\label{diagsn}

In this section, we will study properties of the diagonal functor $\diag: QMM^*(\bS) \to QM^*(\bS)$, with a view to characterising $\mc(\diag E)$, thereby extending Lemma \ref{diagset} to the simplicial case.

\subsubsection{Groupoids}

Our first step is to  establish a diagonal version of Proposition \ref{gpdcalc}. 

\begin{definition}
Given a $\Delta_{**}\by \Delta_{**}$-diagram $S^{\bt, \bt}$ of sets, and a distinguished point $1 \in S^{n-1,i}$,  define $(N_h^nS)^i= S^{n,i}\cap\bigcap_{j}\ker (\sigma_h^j)$, where $\ker$ denotes the inverse image of $1$. Similarly, given $1 \in S^{i, n-1}$, define $(N_v^nS)^i= S^{i,n}\cap\bigcap_{j}\ker (\sigma_v^j)$.
\end{definition}

\begin{lemma}\label{diaggpd}
Given $\Gamma \in QMM^*(\Gpd)$, 
the set  $\mc(\diag  B\Gamma)$ consists of data $(x,s,t, a,b)$, where 
$x \in \Ob \Gamma^{11}$, and for $x_h:=\sigma^0_vx, x_v:= \sigma^0_hx$,
$$
s \in \Gamma^{20}(x_h*x_h, \pd^1_hx_h),\quad t \in \Gamma^{02}(x_v*x_v, \pd^1_vx_v)
$$
and
$$
a \in N_hN_v\Gamma^{11}(x_h * x_v, x) \quad b \in N_hN_v\Gamma^{11}(x_v*x_h, x).
$$

These data satisfy the additional conditions that 
 $$
(x_h,s) \in \mc(B\Gamma^{\bt 0}),\quad (x_v,t) \in \mc(B\Gamma^{0 \bt})
$$
(as in Lemma \ref{gpdcalc}), and that
if we set 
$\gamma=b^{-1}\circ a:x_h * x_v \to x_v*x_h$, then 
\begin{eqnarray*}
\pd^1_h\gamma &=& (x_v*s)\circ (\gamma*x_h)\circ (x_h*\gamma)\circ (s*x_v)^{-1}\\
\pd^1_v\gamma^{-1} &=& (x_h*t)\circ (\gamma^{-1}*x_v)\circ (x_v*\gamma^{-1})\circ (t*x_h)^{-1}.
\end{eqnarray*}
\end{lemma}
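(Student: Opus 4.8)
The plan is to reduce to the already-proven groupoid calculation of Proposition \ref{gpdcalc} and then repackage its output into the five pieces of data, exactly categorifying the set-level splitting of Lemma \ref{diagset}. The first observation is that the diagonal commutes with the levelwise nerve: since $(\diag B\Gamma)^n = (B\Gamma)^{nn} = B(\Gamma^{nn}) = (B\diag\Gamma)^n$ and the operations match ($\pd^i = \pd^i_h\pd^i_v$, $\sigma^i = \sigma^i_h\sigma^i_v$), we have $\diag B\Gamma = B(\diag\Gamma)$ with $\diag\Gamma \in QM^*(\Gpd)$. Applying Proposition \ref{gpdcalc} to $\diag\Gamma$ therefore identifies $\mc(\diag B\Gamma)$ with the set of pairs $(x,A)$, where $x \in \Ob\Gamma^{11}$ and $A \in \Gamma^{22}(x*x, \pd^1_h\pd^1_v x)$, subject to $\sigma^0_h\sigma^0_v x = 1$, $\sigma^0_h\sigma^0_v A = \sigma^1_h\sigma^1_v A = \id_x$, and the single coherence relation $(\pd^2 A)\circ(x*A) = (\pd^1 A)\circ(A*x)$ in $\Gamma^{33}$ (all $\pd^i$ diagonal). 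The whole content of the lemma is then to translate this diagonal datum $(x,A)$ into $(x,s,t,a,b)$.

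I would define the forward map by applying the four degeneracy words used in Lemma \ref{diagset}, now to the morphism $A$ rather than to a set element:
\[
s := (\sigma^0_v)^2 A,\quad t := (\sigma^0_h)^2 A,\quad a := \sigma^1_h\sigma^0_v A,\quad b := \sigma^0_h\sigma^1_v A.
\]
A direct bookkeeping with the compatibilities of Lemma \ref{qmlemma} between $*$ and the $\sigma^i$ (together with $\sigma^j\pd^i = \id$ for $i = j, j+1$ and the commuting of horizontal and vertical operations) computes the sources and targets, giving $s \in \Gamma^{20}(x_h*x_h,\pd^1_h x_h)$, $t \in \Gamma^{02}(x_v*x_v,\pd^1_v x_v)$, $a: x_h*x_v \to x$ and $b: x_v*x_h\to x$, where $x_h=\sigma^0_v x$, $x_v=\sigma^0_h x$. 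The normalizations interlock: one checks $\sigma^0_v a = \sigma^1_h s$ and $\sigma^0_h a = \sigma^0_v t$ (using the identity $(\sigma^0)^2 = \sigma^0\sigma^1$ from Lemma \ref{qmlemma}(2)), so that $a, b \in N_hN_v\Gamma^{11}$ follows precisely from the $\sigma$-conditions on $s$ and $t$.

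Next I would derive the stated constraints from the single coherence relation on $A$. Applying the purely horizontal degeneracies $(\sigma^0_v)^k$ to that relation collapses it to the coherence relation of Proposition \ref{gpdcalc} for $(x_h,s)$ in the horizontal quasi-comonoid $\Gamma^{\bt 0}$, and likewise $(\sigma^0_h)^k$ yields $(x_v,t) \in \mc(B\Gamma^{0\bt})$. Setting $\gamma := b^{-1}\circ a : x_h*x_v \to x_v*x_h$ --- the categorification of the commutativity $\alpha*\beta = \beta*\alpha$ of Lemma \ref{diagset} --- the two displayed hexagons are obtained by applying the remaining mixed degeneracy words to the coherence relation so as to isolate, respectively, the interaction of $\gamma$ with $s$ (keeping two horizontal and one vertical factor) and with $t$; these are exactly the braiding hexagon axioms, with $s, t$ playing the role of the associativity isomorphisms.

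The main obstacle is the reverse direction: reconstructing $A$ from $(x,s,t,a,b)$ and proving the two maps are mutually inverse. Here I would build $A$ as the composite along an edge-path of the cube $\Ob(\diag\Gamma)^2$ from $x*x$ to $\pd^1_h\pd^1_v x$ through the four corners $x_h*x_h$, $x_h*x_v$, $x_v*x_h$, $x_v*x_v$, using $\pd$-shifted copies of $s$ and $t$ for the purely horizontal and vertical steps and $a, b$ for the interchange steps (this mirrors the reconstruction $\omega = \alpha*\beta$ of Lemma \ref{diagset}). The delicate points are that the two hexagon conditions are exactly what make this composite independent of the chosen path --- equivalently, that the reconstructed $A$ satisfies the full $\Gamma^{33}$-coherence if and only if $(x_h,s)$, $(x_v,t)$ and the $\gamma$-hexagons hold --- and that $\sigma^0 A = \sigma^1 A = \id_x$ comes out of the normalizations. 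Since $\Gamma^{22}$ is a groupoid, the computation of $\tau_1\Xi^2$ in Proposition \ref{gpdcalc} guarantees such an $A$ is unique once path-independence is established, which yields that the forward and reverse maps are mutually inverse.
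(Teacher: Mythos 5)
Your proposal is correct and takes essentially the same route as the paper's (sketch) proof: identify $\mc(\diag B\Gamma)$ via Proposition \ref{gpdcalc} applied to $\diag\Gamma$, extract $s,t,a,b$ by exactly the same degeneracy words, obtain the two hexagons by applying mixed degeneracies to the single coherence relation, and recover $A$ from the data (the paper writes your edge-path composite explicitly as $(\pd^1_h\pd^1_va)\circ (s*t)\circ (x_h*\gamma*x_v)\circ (a^{-1}*a^{-1})$). The only cosmetic difference is that you justify uniqueness of the reconstructed $A$ by appeal to path-independence and $\tau_1\Xi^2$, whereas the paper simply exhibits the formula; both are at the same level of detail as the published sketch.
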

\begin{proof}[Proof (sketch).]
By  Proposition \ref{gpdcalc}, we know that $\mc(\diag B\Gamma)$ consists of pairs $(x, \alpha)$, with  $x \in \Ob \Gamma^{11}$, $\alpha \in \Gamma^{22}(x*x, \pd^1_h\pd^1_vx)$, satisfying various conditions. 
If we  look at $a:= \sigma^0_v\sigma^1_h\alpha,\, b:= \sigma^0_h\sigma^1_v\alpha$, then we have $a: \sigma^0_vx * \sigma^0_hx\to x$ and $b: \sigma^0_hx*\sigma^0_vx \to x$. We also set $s= (\sigma^0_v)^2\alpha \in \Gamma^{20}, t= (\sigma^0_h)^2\alpha \in \Gamma^{02}$. 

Note that
$$
(\sigma^0_vx,s) \in \mc(B\Gamma^{\bt 0}),\quad  (\sigma^0_hx,t) \in \mc(B\Gamma^{0\bt}),\quad a,b \in N_hN_v\Gamma^{11},
$$
by applying powers of $\sigma^0_h$ or $\sigma^0_v$ to the equations for $\alpha$.

Applying the operations $\sigma^i_h\sigma^j_v\sigma^k_v$ to the equation 
$$
(\pd^2_h\pd^2_v\alpha)\circ (x*\alpha)=(\pd^1_h\pd^1_v\alpha)\circ (\alpha*x)
$$
 for $\{i,j,k\}=\{1,2,3\}$ and $j<k$ 
gives us the following  equations in  $\sigma^0_v\alpha, \sigma^1_v\alpha$:
\begin{eqnarray*}
\sigma^1_v\alpha &=&   (\pd^1_hb) \circ (b * x_h)^{-1}\\
(\sigma^0_v\alpha)^{-1}\circ (\sigma^1_v\alpha)&=& (x_h *b) \circ (a*x_h)^{-1}\\
\sigma^0_v\alpha &=& (\pd^1_h a) \circ (x_h * a)^{-1},
\end{eqnarray*}
which reduce to the first condition for $\gamma$. Interchanging horizontal and vertical structures does the same for  $\sigma^0_h\alpha, \sigma^1_h\alpha$, giving the second condition (for $\gamma^{-1}$).

It remains  to show that we can recover $\alpha$ from $s,t,a,b$. It is the composition
$$
(\pd^1_h\pd^1_va)\circ (s*t) \circ (x_h*\gamma *x_v) \circ (a^{-1}*a^{-1}).
$$
\end{proof}

\begin{proposition}\label{btcohogpddiag}
For $\Gamma \in QMM^*(\Gpd)$, elements of
$
\Hom_{\Ho(QM^*(\bS))}(\iota\bt, B\diag\Gamma)
$
are represented by  data of the form $(x_h,x_v,s,t, \gamma)$, where
$$
(x_h,s) \in \mc(B\Gamma^{\bt, 0}),\quad (x_v,t) \in \mc(B\Gamma^{0,\bt}),
$$
and $\gamma:x_h * x_v \to x_v*x_h$ satisfies the conditions of Lemma \ref{diaggpd}. Two systems $(x_h,x_v,s,t, \gamma), (x_h',x_v',s',t', \gamma')$ represent the same element if and only if there exist 
$$
\lambda_h \in \Gamma^{1,0}(x_h,x'_h), \quad\lambda_v \in \Gamma^{0,1}(x_v, x_v')
$$ 
such that
\begin{eqnarray*}
(\pd^1_h\lambda_h)\circ s &=& s' \circ (\lambda_h*\lambda_h) \quad \sigma^0_h\lambda_h=1,\\
(\pd^1_v\lambda_v)\circ t &=& t' \circ (\lambda_v*\lambda_v) \quad \sigma_v^0\lambda_v=1,\\
\gamma' \circ (\lambda_h*\lambda_v) &=& (\lambda_v *\lambda_h) \circ \gamma.
\end{eqnarray*}
\end{proposition}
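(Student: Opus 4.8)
The plan is to treat this proposition as the diagonal counterpart of Corollary \ref{btcohogpd}, exactly as Lemma \ref{diaggpd} is the diagonal counterpart of Proposition \ref{gpdcalc}. The first observation is that the nerve, the arrow groupoid and the diagonal all commute: since $B$ is applied levelwise and $\Ar(-)$ is computed levelwise, we have $B\diag\Gamma = \diag B\Gamma$, $\Ar(\diag\Gamma)=\diag\Ar(\Gamma)$, and $\diag\Gamma\by\diag\Gamma = \diag(\Gamma\by\Gamma)$. Because $\diag\Gamma \in QM^*(\Gpd)$, I can apply Corollary \ref{btcohogpd} directly to $\diag\Gamma$. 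Its proof produces the path object $B\diag\Gamma \to B\Ar(\diag\Gamma)\to B(\diag\Gamma\by\diag\Gamma)$, and hence an identification
$$
\Hom_{\Ho(QM^*(\bS))}(\iota\bt, B\diag\Gamma) \cong \mc(\diag B\Gamma)\big/\mc(\diag B\Ar(\Gamma)),
$$
using $B\Ar(\diag\Gamma)=\diag B\Ar(\Gamma)$. It then remains to unpack both sides with Lemma \ref{diaggpd}.

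\textbf{The Maurer--Cartan set and its representatives.} Applying Lemma \ref{diaggpd} to $\Gamma$ describes $\mc(\diag B\Gamma)$ by data $(x,s,t,a,b)$, where the isomorphisms $a\colon x_h*x_v \to x$ and $b\colon x_v*x_h\to x$ enter the conditions only through $\gamma:=b^{-1}\circ a\colon x_h*x_v\to x_v*x_h$, together with the two constraints of Lemma \ref{diaggpd} on $\gamma$ and the requirements $(x_h,s)\in\mc(B\Gamma^{\bt 0})$, $(x_v,t)\in\mc(B\Gamma^{0\bt})$. I would next show that every such class is homotopic to one in the normal form $x=x_h*x_v$, $a=\id$. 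Concretely, the homotopy in $\mc(\diag B\Ar(\Gamma))$ whose underlying morphism of $\Gamma^{11}$ is $a^{-1}\colon x\to x_h*x_v$, with $\lambda_h=\id_{x_h}$ and $\lambda_v=\id_{x_v}$, carries $(x,s,t,a,b)$ to $(x_h*x_v,\,s,\,t,\,\id,\,\gamma^{-1})$ while leaving $\gamma=b^{-1}a$ unchanged. Thus the residual data is exactly $(x_h,x_v,s,t,\gamma)$, which is the representative asserted in the statement.

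\textbf{The equivalence relation.} For the quotient I would apply Lemma \ref{diaggpd} to $\Ar(\Gamma)\in QMM^*(\Gpd)$. An object of $\Ar(\Gamma)^{11}$ is a morphism $x\to x'$ in $\Gamma^{11}$, and the two face maps $\Ar(\Gamma)\rightrightarrows\Gamma$ recover the two systems $(x_h,x_v,s,t,\gamma)$ and $(x_h',x_v',s',t',\gamma')$ as source and target. The pieces $(X_h,S)\in\mc(B\Ar(\Gamma)^{\bt 0})$ and $(X_v,T)\in\mc(B\Ar(\Gamma)^{0\bt})$ are, by the arrow-groupoid analysis already used in Corollary \ref{btcohogpd}, precisely morphisms $\lambda_h\colon x_h\to x_h'$ and $\lambda_v\colon x_v\to x_v'$ subject to $(\pd^1_h\lambda_h)\circ s = s'\circ(\lambda_h*\lambda_h)$, $\sigma^0_h\lambda_h=1$ and $(\pd^1_v\lambda_v)\circ t=t'\circ(\lambda_v*\lambda_v)$, $\sigma^0_v\lambda_v=1$. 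The remaining, genuinely off-diagonal datum of the homotopy encodes the naturality of $\gamma=b^{-1}\circ a$ along the morphism $\lambda_h*\lambda_v$, and I expect it to yield exactly the compatibility $\gamma'\circ(\lambda_h*\lambda_v)=(\lambda_v*\lambda_h)\circ\gamma$.

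The main obstacle I anticipate is this last step: verifying that, after normalising to $a=\id$, the interaction conditions of Lemma \ref{diaggpd} applied to $\Ar(\Gamma)$ collapse cleanly to the single equation $\gamma'\circ(\lambda_h*\lambda_v)=(\lambda_v*\lambda_h)\circ\gamma$ — rather than to a more rigid condition on the individual $a,b$ — so that no spurious constraints survive and every pair $(\lambda_h,\lambda_v)$ satisfying the three displayed relations does arise from a genuine element of $\mc(\diag B\Ar(\Gamma))$. This is bookkeeping with the horizontal and vertical degeneracies $\sigma^i_h,\sigma^j_v$ of the form already carried out in the (sketched) proof of Lemma \ref{diaggpd}, but it must be done carefully enough to confirm both directions of the bijection.
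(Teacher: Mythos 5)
Your proposal is correct and follows essentially the same route as the paper's sketch: normalise every class to the form $a=\id$ via the homotopy $\lambda=a^{-1}$ (which indeed fixes $s,t,\gamma$ and sends $b$ to $a^{-1}\circ b=\gamma^{-1}$), and then unwind the connecting homotopy --- your analysis of Lemma \ref{diaggpd} applied to $\Ar(\Gamma)$ is exactly what the paper's appeal to Corollary \ref{btcohogpd} for $\diag\Gamma$ amounts to, since with $a=a'=\id$ the square $A$ forces $\lambda=\lambda_h*\lambda_v$ and the square $B$ yields $\gamma'\circ(\lambda_h*\lambda_v)=(\lambda_v*\lambda_h)\circ\gamma$. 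The bookkeeping you flag as the main obstacle does collapse cleanly in both directions, and the paper leaves it at the same level of detail.
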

\begin{proof}[Proof (sketch).]
Take $(x,\alpha)$ as in the proof of Lemma \ref{diaggpd}. By Corollary \ref{btcohogpd}, $(x, \alpha)\sim (x', \alpha')$ whenever
 there exists $\lambda \in \Gamma^{11}(x,x')$ such that
$$
(\pd^1_h\pd^1_v\lambda)\circ \alpha = \alpha' \circ (\lambda*\lambda) \quad \sigma^0_h\sigma_v^0\lambda=1.
$$
Given $(x, \alpha)$, we may therefore set 
$$
x'=x_h * x_v, \quad\alpha'= (\pd^1_h\pd^1_va^{-1})\circ \alpha \circ (a*a),
$$
and define a transformation $\lambda: (x,\alpha) \to (x', \alpha')$ by $\lambda=a^{-1}$.

Therefore every element of $\Hom_{\Ho(QM^*(\bS))}(\iota\bt, B\diag\Gamma)$ has a representative with $x=x_h*x_v$, and $a=1$, giving data $(x_h,x_v,s,t, \gamma)$ as above. Two such systems are equivalent if there exists a  transformation $\lambda\in \Gamma^{11}(x_h*x_v,x'_h*x_v')$ satisfying the conditions of Lemma \ref{btcohogpd}. Since $a=1$, we recover that $\lambda= \lambda_h*\lambda_v$ for $\lambda_h:= \sigma^0_v\lambda$ and $\lambda_v:= \sigma^0_h\lambda$. The conditions for $\lambda$ then reduce to the conditions for $\lambda_h,\lambda_v$ above.  
\end{proof}

\begin{corollary}\label{diagcohogpd}
The object  $\diag^*(\Xi) \in QMM^*(\bS)$ is simply connected in every level.
\end{corollary}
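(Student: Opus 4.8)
The plan is to reduce the statement to a computation of fundamental groupoids, and then to read off the answer from the corepresentation of $\mc(\diag(-))$ already obtained. Write $\Pi_1\colon \bS \to \Gpd$ for the fundamental groupoid functor, left adjoint to the nerve $B\colon \Gpd \to \bS$. Since $\Pi_1$ preserves finite products, it takes (bi)comonoids to (bi)comonoids and hence induces left adjoints $\Pi_1\colon QM^*(\bS)\to QM^*(\Gpd)$ and $\Pi_1\colon QMM^*(\bS)\to QMM^*(\Gpd)$ to the levelwise nerve $B$. A level $(\diag^*\Xi)^{m,n}$ is simply connected precisely when $\Pi_1$ of it is equivalent to the trivial groupoid $\bt$, and $\Pi_1$ is computed levelwise, so it suffices to show that $\Pi_1\diag^*\Xi$ is levelwise equivalent to $\bt$. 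The key formal point is that $\diag B = B\diag$, so the left adjoints agree: $\Pi_1\diag^* \cong \diag^*\Pi_1$. Thus $\Pi_1\diag^*\Xi \cong \diag^*(\Pi_1\Xi)$.

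First I would identify $\Pi_1\Xi$. By Proposition \ref{gpdcalc} (or directly, since $\Xi^{n+1}=I^n=(\Delta^1)^n$ is contractible, with $\Pi_1\Xi^0=\bt$), the groupoid $\Pi_1\Xi^n$ is the indiscrete groupoid on the vertex set $\{0,1\}^{n-1}$ for $n\ge 1$, so the canonical map $\Pi_1\Xi \to \iota\bt$ is a levelwise equivalence of groupoids. By adjunction, $\diag^*\Pi_1\Xi$ corepresents $\Gamma \mapsto \Hom_{QM^*(\Gpd)}(\Pi_1\Xi,\diag\Gamma) \cong \Hom_{QM^*(\bS)}(\Xi, B\diag\Gamma) = \mc(\diag B\Gamma)$, which is exactly the functor described explicitly by Lemma \ref{diaggpd}. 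Hence $\diag^*\Pi_1\Xi = \Pi_1\diag^*\Xi$ is the quasi-bicomonoid in groupoids generated by an object $\xi$ and morphisms $s,t,a,b$ subject to the coherence relations listed there, and it is this presentation that I would analyse level by level.

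The final step is to show each level of this presented groupoid is connected with a unique morphism between any two objects, so that it is equivalent to $\bt$. Connectivity is the easy half: the generating isomorphisms $a,b$ (together with the horizontal and vertical data $s,t$) link all generating objects, which is why every class in Proposition \ref{btcohogpddiag} has a representative with $a=1$. I expect the hard part to be the triviality of the fundamental group of each level, i.e.\ the thinness of the groupoid. The obstruction is precisely that $\diag^*$ is only a left adjoint, and neither $\Pi_1\Xi$ nor $\iota\bt$ is cofibrant, so the levelwise equivalence $\Pi_1\Xi \simeq \iota\bt$ cannot simply be transported through $\diag^*$; thinness must instead be extracted from the coherence relations by hand. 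Concretely, the relations of Lemma \ref{diaggpd} living in bidegrees $(2,1)$ and $(1,2)$, pulled back along the degeneracies, force any two parallel composites of $s,t,a,b,\gamma$ to agree --- exactly as the single relation in level $3$ forces $\tau_1\Xi^n$ to be the poset of a cube in Proposition \ref{gpdcalc}. Equivalently, one checks via Proposition \ref{btcohogpddiag} that the homotopy functor corepresented by $\Pi_1\diag^*\Xi$ coincides, including on automorphisms, with the one corepresented by $\iota(\bt,\bt)=\diag^*\iota\bt$ (Corollary \ref{iotadiag}); a Yoneda argument then yields the required levelwise equivalence to $\bt$.
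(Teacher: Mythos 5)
Your proposal is correct and follows essentially the paper's route: identify $\pi_f(\diag^*\Xi)$ as the quasi-bicomonoid in groupoids presented by the generators and relations of Lemma \ref{diaggpd}, use Proposition \ref{btcohogpddiag} to normalise (the paper phrases this as a levelwise equivalence from the subobject $\Upsilon$ generated by $x_h,x_v,s,t,\gamma$), and then extract thinness from the coherence relations, the objects of $\Upsilon$ being words in $(\pd^1_h)^ix_h$ and $(\pd^1_v)^jx_v$ with a unique isomorphism between any two words in the same level. Your explicit observation that $\Pi_1\diag^*\cong\diag^*\Pi_1$ (from $B\diag=\diag B$) is exactly the formal justification the paper uses implicitly, and your concluding Yoneda-style alternative is not needed once the thinness of $\Upsilon$ is checked directly.
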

\begin{proof}
By Lemma \ref{diaggpd}, the fundamental groupoid $\Gamma:=\pi_f(\diag^*\Xi) \in QMM^*(\Gpd)$ is generated by an object $x \in \Ob \Gamma^{1,1}$, together with isomorphisms $(s,t,a,b)$ satisfying the conditions of that Lemma. Consider  $\Upsilon\in QMM^*(\Gpd)$ generated by objects $x_h \in  \Ob \Gamma^{1,0}, \,x_v \in  \Ob \Gamma^{0,1}$ and isomorphisms $(s,t, \gamma)$ satisfying the conditions of  Lemma \ref{btcohogpddiag}. That Lemma implies that the canonical inclusion $\Upsilon \to \pi_f\diag^*\Xi$ is a  levelwise equivalence.

Now, the objects of $\Upsilon$ are words in $(\pd^1_h)^ix_h, (\pd^1_v)^jx_v$, and the conditions on $(s,t,a,b)$ ensure that there is a unique isomorphism between any two such words in the same level, so $\Upsilon^{mn}$ is simply connected. Thus the maps $\Upsilon \to \pi_f\diag^*\Xi\to \iota(\bt, \bt)$ are equivalences in every level, as required.
\end{proof}

\subsubsection{Abelian groups}

\begin{lemma}
The category $QMM^*(\Ab, \by)$ is equivalent to the category $cc\Ab$ of bicosimplicial abelian groups.
\end{lemma}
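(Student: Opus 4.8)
The plan is to mimic the proof of Lemma \ref{abqm} in both the horizontal and vertical directions at once. Given $A \in QMM^*(\Ab, \by)$, the four families of operations $\pd^i_h, \sigma^i_h, \pd^i_v, \sigma^i_v$ arising from bifunctoriality on $\Delta_{**} \by \Delta_{**}$ are automatically $\Z$-linear, and the horizontal operations commute with the vertical ones since $A$ is a functor on a product category. The missing extremal cofaces are reconstructed from the single lax-monoidal product $*: A^{m,m'} \by A^{n,n'} \to A^{m+n, m'+n'}$ by multiplying by the group identities $0_{1,0} \in A^{1,0}$ and $0_{0,1} \in A^{0,1}$: for $a \in A^{m,n}$ set
$$
\pd^0_h a := 0_{1,0} * a, \quad \pd^{m+1}_h a := a * 0_{1,0}, \quad \pd^0_v a := 0_{0,1} * a, \quad \pd^{n+1}_v a := a * 0_{0,1}.
$$
Multiplication by a bidegree-$(1,0)$ (resp. $(0,1)$) element raises only the horizontal (resp. vertical) degree, so these land in the correct groups.

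First I would check that $(\pd^i_h, \sigma^i_h)$ and $(\pd^i_v, \sigma^i_v)$ each satisfy the cosimplicial identities. This is exactly the computation of Lemma \ref{abqm} carried out one direction at a time, using only the associativity and unit of $*$ together with the evident two-directional analogues of the interaction relations Lemma \ref{qmlemma}(4)--(7); since a product with $0_{1,0}$ leaves the vertical degree fixed, the extra vertical degree carried by $*$ is a passive parameter in the horizontal verification, and symmetrically.

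The genuinely new point is the commutativity of the reconstructed horizontal operations with the vertical ones. These all reduce, via associativity and the interaction relations, either to instances of bifunctoriality or to the identity $0_{1,0} * 0_{0,1} = 0_{0,1} * 0_{1,0}$ (both equal the zero of $A^{1,1}$, by bilinearity of $*$). I would dispatch the finitely many cases, namely $\pd^0_h$ and $\pd^{m+1}_h$ against each of $\pd^0_v, \pd^{n+1}_v, \pd^i_v, \sigma^i_v$ and symmetrically, mechanically. This upgrades $A$ to a functor $\Delta \by \Delta \to \Ab$, i.e. an object of $cc\Ab$.

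Finally I would show the product is uniquely determined, yielding the inverse equivalence. Bilinearity of $*$ gives $a * b = a * 0_{n,n'} + 0_{m,m'} * b$; writing $0_{n,n'} = (0_{1,0})^{*n} * (0_{0,1})^{*n'}$ and similarly for $0_{m,m'}$, associativity expresses $a * 0_{n,n'}$ as an iterated top horizontal-then-vertical coface of $a$ and $0_{m,m'} * b$ as an iterated bottom coface of $b$. Thus $*$ is forced to be the two-variable Alexander--Whitney cup product, recovered from the bicosimplicial structure. The functor $A \mapsto (A, \pd^0_h, \pd^{m+1}_h, \pd^0_v, \pd^{n+1}_v)$ and the functor $cc\Ab \to QMM^*(\Ab, \by)$ that forgets the extremal cofaces and installs the cup product are therefore mutually inverse. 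The only obstacle is the organisational bookkeeping of the commutation relations; there is no conceptual difficulty beyond the one-variable Lemma \ref{abqm}.
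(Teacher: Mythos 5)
Your proposal is correct and takes essentially the same route as the paper, whose proof of this lemma consists entirely of the remark that the proof of Lemma \ref{abqm} carries over to the two-variable setting; you have simply written out that carrying-over explicitly (reconstructing the extremal cofaces from $0_{1,0}$ and $0_{0,1}$, checking the cross-commutation via $0_{1,0}*0_{0,1}=0_{1,1}=0_{0,1}*0_{1,0}$, and recovering $*$ as the bicosimplicial Alexander--Whitney product). The details you supply, in particular the observation that the only genuinely new verifications are the horizontal--vertical commutation relations, are exactly the ones the paper leaves implicit.
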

\begin{proof}
The proof of Lemma \ref{abqm} carries over to this context.
\end{proof}

\begin{definition}
Define $\cot: QMM^*(\Set) \to QMM^*(\Ab, \by)\simeq cc\Ab$ to be left adjoint to the forgetful  functor $ QMM^*(\Ab, \by)\to QMM^*(\Set)$.
\end{definition}

The following results have the same proofs as Corollary \ref{btcot}, Proposition \ref{adams} and Corollary \ref{trivchar}, replacing the category $G\Ring$ of graded rings with the category $GG\Ring$ of bigraded rings.

\begin{lemma}\label{btcotd}
 The morphism $\oL\cot\iota(\bt,\bt) \to \cot\iota(\bt,\bt)$ is a weak equivalence.
\end{lemma}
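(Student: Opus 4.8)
The plan is to transcribe the proof of Corollary \ref{btcot} into the bigraded setting, with the category $GG\Ring$ of bigraded (associative, unital) rings playing the role of $G\Ring$. First I would record the bigraded analogue of Lemma \ref{cotcalc}: for $E\in QMM^*(\Set)$, equipped with its canonical augmentation to the terminal object $\iota(\bt,\bt)$, there is a natural isomorphism
$$
U\cot(E)\cong U(\Z\ten\iota(\bt,\bt))\ten_{U(\Z\ten E)}\Omega(U(\Z\ten E))\ten_{U(\Z\ten E)}U(\Z\ten\iota(\bt,\bt)),
$$
where the forgetful functors $U$ are as in Lemma \ref{cotcalc} (now with $U\colon QMM^*(\Ab,\ten)\to GG\Ring$ retaining the product) and $\Omega$ denotes bigraded noncommutative Kähler differentials. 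Its proof is the levelwise repetition of that of Lemma \ref{cotcalc}, invoking the equivalence $QMM^*(\Ab,\by)\simeq cc\Ab$ in place of $QM^*(\Ab,\by)\simeq c\Ab$.

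Next I would put the levelwise model structure on $sGG\Ring$, exactly as in the proof of Lemma \ref{u*}, and check that both $\Z\ten(-)\colon QMM^*(\bS)\to QMM^*(s\Ab,\ten)$ and $U\colon QMM^*(s\Ab,\ten)\to sGG\Ring$ are left Quillen, so that their composite carries cofibrant quasi-bicomonoids to cofibrant bigraded rings. Choosing a cofibrant replacement $E\to\iota(\bt,\bt)$ in $QMM^*(\bS)$, the map $U(\Z\ten E)\to U(\Z\ten\iota(\bt,\bt))$ is then a weak equivalence of bigraded rings whose source is cofibrant.

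Granting that the target $U(\Z\ten\iota(\bt,\bt))$ is itself cofibrant, i.e. freely generated by the unique elements of $\iota(\bt,\bt)^{1,0}$ and $\iota(\bt,\bt)^{0,1}$, the argument closes as before: Kähler differentials of cofibrant rings compute the cotangent complex and are preserved under base change along weak equivalences of cofibrant rings, so
$$
U(\Z\ten\iota(\bt,\bt))\ten_{U(\Z\ten E)}\Omega(U(\Z\ten E))\ten_{U(\Z\ten E)}U(\Z\ten\iota(\bt,\bt))\to\Omega(U(\Z\ten\iota(\bt,\bt)))
$$
is a weak equivalence. Feeding this through the bigraded Lemma \ref{cotcalc} applied both to $E$ and to $\iota(\bt,\bt)$ yields exactly that $\cot(E)=\oL\cot\iota(\bt,\bt)\to\cot\iota(\bt,\bt)$ is a weak equivalence.

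The main obstacle is precisely the cofibrancy assumption just invoked, namely that $U(\Z\ten\iota(\bt,\bt))$ is freely generated as a bigraded ring. In the one-variable case this was automatic, since $U(\Z\ten\iota\bt)\cong\Z[t]$ is free on a single generator in degree one; here the two generators occupy the orthogonal bidegrees $(1,0)$ and $(0,1)$, and one must verify carefully that the lax structure of $\iota(\bt,\bt)$ realises the free bigraded ring on these two generators rather than a proper quotient. This is the sole step of Corollary \ref{btcot} that does not transcribe mechanically, and the substantive content of the lemma is the resulting homotopy-invariance of these Kähler differentials under base change; I would therefore spend the bulk of the effort pinning down the free generation (equivalently, that the naive differentials $\Omega(U(\Z\ten\iota(\bt,\bt)))$ already compute the derived object) rather than on the formal base-change formalism.
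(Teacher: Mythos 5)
Your overall strategy --- transcribe the proof of Corollary \ref{btcot} with $GG\Ring$ in place of $G\Ring$ --- is exactly what the paper does (its proof of this lemma is literally the statement that the earlier proofs carry over under that substitution), and you have correctly isolated the one step that does not transcribe mechanically. The problem is that your proposed resolution of that step is false: $U(\Z\ten\iota(\bt,\bt))$ is \emph{not} the free bigraded associative ring on the two generators $x\in\iota(\bt,\bt)^{1,0}$ and $y\in\iota(\bt,\bt)^{0,1}$. Since $\iota(\bt,\bt)^{m,n}=\bt^m\by\bt^n$ is a single point for every $(m,n)$, the ring $U(\Z\ten\iota(\bt,\bt))$ has exactly one copy of $\Z$ in each bidegree, so $xy=yx$ and the ring is the commutative polynomial ring $\Z[x,y]$; the free bigraded associative ring $\Z\langle x,y\rangle$ has rank $\binom{m+n}{m}$ in bidegree $(m,n)$. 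Worse, $\Z[x,y]$ is not even cofibrant in $sGG\Ring$: by the Quillen result cited in Proposition \ref{adams}, a cofibrant constant simplicial ring is a retract of a free bigraded associative ring, hence has $\Omega$ projective as a bimodule, whereas $\Omega(\Z[x,y])$ is not bimodule-projective (e.g.\ $\Ext^1_{R\ten R^{\opp}}(\Omega(R),-)\cong HH^2(R,-)\neq 0$ for $R=\Z[x,y]$). So the step ``weak equivalence of cofibrant objects, hence base-changed $\Omega$ is a weak equivalence'', which is where Corollary \ref{btcot} actually uses freeness, genuinely breaks, and spending your effort trying to prove free generation would lead nowhere.

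The lemma is nevertheless true, and your parenthetical remark identifies the correct substitute target: one must show directly that $\Omega(U(\Z\ten\iota(\bt,\bt)))$ already computes the derived object, i.e.\ that for a cofibrant replacement $E\to\iota(\bt,\bt)$, writing $F=U(\Z\ten E)$ and $R=U(\Z\ten\iota(\bt,\bt))$, the map $R\ten_F\Omega(F)\ten_FR\to\Omega(R)$ is a weak equivalence. This follows not from freeness of $R$ but from the cofibre sequence obtained by base-changing the levelwise exact sequences $0\to\Omega(F_n)\to F_n\ten F_n\to F_n\to 0$ along $F\to R$: the middle term gives $R\ten R$ with no higher homotopy, the right-hand term gives $R\ten^{\oL}_FR\simeq R$ because $F\to R$ is a weak equivalence of simplicial rings, and the left-hand term is $R\ten_F\Omega(F)\ten_FR$ (no derived correction, as each $\Omega(F_n)$ is a free $F_n$-bimodule); hence $R\ten_F\Omega(F)\ten_FR\simeq\ker(R\ten R\to R)=\Omega(R)$. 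Equivalently, one can check that $x\ten 1-1\ten x$ and $y\ten 1-1\ten y$ form a regular sequence in $\Z[x,y]\ten\Z[x,y]$, so the higher Hochschild homology of $\Z[x,y]$ with free bimodule coefficients vanishes. With that argument replacing your ``free generation'' step, the rest of your transcription goes through as written.
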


\begin{proposition}\label{adamsd}
Given a cofibrant object $R \in sGG\Ring$, equipped with an augmentation $ R \to \Z$   there is a spectral sequence
$$
F(\H_*(\Z\ten_R\cot(R)\ten_R\Z)) \abuts \H_*(R), 
$$
where $F$ is the free bigraded (non-commutative) $\Z$-algebra functor on a bigraded module. This converges whenever $R^{00}=\Z$. 
\end{proposition}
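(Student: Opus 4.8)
The plan is to follow the proof of Proposition \ref{adams} essentially verbatim, substituting the category $sGG\Ring$ of simplicial bigraded rings for $sG\Ring$ throughout. The bigrading is merely carried along as extra bookkeeping and does not interact with the simplicial structure, so each step of the original argument transfers with only notational changes, except for the convergence claim, which requires a genuinely new observation.

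First I would reduce to the free case. By \cite{QHA} \S II.6 every cofibrant simplicial ring is a retract of a free one (a simplicial ring with free generators $C_q \subset R_q$ in each level, closed under the degeneracy operations), and since the statement is unchanged under retracts, I may assume $R$ is free.

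Next I would introduce the $I$-adic filtration. Writing $I = \ker(R \to \Z)$ for the augmentation ideal gives $R = \Z \oplus I$, and freeness identifies the bigraded module of indecomposables $I/(I\cdot I)$ with $\Z \ten_R \cot(R) \ten_R \Z$. Filtering $R$ by the powers $I^{\cdot n}$ then yields associated graded $\gr_I R = \bigoplus_{n \ge 0} I^{\cdot n}/I^{\cdot n+1}$, and freeness makes the canonical map $F(I/(I\cdot I)) \to \gr_I R$ an isomorphism, where $F$ is the free bigraded noncommutative $\Z$-algebra functor. The spectral sequence of the proposition is then simply the homology spectral sequence of this multiplicative filtration.

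The one genuinely new point — and the step I would treat most carefully — is convergence. In the graded setting one uses that $R^0 = \Z$ forces $I^0 = 0$, so that $I^{\cdot(n+1)} \cap R^n = 0$ and the filtration is bounded in each degree. Here the hypothesis $R^{00} = \Z$ forces only $I^{00} = 0$, so the right invariant to track is the \emph{total} degree: every element of $I$ has bidegree of strictly positive total degree, hence $I^{\cdot k}$ is concentrated in total degree $\ge k$, giving $I^{\cdot p} \cap R^{mn} = 0$ for $p > m+n$. Decomposing the spectral sequence by bidegree as before, the filtration in each fixed bidegree $(m,n)$ is then bounded, which yields convergence. The (minor) obstacle is precisely this observation that total degree, rather than either grading separately, controls the length of products in $I$.
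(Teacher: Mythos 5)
Your proof is correct and takes essentially the same approach as the paper, which simply asserts that the proof of Proposition \ref{adams} carries over upon replacing $G\Ring$ by $GG\Ring$. Your observation that convergence in the bigraded case is controlled by total degree --- so that $I^{\cdot p}\cap R^{mn}=0$ for $p>m+n$, making the filtration bounded in each fixed bidegree --- is precisely the detail the paper leaves implicit, and it is the right one.
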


\begin{corollary}\label{trivchard}
If $f:E \to F$ in $QMM^*(\Set)$ is a morphism  with $E^{00}=F^{00}=1$, $\pi_fE \simeq \pi_fF\simeq \bt$ and $\oL\cot(E) \simeq \oL\cot(F)$, then $f$ is a weak equivalence. Moreover,  if  $\oL\cot (E) \simeq \cot\iota(\bt,\bt)$, then $E \to \iota(\bt,\bt)$ is a weak equivalence.
\end{corollary}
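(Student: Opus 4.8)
The plan is to follow the proof of Corollary \ref{trivchar} essentially verbatim, substituting the bigraded analogues of each ingredient. First I would choose cofibrant replacements $\tilde E, \tilde F$ of $E,F$ in $QMM^*(\bS)$ arranged so that $\tilde E^{00}=\tilde F^{00}=1$; this is possible because $E^{00}=F^{00}=1$ and the latching map in bidegree $(0,0)$ is trivial, exactly as in the one-indexed case. Since $\tilde E \to E$ and $\tilde F \to F$ are weak equivalences, the hypothesis $\pi_f E\simeq \pi_f F\simeq \bt$ guarantees that each level $\tilde E^{mn},\tilde F^{mn}$ is a simply connected simplicial set. By the simply connected Whitehead theorem, $f$ is then a weak equivalence in $QMM^*(\bS)$ as soon as every map $\H_*(\tilde E^{mn},\Z)\to \H_*(\tilde F^{mn},\Z)$ of integral homology groups is an isomorphism.

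Next I would reduce these homology isomorphisms to the cotangent hypothesis. Writing $R:=U(\Z\ten\tilde E)\in sGG\Ring$, we have $\H_*(\tilde E^{mn},\Z)=\H_*(\Z\ten\tilde E^{mn})$, recovered bidegreewise from $\H_*(R)$. As $\tilde E^{00}=1$ we get $R^{00}=\Z$, so Proposition \ref{adamsd} supplies a convergent spectral sequence computing $\H_*(R)$ from $\H_*(\Z\ten_R\cot(R)\ten_R\Z)$. The bigraded analogue of Lemma \ref{cotcalc}, which holds with the identical proof upon replacing $G\Ring$ by $GG\Ring$, identifies $\Z\ten_R\cot(R)\ten_R\Z$ with $U\cot(\tilde E)$, and since $\tilde E$ is cofibrant $\cot(\tilde E)$ represents $\oL\cot(E)$. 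The hypothesis $\oL\cot(E)\simeq\oL\cot(F)$ therefore forces the inputs of the two spectral sequences, and hence their abutments, to agree in each bidegree. This gives the required homology isomorphisms, so $f$ is a weak equivalence.

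For the final assertion, observe that Lemma \ref{btcotd} gives $\oL\cot\iota(\bt,\bt)\simeq\cot\iota(\bt,\bt)$, so the hypothesis $\oL\cot(E)\simeq\cot\iota(\bt,\bt)$ becomes $\oL\cot(E)\simeq\oL\cot(\iota(\bt,\bt))$. Since $\iota(\bt,\bt)^{mn}$ is a one-point set in every bidegree, $\iota(\bt,\bt)$ is simply connected in every level, so the first part applies with $F=\iota(\bt,\bt)$ and yields that $E\to\iota(\bt,\bt)$ is a weak equivalence.

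The main obstacle is not conceptual but bookkeeping: one must first put in place the bigraded version of Lemma \ref{cotcalc}. Concretely, this requires checking that the forgetful functor $U:QMM^*(s\Ab,\ten)\to sGG\Ring$ is again left Quillen, and that the non-commutative differentials $\Omega$ of the bigraded ring $U(\Z\ten E)$ compute $U\cot(E)$ after base change along the augmentation $U(\Z\ten E)\to U(\Z\ten\iota(\bt,\bt))$. Once this is established, convergence of the spectral sequence of Proposition \ref{adamsd} is automatic in each fixed bidegree from the condition $R^{00}=\Z$, exactly as in the singly graded case of Corollary \ref{trivchar}.
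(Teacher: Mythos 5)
Your proposal is correct and takes essentially the same approach as the paper, which proves this corollary simply by declaring that the proof of Corollary \ref{trivchar} carries over with $G\Ring$ replaced by $GG\Ring$ (so with Proposition \ref{adamsd}, the bigraded analogue of Lemma \ref{cotcalc}, and Lemma \ref{btcotd} playing the roles of Proposition \ref{adams}, Lemma \ref{cotcalc} and Corollary \ref{btcot}). The bookkeeping you flag at the end is precisely what the paper leaves implicit.
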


\begin{proposition}\label{diagbt}
The map $\oL \diag^*(\iota\bt) \to \iota(\bt,\bt)$ is a weak equivalence.
\end{proposition}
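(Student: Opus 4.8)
The plan is to reduce the statement, via the criterion of Corollary \ref{trivchard}, to a computation of $\oL\cot$ on the diagonal, which in turn reduces to the abelian shadow of the proposition.

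First I would replace $\iota\bt$ by the cofibrant resolution $\Xi$ of Definition \ref{Xidef}: since $\Xi \to \iota\bt$ is a weak equivalence with $\Xi$ cofibrant (Proposition \ref{rhommc}) and $\diag^*$ is left Quillen (Lemma \ref{diagquillen}), we have $\oL\diag^*(\iota\bt) \simeq \diag^*\Xi$, and moreover $\diag^*\Xi$ is itself cofibrant in $QMM^*(\bS)$. It therefore suffices to prove that the canonical map $\diag^*\Xi \to \iota(\bt,\bt)$, obtained by applying $\diag^*$ to $\Xi \to \iota\bt$ together with the identity $\diag^*\iota\bt = \iota(\bt,\bt)$ of Corollary \ref{iotadiag}, is a weak equivalence.

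I would establish this using the ``moreover'' clause of Corollary \ref{trivchard}, checking its three hypotheses for $E = \diag^*\Xi$. The normalisation condition $(\diag^*\Xi)^{00}=1$ is immediate from the construction, and the simple connectivity $\pi_f(\diag^*\Xi) \simeq \iota(\bt,\bt)$ in every level is exactly Corollary \ref{diagcohogpd}. The substantive hypothesis is that $\oL\cot(\diag^*\Xi) \simeq \cot\iota(\bt,\bt)$. Here I would use that the forgetful functors commute with $\diag$ (both being computed levelwise), so that passing to left adjoints yields an identity of functors $\cot \circ \diag^* \cong \diag^* \circ \cot$, where on the right $\diag^*$ denotes the left adjoint of the diagonal $cc\Ab \to c\Ab$. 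Since $\diag^*\Xi$ is cofibrant, $\oL\cot(\diag^*\Xi) \simeq \cot(\diag^*\Xi) = \diag^*(\cot\Xi)$, and by Corollaries \ref{btcot} and \ref{cot2} the object $\cot\Xi$ is a cofibrant model for $\oL\cot(\iota\bt)$, weakly equivalent to $\iota(\Z) \cong \cE D_c(\Z^{[-1]})$. Applying the same commutation to $\iota\bt$ gives $\cot\iota(\bt,\bt) = \diag^*(\cot\iota\bt)$, so the required equivalence amounts to showing that $\diag^*$ carries the weak equivalence $\cot\Xi \to \cot\iota\bt$ to a weak equivalence; that is, that $\oL\diag^*(\oL\cot\iota\bt) \simeq \cot\iota(\bt,\bt)$, the abelian shadow of the proposition.

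The hard part will be precisely this abelian computation, since a priori $\diag^*$ need not preserve the weak equivalence $\cot\Xi \to \cot\iota\bt$ (the target $\cot\iota\bt$ is not obviously cofibrant). I would resolve it on chain level, using the explicit description from Corollary \ref{cot2} that $N_c\CC(\cot\Xi)$ is $\Z(I^{n-1}/\gimel^{n-1})$ in cosimplicial degree $n$ with differential $\delta$, so that $\cot\Xi \simeq \iota(\Z)$; the left adjoint to the diagonal on the associated (bi)cosimplicial chain complexes is an Eilenberg--Zilber/Künneth-type operation, which I would check sends this model to a resolution of $\cot\iota(\bt,\bt) = \iota(\Z,\Z)$. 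Once $\oL\cot(\diag^*\Xi) \simeq \cot\iota(\bt,\bt)$ is established, Corollary \ref{trivchard} yields that $\diag^*\Xi \to \iota(\bt,\bt)$ is a weak equivalence, which is the claim.
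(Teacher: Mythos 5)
Your proposal is correct and follows essentially the same route as the paper: reduce via Corollary \ref{trivchard} using the normalisation and levelwise simple connectivity of $\diag^*\Xi$ (Corollary \ref{diagcohogpd}, resp.\ Proposition \ref{btcohogpddiag}), then compute $\oL\cot(\diag^*\Xi)$ via the natural isomorphism $\cot\diag^*\cong\diag^*_{\Ab}\cot$ of left adjoints, reducing everything to the weak equivalence $\cot\Xi\to\cot\iota\bt$ of Corollary \ref{btcot}. The only divergence is at the last step, which you flag as a remaining Eilenberg--Zilber-type verification: the paper instead invokes the general fact that $\diag^*_{\Ab}$ preserves \emph{all} weak equivalences in $QM^*(\Ab,\by)$ (by the explicit description of the left adjoint to the diagonal on (bi)cosimplicial abelian groups, as for $d^*$ on bisimplicial sets), which applied to $\cot\Xi\to\cot\iota\bt$ closes the argument without needing $\cot\iota\bt$ to be cofibrant.
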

\begin{proof}
Proposition \ref{btcohogpddiag} shows that this gives an equivalence on fundamental groupoids, so by Corollary  \ref{trivchard} it suffices to show that
$$
\oL \cot (\oL \diag^*(\iota\bt)) \simeq \oL \cot(\iota(\bt,\bt)).
$$

Now the diagonal functor $\diag:QMM^*(\Ab,\by) \to QM^*(\Ab, \by)$ also has a left adjoint  $\diag^*_{\Ab}:QM^*(\Ab, \by) \to QMM^*(\Ab,\by)$, which  can be calculated in a similar way to the functor $d^*$ in \cite{sht} \S IV.3.3,  by studying the associated cosimplicial and bicosimplicial complexes. It follows from this description that $\diag^*_{\Ab}$ preserves weak equivalences. Moreover, the functors $\diag^*_{\Ab}\cot$ and $\cot\diag^*$ from $QM^*(\Set)$ to $QMM^*(\Ab, \by)$ are naturally isomorphic, since their right adjoints are.

By Corollary \ref{btcotd}, $\oL\cot\iota(\bt,\bt) \simeq \cot\iota(\bt,\bt) $, so it suffices  to show that
$$
\diag^*_{\Ab}\cot\Xi \to \cot\iota(\bt,\bt) = \diag^*_{\Ab}\cot\iota\bt
$$
is a weak equivalence. Since $\diag^*_{\Ab}$ preserves weak equivalences in $QM^*(\Ab, \by)$,  we need only observe that
$$
\cot\Xi \to \cot\iota\bt 
$$
is a weak equivalence by Corollary \ref{btcot}.
\end{proof}

\section{Mapping spaces}\label{mapsn}

\begin{lemma}\label{spnleft}
For $\on$ as in Lemma \ref{pnnerve}, $P_n^*\diag^*\Xi$ is a cofibrant replacement for $\alg^*\on$ in $sQ\Dat_n$ (with the model structure of Lemma \ref{sqdato}), and hence also in $sQ\Dat$.
\end{lemma}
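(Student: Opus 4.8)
The plan is to verify the two defining properties of a cofibrant replacement separately: that $P_n^*\diag^*\Xi$ is cofibrant in $sQ\Dat_n$, and that the canonical map $P_n^*\diag^*\Xi \to \alg^*\on$ is a weak equivalence. The whole argument is an assembly of left Quillen functors already in place, together with the one genuinely homotopical input, Proposition \ref{diagbt}.

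For cofibrancy, I would first recall from the proof of Proposition \ref{rhommc} that $\Xi$ is cofibrant in $QM^*(\bS)$, being a cofibrant replacement of $\iota\bt$. Since $\diag$ is right Quillen by Lemma \ref{diagquillen}, its left adjoint $\diag^*$ is left Quillen, so $\diag^*\Xi$ is cofibrant in $QMM^*(\bS)$. Likewise $P_n$ is right Quillen by Proposition \ref{pnquillen}, so $P_n^*$ is left Quillen, and therefore $P_n^*\diag^*\Xi$ is cofibrant in $sQ\Dat_n$.

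For the weak equivalence, the key input is Proposition \ref{diagbt}, which supplies a weak equivalence $\diag^*\Xi \simeq \oL\diag^*(\iota\bt) \to \iota(\bt,\bt)$ in $QMM^*(\bS)$; since weak equivalences there are levelwise, this means each level map $(\diag^*\Xi)^{m,n} \to \iota(\bt,\bt)^{m,n}=\bt$ is a weak equivalence in $\bS$. Applying $P_n^*$ and using the explicit formula of Lemma \ref{pnleft2} (extended to simplicial sets via Definition \ref{pkdefn}), the induced map $P_n^*\diag^*\Xi \to P_n^*\iota(\bt,\bt)$ is, on the slot $(i,j)$ in $\Delta_{**}$-degree $a$ with $j\ge i$, just the map $(\diag^*\Xi)^{a,j-i}\to\bt$, hence a weak equivalence, while both sides are empty for $j<i$. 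As $P_n^*\iota(\bt,\bt)=\alg^*\on$ — precisely the computation carried out in the proof of Lemma \ref{pndiag} — this exhibits $P_n^*\diag^*\Xi\to\alg^*\on$ as a levelwise weak equivalence, which by Lemma \ref{sqdato} is exactly a weak equivalence in $sQ\Dat_n$.

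Finally, to pass from $sQ\Dat_n$ to $sQ\Dat$: the map is the identity on the object set $[0,n]$, so condition $(W2)$ of Proposition \ref{sqdatmodel} holds automatically and $(W1)$ is the levelwise statement just proved, so the map remains a weak equivalence in $sQ\Dat$. For cofibrancy in $sQ\Dat$, I would factor $\emptyset\to P_n^*\diag^*\Xi$ through the initial object $\cD_0$ of $sQ\Dat_n$ (objects $[0,n]$, with $\cHHom_{\cD_0}(a,a)^0=\bt$ and all other levels empty): the map $\emptyset\to\cD_0$ is a composite of pushouts of the object-adjoining generating cofibration $(I2)$, while $\cD_0\to P_n^*\diag^*\Xi$, being a cofibration in $sQ\Dat_n$, is a retract of a cell complex built from the fixed-object generating cofibrations of Lemma \ref{sqdato}, each of which is slotwise a pushout of an $(I1)$-cofibration of $sQ\Dat$; hence the composite is an $(I)$-cofibration and $P_n^*\diag^*\Xi$ is cofibrant in $sQ\Dat$. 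The only nontrivial ingredient is Proposition \ref{diagbt}; I expect this to be the main obstacle, since everything else is the formal behaviour of the left Quillen functors $\diag^*$ and $P_n^*$ combined with the explicit levelwise description of $P_n^*$.
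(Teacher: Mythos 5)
Your proposal is correct and follows essentially the same route as the paper: the key input is Proposition \ref{diagbt}, combined with the observation that the explicit formula for $P_n^*$ from Lemma \ref{pnleft2} shows it preserves (levelwise) weak equivalences, left Quillen-ness of $\diag^*$ and $P_n^*$ for cofibrancy, and the identification $P_n^*\iota(\bt,\bt)\cong\alg^*\on$ from the proof of Lemma \ref{pndiag}. Your extra care in passing from $sQ\Dat_n$ to $sQ\Dat$ fills in a step the paper leaves implicit, but does not change the argument.
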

\begin{proof}
By Proposition \ref{diagbt}, the morphism $\diag^*\Xi \to \iota(\bt,\bt)$ in $QMM^*(\bS)$ is a weak equivalence. From the description of $P_n^*$ in Lemma \ref{pnleft2}, it follows that $P_n^*$ from Definition \ref{pkdefn} preserves weak equivalences, so $ P_n^*\diag^*\Xi \to P_n^*\iota(\bt,\bt)$ is also a weak equivalence. Since $P_n^*: QMM^*(\bS) \to sQ\Dat_n$ is left Quillen, $P_n^*\diag^*\Xi$ is cofibrant (and a  realisation of $P_n^*\oL \diag^*\iota(\bt)$). Now we need only recall from  the proof of Lemma \ref{pndiag} that
$
P_n^*\iota(\bt,\bt) \cong \alg^*\on.
$
\end{proof}

\subsection{$\cMC$}

\begin{proposition}\label{MCmap1}
For $\cD \in sQ\Dat$ fibrant,
$$
\cMC(\cD)_n \simeq \coprod_{f: [0,n] \to \Ob \cD} \Map^h_{sQ\Dat_n}(\alg^*\on, f^{-1}\cD),
$$
where $\Map^h$ denotes the derived function complex $\oR \Map$ of \cite{hovey} \S 5.
\end{proposition}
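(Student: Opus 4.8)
The plan is to treat each summand of the coproduct $\cMC(\cD)_n = \coprod_{f\colon [0,n] \to \Ob\cD} \mmc(\diag P_n(f^{-1}\cD))$ separately, and to identify the simplicial set $\mmc(\diag P_n(f^{-1}\cD))$ with the derived function complex $\Map^h_{sQ\Dat_n}(\alg^*\on, f^{-1}\cD)$ by transporting the cofibrant object $\Xi$ across a chain of simplicial adjunctions. Since disjoint unions of simplicial sets preserve weak equivalences, a termwise equivalence yields the stated result.

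First I would recall from the proof of Proposition \ref{rhommc} that $\mmc(E) = \HHom_{QM^*(\bS)}(\Xi, E)$ for all $E \in QM^*(\bS)$, where $\HHom$ denotes the simplicial enrichment. Applying this with $E = \diag P_n(f^{-1}\cD)$ and then invoking the adjunctions $\diag^* \dashv \diag$ and $P_n^* \dashv P_n$ at the enriched level gives
\begin{align*}
\mmc(\diag P_n(f^{-1}\cD)) &= \HHom_{QM^*(\bS)}(\Xi, \diag P_n(f^{-1}\cD))\\
&\cong \HHom_{QMM^*(\bS)}(\diag^*\Xi, P_n(f^{-1}\cD))\\
&\cong \HHom_{sQ\Dat_n}(P_n^*\diag^*\Xi, f^{-1}\cD).
\end{align*}
The enriched form of these adjunction isomorphisms is available because $\diag$ (Lemma \ref{diagquillen}) and $P_n$ (Proposition \ref{pnquillen}) are simplicial right Quillen functors: each commutes with cotensors by simplicial sets, so the underlying adjunctions of the left adjoints $\diag^*$ and $P_n^*$ upgrade to isomorphisms of function complexes.

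It then remains to recognise the last expression as the derived function complex. By Lemma \ref{spnleft}, $P_n^*\diag^*\Xi$ is a cofibrant replacement for $\alg^*\on$ in $sQ\Dat_n$. On the other side, the fibrations of $sQ\Dat_n$ (Lemma \ref{sqdato}) are exactly the morphisms that are levelwise Reedy fibrations on each $\cHHom(a,b)$, so an object is fibrant precisely when every $\cHHom(a,b)$ is Reedy fibrant; since $\cHHom_{f^{-1}\cD}(a,b) = \cHHom_{\cD}(fa,fb)$ is Reedy fibrant whenever $\cD$ is fibrant in $sQ\Dat$ (condition $(F1)$ of Proposition \ref{sqdatmodel}), the object $f^{-1}\cD$ is fibrant in $sQ\Dat_n$. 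In a simplicial model category the simplicial Hom between a cofibrant source and a fibrant target computes $\oR\Map = \Map^h$, so
$$
\HHom_{sQ\Dat_n}(P_n^*\diag^*\Xi, f^{-1}\cD) \simeq \Map^h_{sQ\Dat_n}(\alg^*\on, f^{-1}\cD),
$$
and assembling the coproduct over $f$ finishes the argument.

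The main obstacle I anticipate is bookkeeping rather than conceptual: one must check that the enrichment genuinely passes through both adjunctions (that $\diag$ and $P_n$ preserve cotensors, not merely preserve fibrations), and that the descent of fibrancy from $sQ\Dat$ to the fixed-object category $sQ\Dat_n$ is legitimate. The latter is where the hypothesis that $\cD$ is fibrant is used, and it works precisely because $sQ\Dat_n$ discards the lifting condition $(F2)$ and demands only the levelwise Reedy fibrancy $(F1)$ that $\cD$ already supplies.
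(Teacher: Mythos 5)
Your argument is correct and follows essentially the same route as the paper: representing $\mmc$ by $\Xi$, passing through the $\diag^*\dashv\diag$ and $P_n^*\dashv P_n$ adjunctions to get $\HHom_{sQ\Dat_n}(P_n^*\diag^*\Xi, f^{-1}\cD)$, and then invoking Lemma \ref{spnleft} together with cofibrancy/fibrancy to identify this with the derived function complex. The only difference is that you spell out two points the paper leaves implicit — the enriched form of the adjunctions and the fibrancy of $f^{-1}\cD$ in $sQ\Dat_n$ — both of which you handle correctly.
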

\begin{proof}
For $\cE \in sQ\Dat_n$, 
$$
\mmc(\diag P_n\cE)= \HHom_{sQ\Dat_n}(P_n^*\diag^*\Xi, \cE),
$$
recalling that (unlike $sQ\Dat$)  the model category $sQ\Dat_n$ has a simplicial structure. Since $P_n^*\diag^*\Xi$ is cofibrant, this is equivalent to the derived function complex $\Map^h_{sQ\Dat_n}(P_n^*\diag^*\Xi, \cE)$ whenever $\cE$ is fibrant (\cite{hovey} Theorem 5.4.9). Since 
$P_n^*\diag^*\Xi$ is weakly equivalent to $\alg^*\on$, this function complex is also weakly equivalent to $\Map^h_{sQ\Dat_n}(\alg^*\on, \cE)$. The description now follows immediately from definition \ref{MCdef}.
\end{proof}

\begin{corollary}\label{MCmap2}
For $\cD \in sQ\Dat$ fibrant,
$$
\cMC(\cD)_n \simeq  \Map^h_{sQ\Dat}(\alg^*\on, \cD)\by^h_{\Map^h_{s\Cat}(\oO, \cD^0)^{[0,n]}}(\Ob\cD)^{[0,n]}.
$$
\end{corollary}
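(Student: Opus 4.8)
The plan is to deduce this from Proposition \ref{MCmap1}, whose right-hand side is a coproduct over object-maps $f\colon [0,n]\to \Ob\cD$ of fixed-object mapping spaces, by recognising the homotopy fibre product in the statement as computing exactly the same coproduct.

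First I would construct the comparison map. Applying the left Quillen functor $(-)^0\colon sQ\Dat \to s\Cat$ (which is both left and right Quillen by the remark after Proposition \ref{sqdatmodel}) to a map out of the cofibrant object $\alg^*\on$, and using $(\alg^*\on)^0 \cong \on$ followed by restriction along the $n+1$ inclusions $\oO \to \on$, yields a natural map
$$
\phi\colon \Map^h_{sQ\Dat}(\alg^*\on, \cD) \lra \Map^h_{s\Cat}(\oO, \cD^0)^{[0,n]}.
$$
Here $\Map^h_{s\Cat}(\oO, \cD^0)$ is the moduli space of objects of $\cD^0$: since $\oO$ is cofibrant and $\cD^0$ is fibrant in $s\Cat$ (as $(-)^0$ is right Quillen), it is the classifying space of the homotopy equivalences in $\cD^0$, with $\pi_0$ the set of homotopy-equivalence classes of objects. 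The discrete set $\Ob\cD$ maps into it by sending each object to its corresponding vertex, so that $(\Ob\cD)^{[0,n]} \to \Map^h_{s\Cat}(\oO,\cD^0)^{[0,n]}$ hits $0$-simplices.

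The heart of the argument is the identification of the homotopy fibres of $\phi$: for a vertex corresponding to $f\colon[0,n]\to\Ob\cD$, I claim the homotopy fibre of $\phi$ is naturally equivalent to $\Map^h_{sQ\Dat_n}(\alg^*\on, f^{-1}\cD)$. To prove this I would use the cofibrant model $P_n^*\diag^*\Xi$ of $\alg^*\on$ from Lemma \ref{spnleft}, which has object set exactly $[0,n]$ and is cofibrant both in $sQ\Dat_n$ and in $sQ\Dat$. A map out of it with prescribed object behaviour $f$ is precisely a morphism of fixed-object data $P_n^*\diag^*\Xi \to f^{-1}\cD$ in the simplicial model category $sQ\Dat_n$ (Lemma \ref{sqdato}), and the fibrancy condition $(F2)$ of Proposition \ref{sqdatmodel} rigidifies any map whose object-map is merely homotopic to $f$ to one that is literally $f$ on objects. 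Comparing the two model structures then gives the fibre sequence
$$
\Map^h_{sQ\Dat_n}(\alg^*\on, f^{-1}\cD) \lra \Map^h_{sQ\Dat}(\alg^*\on, \cD) \xra{\phi} \Map^h_{s\Cat}(\oO, \cD^0)^{[0,n]}
$$
over the component of $f$.

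Finally, since $(\Ob\cD)^{[0,n]}$ is discrete and maps to $0$-simplices of the base, the homotopy fibre product with it splits as the coproduct over $f \in (\Ob\cD)^{[0,n]}$ of the homotopy fibres of $\phi$ over the corresponding vertices:
$$
\Map^h_{sQ\Dat}(\alg^*\on, \cD)\by^h_{\Map^h_{s\Cat}(\oO, \cD^0)^{[0,n]}}(\Ob\cD)^{[0,n]} \simeq \coprod_{f\colon [0,n]\to\Ob\cD} \Map^h_{sQ\Dat_n}(\alg^*\on, f^{-1}\cD),
$$
which is $\cMC(\cD)_n$ by Proposition \ref{MCmap1}. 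The main obstacle is the fibre identification of the third paragraph: making precise the interaction between the fixed-object simplicial model structure of Lemma \ref{sqdato} and the full model structure of Proposition \ref{sqdatmodel}, and verifying that the moduli-of-objects map $\phi$ has the asserted homotopy fibres — this is where condition $(F2)$ and the object-preserving cofibrant replacement $P_n^*\diag^*\Xi$ do the essential work.
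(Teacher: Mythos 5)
Your overall strategy is the paper's: reduce to Proposition \ref{MCmap1} and recognise the homotopy fibre product with the discrete set $(\Ob\cD)^{[0,n]}$ as the coproduct, over object-maps $f$, of the homotopy fibres of the restriction-to-objects map, each fibre being the fixed-object mapping space $\Map^h_{sQ\Dat_n}(\alg^*\on, f^{-1}\cD)$. The final identification $\Map^h_{sQ\Dat}([0,n],\cD)\simeq\Map^h_{s\Cat}(\oO,\cD^0)^{[0,n]}$ via $(-)^0$ and its adjoints also matches. Where you diverge is precisely at the step you yourself flag as the main obstacle: the paper does not argue by rigidifying maps whose object-map is homotopic to $f$, and does not invoke condition $(F2)$ at all. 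Instead it observes that the functor $\cO\da sQ\Dat\to sQ\Dat_{\cO}$ sending $(f\co\cO\to\cD)$ to $f^{-1}\cD$ preserves (trivial) fibrations and has the inclusion $sQ\Dat_{\cO}\to\cO\da sQ\Dat$ as left adjoint; the resulting Quillen adjunction gives
$$
\Map^h_{sQ\Dat_n}(\alg^*\on, f^{-1}\cD)\simeq\Map^h_{[0,n]\da sQ\Dat}(\alg^*\on,\, f)=\Map^h_{sQ\Dat}(\alg^*\on,\cD)\by^h_{\Map^h_{sQ\Dat}([0,n],\cD)}\{f\}
$$
in one line, since mapping spaces in an under-category are homotopy fibres of mapping spaces in the ambient category. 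This entirely replaces your $(F2)$-rigidification sketch, which as stated is not a proof (it is not clear how $(F2)$ — a lifting property for homotopy equivalences along fibrations of quasi-descent data — controls the homotopy fibre of $\phi$ over a vertex, as opposed to its strict fibre). I would adopt the under-category adjunction; it also sidesteps the worry that $sQ\Dat$ itself is not a simplicial model category, since the derived mapping space is transported from $sQ\Dat_n$, where the simplicial structure of Lemma \ref{sqdato} is available.
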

\begin{proof}
 Comparing the model structures of Lemma \ref{sqdato} and Proposition \ref{sqdatmodel}, it follows immediately that the functor $\cO \da sQ\Dat\to sQ\Dat_{\cO}$, given by mapping $f: \cO \to \cD$ to $f^{-1}\cD$, preserves (trivial) fibrations.  Its left adjoint is the inclusion  functor $sQ\Dat_{\cO}\to \cO \da sQ\Dat$, so these form a Quillen pair. Hence 
\begin{eqnarray*}
\Map^h_{sQ\Dat_n}(\alg^*\mathbf{n}, f^{-1}\cD) &\simeq& \Map_{[0,n] \da sQ\Dat}^h( \alg^*\mathbf{n}, [0,n] \xra{f} \cD)\\
&=& \Map_{sQ\Dat}^h(\alg^*\mathbf{n}, \cD)\by_{\Map_{sQ\Dat}^h([0,n], \cD)}^h\{f\}.
\end{eqnarray*}
Thus we have  that $\cMC(\cD)_n = \Map^h(\alg^*\mathbf{n}, \cD)\by^h_{\Map^h([0,n], \cD)}\Hom([0,n], \cD)$,
and
$$
\Map^h_{sQ\Dat}([0,n], \cD)= \Map^h_{s\Cat}([0.n], \cD^0)= \Map^h_{s\Cat}(\oO, \cD^0)^{[0,n]}.
$$
\end{proof}

\begin{definition}
For $n \in \N_0$, define $\Xi \by \alg^*\on \in sQ\Dat$ to have objects $[0,n]$ and morphisms $(\Xi \by \alg^*\on)(i,j)= \Xi$ for $i\le j$ and $\emptyset$ otherwise. This can be characterised as the coproduct $(\Xi \by \alg^*(\Ob \on))\sqcup_{ (\id / \emptyset)[0,n]}(\id / \emptyset)(\on)$ in the category $sQ\Dat$, or equivalently in the category $sQ\Dat_n$. 
\end{definition}

\begin{lemma}\label{workmor}
The natural morphism $ f:\Xi \by \alg^*\on \to P_n^*\diag^*\Xi$ is a trivial cofibration in $sQ\Dat_n$.
\end{lemma}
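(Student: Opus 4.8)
The plan is to split the claim into its two halves---that $f$ is a weak equivalence and that it is a cofibration---and to transport both across the adjunction $P_n^*\dashv P_n$. Let $p\colon\Delta_{**}\by\Delta_{**}\to\Delta_{**}$ be the projection onto the first (horizontal) factor; this is strict monoidal, so $p^*\Xi:=\Xi\circ p\in QMM^*(\bS)$ is the quasi-bicomonoid that is $\Xi$ in the horizontal variable and constant (with $\pd^i_v,\sigma^i_v$ acting as the identity) in the vertical variable. The first step is the identification $\Xi\by\alg^*\on\cong P_n^*(p^*\Xi)$, which is immediate from the formula for $P_n^*$ in Lemma \ref{pnleft2}: one has $(P_n^*p^*\Xi)(i,j)^a=(p^*\Xi)^{a,j-i}=\Xi^a$ for $j\ge i$, with the composition supplied by the product on $\Xi$. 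Under this identification $f=P_n^*(\phi)$, where $\phi\colon p^*\Xi\to\diag^*\Xi$ is the natural comparison map (characterised by the fact that $\diag\phi$ is the unit $\Xi\to\diag\diag^*\Xi$ of $\diag^*\dashv\diag$, using $\diag\,p^*\Xi=\Xi$); one checks this against the definition of the natural morphism. Since $P_n^*$ is left Quillen (as used in Lemma \ref{spnleft}), it then suffices to prove that $\phi$ is a trivial cofibration in $QMM^*(\bS)$.

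For the weak-equivalence half I would argue by two-out-of-three over $\iota(\bt,\bt)$. The map $p^*\Xi\to p^*\iota\bt=\iota(\bt,\bt)$ induced by $\Xi\to\iota\bt$ is a levelwise weak equivalence, being $\Xi^a\to\bt$ at level $(a,b)$, and $\diag^*\Xi\to\iota(\bt,\bt)$ is a weak equivalence by Proposition \ref{diagbt} (equivalently, as recorded in Lemma \ref{spnleft}). As $\phi$ is compatible with these two maps to $\iota(\bt,\bt)$, two-out-of-three makes $\phi$ a weak equivalence. Consequently $f=P_n^*\phi$ is automatically a weak equivalence once we know it is a cofibration.

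The substance of the lemma is therefore to show that $\phi\colon p^*\Xi\to\diag^*\Xi$ is a cofibration in $QMM^*(\bS)$, and this is the main obstacle: in the transferred model structure on $QMM^*(\bS)$ the cofibrations are not merely the levelwise (Reedy) monomorphisms, so this cannot be read off directly. The approach I would take is to exhibit $\diag^*\Xi$ as a free extension of $p^*\Xi$, adapting the latching-object analysis already used in Proposition \ref{reedymodel} and in the proof of Lemma \ref{diagquillen}. Concretely, I would take the cellular presentation of $\emptyset\to\Xi$ coming from the known structure of the cofibrant object $\Xi$ (with $\Xi^{n+1}=I^n$, attached along the $\pd^i,\sigma^i$), apply the left Quillen functor $\diag^*$, and compare the resulting presentation of $\emptyset\to\diag^*\Xi$ with the purely horizontal cells already present in $p^*\Xi$. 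The remaining generators---those carrying genuine vertical and diagonal structure---should present $\phi$ as a relative cell complex of free generating cofibrations, hence as a cofibration.

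The real work, and the point where care is required, lies in this last comparison: one must verify that the generators of $\diag^*\Xi$ not already coming from $p^*\Xi$ are attached along a Reedy cofibration of generating objects in $\bS^{\Delta_{**}\by\Delta_{**}}$. Here I would lean on the description of the matching objects $M^{ij}$ from the proof of Proposition \ref{pnquillen} and on the fact, established in the proof of Lemma \ref{diagquillen}, that $\diag^*$ preserves monomorphisms. Once $\phi$ is known to be a cofibration, it is a trivial cofibration by the second paragraph, and then $f=P_n^*\phi$ is a trivial cofibration in $sQ\Dat_n$ because $P_n^*$ is left Quillen, completing the proof.
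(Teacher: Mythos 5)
Your reduction of the lemma to a map $\phi\colon p^*\Xi \to \diag^*\Xi$ in $QMM^*(\bS)$, followed by an application of the left Quillen functor $P_n^*$, is a legitimate strategy, and your two-out-of-three argument for the weak-equivalence half is sound (it runs parallel to the paper, which simply notes that source and target are both weakly equivalent to $\alg^*\on$). The problem is the cofibration half, which you correctly identify as the substance of the lemma but then do not prove. Your plan is to read off a relative cell structure for $\phi$ from a cellular presentation of $\diag^*\Xi$, but no such presentation is available: $\diag^*$ is constructed only via the adjoint functor theorem, cofibrations in the transferred model structure on $QMM^*(\bS)$ are not detected levelwise, and everywhere else in the paper $\diag^*\Xi$ is controlled only up to weak equivalence (through its fundamental groupoid and cotangent complex, in Lemma \ref{diaggpd} through Proposition \ref{diagbt}). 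Knowing that the underlying $\diag^*\colon\Set^{\Delta_{**}}\to\Set^{\Delta_{**}\by\Delta_{**}}$ preserves monomorphisms, as in Lemma \ref{diagquillen}, does not tell you how the free quasi-bicomonoid cells of $\diag^*\Xi$ sit relative to the image of $p^*\Xi$; so the step ``the remaining generators should present $\phi$ as a relative cell complex'' is an unproven assertion, and it is the entire content of the lemma. (A smaller point: a map of quasi-bicomonoids is not determined by its diagonal, so your characterisation of $\phi$ needs to be supplemented by an actual construction.)

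The paper avoids this difficulty entirely by verifying the left lifting property of $f$ against trivial fibrations directly. Using the adjunctions together with the identifications $\Hom_{sQ\Dat_n}(P_n^*\diag^*\Xi,\cD)=\mc(\diag P_n\cD)$ and $\Hom_{sQ\Dat_n}(\Xi\by\alg^*\on,\cD)=\mc\bigl((P_n\cD)^{\bt,0}\by_{(P_n\cD)^{0,0}}(P_n\cD)^{0,\bt}\bigr)$, a lifting problem for $f$ against a trivial fibration $\cD\to\cE$ becomes the question of whether $\mc$ applied to
$$
\diag(P_n\cD)\lra \diag(P_n\cE)\by_{(P_n\cE)^{\bt,0}\by_{(P_n\cE)^{0,0}}(P_n\cE)^{0,\bt}}\Bigl((P_n\cD)^{\bt,0}\by_{(P_n\cD)^{0,0}}(P_n\cD)^{0,\bt}\Bigr)
$$
is surjective; the matching-object analysis from Proposition \ref{pnquillen} (via conditions (W1) and (F1)) shows this map is a trivial fibration in $QM^*(\bS)$, and Lemma \ref{mcqsub} then gives the surjectivity, so $f$ is a cofibration. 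I would recommend replacing your cellular argument with this lifting-property argument; your treatment of the weak-equivalence part can stand.
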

\begin{proof}
For $\cD \in sQ\Dat_n$, 
$$
\Hom_{sQ\Dat_n}(\Xi \by \alg^*\on, \cD) =  \mc( (P_n\cD)^{\bt, 0}\by_{(P_n\cD)^{0, 0}}(P_n\cD)^{0,\bt}).
$$

The morphism $ f$ then corresponds to the map
$$
((\sigma^0_v)^{\bt}, (\sigma^0_h)^{\bt}):\diag(P_n\cD) \to (P_n\cD)^{\bt, 0}\by_{(P_n\cD)^{0, 0}}(P_n\cD)^{0,\bt}
$$
in $QM^*(\bS)$. 

Given a trivial fibration $\cD \to \cE$ in $sQ\Dat_n$, observe that the  conditions (W1) and (F1) from Proposition \ref{sqdatmodel} ensure that 
$$
\diag(P_n\cD) \to \diag(P_n\cE)\by_{ (P_n\cE)^{\bt, 0}\by_{(P_n\cE)^{0, 0}}(P_n\cE)^{0,\bt}} (P_n\cD)^{\bt, 0}\by_{(P_n\cD)^{0, 0}}(P_n\cD)^{0,\bt}
$$
is a trivial fibration in $QM^*(\bS)$, so Lemma \ref{mcqsub} implies that the functor $\mc$ applied to this map is surjective. Therefore
$f $
is a cofibration. 

Now,  $\Xi \by \alg^*\on \simeq (\iota \bt) \by \alg^*\on= \alg^*\on$ and  $P_n^*\diag^*\Xi\simeq \alg^*\on$ by Lemma \ref{spnleft}. Thus $f$ is a trivial cofibration in $sQ\Dat_n$.
\end{proof}

\begin{definition}
Recall from \cite{rezk} 4.1 that a Segal space is defined to be a bisimplicial set $W \in s\bS$ which is Reedy fibrant, and for which the natural maps 
$$
W_k \to \overbrace{W_1\by_{\pd_0, W_0\, \pd_1}\ldots \by_{\pd_0, W_0, \pd_1}W_1}^k
$$ 
are  weak equivalences in $\bS$ for all $k$.
\end{definition}

\begin{proposition}\label{MCsegal}
For $\cD \in sQ\Dat$ fibrant, $\cMC(\cD)$ is a Segal space.
\end{proposition}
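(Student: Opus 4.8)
The plan is to verify the two defining properties of a Segal space in turn: that $\cMC(\cD)$ is Reedy fibrant in $s\bS$, and that the Segal maps
$$
\cMC(\cD)_k \to \cMC(\cD)_1\times_{\cMC(\cD)_0}\cdots\times_{\cMC(\cD)_0}\cMC(\cD)_1
$$
are weak equivalences for all $k$.

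For Reedy fibrancy I would apply Lemma \ref{MCfib} to the canonical map $\cD \to \ast$, where $\ast$ is the terminal object of $sQ\Dat$ (one object, with every level of its endomorphism quasi-comonoid equal to the point). Since $\cD$ is fibrant this map is a fibration, so Lemma \ref{MCfib} shows that $\cMC(\cD) \to \cMC(\ast)\times_{\cosk_0(\Ob\ast)^{\hor}}\cosk_0(\Ob\cD)^{\hor}$ is a Reedy fibration. A direct computation from Definitions \ref{MCdef} and \ref{mcdefqm} gives $\cMC(\ast) = \ast$ and $\cosk_0(\Ob\ast)^{\hor}=\ast$, so the target is $\cosk_0(\Ob\cD)^{\hor}$. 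As $\cosk_0$ is $0$-coskeletal its simplicial matching maps are isomorphisms in positive degrees and each row is discrete, whence $\cosk_0(\Ob\cD)^{\hor}$ is Reedy fibrant; hence so is $\cMC(\cD)$. This fibrancy also forces each leg $\cMC(\cD)_1\to\cMC(\cD)_0$ to be a Kan fibration, so the iterated fibre product above is automatically a homotopy fibre product.

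Next I would identify the Segal map with the map induced by the spine inclusion. Write $\mathrm{sp}^n\subset\Delta^n$ for the spine (the union of the edges $\{i,i+1\}$); it is inner anodyne and a bijection on $0$-simplices, and has the pushout presentation $\mathrm{sp}^n = \Delta^1\cup_{\Delta^0}\cdots\cup_{\Delta^0}\Delta^1$. Since the functors $P_{(-)}$, $\diag$ and $\mmc$ all preserve limits (the latter two being right Quillen by Lemma \ref{diagquillen} and Corollary \ref{mcquillen}), this pushout is carried to a fibre product. Matching coproduct components over $\Ob\cD$, the target of the Segal map is thus $\coprod_{f}\mmc(\diag P_{\mathrm{sp}^n}f^{-1}\cD)$, and the Segal map itself is induced componentwise by the restriction $P_{\Delta^n}f^{-1}\cD \to P_{\mathrm{sp}^n}f^{-1}\cD$. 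The Segal condition therefore reduces to showing that, for fibrant $\cD$, the spine inclusion induces a weak equivalence $\mmc(\diag P_{\Delta^n}\cD)\to\mmc(\diag P_{\mathrm{sp}^n}\cD)$.

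This last reduction is the crux. I would prove it by the right-Quillen-bifunctor statement dual to the final paragraph of the proof of Proposition \ref{pnquillen}: for an inclusion $J\into K$ of simplicial sets which is a weak equivalence and a bijection on $0$-simplices, and for fibrant $\cD\in sQ\Dat_{K_0}$, the restriction $P_K(\cD)\to P_J(\cD)$ is a trivial fibration in $QMM^*(\bS)$; applying the right Quillen functors $\diag$ and $\mmc$ then gives the desired weak equivalence. The main obstacle is establishing this statement in the $K$-variable, since the spine inclusion is not assembled from the object-preserving generating trivial cofibrations used elsewhere. Because $P_{(-)}$ turns colimits in $K$ into limits, it suffices to treat the inner horn inclusions $\Lambda^n_k\into\Delta^n$ (of which $\mathrm{sp}^n\into\Delta^n$ is a transfinite composite of pushouts), where a latching-object analysis as in Proposition \ref{pnquillen} applies. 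Equivalently, one may argue on cofibrant models: by Lemma \ref{spnleft} and Proposition \ref{diagbt}, both $P^*_{\mathrm{sp}^n}\diag^*\Xi$ and $P^*_{\Delta^n}\diag^*\Xi$ are cofibrant objects of $sQ\Dat_{[0,n]}$ weakly equivalent to $\alg^*\on$, with the comparison covering the identity (the spine inclusion being an isomorphism on fundamental categories), and the comparison can then be checked to be a weak equivalence hom-object-wise via the cohomological criterion of Corollary \ref{trivchar}, reducing as for $\alg^*\on$ to computations of $\pi_f$ and $\oL\cot$.
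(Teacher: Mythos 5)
Your reduction of the statement to the spine is the same as the paper's: Reedy fibrancy via Lemma \ref{MCfib} applied to $\cD\to\alg^*\oO$, and identification of the target of the Segal map with $\coprod_f\mmc(\diag P_{\mathrm{sp}^k}f^{-1}\cD)$ using the limit-preservation of $P_{(-)}$, $\diag$ and $\mmc$. That part is fine. The problem is the crux, which neither of your two routes actually establishes. Route (a) rests on a false claim: for a trivial cofibration $J\into K$ that is bijective on vertices, the restriction $P_K(\cD)\to P_J(\cD)$ is in general \emph{not} a weak equivalence in $QMM^*(\bS)$, because weak equivalences there are levelwise and $P_K(\cD)^{a,b}$ acquires extra product factors indexed by $K_b\setminus J_b$. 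Already for $K=\Delta^2$, $J=\mathrm{sp}^2$, the nondegenerate $2$-simplex contributes a factor $\cD(0,2)^a$ to $P_{\Delta^2}(\cD)^{a,2}$ that is absent from $P_{\mathrm{sp}^2}(\cD)^{a,2}$ and has no reason to be contractible. The Segal equivalence only appears \emph{after} applying $\mmc\circ\diag$, so no latching-object or inner-horn induction at the level of $P$ can deliver it; the two-variable statement you quote from Proposition \ref{pnquillen} genuinely only gives the fibration half. Route (b) is circular as written: Lemma \ref{spnleft} and Proposition \ref{diagbt} tell you $P^*_{\Delta^n}\diag^*\Xi\simeq\alg^*\on$, but nothing you cite tells you $P^*_{\mathrm{sp}^n}\diag^*\Xi\simeq\alg^*\on$ --- that is precisely the Segal condition in disguise --- and the hom-object computation you would need is not covered by Lemma \ref{pkleft2}, since the spine is not the nerve of a category (composites across consecutive edges are freely adjoined rather than indexed by simplices of $K$).

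The ingredient that closes this gap in the paper is Lemma \ref{workmor}: the map $\Xi\by\alg^*\on\to P_n^*\diag^*\Xi$ is a trivial cofibration in $sQ\Dat_n$. Since $P^*$ preserves colimits, $P^*_{\mathrm{sp}^k}\diag^*\Xi=(P_1^*\diag^*\Xi)\cup_{\Xi}\cdots\cup_{\Xi}(P_1^*\diag^*\Xi)$, and cobase change of Lemma \ref{workmor} along this gluing exhibits a trivial cofibration from $(\Xi\by\alg^*\oI)\cup_{\Xi}\cdots\cup_{\Xi}(\Xi\by\alg^*\oI)=\Xi\by\alg^*\ok\simeq\alg^*\ok$ into $P^*_{\mathrm{sp}^k}\diag^*\Xi$. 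Both source and target of the Segal map are then identified with $\coprod_f\Map^h_{sQ\Dat_k}(\alg^*\ok,f^{-1}\cD)$ via Proposition \ref{MCmap1}, and the map between them is the induced one, hence an equivalence. Without Lemma \ref{workmor} (or an equivalent explicit comparison of $P^*_{\mathrm{sp}^k}\diag^*\Xi$ with $\alg^*\ok$), your argument does not go through.
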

\begin{proof}
By applying Lemma \ref{MCfib}  to the morphism $\cD \to \alg^*\oO$, we know that $\cMC(\cD)$ is Reedy fibrant, since $\cosk_0(\Ob\cD)^{\hor}$ is Reedy fibrant, and $\cMC(\alg^*\oO)=\bt$. 

Letting $W:=\cMC(\cD)$, we have 
\begin{eqnarray*}
(W_n)_i &=& \coprod_{f: [0,n] \to \Ob \cD} \HHom_{sQ\Dat_n}( P_n^*\diag^*\Xi  , f^{-1}\cD)_i \\
&=& \coprod_{f: [0,n] \to \Ob \cD}  \Hom_{sQ\Dat_n}( (P_n^*\diag^*\Xi)  , (f^{-1}\cD)^{\Delta^i})\\
&=& \Hom_{sQ\Dat}( (P_n^*\diag^*\Xi)  , \cD^{\Delta^i}),
\end{eqnarray*}
where  $\cD^K$ is defined by $\Ob \cD^K= \Ob \cD$ and $\cHHom_{\cD^K}(a,b):= \cHHom_{\cD}(a,b)^K$ (note that although this makes $sQ\Dat$ into a simplicial category, it does not satisfy axiom (SM7) of a simplicial model category).

Now,
\begin{eqnarray*}
\overbrace{
(W_1\by_{\pd_0, W_0, \pd_1}\ldots \by_{\pd_0, W_0, \pd_1}W_1}^k)_i &\cong& \Hom_{sQ\Dat}((P_1^*\diag^*\Xi) {\cup}_{P_0^*\Xi}\ldots {\cup}_{P_0^*\Xi} (P_1^*\diag^*\Xi)    , \cD^{\Delta^i})  \\
&\cong& \coprod_{f: [0,n] \to \Ob \cD}\HHom_{sQ\Dat_n}( (P_1^*\diag^*\Xi) {\cup}_{\Xi}\ldots {\cup}_{\Xi} (P_1^*\diag^*\Xi) , f^{-1}\cD)_i,
\end{eqnarray*}
since $[0,1]\cup_{\{1\}}[1,2]\cup_{\{2\}} \ldots \cup_{\{n-1\}}[n-1,n]= [0,n]$.

Now, Lemma \ref{workmor} implies that
$$
(\Xi \by \alg^*\oI){\cup}_{\Xi}\ldots {\cup}_{\Xi}(\Xi \by \alg^*\oI) \to  P_1^*\diag^*\Xi {\cup}_{\Xi}\ldots {\cup}_{\Xi} P_1^*\diag^*\Xi 
$$
is a trivial cofibration (being a pushout of trivial cofibrations), and the left-hand side is just $\Xi \by \alg^*(\oI{\cup}_{\oO} \ldots {\cup}_{\oO}\oI)= \Xi \by \alg^*\ok$. This is weakly equivalent to $\alg^*\ok$, so 
$$
\overbrace{W_1\by_{\pd_0, W_0, \pd_1}\by \ldots \by_{\pd_0, W_0, \pd_1}W_1}^k \simeq \coprod_{f: [0,n] \to \Ob \cD}\Map_{sQ\Dat_n}^h(\alg^*\ok, f^{-1}\cD),
$$
 which is equivalent to $W_k$ by Proposition \ref{MCmap1}.
\end{proof}

\subsubsection{Morphism spaces}\label{morphism}

\begin{lemma}\label{cosimplicialmor}
Given $\cD \in Q\Dat$, and $x, y \in  \Ob \alg \cD$, the object $\cD(\bar{x},\bar{y}) \in \Set^{\Delta_{**}}$ has the natural structure of a cosimplicial set, where $\bar{x}, \bar{y} \in \Ob \cD$ are the objects underlying $x,y$. 
\end{lemma}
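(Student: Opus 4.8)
The plan is to use the Maurer--Cartan elements underlying $x$ and $y$ to supply precisely the two coface operations $\pd^0,\pd^{n+1}$ that a $\Delta_{**}$-diagram lacks. Write $x=(\bar x,\omega_x)$ and $y=(\bar y,\omega_y)$, where by Definition \ref{alg} we have $\omega_x\in\mc(\cD(\bar x,\bar x))\subset\cD(\bar x,\bar x)^1$ and $\omega_y\in\mc(\cD(\bar y,\bar y))\subset\cD(\bar y,\bar y)^1$. By Remark \ref{qmstructure}, the $\Delta_{**}$-diagram $\cD(\bar x,\bar y)$ already carries all cosimplicial operations $\pd^i$ ($1\le i\le n$) and $\sigma^i$ ($0\le i<n$); I would extend it to a genuine cosimplicial set by defining the two extremal cofaces through multiplication by the Maurer--Cartan elements:
\[
\pd^0 g:=\omega_x*g,\qquad \pd^{n+1}g:=g*\omega_y\qquad(g\in\cD(\bar x,\bar y)^n),
\]
which are well typed since $\cD(\bar x,\bar x)^1\ten\cD(\bar x,\bar y)^n\to\cD(\bar x,\bar y)^{n+1}$ and $\cD(\bar x,\bar y)^n\ten\cD(\bar y,\bar y)^1\to\cD(\bar x,\bar y)^{n+1}$ are part of the quasi-descent structure. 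This is the exact analogue of the construction in the proof of Lemma \ref{abqm}, where the cofaces $\pd^0 a=0_1*a$ and $\pd^{m+1}a=a*0_1$ are built from the unit $0_1$; there the unit is itself a Maurer--Cartan element, and here its role is taken over by the genuine Maurer--Cartan elements $\omega_x,\omega_y$.

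It then remains to verify the cosimplicial identities for the extended system. Those involving only internal operations hold by Lemma \ref{qmlemma}, so I would only check the ones involving $\pd^0$ or $\pd^{n+1}$, and these all reduce to three inputs: associativity of $*$, the compatibility relations (4)--(7) of Lemma \ref{qmlemma} between $*$ and $\pd^i,\sigma^i$, and the Maurer--Cartan equations $\pd^1\omega_x=\omega_x*\omega_x$, $\sigma^0\omega_x=1$ (and likewise for $\omega_y$). For instance $\pd^1\pd^0 g=\pd^1(\omega_x*g)=(\pd^1\omega_x)*g=(\omega_x*\omega_x)*g=\omega_x*(\omega_x*g)=\pd^0\pd^0 g$, using (4) and the first Maurer--Cartan equation; the two extremal cofaces commute past one another by associativity, $\pd^{n+2}\pd^0 g=(\omega_x*g)*\omega_y=\omega_x*(g*\omega_y)=\pd^0\pd^{n+1}g$; and the boundary degeneracies collapse via (6), (7) and $\sigma^0\omega_x=\sigma^0\omega_y=1$, giving $\sigma^0\pd^0 g=\sigma^0(\omega_x*g)=(\sigma^0\omega_x)*g=g$ and $\sigma^n\pd^{n+1}g=g*(\sigma^0\omega_y)=g$, i.e. $\sigma^0\pd^0=\sigma^n\pd^{n+1}=\id$.

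The remaining mixed identities follow the same pattern: relation (5) gives $\pd^j\pd^0=\pd^0\pd^{j-1}$ for $2\le j\le n+1$ and $\pd^{n+2}\pd^i=\pd^i\pd^{n+1}$ for internal $i$, while (6) and (7) convert $\sigma^j\pd^0$ and $\sigma^j\pd^{n+1}$ into $\pd^0\sigma^{j-1}$ (for $1\le j\le n$) and $\pd^n\sigma^j$ (for $0\le j<n$) respectively. I expect the only point requiring genuine care — really just bookkeeping rather than a true obstacle — to be tracking the index shifts in (5) and (7), where multiplying shifts a coface or codegeneracy index of the right-hand factor by the degree $m$ of the left-hand factor; with $m=1$ (when acting by $\omega_x$ on the left) and $m=n$ (when acting by $\omega_y$ on the right), these shifts are exactly those demanded by the cosimplicial identities. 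Naturality of the assignment $(x,y)\mapsto\cD(\bar x,\bar y)$ in $x$ and $y$ is then immediate from the naturality of $*$.
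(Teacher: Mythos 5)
Your proof is correct and is essentially the paper's own argument: the paper likewise sets $\pd^0:=\omega_x*(-)$ and $\pd^{n+1}:=(-)*\omega_y$ and appeals to the Maurer--Cartan equations for the cosimplicial identities. You simply carry out the verification in more detail than the paper, which leaves it as a one-line remark.
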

\begin{proof}
The objects $x,y$ correspond to elements $\omega_x \in \mc(\cD(x,x)), \,\omega_y \in \mc(\cD(y,y))$. In order to enhance the structure of $\cD(\bar{x},\bar{y})$, we define operations $\pd^0:= \omega_x*: \cD(x,y)^n \to \cD(x,y)^{n+1}$, and $\pd^{n+1}:= *\omega_y: \cD(x,y)^n \to \cD(x,y)^{n+1}$. The Maurer-Cartan equations ensure that these operations satisfy the necessary conditions for a cosimplicial set.
\end{proof}

\begin{definition}
Recall from  \cite{sht} \S VIII.1 that  the functor  $\Tot: c\bS \to \bS$ from cosimplicial simplicial sets to simplicial sets is given by
$$
\Tot X^{\bt} =\{ x \in \prod_n (X^n)^{\Delta^n}\,:\, \pd^i_Xx_n = (\pd^i_{\Delta})^*x_{n+1},\,\sigma^i_Xx_n = (\sigma^i_{\Delta})^*x_{n-1}\}. 
$$ 
When  $X$ is Reedy fibrant, homotopy groups of the total space are related to homotopy groups of the spaces $X^n$ via a spectral sequence given in \cite{sht} Proposition VIII.1.15.
\end{definition}

\begin{proposition}\label{loopmorsub}
Given $\cD \in sQ\Dat$ fibrant, and $x, y \in  \Ob \alg (\cD_0)$, there is a natural weak equivalence
$$
\cMC(\cD)_1\by_{\mc(\cD)\by \mc(\cD)}\{(x,y)\} \simeq \Tot \cD(\bar{x},\bar{y}).
$$
\end{proposition}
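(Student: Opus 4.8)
The plan is to reduce both sides to a single space of ``relative Maurer--Cartan elements'' and then to recognise that space as the totalisation of the cosimplicial object of Lemma \ref{cosimplicialmor}. Throughout, fix $f\colon[0,1]\to\Ob\cD$ with $f(0)=\bar x$, $f(1)=\bar y$, so that the fibre in question is the component of $\cMC(\cD)_1=\coprod_{g}\mmc(\diag P_1 g^{-1}\cD)$ indexed by $f$, cut out by requiring the two endpoint data to be the prescribed $\omega_x,\omega_y$.

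First I would trade $\diag P_1$ for the combinatorially simpler $\Xi\by\alg^*\oI$. As in the proof of Proposition \ref{MCmap1} we have $\mmc(\diag P_1 f^{-1}\cD)=\HHom_{sQ\Dat_1}(P_1^*\diag^*\Xi,\,f^{-1}\cD)$, and $f^{-1}\cD$ is fibrant in $sQ\Dat_1$ because $\cO\da sQ\Dat\to sQ\Dat_{\cO}$ preserves fibrations (as in the proof of Corollary \ref{MCmap2}). Since $sQ\Dat_1$ is a simplicial model category (Lemma \ref{sqdato}) and $\Xi\by\alg^*\oI\to P_1^*\diag^*\Xi$ is a trivial cofibration (Lemma \ref{workmor}), applying $\HHom_{sQ\Dat_1}(-,f^{-1}\cD)$ yields a trivial fibration
$$
\HHom_{sQ\Dat_1}(P_1^*\diag^*\Xi,\,f^{-1}\cD)\ \lra\ \HHom_{sQ\Dat_1}(\Xi\by\alg^*\oI,\,f^{-1}\cD),
$$
compatible with restriction to the two vertices. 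It therefore suffices to compute the fibre of the right-hand space over $(x,y)$.

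Next I would unwind that fibre. Using the coproduct presentation $\Xi\by\alg^*\oI=(\Xi\by\alg^*(\Ob\oI))\sqcup_{(\id/\emptyset)[0,1]}(\id/\emptyset)(\oI)$, a morphism $\Xi\by\alg^*\oI\to f^{-1}\cD$ fixing objects is precisely a pair $\omega_x\in\mmc(\cD(\bar x,\bar x))$, $\omega_y\in\mmc(\cD(\bar y,\bar y))$ (the images of the two diagonal copies of $\Xi$) together with a map $\phi\colon\Xi\to\cD(\bar x,\bar y)$ in $\bS^{\Delta_{**}}$ that is a bimodule map for the left and right $\Xi$-actions determined by $\omega_x,\omega_y$. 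Fixing the endpoints to be $x,y$ identifies the fibre with the space $R$ of such $\phi$. Writing $\phi=(\phi_n)$ with $\phi_n\colon I^n\to\cD(\bar x,\bar y)^{n+1}$, the module relations say exactly that $\phi_{n+1}$ on the face where the first (resp. last) coordinate is $0$ equals $\omega_x*\phi_n$ (resp. $\phi_n*\omega_y$), while inserting a $1$ and the $\min$-degeneracies reproduce the operations $\pd^i,\sigma^i$ internal to $\cD(\bar x,\bar y)$. By Lemma \ref{cosimplicialmor} the operations $\omega_x*(-)$ and $(-)*\omega_y$ are the extra cofaces $\pd^0,\pd^{n+1}$ making $\cD(\bar x,\bar y)$ a cosimplicial simplicial set, so $R$ is naturally the cosimplicial mapping space $\HHom_{c\bS}(\tilde\Xi,\cD(\bar x,\bar y))$, where $\tilde\Xi$ is the cosimplicial simplicial set with $\tilde\Xi^{\,n}=I^{n-1}$, its cofaces given by the two $0$-faces and the inserted-$1$ faces and its codegeneracies by the $\min$-maps.

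Finally I would compare $\tilde\Xi$ with the standard cosimplicial simplicial set $\Delta^{\bt}$ (i.e. $n\mapsto\Delta^n$), whose mapping space is $\Tot$. Here $\cD(\bar x,\bar y)$ is Reedy fibrant in $c\bS$: it is Reedy fibrant in $\bS^{\Delta_{**}}$ by condition (F1) of Proposition \ref{sqdatmodel}, and by Proposition \ref{reedymodel} the codegeneracy-matching objects for $\Delta$ and $\Delta_{**}$ coincide, so Reedy fibrancy persists in $c\bS$. Both $\tilde\Xi$ and $\Delta^{\bt}$ are Reedy cofibrant and levelwise contractible; it thus suffices to produce a Reedy weak equivalence $\tilde\Xi\to\Delta^{\bt}$, after which $\HHom_{c\bS}(-,\cD(\bar x,\bar y))$ carries it to the required weak equivalence $R\simeq\Tot\cD(\bar x,\bar y)$. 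Concretely I would run this through the two associated towers, writing $R=\Lim_n R^n$ exactly as in Lemma \ref{mcqsub} and $\Tot=\Lim_n\Tot_n$, and comparing fibres: the fibre of $R^n\to R^{n-1}$ is a mapping space out of $I^n/\pd I^n$ into the relative matching (normalised) object, and that of $\Tot_n\to\Tot_{n-1}$ is a mapping space out of $\Delta^n/\pd\Delta^n$ into the cosimplicial normalisation, with $I^n/\pd I^n\cong\Delta^n/\pd\Delta^n\cong S^n$. The main obstacle is precisely this last comparison: reconciling the cube-indexed resolution $\tilde\Xi$, with its asymmetric $\gimel$-type boundary behaviour and the degree shift $\tilde\Xi^{\,n}=I^{n-1}$ against $\Delta^n$, with the simplex-indexed $\Delta^{\bt}$. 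This is the non-abelian counterpart of Corollary \ref{cot2}, where the cochain-level resolution $n\mapsto\Z(I^{n-1}/\gimel^{n-1})$ of $\Z^{[-1]}$ already records exactly this matching; I would lift that identification to the level of spaces via the Reedy-fibrancy argument above. Naturality in $x,y$ (and in $\cD$) is immediate, since every construction used is functorial.
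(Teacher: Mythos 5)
Your opening reduction is sound and matches the start of the paper's own argument: Lemma \ref{workmor} together with (SM7) does give a trivial fibration onto $\HHom_{sQ\Dat_1}(\Xi\by\alg^*\oI,f^{-1}\cD)$ compatible with restriction to the two endpoints, so it is legitimate to compute the fibre of the latter, and invoking Lemma \ref{cosimplicialmor} to turn the bimodule structure into extra cofaces is also the right idea. The gap is in the last step. First, the object $\tilde\Xi$ with $\tilde\Xi^{\,n}=I^{n-1}$, the two $0$-faces as $\pd^0,\pd^{n+1}$ and the inserted-$1$ faces as the inner cofaces is not a cosimplicial simplicial set: $\pd^1\pd^0(t)=(1,0,t)$ while $\pd^0\pd^0(t)=(0,0,t)$, so the identity $\pd^1\pd^0=\pd^0\pd^0$ fails in $\tilde\Xi$ (it holds in the target only because $\omega_x$ satisfies the Maurer--Cartan equation), and the true corepresenting object is a quotient of the cubes. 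Second, and fatally, that object is not Reedy cofibrant: already $\tilde\Xi^{\,1}=I^{0}=\bt$, so the two cofaces $\pd^0,\pd^1\colon\tilde\Xi^{\,0}\to\tilde\Xi^{\,1}$ coincide and the latching map $\bt\sqcup\bt\to\bt$ is not injective. Concretely, a vertex of your space $R$ is forced to satisfy $\omega_x*e_0=e_0*\omega_y$ on the nose, i.e.\ to be a strict morphism, whereas a vertex of $\Tot\cD(\bar x,\bar y)$ is only a homotopy morphism. So $\tilde\Xi$ is not a cosimplicial resolution of a point, the ``both Reedy cofibrant and levelwise contractible'' comparison with $\Delta^{\bt}$ is unavailable, and the equivalence $R\simeq\Tot\cD(\bar x,\bar y)$ --- which is in fact true --- cannot be obtained by this route.

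The paper avoids the problem by never taking the strict fibre of a mapping space out of a non-cofibrant object. It rewrites the fibre as $\HHom_{\alg^*(\oO\sqcup\oO)\da sQ\Dat_1}\bigl((P_1^*\diag^*\Xi)\cup_{\Xi\sqcup\Xi}\alg^*(\oO\sqcup\oO),\,f^{-1}\cD\bigr)$, observes via Lemma \ref{workmor} that the source is a cofibrant model for $\alg^*\oI$ in the under-category $\alg^*(\oO\sqcup\oO)\da sQ\Dat_1$, and then substitutes the explicit cofibrant replacement $\C$ with $\C(0,0)=\C(1,1)=\iota\bt$ and $\C(0,1)^{\bt}=\Delta^{\bt}$, the module structure realising $\pd^0$ and $\pd^{n+1}$ as $\omega_x*(-)$ and $(-)*\omega_y$. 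Because $\Delta^1$ has two distinct vertices, the degree-one cofaces of $\C(0,1)$ are genuinely free, and $\HHom(\C,f^{-1}\cD)$ is literally $\Tot\cD(\bar x,\bar y)$. To repair your argument you would have to replace $\tilde\Xi$ by a Reedy cofibrant cosimplicial object mapping to it, which is exactly what the choice of $\Delta^{\bt}$ accomplishes; the resolution must be performed before, not after, collapsing the endpoint data.
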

\begin{proof}
Define $f: [0,1] \to \Ob \cD$ by $f(0)=\bar{x}, f(1)=\bar{y}$, and set $\cE:= f^{-1}\cD \in sQ\Dat_1$ (for $sQ\Dat_n$ as in Lemma \ref{pnleft2}). Then  
\begin{eqnarray*}
\cMC(\cD)_1\by_{\mc(\cD)\by \mc(\cD)}\{(x,y)\} &=& \HHom_{(\Xi\sqcup \Xi)\da  sQ\Dat_1}(P_1^*\diag^*\Xi, \cE)\\
&=& \HHom_{\alg^*(\oO \sqcup \oO)\da sQ\Dat_1}((P_1^*\diag^*\Xi){\cup}_{(\Xi\sqcup \Xi)}\alg^*(\oO \sqcup \oO), \cE).
\end{eqnarray*}

By Lemma \ref{workmor}, 
$$
(P_1^*\diag^*\Xi){\cup}_{(\Xi\sqcup \Xi)}\alg^*(\oO \sqcup \oO) \simeq (\Xi \by \alg^*\oI) {\cup}_{(\Xi\sqcup \Xi)}\alg^*(\oO \sqcup \oO)= \alg^*\oI, 
$$
so our expression reduces to $\oR\HHom_{\alg^*(\oO \sqcup \oO)\da sQ\Dat_1}(\alg^*\oI, \cE)$.

Now, in the simplicial category $\alg^*(\oO \sqcup \oO)\da sQ\Dat_1$, a cofibrant replacement for $\alg^*\oI$ is given by the object $\C$ with $\C(0,0)= \C(1,1)= \iota\bt$, $\C(1,0)=\emptyset$ and $\C(0,1)^{\bt}= \Delta^{\bt}$. The multiplicative structure is determined determined by setting  $ \omega_0*a=\pd^0a$ and $a*\omega_1=  \pd^{n+1}a$, for $a \in \Delta^n$ and $\omega_0, \omega_1$ the unique elements of $\C(0,0)^1$ and $\C(1,1)^1$.
 
Now,
$$
\HHom_{\alg^*(\oO \sqcup \oO)\da sQ\Dat_1}(\C, \cE)= \{ e \in \prod_n (\cE(0,1)^n)^{\Delta^n}\,:\, \pd^i_{\cE} e_n = (\pd^i_{\Delta})^*e_{n+1},\,\sigma^i_{\cE}e_n = (\sigma^i_{\Delta})^*e_{n-1}\}
$$ 
(which preserves (trivial) fibrations, proving that $\C$ is cofibrant).

This expression is just $\Tot \cE(0,1)$, where $\cE(0,1)$ is given the cosimplicial structure of Lemma \ref{cosimplicialmor}, but $\cE(0,1)=  \cD(\bar{x},\bar{y})$, giving the  required description.
\end{proof}

\subsection{$\cN$}

\begin{definition}
Given $\cD \in sQ\Dat$, define $\cN(\cD) \in s\bS$ by $\cN(\cD)_n:= \Map^h_{sQ\Dat}(\alg^*\mathbf{n}, \cD)$.
\end{definition}

\begin{proposition}\label{NSS}
Any Reedy fibrant replacement for $\cN(\cD)$ is a Segal space.
\end{proposition}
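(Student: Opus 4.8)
The plan is to reduce the Segal condition to a homotopy-pushout decomposition of $\alg^*\ok$ in $sQ\Dat$, and then to establish that decomposition by the same trivial-cofibration argument (Lemma \ref{workmor}) already used for Proposition \ref{MCsegal}.

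Since $\cN(\cD)_n = \Map^h_{sQ\Dat}(\alg^*\on, \cD)$ depends only on the weak equivalence class of $\cD$, I may assume $\cD$ is fibrant. Let $W$ be a Reedy fibrant replacement of $\cN(\cD)$; it is Reedy fibrant by construction, and the replacement map is a levelwise weak equivalence, so the strict Segal maps of $W$ compute the homotopy fibre products. Thus $W$ is a Segal space if and only if the derived Segal maps
$$
\cN(\cD)_k \to \cN(\cD)_1 \times^h_{\cN(\cD)_0} \cdots \times^h_{\cN(\cD)_0} \cN(\cD)_1
$$
are weak equivalences for all $k$.

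The derived mapping space $\Map^h_{sQ\Dat}(-,\cD)$ sends homotopy colimits in its first argument to homotopy limits, so these maps are weak equivalences provided the natural map
$$
\alg^*\oI \cup^h_{\alg^*\oO} \cdots \cup^h_{\alg^*\oO} \alg^*\oI \to \alg^*\ok
$$
is a weak equivalence in $sQ\Dat$, the iterated homotopy pushout gluing the target of each $\alg^*\oI$ to the source of the next (mirroring $[0,1]\cup_{\{1\}}\cdots\cup_{\{k-1\}}[k-1,k]=[0,k]$). To produce this, recall from Lemma \ref{spnleft} that $P_1^*\diag^*\Xi \simeq \alg^*\oI$ is cofibrant, and that the one-object datum $\Xi$ satisfies $\Xi \simeq \iota\bt \simeq \alg^*\oO$. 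The two structure maps $\Xi \to P_1^*\diag^*\Xi$ are cofibrations between cofibrant objects, so the strict pushout
$$
S_k := P_1^*\diag^*\Xi \cup_{\Xi} \cdots \cup_{\Xi} P_1^*\diag^*\Xi
$$
is a model for the iterated homotopy pushout above. Exactly the computation in the proof of Proposition \ref{MCsegal} (applying Lemma \ref{workmor} and taking pushouts) shows that the inclusion $\Xi \by \alg^*\ok \to S_k$ is a trivial cofibration; since $\Xi \simeq \iota\bt$ gives $\Xi \by \alg^*\ok \simeq \alg^*\ok$, the two descriptions of $S_k$ together yield the desired weak equivalence, and hence the Segal condition.

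I expect the only delicate point to be the object-bookkeeping: unlike the proof of Proposition \ref{MCsegal}, which worked in the fixed-object category $sQ\Dat_n$ after a coproduct over $f$, here the pushout $S_k$ is formed in $sQ\Dat$ with varying object sets, so one must check that the two maps $\Xi \to P_1^*\diag^*\Xi$ are genuine cofibrations and that all the objects appearing are cofibrant, guaranteeing that the strict pushout is homotopy-invariant. Granting this, the remainder is a formal consequence of Lemmas \ref{spnleft} and \ref{workmor} together with the homotopy-colimit/limit duality for $\Map^h$.
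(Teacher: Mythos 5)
Your proposal is correct and follows essentially the same route as the paper: reduce the Segal maps to the claim that the iterated homotopy pushout $\alg^*\oI\cup^{\oL}_{\alg^*\oO}\cdots\cup^{\oL}_{\alg^*\oO}\alg^*\oI$ is weakly equivalent to $\alg^*\ok$, model that pushout by replacing $\alg^*\oO$ with $\Xi$ and $\alg^*\oI$ with $P_1^*\diag^*\Xi$, and conclude via Lemma \ref{workmor} exactly as in the proof of Proposition \ref{MCsegal}. The "delicate point" you flag about cofibrancy of the gluing maps is precisely what the paper's appeal to that earlier calculation is implicitly relying on, so nothing further is needed.
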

\begin{proof}
If $W:= \cN(\cD)$, we need to show that the maps
$$
W_k \to \overbrace{W_1\by_{\pd_0, W_0\, \pd_1}^h \ldots \by_{\pd_0, W_0, \pd_1}^hW_1}^k
$$
are  weak equivalences  for all $k$.

Now, the right hand side is given by
$$
 \Map^h_{sQ\Dat}(\alg^*\oI{\cup}^{\oL}_{\alg^*\oO} \ldots {\cup}^{\oL}_{\alg^*\oO}\alg^*\oI , \cD),
$$
so we need only show that $\alg^*\oI{\cup}^{\oL}_{\alg^*\oO} \ldots {\cup}^{\oL}_{\alg^*\oO}\alg^*\oI \simeq \alg^*\ok$.

A cofibrant replacement of the diagram for this homotopy colimit is given by taking $\Xi$ instead of $\alg^*\oO$, and $P_1^*\diag^*\Xi$ instead of $\alg^*\oI$. Thus the calculation of Proposition \ref{MCsegal} can be interpreted as saying that 
$$
\overbrace{\alg^*\oI{\cup}^{\oL}_{\alg^*\oO} \ldots {\cup}^{\oL}_{\alg^*\oO}\alg^*\oI}^k \simeq \alg^*\ok,
$$
which gives the required result.
\end{proof}

\begin{definition}
Given a Segal space $W$, define $\Ob W:= (W_0)_0$. For $x, y \in \Ob W$, define $\map_W(x,y):=  \{x\}\by_{W_0, \pd_1} W_1\by_{\pd_0}\{y\} \in \bS$. There is a natural category $\Ho(W)$ with objects $\Ob W$ and morphisms $\pi_0\map_W(x,y)$.
\end{definition}

\begin{definition}
Recall from \cite{rezk} 7.4 that a morphism $f: U \to V$ of Segal spaces is said to be a  Dwyer-Kan equivalence if  
\begin{enumerate}
\item
$\Ho(f): \Ho(U) \to \Ho(V)$ is an equivalence of categories, and 
\item 
 for all $x,y \in \Ob U$, the map $\map_U(x,y) \to \map_V(fx,fy)$ is a weak equivalence in $\bS$. 
\end{enumerate}
\end{definition}

\begin{proposition}\label{MCNequiv}
Given a fibrant object $\cD \in sQ\Dat$, the natural transformation $\cMC(\cD) \to  \cN(\cD)^f$ is a Dwyer-Kan equivalence of Segal spaces, where $(-)^f$ denotes Reedy fibrant replacement.
\end{proposition}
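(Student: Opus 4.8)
The plan is to verify the two conditions in the definition of a Dwyer--Kan equivalence directly, using the homotopy-pullback description of $\cMC$ from Corollary \ref{MCmap2}. The natural transformation is, levelwise, the projection
$$
\cMC(\cD)_n \simeq \cN(\cD)_n \by^h_{\Map^h_{s\Cat}(\oO,\cD^0)^{[0,n]}} (\Ob\cD)^{[0,n]} \lra \cN(\cD)_n
$$
onto the first factor, arising from the Quillen pair used in Corollary \ref{MCmap2}; the source and (the Reedy-fibrant replacement of) the target are Segal spaces by Proposition \ref{MCsegal} and Proposition \ref{NSS}. It therefore remains to check that this map induces an equivalence on mapping spaces and an equivalence of homotopy categories.

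For the mapping spaces, fix objects $x,y$ of $\cMC(\cD)$ lying over objects $\bar x,\bar y \in \Ob\cD$. Since both $\cMC(\cD)$ and $\cN(\cD)^f$ are Segal spaces, $\map(x,y)$ is computed as the homotopy fibre of $(\pd_1,\pd_0)\co W_1 \to W_0\by W_0$ over $(x,y)$. Taking this homotopy fibre in the pullback display above at levels $n=0,1$ fixes the coordinates in $(\Ob\cD)^{[0,1]}$ and $(\Ob\cD)^{[0,0]}\by(\Ob\cD)^{[0,0]}$ compatibly (the two vertices of $\oI$ matching $\bar x$ and $\bar y$), so the object-space factor contributes nothing to the fibre and the comparison map $\map_{\cMC(\cD)}(x,y) \to \map_{\cN(\cD)^f}(x,y)$ is a weak equivalence. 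Concretely, Proposition \ref{loopmorsub} identifies both sides with $\Tot\cD(\bar x,\bar y)$. This verifies condition (2), and in particular shows that $\Ho(\cMC(\cD)) \to \Ho(\cN(\cD)^f)$ is fully faithful on the image of $\Ob\cMC(\cD)$.

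It remains to show that the induced functor is essentially surjective. The underlying homotopy object of any object of $\cN(\cD)^f$ defines a point of $\Map^h_{s\Cat}(\oO,\cD^0)$, and the canonical map from the discrete set $\Ob\cD$ to $\Map^h_{s\Cat}(\oO,\cD^0)$ is surjective on $\pi_0$, since every homotopy-equivalence class of objects of the simplicial category $\cD^0$ is represented by a strict object. Feeding this surjectivity into the homotopy-pullback description of $\cMC(\cD)_0$ shows that every object of $\cN(\cD)^f$ is equivalent in $\Ho(\cN(\cD)^f)$ to the image of an object of $\cMC(\cD)$. Combined with the full faithfulness above, this makes $\Ho(\cMC(\cD)) \to \Ho(\cN(\cD)^f)$ an equivalence of categories, establishing condition (1) and hence the result.

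The main obstacle I anticipate is the mapping-space comparison: one must argue cleanly that passing to homotopy fibres over \emph{strict} objects on the $\cMC$ side agrees with passing to fibres over the corresponding \emph{homotopy} objects on the $\cN$ side, i.e.\ that fixing objects trivialises the object-space factor of the homotopy pullback of Corollary \ref{MCmap2}. This is exactly where the explicit $\Tot\cD(\bar x,\bar y)$ computation of Proposition \ref{loopmorsub} does the work, and checking that the natural transformation respects this identification is the crux of the argument.
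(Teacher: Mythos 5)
Your proposal follows essentially the same route as the paper: both use the homotopy-pullback description of Corollary \ref{MCmap2}, observe that the coskeleton/object-space factors have trivial mapping spaces so the comparison on $\map(x,y)$ is an equivalence, and deduce essential surjectivity from $\pi_0$-surjectivity of $\Ob\cD \to \Map^h(\oO,\cD^0)$. The aside invoking Proposition \ref{loopmorsub} slightly overstates what that result gives (it computes only the $\cMC$ side), but your main argument does not depend on it.
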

\begin{proof}
Corollary \ref{MCmap2}  amounts to saying that $\cMC(\cD)$ is weakly equivalent to the homotopy fibre product of 
$$
 \cN(\cD)\to {\cosk_0\Map^h_{sQ\Dat}(\oO,\cD^0)^{\hor}}\la \cosk_0(\Ob \cD)^{\hor}.
$$
Thus, for $x, y \in \Ob \cMC(\cD)$, the space $\map_{\cMC(\cD)}(x,y)$ is is weakly equivalent to the homotopy fibre product of
$$
 \map_{\cN(\cD)^f}(x,y) \to{ \map_{\cosk_0\Map^h_{sQ\Dat}(\oO,\cD^0)^{\hor}}(x,y)}\la \map_{\cosk_0(\Ob\cD)^{\hor}}(x,y).
$$
Now, for any $S \in \bS$ and $x,y \in S_0$, $\map_{\cosk_0(S)^{\hor}}(x,y)= \{(x,y)\}$, so
$$
\map_{\cMC(\cD)}(x,y) \simeq \map_{\cN(\cD)^f}(x,y).
$$

It therefore remains only to show that the morphism $\Ho(\cMC(\cD)) \to \Ho(\cN(\cD)^f)$ of categories is essentially surjective. Since $\oO$ is cofibrant, the map $\Ob \cD \to \Map^h_{sQ\Dat}(\oO,\cD^0)$ is surjective on $\pi_0$. Any fibrant replacement $(\Ob \cD)^f \to \Map^h_{sQ\Dat}(\oO,\cD^0) $ will also be surjective on $\pi_0$, and hence surjective on  level $0$ (by path-lifting). Therefore the map $\Ob \cMC(\cD) \to  \Ob \cN(\cD)^f$ is surjective on objects, so \emph{a fortiori} essentially surjective.
\end{proof}

\begin{definition}
Given a Segal space $W$, let $W_{\mathrm{hoequiv}} \subset W_1$ consist of components whose images in $\pi_0\map_W(x,y)$ are equivalences in $\Ho(W)$. $W$ is said to be a complete Segal space if the map $\sigma_0: W_0 \to W_{\mathrm{hoequiv}}$ is a weak equivalence.
\end{definition}

\begin{lemma}\label{endmono}
Given a levelwise trivial cofibration $E \to F$ in  $\bS^{\Delta_{**}}$, and $X \in \Set^{\Delta_{**}^{\op}}$, the map
$$
X\uleft{\by} E \to X\uleft{\by} F
$$
is a trivial cofibration in $\bS$, for $\uleft{\by}$ as in Definition \ref{uleft}.
\end{lemma}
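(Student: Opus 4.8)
The plan is to present $X\uleft{\by}(-)$ as a cocontinuous functor $\bS^{\Delta_{**}} \to \bS$ and then run the skeletal filtration of $X$, reducing the statement to representables. Writing the construction of Definition \ref{uleft} as the coend $X\uleft{\by} E = \int^{\mathbf{n}\in\Delta_{**}} X_n\by E^n$, the two generating relations are precisely the coend identifications, so $X\uleft{\by}(-)$ preserves all colimits (equivalently, it has the right adjoint $K\mapsto (r\mapsto K^{X_r})$). Recalling the representables $\nabla^n\in\Set^{\Delta_{**}^{\op}}$ from the proof of Lemma \ref{pnleft}, given by $(\nabla^n)_r=\Hom_{\Delta_{**}}(\mathbf{r},\mathbf{n})$, the co-Yoneda (density) formula gives a natural isomorphism $\nabla^n\uleft{\by}E\cong E^n$. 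Thus on representables the map in question is just $E^n\to F^n$, a trivial cofibration by hypothesis.

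First I would filter $X$ by its skeleta $\sk_NX$ for the Reedy structure on $\Delta_{**}^{\op}$ (legitimate since $\Delta_{**}$ is a Reedy category, as used in Proposition \ref{reedymodel}). Because $\Delta_{**,-}=\Delta_-$, the Eilenberg--Zilber lemma applies verbatim, so $\sk_NX$ is obtained from $\sk_{N-1}X$ by a pushout of $\coprod\partial\nabla^N\to\coprod\nabla^N$ indexed by the nondegenerate elements of $X_N$, where $\partial\nabla^N:=\sk_{N-1}\nabla^N$. Applying the cocontinuous functors $(-)\uleft{\by}E$ and $(-)\uleft{\by}F$, and using $\nabla^N\uleft{\by}E\cong E^N$ together with $\partial\nabla^N\uleft{\by}E\cong L^NE$ (the $\Delta_{**}$-latching object of Proposition \ref{reedymodel}), the induction step is governed by the pushout--product of the boundary inclusion $\partial\nabla^N\into\nabla^N$ with $E\to F$, i.e. by the relative latching map
$$
(L^NF)\cup_{L^NE}E^N \to F^N.
$$

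The crucial input is that this relative latching map is a trivial cofibration in $\bS$. Indeed, in the model structure of Proposition \ref{reedymodel} the cofibrations are exactly the levelwise injections, so a levelwise trivial cofibration is precisely a Reedy trivial cofibration; by the standard characterisation of Reedy (trivial) cofibrations (cf. \cite{sht} \S VII or \cite{Hirschhorn}) its relative latching maps are then trivial cofibrations of simplicial sets. Feeding this into the gluing lemma, together with the inductive hypothesis that $\sk_{N-1}X\uleft{\by}E\to\sk_{N-1}X\uleft{\by}F$ is a trivial cofibration, yields the same at stage $N$; passing to the colimit over $N$ (a transfinite composite of trivial cofibrations) gives the claim. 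Injectivity is automatic, since every stage is a pushout of a monomorphism of simplicial sets along a monomorphism, and these are stable under pushout and transfinite composition in $\bS$.

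The hard part will be the bookkeeping in the inductive step: checking that the coend relations of Definition \ref{uleft} produce exactly the attaching pushouts above, and that $\partial\nabla^N\uleft{\by}E$ is genuinely the latching object rather than an over-identified or collapsed quotient. This rests entirely on $\Delta_{**,-}=\Delta_-$, so that the degeneracy combinatorics is literally that of ordinary simplicial sets, and on correctly tracking the shifted indexing $\Delta_{**}^{\op}\cong\Delta_0$ of Remark \ref{cflein1}, including the augmentation term; once the Eilenberg--Zilber normal form is established, the remainder is the formal pushout--product argument.
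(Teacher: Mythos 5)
Your proposal is correct and is essentially the paper's own argument in coend/Reedy language: the paper filters $X\uleft{\by}E$ by the sub-object generated by $\coprod_{i\le n}X_i\by E^i$, identifies each layer as a pushout along $N_nX\by L^nE\to N_nX\by E^n$ (your $\coprod\partial\nabla^N\to\coprod\nabla^N$ attachment), feeds in the fact that a levelwise trivial cofibration has relative latching maps that are trivial cofibrations, and concludes by induction and transfinite composition. The only cosmetic difference is that the paper justifies the latching-map input via the identification of $\Set^{\Delta_{**}}$ with augmented simplicial objects and the levelwise-versus-Reedy comparison for bisimplicial sets, whereas you cite Proposition \ref{reedymodel} directly; these amount to the same thing.
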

\begin{proof}
 Since $\Set^{\Delta_{**}}$ is equivalent to the category of augmented simplicial objects by Remark \ref{cflein1},  a morphism $E \to F$ is  a levelwise trivial cofibration precisely when $E^0 \to F^0$ is a trivial cofibration and $E^{\ge 1} \to F^{\ge 1}$ corresponds to a levelwise trivial  cofibration of bisimplicial sets. By \cite{sht} Theorem IV.3.9, this second condition is the same as being a Reedy trivial cofibration of bisimplicial sets.

Let $L^nE$ be the Reedy latching object of $E$ in $\Set^{\Delta_{**}}$ (as in \cite{hovey} Definition 5.2.2). Explicitly, this is the quotient of $\coprod_{i=1}^{n-1}E^{n-1}$ by the relations $(\pd^je)_i \sim (\pd^{i-1}e)_j$ for $e \in E^{n-2}$ and $i \le j$. Note that $L^0E= L^1E= \emptyset$. For $E^{\ge 1} \to F^{\ge 1}$ to be a Reedy trivial cofibration says that the latching maps $E^n\cup_{L^nE}L^nF \to F^n$ are trivial cofibrations for all $n \ge 1$. Thus a morphism $E \to F$ is a levelwise trivial cofibration in $\bS^{\Delta_{**}}$ precisely when the latching maps  are trivial cofibrations for all $n \ge 0$. 

Let $(X\uleft{\by} E)^{(n)} \subset X\uleft{\by} E$ be the subspace generated by $X_i \by E^i$ for $i \le n$, and let $N_nX =X_n -(\bigcup_{0 \le r \le n-1} \sigma_rX_{n-1})$. Then 
$$
(X\uleft{\by} E)^{(n)}= (X\uleft{\by} E)^{(n-1)}\cup_{(N_nX \by L^nE)}(N_nX \by E^n),
$$
so the map 
$$
(X\uleft{\by} E)^{(n)}\cup_{(X\uleft{\by} E)^{(n-1)}}(X\uleft{\by} F)^{(n-1)}\to (X\uleft{\by} F)^{(n)}
$$ 
is a trivial cofibration. 

We then proceed inductively to show that $(X\uleft{\by} E)^{(n)} \to (X\uleft{\by} F)^{(n)} $ is a trivial cofibration. The case $n=0$ is immediate, and assuming the  $n-1$ case gives a trivial cofibration 
$$
(X\uleft{\by} E)^{(n)} \to  (X\uleft{\by} E)^{(n)}\cup_{(X\uleft{\by} E)^{(n-1)}}(X\uleft{\by} F)^{(n-1)}.
$$
The result above then implies that 
$$
(X\uleft{\by} E)^{(n)}\to (X\uleft{\by} F)^{(n)}
$$
is a trivial cofibration, which completes the induction.

Thus $X\uleft{\by} E \to X\uleft{\by} F$ is a transfinite composition of trivial cofibrations, so must be a trivial cofibration, as required.
\end{proof}

\begin{proposition}\label{catswork1}
If $\bI$ is a small category and $\cD \in sQ\Dat$, then there  are canonical equivalences
$$
\Map_{s\bS}^h((B\bI)^{\hor}, \cN(\cD))\simeq \Map_{sQ\Dat}^h(\alg^*\bI , \cD).
$$
\end{proposition}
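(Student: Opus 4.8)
The plan is to isolate a single explicit cofibrant object that corepresents the right-hand side, generalizing Lemma \ref{spnleft} from $\on$ to an arbitrary small category $\bI$. The central claim I would establish is that $P_{B\bI}^*\diag^*\Xi$ is a cofibrant replacement for $\alg^*\bI$, where I use the identification $P_{B\bI}^*\iota(\bt,\bt)\cong\alg^*\bI$ recorded in the proof of Lemma \ref{sdcdiagramsub}, now viewed in the full category $sQ\Dat$ rather than merely in $sQ\Dat_{\Ob\bI}$. Granting this, the right-hand side is $\oR\HHom_{sQ\Dat}(P_{B\bI}^*\diag^*\Xi,\cD)$, and the adjunctions $P_{B\bI}^*\dashv P_{B\bI}$, $\diag^*\dashv\diag$ together with $\mmc=\HHom(\Xi,-)$ rewrite the relevant complexes in terms of $\mmc(\diag P_{B\bI}(-))$, which is exactly the formula noted in Remark \ref{shdiagram}. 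The left-hand side is then reached through the base identity $\Map^h_{s\bS}((\Delta^n)^{\hor},\cN(\cD))\simeq\cN(\cD)_n=\Map^h_{sQ\Dat}(\alg^*\on,\cD)$ for $\cN(\cD)$ Reedy fibrant, combined with the presentation of $(B\bI)^{\hor}$ as a homotopy colimit of the $(\Delta^n)^{\hor}$.

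The weak equivalence $P_{B\bI}^*\diag^*\Xi\to\alg^*\bI$ is the crux, and it is where Lemma \ref{endmono} is designed to be used. By Proposition \ref{diagbt} the map $\diag^*\Xi\to\iota(\bt,\bt)$ is a weak equivalence in $QMM^*(\bS)$, i.e.\ a levelwise weak equivalence. In the $\on$-case of Lemma \ref{spnleft} the functor $P_n^*$ is a mere relabelling and trivially preserves such equivalences; for general $\bI$ the functor $P_{B\bI}^*$ is instead the coend $(P_{B\bI}^*E)(a,b)^n=(B\bI)(a,b)\uleft{\by}E^n$ of Lemma \ref{pkleft2}, and it is precisely Lemma \ref{endmono} (that $X\uleft{\by}(-)$ preserves levelwise trivial cofibrations, together with the easier parallel statement for levelwise cofibrations) which, via Ken Brown's lemma, shows $(B\bI)(a,b)\uleft{\by}(-)$ preserves all levelwise weak equivalences. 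Cofibrancy is softer: $P_{B\bI}^*$ is left Quillen, its right adjoint $P_{B\bI}$ being right Quillen by the argument of Proposition \ref{pnquillen}, so $P_{B\bI}^*\diag^*\Xi$ is cofibrant in $sQ\Dat_{\Ob\bI}$; since it is the identity on the object set $\Ob\bI$, conditions $(W)$ and $(F)$ of Proposition \ref{sqdatmodel} promote this to a cofibrant replacement in $sQ\Dat$ (note $(P_{B\bI}^*\diag^*\Xi)^0\simeq\bI$ because $(-)^0$ is left Quillen and hence preserves the weak equivalence just produced).

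The main obstacle is turning these corepresentability statements into the asserted equivalence \emph{on the $\cN$-side}, and it stems from the failure of axiom $(SM7)$ for $sQ\Dat$ noted after Proposition \ref{sqdatmodel}. Evaluating $\HHom(P_{B\bI}^*\diag^*\Xi,\cD^{\Delta^\bullet})$ with the naive function complex $\cD^{\Delta^\bullet}$, whose object set is the constant $\Ob\cD$, produces the object data as a \emph{discrete} choice $f\colon\Ob\bI\to\Ob\cD$, yielding the $\cMC$-flavoured answer $\coprod_f\mmc(\diag P_{B\bI}f^{-1}\cD)$; this differs from the honest $\Map^h_{sQ\Dat}(\alg^*\bI,\cD)$ by exactly the object-correction measured in Corollary \ref{MCmap2}, and that correction cannot simply be cancelled. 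I would therefore argue directly on $\cN(\cD)$: using the base identity above and the skeletal filtration of $B\bI$ (each stage of $(-)^{\hor}$ being a Reedy cofibration, so that $(B\bI)^{\hor}$ is a genuine homotopy colimit of the $(\Delta^n)^{\hor}$), one obtains $\Map^h_{s\bS}((B\bI)^{\hor},\cN(\cD))\simeq\Map^h_{sQ\Dat}(\mathrm{hocolim}_{\Delta\downarrow B\bI}\alg^*\on,\cD)$, with the Segal property of $\cN(\cD)$ (Proposition \ref{NSS}) controlling the matching objects along the filtration. The cofibrant model $P_{B\bI}^*\diag^*\Xi$ is then what identifies this homotopy colimit with $\alg^*\bI$ in $sQ\Dat$ with the correct (free) object data. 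Matching the two filtrations step by step — equivalently, checking that it is the \emph{correct} cosimplicial framing of $\cD$, not $\cD^{\Delta^\bullet}$, that pairs with $P_{B\bI}^*\diag^*\Xi$ — is the delicate technical heart, and is exactly the bookkeeping that makes the two free object spaces agree.
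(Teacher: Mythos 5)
Your proposal follows essentially the same route as the paper's proof: decompose $\Map^h_{s\bS}((B\bI)^{\hor},\cN(\cD))$ over the simplices of $B\bI$ using $\cN(\cD)_n\simeq\Map^h_{sQ\Dat}(P_n^*\diag^*\Xi,\cD)$, assemble the resulting homotopy colimit into $P_{B\bI}^*\diag^*\Xi$, and then show via Proposition \ref{diagbt}, Lemma \ref{pkleft2} and Lemma \ref{endmono} that $P_{B\bI}^*\diag^*\Xi\to P_{B\bI}^*\iota(\bt,\bt)\cong\alg^*\bI$ is a weak equivalence. Your cautionary remarks about the failure of $(SM7)$ and the need to use the genuine derived function complex rather than $\cD^{\Delta^\bullet}$ are correct and consistent with how the paper defines $\cN$, so this is the same argument with only cosmetic differences (Ken Brown's lemma versus the paper's explicit section $\iota\bt\to(\diag^*\Xi)^{n,\bt}$).
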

\begin{proof}
Lemma \ref{spnleft} shows that $P_n^*\diag^*\Xi$ is a cofibrant replacement for $\alg^*\on$, so for $K \in \bS$, 
\begin{eqnarray*}
\Map_{s\bS}^h(K^{\hor}, \cN(\cD))&\simeq& \holim_{\substack{ \lla \\n \in \Delta\da K}} \Map_{s\bS}^h((\Delta^n)^{\hor}, \cN(\cD))  \\
&\simeq& \holim_{\substack{ \lla \\n \in \Delta\da K}} \Map^h_{sQ\Dat}(P_n^*\diag^*\Xi, \cD)\\
&\simeq& \Map^h_{sQ\Dat}(\holim_{\substack{ \lra \\ n \in (\Delta\da K)^{\op}}} P_n^*\diag^*\Xi, \cD)\\
& =& \Map_{sQ\Dat}^h(P_K^*\diag^*\Xi, \cD).
\end{eqnarray*}

Now,  Lemma \ref{pkleft2} gives a method for calculating the functor $P_{B\bI}^*$. Since $(\diag^*\Xi)^{n, \bt} \in \bS^{\Delta_{**}}$ is levelwise contractible for all $n$  (by Proposition \ref{diagbt}), a choice of point $x \in (\diag^*\Xi)^{n, 1}$ determines a levelwise trivial cofibration $\iota\bt \to (\diag^*\Xi)^{n, \bt}$ in $\bS^{\Delta_{**}}$. Therefore Lemma \ref{endmono} implies that
$$
K(a,b)\uleft{\by}(\iota\bt) \to K(a,b) \uleft{\by}(\diag^*\Xi)^{n, \bt} 
$$
is a trivial cofibration for all $a, b \in K_0$.

By Lemma \ref{pkleft}, $(P_{B\bI}^* \diag^*\Xi)(a,b)^n= (B\bI)(a,b) \uleft{\by}(\diag^*\Xi)^{n, \bt}$, so the calculation above shows that the canonical map
$$
(P_{B\bI}^* \diag^*\Xi)(a,b)^n\to (B\bI)(a,b) \uleft{\by}(\iota\bt)
$$
is a weak equivalence, so
$$
P_{B\bI}^* \diag^*\Xi\to P_{B\bI}^*\iota(\bt,\bt) 
$$
is a weak equivalence, and $P_{B\bI}^*\iota(\bt,\bt) \cong \alg^*\bI$ as in Lemma \ref{sdcdiagramsub}.
\end{proof}

\begin{corollary}\label{NCSS}
Any Reedy fibrant replacement  of $\cN(\cD)$ is a complete Segal space.
\end{corollary}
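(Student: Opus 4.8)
The plan is to deduce completeness from the Segal condition (already established in Proposition \ref{NSS}) by feeding Rezk's completeness criterion through the mapping-space calculation of Proposition \ref{catswork1}. Write $\cJ$ for the groupoid with two objects $0,1$ and a single isomorphism between them, so that its nerve $B\cJ$ is a contractible simplicial set with two vertices, and recall from \cite{rezk} that a Reedy fibrant Segal space $W$ is complete if and only if the map
$$
W_0 = \Map^h_{s\bS}((\Delta^0)^{\hor}, W) \lra \Map^h_{s\bS}((B\cJ)^{\hor}, W)
$$
induced by the projection $B\cJ \to \Delta^0 = B\oO$ is a weak equivalence; here $(B\cJ)^{\hor}$ serves as a Reedy cofibrant model for the localising object $E$ of \cite{rezk}.

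First I would apply Proposition \ref{catswork1}, which is natural in the small category $\bI$, to the collapse functor $\cJ \to \oO$. Taking $\bI = \cJ$ and $\bI = \oO$ identifies the displayed map, up to the canonical equivalences of that proposition, with the map
$$
\Map^h_{sQ\Dat}(\alg^*\oO, \cD) \lra \Map^h_{sQ\Dat}(\alg^*\cJ, \cD)
$$
obtained by applying $\alg^*$ to $\cJ \to \oO$ and precomposing; indeed $(B\oO)^{\hor} = (\Delta^0)^{\hor}$ and $\Map^h_{s\bS}((\Delta^0)^{\hor}, \cN(\cD)) \simeq \cN(\cD)_0 = W_0$.

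It then remains to check that this last map is a weak equivalence, and this is the one substantive point. The collapse $\cJ \to \oO$ is an equivalence of categories (since $\cJ$ is a contractible groupoid), hence a weak equivalence in the model structure of Lemma \ref{catmodel}, in which every object is fibrant by \cite{bergner}. Since $\alg^*$ is right adjoint to the left Quillen functor $(-)^0$, it is right Quillen, so it preserves weak equivalences between fibrant objects; thus $\alg^*\cJ \to \alg^*\oO$ is a weak equivalence in $sQ\Dat$. Homotopy invariance of the derived mapping space $\Map^h_{sQ\Dat}(-,\cD)$ in its first variable then makes the displayed map a weak equivalence, and together with Proposition \ref{NSS} this shows that $\cN(\cD)^f$ is a complete Segal space.

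The main obstacle is bookkeeping rather than mathematics: one must confirm that $(B\cJ)^{\hor}$ genuinely models Rezk's localising object (so that the cited criterion applies verbatim), and that the naturality of Proposition \ref{catswork1} matches the completeness comparison map with $\alg^*(\cJ \to \oO)$ in the correct variance. Once these identifications are in place, the argument is immediate.
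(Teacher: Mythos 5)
Your argument is correct and follows essentially the same route as the paper: invoke Rezk's completeness criterion (Proposition 6.4 of \cite{rezk}) with the contractible groupoid $J$ on two objects, apply Proposition \ref{catswork1} for $\bI=J$ and $\bI=\oO$, and observe that $\alg^*J\simeq\alg^*\oO$; the paper simply asserts this last equivalence ``since $J$ is simply connected'', whereas you justify it via Ken Brown's lemma for the right Quillen functor $\alg^*$. One small slip: not every object of $s\Cat$ is fibrant in Bergner's model structure (only those with Kan hom-spaces), but the discrete categories $J$ and $\oO$ are, so your application of Ken Brown's lemma still goes through.
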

\begin{proof}
We know that $\cN(\cD)^f$ is a Segal space by Proposition \ref{NSS}. If we let $J$ be the simply connected groupoid on two objects,  \cite{rezk} Proposition 6.4 implies that it suffices to show that the map
$$
\cN(\cD)^f_0 \to \Map_{s\bS}^h((BJ)^{\hor}, \cN(\cD)^f)
$$
is a weak equivalence.

By Proposition \ref{catswork1},
$$
\Map_{s\bS}^h((B\bI)^{\hor}, \cN(\cD))\simeq \Map_{sQ\Dat}^h(\alg^*\bI , \cD),  
$$
and $\alg^*J \simeq \alg^*\oO$, since $J$ is simply connected. The result then follows by considering this equivalence in the cases $\bI=J$ and $\bI=\oO$.
\end{proof}

\begin{definition}
Given a simplicial category $\bI$, let $B\bI \in s\bS$ be the nerve of $\bI$, as considered in \cite{bergner3} \S 8 (where it was denoted $R$). Explicitly, $(B\bI)_n \in \bS$ is given by
$$
(B\bI)_n = \coprod_{x_0, \ldots, x_n \in \Ob \bI} \HHom_{\bI}(x_0, x_1)\by \ldots \by\HHom_{\bI}(x_{n-1}, x_n).
$$ 
\end{definition}

In \cite{rezk} Theorem 7.2, Rezk introduced the complete Segal space model structure $\CSS$ on the category $s\bS$, whose fibrant objects are complete Segal spaces. 
In \cite{bergner3}, Bergner showed that there is a chain of Quillen equivalences between the model categories $\CSS$ and $s\Cat$. Therefore the following theorem can be interpreted as saying that $\cN$ and $\cMC$ are derived right adjoints to $\alg^*: s\Cat \to sQ\Dat$.

\begin{theorem}\label{catswork}
If $\bI$ is a simplicial category and $\cD \in sQ\Dat$, then there  are canonical equivalences
$$
 \Map_{sQ\Dat}^h(\alg^*\bI , \cD)\simeq\Map_{s\bS}^h(B\bI, \cN(\cD))\simeq \Map_{\CSS}^h(B\bI, \cN(\cD))\simeq \Map_{\CSS}^h(B\bI, \cMC(\cD^f)),
$$
where $\cD^f$ is a Reedy fibrant replacement of $\cD$.
\end{theorem}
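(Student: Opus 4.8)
The plan is to establish the three equivalences in turn: the first by reducing to the ordinary-category case already proved in Proposition \ref{catswork1}, and the remaining two formally from the complete Segal space model structure $\CSS$ of \cite{rezk} together with Corollary \ref{NCSS} and Proposition \ref{MCNequiv}.

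\textbf{First equivalence.} For a simplicial category $\bI$, write $\bI_k$ for the ordinary category with object set $\Ob\bI$ and morphism sets $\HHom_{\bI}(a,b)_k$, so that $[k]\mapsto \bI_k$ is a simplicial object in categories whose realisation is $\bI$. Unwinding the definition of the nerve gives $(B\bI)_{(m),k} = (B\bI_k)_m$; that is, $B\bI$ is the diagonal of the simplicial diagram $[k]\mapsto (B\bI_k)^{\hor}$ in $s\bS$, hence a model for the homotopy colimit over $\Delta^{\op}$ of this diagram. Since a homotopy colimit in the source turns into a homotopy limit of derived mapping spaces, and Proposition \ref{catswork1} applies to each ordinary category $\bI_k$, I obtain
$$
\Map^h_{s\bS}(B\bI, \cN(\cD)) \simeq \holim_{[k]\in\Delta^{\op}} \Map^h_{s\bS}((B\bI_k)^{\hor}, \cN(\cD)) \simeq \holim_{[k]\in\Delta^{\op}} \Map^h_{sQ\Dat}(\alg^*\bI_k, \cD).
$$
It then remains to identify the right-hand side with $\Map^h_{sQ\Dat}(\alg^*\bI, \cD)$, for which I would show that $\alg^*\bI$ is the homotopy colimit of $[k]\mapsto \alg^*\bI_k$ and invoke the homotopy-colimit/homotopy-limit adjunction once more. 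As $\alg^*$ is a left adjoint it commutes with the diagonal, so $\alg^*\bI$ is the strict realisation of $[k]\mapsto \alg^*\bI_k$; and since $\alg^*$ preserves weak equivalences, comparison with a Reedy cofibrant resolution of $[k]\mapsto \bI_k$ shows that this strict realisation computes the homotopy colimit.

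\textbf{Second and third equivalences.} The structure $\CSS$ on $s\bS$ is a left Bousfield localisation of the Reedy model structure, so the identity is left Quillen into $\CSS$ and the derived mapping space into any $\CSS$-fibrant (complete Segal) object is unchanged by the localisation. By Corollary \ref{NCSS} the Reedy fibrant replacement $\cN(\cD)^f$ is a complete Segal space, and $\cN(\cD)\to \cN(\cD)^f$ is a Reedy, hence $\CSS$, weak equivalence; this yields the second equivalence $\Map^h_{s\bS}(B\bI, \cN(\cD)) \simeq \Map^h_{\CSS}(B\bI, \cN(\cD))$. For the third, note that $\cN$ is defined through derived function complexes and is therefore homotopy invariant in $\cD$, so $\cN(\cD)\to \cN(\cD^f)$ is a levelwise, hence $\CSS$, weak equivalence. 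By Proposition \ref{MCsegal} the space $\cMC(\cD^f)$ is a Segal space, and Proposition \ref{MCNequiv} provides a Dwyer--Kan equivalence $\cMC(\cD^f)\to \cN(\cD^f)^f$; by \cite{rezk} a Dwyer--Kan equivalence between Segal spaces is a $\CSS$ weak equivalence. Chaining these $\CSS$ equivalences identifies $\cN(\cD)$ with $\cMC(\cD^f)$ in the homotopy category of $\CSS$, giving the third equivalence.

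\textbf{Main obstacle.} The substantive point is the first equivalence, and within it the identification $\alg^*\bI \simeq \holim_{[k]}\alg^*\bI_k$ (homotopy colimit). Because the model structure on $sQ\Dat$ alters object sets and $\alg^*$ is only right Quillen for the adjunction $(-)^0 \dashv \alg^*$, one cannot simply invoke a left-Quillen property, and must instead check directly that the strict realisation of $[k]\mapsto \alg^*\bI_k$ is Reedy cofibrant enough to compute the homotopy colimit. I expect this to follow from the fact that $\alg^*$ preserves weak equivalences together with the explicit formula $(\alg^*\C)(a,b)^n = \C(a,b)$, which renders the relevant latching maps transparent, but it is the step demanding the most care.
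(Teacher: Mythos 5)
Your treatment of the second and third equivalences is correct and coincides with the paper's: both rest on Corollary \ref{NCSS}, Proposition \ref{MCNequiv}, and Rezk's identification of $\CSS$-weak equivalences between Segal spaces with Dwyer--Kan equivalences. For the first equivalence you take a genuinely different decomposition: you slice $B\bI$ in the vertical (simplicial) direction, presenting it as a homotopy colimit over $\Delta^{\op}$ of the discrete nerves $(B\bI_k)^{\hor}$ and applying Proposition \ref{catswork1} levelwise, whereas the paper slices horizontally, writing $B\bI$ as the coend $\int^n (B\bI)_n^{\ver}\by(\Delta^n)^{\hor}$ and rerunning the proof of Proposition \ref{catswork1} with ends in place of homotopy limits after extending $P_K^*$ to bisimplicial $K$. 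Your reduction to the ordinary-category case is attractive, but it concentrates all the difficulty into the single claim that $\alg^*\bI$ is the homotopy colimit of $[k]\mapsto\alg^*\bI_k$.

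That claim is exactly where the gap lies, and the repair you sketch does not work. Cofibrations in $sQ\Dat$ are retracts of cell complexes built from the classes $(I1)\cup(I2)$ of Proposition \ref{sqdatmodel}, i.e.\ free attachments of cells to hom-objects; a map that is merely injective on hom-objects, such as the latching maps of $[k]\mapsto\alg^*\bI_k$, need not be a cofibration, so the explicit formula $(\alg^*\C)(a,b)^n=\C(a,b)$ does not make the diagram Reedy cofibrant. Nor does applying $\alg^*$ to a Reedy cofibrant resolution of $[k]\mapsto\bI_k$ help, because $\alg^*$ does not carry cofibrant (simplicial) categories to cofibrant quasi-descent data: already $\alg^*\oO=\iota\bt$ fails to be cofibrant (see Corollary \ref{btcot}), which is precisely why $\Xi$ is introduced. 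Indeed the whole point of the theorem, as the paper frames it, is that $\alg^*$ is \emph{not} left Quillen, so one cannot commute it past a homotopy colimit by softness arguments. The step can be repaired by replacing each $\alg^*\bI_k$ with its cofibrant model $P_{B\bI_k}^*\diag^*\Xi$ from Proposition \ref{catswork1} (functorially in $k$), using that $P^*$ is left Quillen (Proposition \ref{pnquillen}) and commutes with the relevant colimits to identify the realisation with $P_{B\bI}^*\diag^*\Xi$ for the extension of $P^*$ to bisimplicial sets, and then quoting the weak equivalence $P_{B\bI}^*\diag^*\Xi\to\alg^*\bI$ --- but at that point you have reconstructed the paper's argument rather than bypassed it.
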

\begin{proof}
First note that we can regard $K \in s\bS$ as a diagram $\Delta^{\op} \to \bS$, and evaluate $K$ in the simplicial category $s\bS$ as   the coend $K=K\ten_{\Delta^{\op}}(\Delta)^{\hor} = \int^n K_n^{\ver} \by (\Delta^n)^{\hor}$ (in the notation of \cite{Hirschhorn} Definition 18.3.2).  Since $K$ and $\Delta: \Delta^{\op} \to \bS$ are both Reedy cofibrant, this means that
$
\Map_{s\bS}^h(K, \cN(\cD))
$ 
is (assuming $\cN(D)$ Reedy fibrant)  the end $\int_n \Map_{s\bS}^h(K_n^{\ver} \by (\Delta^n)^{\hor}, \cN(\cD))  = \int_n \cN(\cD)_n^{K_n}$.  
The proof of Proposition \ref{catswork1} now adapts to prove  the first equivalence, replacing homotopy limits with ends.

For any $K \in s\bS$, we may define $P_K^*:QMM^*(\bS) \to sQ\Dat(\bS)$, specialising to the functor $P_K^*$ of Lemma \ref{pkleft2} when $K \in s\Set$. If $\bI$ is a simplicial category, then for all $m$ the simplicial set $[n] \mapsto ((B\bI)_n)_m$ is the nerve of the category $\bI_m$, given by $\Ob \bI_m= \Ob \bI$ and $\Hom_{\bI_m}(x,y):= \HHom_{\bI}(x,y)_i$. Thus the description of $P_{B\bI}^*$ in Lemma \ref{pkleft2} generalises to simplicial categories $\bI$, and as in Proposition \ref{catswork1}, the morphism $P_{B\bI}^*\diag^*\Xi \to P_{B\bI}^*\iota(\bt, \bt) = \alg^*\bI$ is a weak equivalence. 

By Corollary \ref{NCSS}, $\cN(\cD)^f$ is a complete Segal space, and since the identity functor $\CSS \to s\bS$ is right  Quillen for the Reedy  model structure, the second  equivalence follows. Proposition \ref{MCNequiv} gives the third equivalence, since \cite{rezk} Theorem 7.7 shows that a morphism of Segal spaces becomes a weak equivalence in $\CSS$ if and only if it is a Dwyer-Kan equivalence.
\end{proof}

\subsection{$\cDel$}

\begin{definition}
Let $\cD \mapsto \cD^{\oL \gpd}$ be the left-derived functor $\Ho(sQ\Dat) \to \Ho(sQ\Dpd)$ of the functor $\cD \mapsto \cD^{\gpd}$ from Definition \ref{dpdmodel}.
 \end{definition}

\begin{lemma}\label{gpdn}
There is a natural equivalence $ (\alg^*\on)^{\oL\gpd} \simeq \alg^*(\on^{\gpd})$.
\end{lemma}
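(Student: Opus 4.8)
The plan is to compute the derived functor $\oL(-)^{\gpd}$ on a cofibrant model of $\alg^*\on$ and then compare the result with $\alg^*(\on^{\gpd})$. By Lemma \ref{spnleft}, $P_n^*\diag^*\Xi$ is a cofibrant replacement for $\alg^*\on$ in $sQ\Dat$, so the derived groupoidification is represented by $(P_n^*\diag^*\Xi)^{\gpd}$, i.e. $(\alg^*\on)^{\oL\gpd}\simeq (P_n^*\diag^*\Xi)^{\gpd}$. Since $\on^{\gpd}$ is the indiscrete groupoid on $[0,n]$, the object $\alg^*(\on^{\gpd})$ already lies in $sQ\Dpd$; hence the composite weak equivalence $P_n^*\diag^*\Xi \to \alg^*\on \to \alg^*(\on^{\gpd})$ (the second map induced by $\on\to\on^{\gpd}$) factors, by the universal property of the adjunction $(-)^{\gpd}\dashv\iota$ (Definition \ref{dpdmodel}), through a canonical morphism $\psi\colon (P_n^*\diag^*\Xi)^{\gpd}\to \alg^*(\on^{\gpd})$ in $sQ\Dpd$. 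It then suffices to prove that $\psi$ is a weak equivalence, which by Definition \ref{dpdmodel} and Proposition \ref{sqdatmodel} means verifying conditions (W1) and (W2).

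First I would dispose of (W2). Taking left adjoints of the evident commuting square formed by the two inclusions ($sQ\Dpd\hookrightarrow sQ\Dat$ and $s\Gpd\hookrightarrow s\Cat$) together with the two versions of $\alg^*$ yields a natural isomorphism $(\cD^{\gpd})^0\cong (\cD^0)^{\gpd}$ of functors $sQ\Dat\to s\Gpd$. Applying this to $\cD=P_n^*\diag^*\Xi$, and using that $(-)^0$ preserves weak equivalences and (being left Quillen) cofibrant objects, while $(-)^{\gpd}_{\Cat}$ is left Quillen, reduces (W2) to the statement that the groupoidification of the underlying simplicial category of $P_n^*\diag^*\Xi$ is equivalent to $\on^{\gpd}$. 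This is clear, since that underlying category is a cofibrant model of $\on$ and $\on$ is itself cofibrant in $s\Cat$.

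The heart of the argument, and the main obstacle, is (W1): I must show that every Hom quasi-comonoid of $(P_n^*\diag^*\Xi)^{\gpd}$ is weakly equivalent to $\iota\bt$, since indiscreteness of $\on^{\gpd}$ forces $\cHHom_{\alg^*(\on^{\gpd})}(i,j)=\iota\bt$ for all $i,j$. \emph{Before} inverting, Lemma \ref{pnleft2} identifies $\cHHom_{P_n^*\diag^*\Xi}(i,j)$ with the horizontal slice $(\diag^*\Xi)^{\bt,\,j-i}$, which is weakly equivalent to $\iota\bt$ for $j\ge i$ by Proposition \ref{diagbt} and is empty for $j<i$. The content of (W1) is therefore that \emph{formal inversion} of the level-$0$ morphisms turns this configuration (forward Homs contractible, backward Homs empty) into one with contractible Homs in every bidegree. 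I would establish this with the machinery of Section \ref{absn}: compute the fundamental groupoids of these Hom quasi-comonoids via Proposition \ref{gpdcalc} and Corollary \ref{btcohogpd} to see that they become simply connected after inversion, and then invoke the cohomological contractibility criterion of Corollary \ref{trivchar}, whose hypothesis (vanishing of $\oL\cot$) is supplied by the computations behind Proposition \ref{diagbt} and Corollary \ref{diagcohogpd}.

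Naturality in $n$ is then automatic: $\psi$ is assembled from the natural transformation $\alg^*\on\to\alg^*(\on^{\gpd})$ and the functorial replacements $P_n^*\diag^*\Xi$, so the resulting equivalence $(\alg^*\on)^{\oL\gpd}\simeq\alg^*(\on^{\gpd})$ is compatible with the cosimplicial structure in $\on$, as required for the later construction of $\cDel$. I expect step (W1) to be where essentially all the work lies, since it is precisely the point at which the transparent $s\Cat$-level fact (groupoidifying an $n$-chain gives the indiscrete groupoid) must be upgraded to the full quasi-descent datum, where the higher Maurer--Cartan structure carries genuine homotopical content.
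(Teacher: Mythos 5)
Your first reduction is the same as the paper's: since $P_n^*\diag^*\Xi$ is a cofibrant replacement for $\alg^*\on$ (Lemma \ref{spnleft}), the derived groupoidification is computed by $(P_n^*\diag^*\Xi)^{\gpd}$, and one must compare this with $\alg^*(\on^{\gpd})$. The (W2) step and the naturality remarks are fine. But the step you yourself identify as ``the heart of the argument'' --- showing that formal inversion of the level-$0$ morphisms turns the configuration (forward Homs $\simeq\iota\bt$, backward Homs empty) into one with contractible Homs in every bidegree --- is a genuine gap as written. Groupoid completion is a colimit construction that glues copies of the Hom objects along zigzags of composable chains, and none of the tools you cite gives you a handle on the result: Proposition \ref{gpdcalc}, Corollary \ref{btcohogpd} and Corollary \ref{trivchar} let you test a quasi-comonoid for contractibility \emph{once you can compute its fundamental groupoid and its $\oL\cot$}, but computing those for $\cHHom_{(P_n^*\diag^*\Xi)^{\gpd}}(i,j)$ is exactly the problem, and the computations behind Proposition \ref{diagbt} and Corollary \ref{diagcohogpd} concern $\diag^*\Xi$ itself, not the localisation of $P_n^*\diag^*\Xi$. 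You have deferred the entire difficulty to a step for which no mechanism is supplied.

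The paper closes this gap with Lemma \ref{workmor}: the natural map $f\colon\Xi\by\alg^*\on\to P_n^*\diag^*\Xi$ is a trivial cofibration in $sQ\Dat_n$, hence in $sQ\Dat$, and since $(-)^{\gpd}$ is left Quillen, $f^{\gpd}$ is again a trivial cofibration. This replaces the intractable localisation $(P_n^*\diag^*\Xi)^{\gpd}$ by $(\Xi\by\alg^*\on)^{\gpd}$, which is computable by inspection: $\Xi\by\alg^*\on$ has $\Xi$ as every forward Hom with the unique point of $\Xi^0$ as the only level-$0$ morphism, so inverting these simply places $\Xi\simeq\iota\bt$ in every bidegree, giving $\alg^*(\on^{\gpd})$ directly. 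If you want to keep your head-on approach you would need some analogous device (a calculus-of-fractions or explicit cell-by-cell analysis of the localisation); as it stands, the proposal does not prove (W1).
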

\begin{proof}
By Lemma \ref{sdcdiagramsub}, $P_n^*\diag^*\Xi$ is a cofibrant resolution of $\alg^*\on$, so we need to show that  
$$
(P_n^*\diag^*\Xi)^{\gpd} \simeq \alg^*(\on^{\gpd}).
$$

By Lemma \ref{workmor}, the natural morphism $ f:\Xi \by \alg^*\on \to P_n^*\diag^*\Xi$ is a trivial cofibration in $sQ\Dat_n$, and hence also in $sQ\Dat$.
Since $(-)^{\gpd}: sQ\Dat \to sQ\Dpd$ is left Quillen, this implies that $f^{\gpd}$ is a trivial cofibration, so it suffices to show that $ (\Xi \by \alg^*\on)^{\gpd} \simeq \alg^*(\on^{\gpd})$. Now, $(\Xi \by \alg^*\on)$ has objects $[0,n]$ and $(\Xi \by \alg^*\on)^{\gpd}(i,j)= \Xi$ for all $i,j$. Since $\Xi \simeq \iota\bt$, this gives a weak equivalence $(\Xi \by \alg^*\on)^{\gpd} \to \alg^*(\on^{\gpd}) $, as required.
 \end{proof}

\begin{proposition}\label{ddefmap}
For $\cD \in sQ\Dpd$ fibrant, 
$$
\ddel(\cD)\simeq \Map_{sQ\Dat}^h(\alg^*\on, \cD)
$$
for all $n\ge 0$.
\end{proposition}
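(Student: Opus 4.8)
The plan is to first reduce to the case $n=0$, and then to identify $\Map^h_{sQ\Dat}(\alg^*\oO,\cD)$ with the Borel construction defining $\ddel(\cD)$ by comparing both of them against $\cMC(\cD)_0$ over the classifying space $\bar{W}\cD^0$. The first move exploits the fact that $\cD$ is a groupoid, which collapses the $n$-dependence.

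For the reduction, note that $\Map^h_{sQ\Dat}(\alg^*\on,\cD)=\cN(\cD)_n$ by definition. Since the forgetful functor $sQ\Dpd\to sQ\Dat$ is right Quillen (Definition \ref{dpdmodel}) with left adjoint $(-)^{\gpd}$, and $\cD\in sQ\Dpd$, the derived adjunction gives $\Map^h_{sQ\Dat}(\alg^*\on,\cD)\simeq\Map^h_{sQ\Dpd}((\alg^*\on)^{\oL\gpd},\cD)$. By Lemma \ref{gpdn}, $(\alg^*\on)^{\oL\gpd}\simeq\alg^*(\on^{\gpd})$, and since $\on^{\gpd}$ is the indiscrete groupoid on $[0,n]$, the canonical functor $\on^{\gpd}\to\oO$ is an equivalence of simplicial groupoids, so $\alg^*(\on^{\gpd})\to\alg^*\oO$ is a weak equivalence in $sQ\Dpd$ (as $\alg^*$ preserves weak equivalences). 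Hence $\Map^h_{sQ\Dat}(\alg^*\on,\cD)\simeq\Map^h_{sQ\Dpd}(\alg^*\oO,\cD)$ independently of $n$, and it remains only to prove that this space is $\ddel(\cD)$.

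To set up the comparison, observe first that Definition \ref{MCdef} gives directly $\cMC(\cD)_0=\coprod_{a\in\Ob\cD}\mmc(\cD(a,a))$, which is exactly the total space of the $\cD^0$-representation $\mmc(\cD)\in\bS(\cD^0)$ cut out by the adjoint action. By construction $\ddel(\cD)=\ho\LLim_{\cD^0}\mmc(\cD)=\mmc(\cD)\by_{\cD^0}W\cD^0$ is the Borel construction of this representation, living over $\bar{W}\cD^0$; moreover the proof of Proposition \ref{ddefquillen} shows $\ddel(\alg^*\cD^0)=\bar{W}\cD^0$, so the unit $\cD\to\alg^*\cD^0$ induces the structure map $\ddel(\cD)\to\bar{W}\cD^0$. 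On the other side, $\cN(\cD)_0\to\Map^h_{s\Cat}(\oO,\cD^0)\simeq\bar{W}\cD^0$ is the analogous structure map. Now Corollary \ref{MCmap2} at $n=0$ reads $\cMC(\cD)_0\simeq\cN(\cD)_0\by^h_{\bar{W}\cD^0}\Ob\cD$, while the standard fact that pulling a homotopy quotient back along the inclusion of objects recovers the total space — i.e. $W\cD^0\by^h_{\bar{W}\cD^0}\Ob\cD$ is the free $\cD^0$-space on $\Ob\cD$, so that $\bigl(\mmc(\cD)\by_{\cD^0}W\cD^0\bigr)\by^h_{\bar{W}\cD^0}\Ob\cD\simeq\coprod_a\mmc(\cD(a,a))$ — identifies $\ddel(\cD)\by^h_{\bar{W}\cD^0}\Ob\cD\simeq\cMC(\cD)_0$ as well. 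Thus $\cN(\cD)_0$ and $\ddel(\cD)$ have equivalent base-changes along $\Ob\cD\to\bar{W}\cD^0$, and since this map is surjective on $\pi_0$ one concludes $\cN(\cD)_0\simeq\ddel(\cD)$ by descent.

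The main obstacle is the last step: to turn the two fibre-product equivalences into an equivalence of the bases $\cN(\cD)_0$ and $\ddel(\cD)$ themselves, I will need to produce a \emph{natural} comparison map between them over $\bar{W}\cD^0$ (so that descent applies), rather than merely knowing the two pullbacks abstractly agree. Concretely, one should exhibit a map realizing both as $\ho\LLim_{\cD^0}\mmc(\cD)$ — one by definition, one through Corollary \ref{MCmap2} and the total-space formula — compatibly with the structure maps to $\bar{W}\cD^0$, and then invoke that a morphism of fibrations over $\bar{W}\cD^0$ which becomes a weak equivalence after pullback along the $\pi_0$-surjection $\Ob\cD\to\bar{W}\cD^0$ (hitting every component and inducing the same $\cD^0$-monodromy on the common fibres $\mmc(\cD(a,a))$) is itself a weak equivalence. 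Verifying the total-space $=$ pulled-back-Borel-construction identity in the simplicial-groupoid setting, via the explicit model $W\cD^0$ of Section \ref{simplicialsn}, is the other technical point to nail down.
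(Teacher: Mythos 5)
Your reduction of the general case to $n=0$ (via the Quillen adjunction between $sQ\Dpd$ and $sQ\Dat$, Lemma \ref{gpdn}, and $\on^{\gpd}\simeq\oO$) is exactly the argument the paper uses, and it is fine. The problem is the $n=0$ case, where your strategy does not close. You establish (modulo the identification $\Map^h_{s\Cat}(\oO,\cD^0)\simeq\bar{W}\cD^0$, itself unproved here) that $\cN(\cD)_0$ and $\ddel(\cD)$ become equivalent after homotopy base change along $\Ob\cD\to\bar{W}\cD^0$, both yielding $\cMC(\cD)_0=\coprod_a\mmc(\cD(a,a))$. But two fibrations over the same base with equivalent pullbacks along a $\pi_0$-surjection need not be equivalent (already for covering spaces of $S^1$ this fails), so the ``descent'' step is invalid without a comparison map over $\bar{W}\cD^0$ inducing those fibre equivalences. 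You flag this yourself, but constructing that map is not a technical afterthought --- it is essentially the entire content of the proposition, and nothing in your argument produces it. As written, the proof is circular at its key step: you would be assuming a natural identification of $\Map^h_{sQ\Dat}(\alg^*\oO,\cD)$ with (a space over $\bar{W}\cD^0$ receiving) the Borel construction, which is what is to be shown.

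The paper closes this gap by a different and essentially formal device: $\ddel$ is right Quillen (Proposition \ref{ddefquillen}), so it has a left adjoint $\ddel^*$; one computes $\ddel^*(\Delta^0)=\Xi\simeq\alg^*\oO$, and since $\Delta^{\bt}$ is a cofibrant cosimplicial resolution of $\Delta^0$, the object $\ddel^*(\Delta^{\bt})$ is a cofibrant cosimplicial resolution of $\alg^*\oO$. Hence
$$
\Map^h_{sQ\Dpd}(\alg^*\oO,\cD)\simeq\bigl([m]\mapsto\Hom_{sQ\Dpd}(\ddel^*(\Delta^m),\cD)\bigr)=\ddel(\cD)
$$
by adjunction, which manufactures the comparison and proves it is an equivalence in one stroke. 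If you want to salvage your fibrewise approach, you would need to exhibit an explicit zig-zag of maps over $\bar{W}\cD^0$ between a model of $\Map^h_{sQ\Dat}(\alg^*\oO,\cD)$ and $\mmc(\cD)\by_{\cD^0}W\cD^0$ restricting to the identity on the fibres $\mmc(\cD(a,a))$; the adjoint-functor route avoids having to do this by hand.
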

\begin{proof}
Let $\ddel^*$ be the left adjoint to $\ddel: sQ\Dpd \to \bS$, making it a left Quillen functor by Proposition \ref{ddefquillen}. Since $\ddel(\cD)_0= \coprod_{x \in \Ob \cD}\mc(\cD(x,x))$, it follows that $\ddel^*(\Delta^0)= \Xi$, so $\ddel^*(\Delta^0) \simeq \alg^*\oO$. Since $\Delta^{\bt}$ is a cofibrant cosimplicial resolution of $\Delta^0$ in $\bS$, $\ddel^*(\Delta^{\bt})$ is thus a  cofibrant cosimplicial resolution of $\alg^*\oO$, so  
$$
\Map_{sQ\Dpd}^h(\alg^*\oO,\cD) \simeq ([m] \mapsto \Hom_{sQ\Dpd}(\ddel^*(\Delta^m), \cD)) = \ddel(\cD),
$$ 
giving the case $n=0$.

For the general case, note that  the inclusion functor $sQ\Dpd \to sQ\Dat$ is right Quillen, with left adjoint given by groupoid completion, and so 
$$
\Map_{sQ\Dat}^h(\alg^*\on,\cD) \simeq \Map_{sQ\Dpd}^h((\alg^*\on)^{\oL \gpd},\cD)\simeq \Map_{sQ\Dpd}^h(\alg^*(\on^{\gpd}),\cD),
$$
by Lemma \ref{gpdn}.
Since $\on^{\gpd} \simeq \oO$, this is just $\Map_{sQ\Dpd}^h(\alg^*\oO,\cD)$, which completes the proof.
\end{proof}

\begin{proposition}\label{DefCSS}
The functor $\cDel: sQ\Dpd\to s\bS$ of Definition \ref{Defdef} is right  Quillen for the complete Segal space model structure $\CSS$ of \cite{rezk} Theorem 7.2. In particular, for any $\cD \in sQ\Dpd$ fibrant, the simplicial space $\cDel(\cD)$ is a complete Segal space. In fact, $\cDel(\cD)$ is then equivalent to the constant simplicial space $\ddel(\cD)$.
 \end{proposition}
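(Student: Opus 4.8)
The plan is to reduce the whole proposition to a single computation: that $\cDel(\cD)$ is levelwise weakly equivalent, compatibly with its simplicial structure, to the constant simplicial space on $\ddel(\cD)$. By Rezk's Theorem 7.2 (\cite{rezk}), $\CSS$ is a left Bousfield localization of the Reedy model structure on $s\bS$, so it has the same cofibrations. Since Corollary \ref{Defquillen} already shows $\cDel$ to be right Quillen for the Reedy structure, the standard descent criterion for localizations (\cite{Hirschhorn} \S 3.3) tells us that $\cDel$ is automatically right Quillen for $\CSS$ provided it sends fibrant objects to complete Segal spaces. A constant simplicial space on a Kan complex is readily checked to be a complete Segal space (all Segal maps and the completeness map $\sigma_0$ are identities), and $\CSS$-locality is invariant under Reedy weak equivalence between Reedy-fibrant objects; moreover $\ddel(\cD)$ is a Kan complex by Proposition \ref{ddefquillen}. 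Hence both the first assertion and the ``in particular'' clause will follow from the ``in fact'' clause.

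To prove the ``in fact'' clause, I would fix $\cD\in sQ\Dpd$ fibrant and analyse $\cDel(\cD)_{(n)}=\ddel(h(\Delta^n,\cD))$. First one checks that $h(\Delta^n,\cD)$ again lies in $sQ\Dpd$ (its morphism sets are products of invertible morphisms of $\cD$) and is fibrant: the functor $h(\Delta^n,-)$ preserves limits and, by the relative-fibration statement in the proof of Corollary \ref{Defquillen}, preserves (trivial) fibrations, so it is right Quillen. Composing with $\ddel$ (Proposition \ref{ddefquillen}), the functor $\cDel(-)_{(n)}=\ddel\circ h(\Delta^n,-)\colon sQ\Dpd\to\bS$ is right Quillen, and for $\cD$ fibrant it therefore computes $\Map^h_{sQ\Dpd}(L_n(\Delta^0),\cD)$, where $L_n$ denotes the derived left adjoint. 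Everything then hinges on identifying $L_n(\Delta^0)$.

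The identification I would run parallels Proposition \ref{MCmap1}, carried out in the groupoid setting. Writing $\ddel^*(\Delta^0)=\Xi\simeq\alg^*\oO$ (as in the proof of Proposition \ref{ddefmap}) and $T_n$ for the left adjoint of $h(\Delta^n,-)$, we have $L_n(\Delta^0)=\oL T_n(\Xi)$. In $sQ\Dat$ the corresponding adjunction reads $\mmc\circ h(\Delta^n,-)=\HHom_{sQ\Dat}(P_n^*\diag^*\Xi,-)$ (as used in the proof of Proposition \ref{MCsegal}), so the $sQ\Dat$-left adjoint sends $\Xi$ to $P_n^*\diag^*\Xi$, a cofibrant model of $\alg^*\on$ by Lemma \ref{spnleft}. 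Since $h(\Delta^n,-)$ commutes with the inclusion $sQ\Dpd\hookrightarrow sQ\Dat$, its groupoidal left adjoint is obtained by post-composing with groupoid completion, so Lemma \ref{gpdn} gives $L_n(\Delta^0)\simeq(\alg^*\on)^{\oL\gpd}\simeq\alg^*(\on^{\gpd})$. As $\on^{\gpd}\simeq\oO$ (the poset $[0,n]$ has contractible groupoid completion), Proposition \ref{ddefmap} yields
$$
\cDel(\cD)_{(n)}\simeq\Map^h_{sQ\Dpd}(\alg^*\oO,\cD)\simeq\ddel(\cD),
$$
naturally in $[n]\in\Delta$, with structure maps induced by $\om^{\gpd}\to\on^{\gpd}$. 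These are all equivalences to $\alg^*\oO$, so every structure map of $\cDel(\cD)$ is a weak equivalence; the canonical map from the constant simplicial space on $\ddel(\cD)$ is then a levelwise, hence Reedy, weak equivalence, which completes the argument.

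The main obstacle is precisely this identification $L_n(\Delta^0)\simeq\alg^*(\on^{\gpd})$. The subtle point is that the vertex-restriction map $h(\Delta^n,\cD)\to\cD$ is \emph{not} a weak equivalence in $sQ\Dpd$ --- it is very far from essentially surjective --- so the collapse of $\cDel(\cD)_{(n)}$ onto $\ddel(\cD)$ cannot be seen directly on quasi-descent data and must instead be extracted at the level of (derived) representing objects, exactly as the collapse $\on^{\gpd}\simeq\oO$ is what drives the $n$-independence in Proposition \ref{ddefmap}.
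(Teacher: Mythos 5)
Your argument is essentially sound and rests on exactly the same core identification as the paper --- namely that $(\cDel_n)^*(\Delta^\bt)$ is a cofibrant cosimplicial resolution of $(P_n^*\diag^*\Xi)^{\gpd}\simeq\alg^*(\on^{\gpd})\simeq\alg^*\oO$ (via Lemma \ref{gpdn} and the mechanism of Proposition \ref{ddefmap}), so that $\cDel(\cD)_{(n)}\simeq\Map^h_{sQ\Dpd}(\alg^*\oO,\cD)\simeq\ddel(\cD)$. Where you diverge is in how the Quillen statement is extracted. The paper proves a \emph{relative} statement directly: for an arbitrary fibration $f\co\cD\to\cE$ it shows the maps $\cDel(\cD)_n\to\cDel(\cE)_n\by_{\ddel(\cE)}\ddel(\cD)$ are trivial fibrations, by translating this into the assertion that $\ddel^*(\Delta^m)\cup_{\ddel^*(\pd\Delta^m)}(\cDel_n)^*(\pd\Delta^m)\to(\cDel_n)^*(\Delta^m)$ is a trivial cofibration; this single pushout-product computation delivers the $\CSS$-right-Quillen property, completeness, and homotopy constancy simultaneously. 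You instead prove only the absolute case ($\cE$ terminal), i.e.\ homotopy constancy of $\cDel(\cD)$ for fibrant $\cD$, and then invoke the universal property of left Bousfield localization (\cite{Hirschhorn} Proposition 3.3.18) to upgrade a Reedy-right-Quillen functor landing in local objects to a $\CSS$-right-Quillen functor. Both are legitimate; your route externalises the ``relative'' work to Hirschhorn's general criterion, at the cost of needing the auxiliary fact that a Reedy fibrant, homotopically constant simplicial space is a complete Segal space, whereas the paper's pushout-product argument keeps everything internal to the adjunctions already set up.

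One small imprecision: a constant simplicial space on a Kan complex $K$ is \emph{not} literally a complete Segal space, because its first matching map is the diagonal $K\to K\by K$, which is rarely a fibration; so it fails Reedy fibrancy. What you actually need (and what your parenthetical about invariance under Reedy equivalence is implicitly doing) is the statement that a Reedy fibrant simplicial space all of whose structure maps are weak equivalences satisfies the Segal and completeness conditions; this is true and easy (the Segal maps become compositions of pullbacks of trivial fibrations, and $\sigma_0\co W_0\to W_1$ is a weak equivalence hitting every component, forcing $W_{\mathrm{hoequiv}}=W_1$), but it should be stated in that form rather than attributed to the non-fibrant constant object.
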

\begin{proof}
By Corollary \ref{Defquillen}, we know that $\cDel$ is right  Quillen for the Reedy model structure.
In order to prove the remaining statements, it suffices that for any fibration $f:\cD \to \cE$, the fibrations
$$
(f,(\pd_i)^n):\cDel(\cD)_n \to \cDel(\cE)_n\by_{\ddel(\cE)}\ddel(\cD)
$$
are trivial fibrations for $i=0,1$ and for all $n$. This is equivalent to showing that the cofibrations
$$
\ddel^*(\Delta^m){\cup}_{\ddel^*(\pd\Delta^m)}(\cDel_n)^*(\pd\Delta^m)\to (\cDel_n)^*(\Delta^m)
$$
are trivial cofibrations for all $m \ge 0$  (and both choices of map $(\pd_i)^{n*}:\ddel^* \to \cDel_n^*$), where $(\cDel_n)^*: \bS \to sQ\Dpd$ is left adjoint to $\cDel_n$.  

Since $\cDel$ is right Quillen, the functor $\cDel_n$ is right Quillen, so the proof of Proposition \ref{ddefmap} adapts to show that the cosimplicial object $ i \mapsto (\cDel_n)^*(\Delta^m)$ is a cofibrant cosimplicial resolution of $(P_n^*\diag^*\Xi)^{\gpd}$ in $sQ\Dpd$, which by Lemma \ref{gpdn} is equivalent to $\alg^*(\on^{\gpd})$, and hence to $\alg^*\oO$. Therefore the cofibrations $\ddel^*(\Delta^m) \to (\cDel_n)^*(\Delta^m)$ are all weak equivalences. Moreover, $\ddel^*(\pd\Delta^m) \simeq \ho\LLim_{\Delta_{m-1,+}}\alg^*\oO$ (for $\Delta_{m-1,+}$ the  subcategory of   $\Delta$ on objects $\le m-1$ with only injective morphisms)
and similarly for $(\cDel_n)^*(\pd\Delta^m)$.  Thus the cofibrations $\ddel^*(\pd\Delta^m) \to (\cDel_n)^*(\pd\Delta^m)$ are also weak equivalences, giving a weak equivalence
$$
\ddel^*(\Delta^m)\to \ddel^*(\Delta^m){\cup}_{\ddel^*(\pd\Delta^m)}(\cDel_n)^*(\pd\Delta^m).
$$
Since $\ddel^*(\Delta^m)\to (\cDel_n)^*(\Delta^m) $ is a trivial cofibration (coming  the trivial cofibration $(\pd^i)^n: \Delta^0 \to \Delta^n$), this gives the required result.
\end{proof}

\begin{corollary}
For $\cD \in sQ\Dpd$ fibrant, the morphism $\cMC(\cD) \to \cDel(\cD)$ is a Dwyer-Kan equivalence, and $\cDel(\cD) \simeq \cN(\cD)$.
\end{corollary}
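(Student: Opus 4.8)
The plan is to deduce both assertions from the equivalences already in place, organising everything around the constant simplicial space on $\ddel(\cD)$. First I would prove $\cDel(\cD)\simeq\cN(\cD)$ by showing that both sides are equivalent to this constant space. Proposition \ref{ddefmap} gives, for each $n$, a weak equivalence $\cN(\cD)_n=\Map^h_{sQ\Dat}(\alg^*\on,\cD)\simeq\ddel(\cD)$; moreover every structure map $\on\to\om$ in $\Delta$ induces an equivalence after groupoid completion, since $\on^{\gpd}\simeq\oO$ (invoked through Lemma \ref{gpdn}), so these level equivalences are compatible with the face and degeneracy operators of $\cN(\cD)$. This exhibits $\cN(\cD)$ as levelwise weakly equivalent to the constant simplicial space on $\ddel(\cD)$. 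Proposition \ref{DefCSS} states that $\cDel(\cD)$ is likewise equivalent to that constant space. Since $\cN(\cD)^f$ (Corollary \ref{NCSS}) and $\cDel(\cD)$ (Proposition \ref{DefCSS}) are both complete Segal spaces, they are weakly equivalent in $\CSS$, giving $\cDel(\cD)\simeq\cN(\cD)$.

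For the Dwyer-Kan equivalence I would argue by transitivity through $\cN(\cD)^f$. By \cite{rezk} Theorem 7.7 it suffices to show that the natural map $\cMC(\cD)\to\cDel(\cD)$ is a weak equivalence in $\CSS$. Proposition \ref{MCNequiv} shows that $\cMC(\cD)\to\cN(\cD)^f$ is a Dwyer-Kan equivalence, hence again a $\CSS$-weak equivalence, and the first paragraph identifies $\cN(\cD)^f$ with $\cDel(\cD)$ in $\CSS$. It therefore remains to check that the natural quotient map $\cMC(\cD)\to\cDel(\cD)$ (induced levelwise by the canonical map $\mmc\to\ddel=\ho\LLim\mmc$ from a representation to its homotopy quotient) fits into a homotopy-commutative triangle with the comparison $\cMC(\cD)\to\cN(\cD)^f$ and the equivalence $\cN(\cD)^f\simeq\cDel(\cD)$. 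Once such a triangle is available, two-out-of-three in $\CSS$ forces $\cMC(\cD)\to\cDel(\cD)$ to be a $\CSS$-weak equivalence, and so a Dwyer-Kan equivalence of Segal spaces.

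The main obstacle is exactly this compatibility of the two comparison maps out of $\cMC(\cD)$. I expect both maps to restrict, after the identifications of Proposition \ref{ddefmap}, to the same data: on $0$-simplices the map $\cMC(\cD)_0=\mc(\cD)\to(\ddel(\cD))_0$ is the inclusion of Maurer-Cartan elements into the Borel construction, which underlies both $\cMC(\cD)\to\cN(\cD)^f$ and $\cMC(\cD)\to\cDel(\cD)$ under the identification $\cN(\cD)_0\simeq\ddel(\cD)\simeq\cDel(\cD)_0$; and on mapping spaces both should be controlled by $\Tot\cD(\bar{x},\bar{y})$ via Proposition \ref{loopmorsub}. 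Tracing these identifications carefully through the definitions of $\ddel$, $\cN$ and the derived function complexes is where the genuine bookkeeping lies, but it establishes the homotopy-commutativity of the triangle, and the corollary then follows from the two-out-of-three argument above together with $\cDel(\cD)\simeq\cN(\cD)$.
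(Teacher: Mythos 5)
Your proposal is correct and follows essentially the same route as the paper: the paper likewise deduces $\cDel(\cD)\simeq\cN(\cD)$ by noting that Proposition \ref{ddefmap} makes the constant simplicial space $\ddel(\cD)$ a model for $\cN(\cD)$ while Proposition \ref{DefCSS} identifies $\cDel(\cD)$ with that same constant space, and then quotes Proposition \ref{MCNequiv} for the Dwyer-Kan equivalence. The compatibility of the two comparison maps out of $\cMC(\cD)$, which you rightly flag as the remaining bookkeeping, is left implicit in the paper's own (very short) proof, so your treatment is if anything slightly more careful.
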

\begin{proof}
Proposition \ref{ddefmap} implies that the constant simplicial space $\ddel$ is a model for $\cN$, so Proposition \ref{DefCSS} shows that $\cDel$ must also be a model for $\cN$.  By Proposition \ref{MCNequiv}, the morphism $\cMC(\cD) \to \cN(\cD)^f$ is a Dwyer-Kan equivalence. 
\end{proof}

\section{Maurer-Cartan and classifying spaces}\label{class}

In many cases, a simplicial quasi-descent datum $E$  has additional structure, and in this section we show how this simplifies the Segal spaces $\cMC(E)$ and $\cDel(E)$ (where appropriate). This will give descriptions which are not only simpler, but are related to more familiar  formulae.

\subsection{Groups}

\subsubsection{Cosimplicial simplicial groups}

\begin{definition}
Let $cs\Gp$ be the category of cosimplicial simplicial groups, equipped with its Reedy simplicial model structure over simplicial groups.
\end{definition}

\begin{example}\label{chains}
Given a simplicial set $X$ and a simplicial group $G$, our main motivating example of a  cosimplicial simplicial group is the complex $\CC^{\bt}(X,G)$ given by
$$
\CC^n(X,G)_m:=G_m^{X_n}, 
$$
with cosimplicial operations $\pd^i:=G_m^{\pd_i}, \sigma^i:=G_m^{\sigma_i}$, and simplicial operations $\pd_i=\pd_i^G, \sigma_i= \sigma_i^G$.
\end{example}

\begin{definition}\label{mcdefgp1}
 Define $\mc:c\Gp \to \Set$ by  
$$
\mc(G):= \z^1(G)= \{ \omega \in G^1 \,:\, \sigma^0\omega =1\,\, \pd^1 \omega = \pd^2\omega \cdot \pd^0\omega\}.
$$ 
\end{definition}

\begin{definition}\label{mcdefgp}
Define $\mmc: cs\Gp \to \bS$ by $\mmc(G) \subset \prod_{n\ge 0} (G^{n+1})^{\Delta^n}$, satisfying the conditions of \cite{htpy} Lemma \ref{htpy-maurercartan}, i.e.
 the elements $\omega_n \in(G^{n+1})^{\Delta^n}$ satisfy 
\begin{eqnarray*}
\pd_i\omega_n &=& \left\{\begin{matrix} \pd^{i+1}\omega_{n-1}  & i>0 \\ (\pd^1\omega_{n-1})\cdot(\pd^0\omega_{n-1})^{-1} & i=0,\end{matrix} \right.\\
\sigma_i\omega_n &=& \sigma^{i+1}\omega_{n+1},\\
\sigma^0\omega_n&=& 1.
\end{eqnarray*}
Define $\mc :cs\Gp \to \Set$ by $\mc(G)=\mmc(G)_0$, noting that this agrees with Definition \ref{mcdefgp1} when $G \in c\Gp$. 
\end{definition}

\begin{remark}\label{pathspace}
If $G \in cs\Gp$ is of the form $G^n= \CC^{\bt}(X,H)$, for $X \in \bS$ and $H \in s\Gp$ as in Example \ref{chains}, then \cite{htpy} Lemma \ref{htpy-maurercartan} gives a canonical isomorphism $\mc(G) \cong \Hom_{\bS}(X, \bar{W}H)$, where $\bar{W}$ is the classifying space functor of \cite{sht} Ch.V.4.
\end{remark}

\begin{definition}\label{cEdef}
Define  $\cE: cs\Gp \to QM^*(\bS)$ by $\cE(G)^n= G^n$, with identity $1 \in G^0$, operations $\pd^i_{\cE(G)}=\pd^i_G, \sigma^i_{\cE(G)}=\sigma^i_G$, and Alexander-Whitney product
$$
g*h= ((\pd_G^{m+1})^ng)\cdot ((\pd^0_G)^mh),
$$  
for $g \in G^m,\, h \in G^n$.

Observe that $\cE$  is right Quillen and preserves weak equivalences. Denote its left adjoint by $\cE^*$. Note that $\mc(G)=\mc(\cE(G))$.
\end{definition}

Note  that  the equivalence of Lemma \ref{abqm} is just given by $\cE: c\Ab \to QM^*(\Ab,\by)$.

\begin{proposition}\label{rhommc2}
For $G \in cs\Gp$ fibrant, there is a natural isomorphism
$$
\oR \HHom_{cs\Gp}(\cE^*(\iota\bt), G) \simeq \mmc(G)
$$
in $\Ho(\bS)$.
\end{proposition}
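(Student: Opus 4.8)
The plan is to transport the statement across the Quillen adjunction $\cE^* \dashv \cE$ of Definition \ref{cEdef}, reducing it to Proposition \ref{rhommc}. Since $\cE\colon cs\Gp \to QM^*(\bS)$ is right Quillen and preserves weak equivalences, it preserves fibrant objects, so for $G \in cs\Gp$ fibrant the quasi-comonoid $\cE(G)$ is fibrant in $QM^*(\bS)$. Moreover, as in the proof of Proposition \ref{rhommc}, $\Xi$ is a cofibrant replacement of $\iota\bt$ in $QM^*(\bS)$, so $\oL\cE^*(\iota\bt) = \cE^*(\Xi)$, and this is what computes the derived mapping space on the left-hand side.

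First I would run the formal chain of equivalences
$$
\oR\HHom_{cs\Gp}(\cE^*(\iota\bt), G) \simeq \oR\HHom_{QM^*(\bS)}(\iota\bt, \cE(G)) \simeq \mmc(\cE(G)),
$$
where the first equivalence is the derived adjunction $\oL\cE^* \dashv \oR\cE$ (using $\oR\cE(G) = \cE(G)$ for $G$ fibrant) and the second is Proposition \ref{rhommc} applied to the fibrant object $\cE(G)$. It then remains to produce a natural weak equivalence $\mmc(\cE(G)) \simeq \mmc(G)$ relating the cube-parametrised Maurer--Cartan space of the quasi-comonoid $\cE(G)$ (Definition \ref{mcdefqm}, with $\omega_n$ indexed by $I^n$) to the simplex-parametrised Maurer--Cartan space of the cosimplicial simplicial group $G$ (Definition \ref{mcdefgp}, with $\omega_n$ indexed by $\Delta^n$). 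On vertices this is precisely the equality $\mc(\cE(G)) = \mc(G)$ recorded in Definition \ref{cEdef}, and I would upgrade it to all simplicial degrees using the comparison of these two combinatorial models supplied by \cite{htpy} Lemma \ref{htpy-maurercartan}, from which the relations of Definition \ref{mcdefgp} are drawn.

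The main obstacle is exactly this last comparison: the purely monoidal quasi-comonoid structure on $\cE(G)$ forces the cubical parameters $I^n$, whereas the group structure on $G$ --- in particular the inverse appearing in the $\pd_0$-face relation of Definition \ref{mcdefgp}, which is invisible through the quasi-comonoid structure alone --- permits the more economical simplicial model $\Delta^n$. I expect to resolve this by realising $\mmc(G)$ as $\HHom_{cs\Gp}(\Psi, G)$ for a cofibrant $\Psi \in cs\Gp$ built from the $\Delta^n$ and the group operations, then showing that $\Psi$ and $\cE^*(\Xi)$ are both cofibrant resolutions of $\cE^*(\iota\bt)$ --- which follows from the contractibility of the $I^n$ and $\Delta^n$ together with the left-Quillen property of $\cE^*$ and the explicit description of $\cE^*$ --- and concluding that $\HHom_{cs\Gp}(\Psi, G) \simeq \HHom_{cs\Gp}(\cE^*(\Xi), G)$ for $G$ fibrant. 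All the remaining ingredients (fibrancy of $\cE(G)$, the derived adjunction, and the appeal to Proposition \ref{rhommc}) are formal.
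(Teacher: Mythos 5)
There is a genuine gap, and it sits in your very first step. You write that $\oL\cE^*(\iota\bt)=\cE^*(\Xi)$ ``is what computes the derived mapping space on the left-hand side''. But the left-hand side of the proposition is $\oR\HHom_{cs\Gp}(\cE^*(\iota\bt),G)$, the derived hom out of the object $\cE^*(\iota\bt)$ of $cs\Gp$, and this is computed by a cofibrant replacement of $\cE^*(\iota\bt)$ \emph{in $cs\Gp$}. The object $\cE^*(\Xi)$ is cofibrant (since $\cE^*$ is left Quillen), but the map $\cE^*(\Xi)\to\cE^*(\iota\bt)$ is not known to be a weak equivalence at this point: $\iota\bt$ is not cofibrant in $QM^*(\bS)$ (the paper notes this explicitly in Corollary \ref{btcot}), so Ken Brown's lemma does not apply, and a left Quillen functor need not preserve the weak equivalence $\Xi\to\iota\bt$. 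In fact the assertion $\oL\cE^*(\iota\bt)\simeq\cE^*(\iota\bt)$ is precisely the hard content of Proposition \ref{cfmc}, which the paper proves \emph{after} Proposition \ref{rhommc2} by a substantial homological argument (the $\jmath\dashv\jmath^*$ adjunction, $\oL\cot$, and a Hurewicz argument showing the components of $(\cE^*\Xi)^n$ are contractible) --- and whose proof \emph{uses} Proposition \ref{rhommc2}. So your route, as framed, is circular: it quietly assumes the comparison $\mmc(\cE(G))\simeq\mmc(G)$ that the paper only obtains downstream. Your suggestion that $\cE^*(\Xi)\simeq\cE^*(\iota\bt)$ ``follows from the contractibility of the $I^n$ \ldots and the left-Quillen property of $\cE^*$'' does not hold up: $\cE^*$ is not the levelwise free simplicial group on the underlying simplicial sets of $\Xi$, because it must impose the relations encoding the Alexander--Whitney product $g*h=((\pd^{m+1})^ng)\cdot((\pd^0)^mh)$, and contractibility of the generators does not formally survive such a quotient.

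The salvageable (and in fact correct) part of your proposal is the final device: realising $\mmc(G)$ as $\HHom_{cs\Gp}(\Psi,G)$ for an explicit cofibrant $\Psi$ with a weak equivalence $\Psi\to\cE^*(\iota\bt)$. This is exactly the paper's proof, with $\Psi=\Phi$, $\Phi^n=\Fr(\coprod_{j<n}\Delta^j)$ carrying the operations dual to those on $\bar{W}G$; here the weak equivalence $\Phi\to\cE^*(\iota\bt)$ \emph{is} a formal consequence of contractibility, because $\Phi^n$ is genuinely a free simplicial group on a simplicial set whose components are contractible, matching the free group on $n$ generators $\cE^*(\iota\bt)^n$. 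Cofibrancy of $\Phi$ is checked by a surjectivity argument as in Lemma \ref{mcqsub}. Once you have this, the proposition follows directly as $\oR\HHom_{cs\Gp}(\cE^*(\iota\bt),G)\simeq\HHom_{cs\Gp}(\Phi,G)\cong\mmc(G)$, and the entire detour through $\cE(G)$, Proposition \ref{rhommc} and the cubical model $\mmc(\cE(G))$ should be deleted rather than repaired.
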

\begin{proof}
This is essentially the same as Proposition \ref{rhommc}. First, note that $\cE^*(\iota\bt)^n$ is the free group on $n$ generators, with constant simplicial structure. If $s$ is the unique element of $(\iota\bt)^1$, then the generators in level $n$ are given by $\pd^{i_{n-1}}\ldots \pd^{i_2}\pd^{i_1}s$, for $0< i_1 <i_2< \ldots < i_{n-1} \le n$.  We then define $\Phi \in cs\Gp$ by 
$$
\Phi^n:=\Fr(\coprod_{j < n}\Delta^{j}),
$$
where $\Fr$ denotes the free group functor. We give this the operations dual to those on $\bar{W}G$ in \cite{sht} Ch.V, i.e. for $x \in \Delta^j \subset \Phi^n$, we set
\begin{eqnarray*}
\pd^i_{\Phi}(x)&=& \left\{ \begin{matrix}  \pd^{i-n+j}_{\Delta}(x)   & i> n-j\\  
(\pd^0_{\Delta}x)\cdot x & i=n-j \\
x  & i < n-j.\end{matrix}\right.\\
\sigma^i_{\Phi}(x) &=& \left\{ \begin{matrix}  \sigma^{i-n+j}_{\Delta}(x)   & i\ge n-j\\  
1 & i=n-j \\
x  & i < n-j.\end{matrix}\right.
\end{eqnarray*}

Thus $\Hom_{s\Gp}(\Phi^n, G^m)= (\bar{W}G^m)_n$, with cosimplicial operations on $\Phi$ corresponding to simplicial operations on $\bar{W}G$,   so  we have
$$
\Hom_{cs\Gp}(\Phi, G)= \{f\in \prod_n (\bar{W}G^n)_n\,:\, \pd^if_n= \pd_if_{n+1} \, \sigma^if_n= \sigma_i f_{n-1}\} =  \Hom_{c\bS}(\Delta, \bar{W}G),
$$ 
where $\Delta$ is the cosimplicial space given by $\Delta^n$ in level $n$.

The proof of \cite{sht} Lemma V.5.3 adapts to show that this is isomorphic to $\mc(G)$. Explicitly,  $\underline{\omega} \in \mc(G)$ corresponds to the maps $\Delta^n \to\bar{W}G^n$ given by the element $(\omega_{n-1}, \pd^0 \omega_{n-2}, \ldots,(\pd^0)^{n-1}\omega_0) \in (\bar{W}G^n)_n$. (This also implies that $\Phi \cong G(\Delta)$, for $G$ the  loop group functor of  \cite{sht} \S V.5.) Hence
$$
\Hom_{cs\Gp}(\Phi, G)\cong \mmc(G).
$$

 The inclusion $\mc(G_0) \into \mmc(G)$ corresponds to a map $ \Phi \to \iota\bt$. Since $\coprod_{i < n}\Delta^{i}$ is weakly equivalent to a disjoint union of $n-1$ points, this map must be a weak equivalence.

Now, we may show (similarly to  Lemma \ref{mcqsub}, replacing $I^n$ by $\Delta^n$) that for all trivial fibrations $G \to H$, 
$$
\mc(G)\to {\mc}(H)
$$
is a surjection, so $\Phi$ is cofibrant. 

Thus, for $G$ fibrant,
$$
\oR \HHom(\cE^*(\iota\bt), G) \simeq \HHom(\Phi, G) \simeq \mmc(G).
$$
\end{proof}

\subsubsection{Equivalence of Maurer-Cartan spaces}

\begin{proposition}\label{cfmc}
There are equivalences
$$
\mmc(\cE(G))\simeq \mmc(G)
$$
in $\bS$, functorial in fibrant objects $G \in cs\Gp$. Here, the functors $\mmc$ on the left and right are those from Definitions \ref{mcdefqm} and \ref{mcdefgp} respectively.
 
\end{proposition}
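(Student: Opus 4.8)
The plan is to reduce the statement to a comparison of cofibrant resolutions, using the two representability results, Propositions~\ref{rhommc} and~\ref{rhommc2}, together with the Quillen adjunction $\cE^*\dashv\cE$ of Definition~\ref{cEdef}. Fix $G\in cs\Gp$ fibrant; since $\cE$ is right Quillen, $\cE(G)$ is fibrant in $QM^*(\bS)$, so by the identity $\mmc(E)=\HHom_{QM^*(\bS)}(\Xi,E)$ from Proposition~\ref{rhommc} we have $\mmc(\cE G)=\HHom_{QM^*(\bS)}(\Xi,\cE G)$ for $\Xi$ as in Definition~\ref{Xidef}. As $\cE^*\dashv\cE$ is a simplicial adjunction (both simplicial structures being levelwise), this is naturally isomorphic to $\HHom_{cs\Gp}(\cE^*\Xi,G)$. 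On the other side, the proof of Proposition~\ref{rhommc2} produces a cofibrant object $\Phi\in cs\Gp$, weakly equivalent to $\cE^*\iota\bt$, with $\HHom_{cs\Gp}(\Phi,G)\cong\mmc(G)$. Since $\cE^*$ is left Quillen and $\Xi$ is cofibrant, $\cE^*\Xi$ is cofibrant, and for fibrant $G$ the functor $\HHom_{cs\Gp}(-,G)$ carries weak equivalences between cofibrant sources to weak equivalences. The proposition therefore reduces to exhibiting a ($G$-independent) weak equivalence $\cE^*\Xi\simeq\Phi$ in $cs\Gp$.

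I would obtain this by factoring through $\cE^*\iota\bt$: the collapse $\Xi\to\iota\bt$ induces $\cE^*\Xi\to\cE^*\iota\bt$, while $\Phi\to\cE^*\iota\bt$ is already a weak equivalence, so it suffices to prove that $\cE^*\Xi\to\cE^*\iota\bt$ is a weak equivalence (then $\cE^*\Xi$ and $\Phi$ are both cofibrant replacements of $\cE^*\iota\bt$, hence equivalent). Equivalently this says $\oL\cE^*\iota\bt\simeq\cE^*\iota\bt$. I emphasise that one cannot shortcut this through the derived adjunction: by Proposition~\ref{rhommc} and the derived form of $\cE^*\dashv\cE$, the assertion $\oL\cE^*\iota\bt\simeq\cE^*\iota\bt$ is logically \emph{equivalent} to the proposition itself, so genuine input is needed. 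The input I propose is a direct computation of $\cE^*\Xi$ from the universal property of $\cE^*$: a map $\cE^*\Xi\to H$ is exactly a map $\Xi\to\cE H$ in $QM^*(\bS)$, and unwinding the product relation $x*y=(\pd^{m+1})^n x\cdot(\pd^0)^m y$ shows that $(\cE^*\Xi)^n$ is freely generated by the \emph{indecomposable} elements of the $\Xi^m$ with $m\le n$. Since the product in $\Xi$ inserts a coordinate equal to $0$, these indecomposables assemble into a disjoint union of copies of the quotients $I^{j}/\gimel^{j}$, one copy for each free generator of $(\cE^*\iota\bt)^n$. This is in complete parallel with the abelian computation of $\cot\Xi$ in Corollary~\ref{cot2} (with $\Fr$ in place of $\Z\ten(-)$), and with the shape of $\Phi^n=\Fr(\coprod_{j<n}\Delta^j)$.

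The main obstacle is making this identification precise and checking that the cosimplicial coface and codegeneracy maps agree with those of $\cE^*\iota\bt$ under the collapse. Granting it, the argument concludes quickly: each $I^{j}/\gimel^{j}$ is contractible, since $\gimel^{j}$ is a contractible subcomplex of the contractible cube $I^{j}$, so collapsing it to the corresponding point is a weak equivalence of simplicial sets. The levelwise free group functor $\Fr\colon\bS\to s\Gp$ is left Quillen (its right adjoint, the forgetful functor, detects fibrations and weak equivalences), hence preserves weak equivalences; therefore each $(\cE^*\Xi)^n\to(\cE^*\iota\bt)^n$ is a weak equivalence of simplicial groups, and so $\cE^*\Xi\to\cE^*\iota\bt$ is a weak equivalence in $cs\Gp$. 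Combined with $\Phi\xra{\sim}\cE^*\iota\bt$ this gives $\cE^*\Xi\simeq\Phi$, and applying $\HHom_{cs\Gp}(-,G)$ for fibrant $G$ produces the required equivalence $\mmc(\cE G)\simeq\mmc(G)$, natural in fibrant $G$.
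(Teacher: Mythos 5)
Your reduction of the proposition to the single assertion that $\cE^*\Xi\to\cE^*(\iota\bt)$ is a weak equivalence in $cs\Gp$ is exactly the paper's reduction, and your observation that this cannot be extracted formally from the derived adjunction is well taken. The gap lies in the proposed proof of that assertion. The relations defining $\cE^*\Xi$ identify the restriction of the top generator $\omega_n\co I^n\to(\cE^*\Xi)^{n+1}$ to the \emph{whole} boundary $\pd I^n$ --- not merely to $\gimel^n$ --- with specified \emph{words} in cofaces of lower generators: $\omega_n(\ldots,1,\ldots)=\pd^i\omega_{n-1}(\ldots)$, while $\omega_n(s,0,t)=(\pd^{m+2})^{n-m}\omega_m(s)\cdot(\pd^0)^{m+1}\omega_{n-m-1}(t)$, and in particular $\omega_n(0,t)=(\pd^2)^{n}\omega_0\cdot\pd^0\omega_{n-1}(t)$. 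Because these targets are products rather than basepoints, and because free groups admit no analogue of the Dold--Kan splitting $C^n=N_c^n\oplus(M')^n$ that underlies Corollary \ref{cot2}, $(\cE^*\Xi)^{n+1}$ is \emph{not} a free simplicial group on a disjoint union of copies of $I^j/\gimel^j$. One can, using that $\gimel^n\into I^n$ is anodyne and the lower skeleton is fibrant, change variables so that the new generator vanishes on $\gimel^n$; but the remaining face $\{0\}\by I^{n-1}$, whose image in $I^n/\gimel^n$ is $I^{n-1}/\pd I^{n-1}\simeq S^{n-1}$, is still glued to a word involving $\pd^0$ of the previous generator. Each level is therefore obtained by attaching a cell along a sphere via a nontrivial attaching map, and whether these attachments preserve the homotopy type is precisely the point at issue; it cannot be read off from contractibility of the individual pieces. (The bookkeeping also does not match: paralleling Corollary \ref{cot2}, level $n$ contains $\binom{n}{n-m}$ coface copies of the $m$-th piece, i.e.\ $2^n-1$ generating summands in all, not one per free generator of $(\cE^*(\iota\bt))^n$.)

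This is exactly the difficulty the paper's proof is built to circumvent. It first computes $\pi_0(\cE^*\Xi)^n=\cE^*(\iota\bt)^n$ using that $\cE^*$ preserves colimits, then invokes the Hurewicz theorem for the universal cover of $B(\cE^*\Xi)^n$ to reduce contractibility of the components to the vanishing of homology with local coefficients in $\Z\cE^*(\iota\bt)^n$. That homology is identified with the derived cotangent object $\oL\cot\cE^*\Xi$, which is shown to equal $\jmath^*\oL\cot(\iota\bt)$ via the auxiliary Quillen pair $(\jmath^*,\jmath)$ and is therefore acyclic in positive degrees by Corollary \ref{cot2}. In other words, the ``complete parallel with the abelian computation'' that you invoke is valid only \emph{after} linearising, and the Hurewicz/local-coefficient step that justifies passing from the nonabelian cell structure to that linearisation is the essential ingredient missing from your argument.
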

\begin{proof}
Since $G$ is fibrant, $\cE(G)= \oR \cE(G)$.  By Proposition \ref{rhommc},
$$
\mmc(\cE(G))\cong \oR\Hom_{QM^*(\bS)}(\iota\bt, \oR\cE(G))\cong \HHom_{cs\Gp}(\oL\cE^*(\iota\bt), G).
$$
Thus, by Proposition \ref{rhommc2}, we need only show that
$$
\oL\cE^*(\iota\bt) \to \cE^*(\iota\bt)
$$
is a weak equivalence in $cs\Gp$. A model for $\oL\cE^*(\iota\bt)$ is given by $\cE^*\Xi$, for $\Xi$ from Definition \ref{Xidef}.

Since $\cE^*$ is a left adjoint, it commutes with coequalisers, so  $ \pi_0(\cE^*\Xi)^n = \cE^*(\pi_0\Xi^n)= \cE^*((\iota\bt)^n)$. Since  $\cE^*(\iota\bt)^n$ is a discrete group,   we need only show that the components of $(\cE^*\Xi)^n$ are contractible for all $n$. This is equivalent to saying that the universal cover $W$  of $B\cE^*(\iota\bt)^n$ is contractible, which will follow from the Hurewicz theorem if $\H_i(W, \Z) = 0$ for all $i>1$. This is the same as saying  that the homology groups $\H_i((\cE^*\Xi)^n, \Z\cE^*(\iota\bt)^n)$ are zero for all $i>1$. 

Let $cs\Rep(\cE^*(\iota\bt))  $ be the category of  abelian cosimplicial simplicial $\cE^*(\iota\bt)$-representations, and consider the functor $cs\Rep(\cE^*(\iota\bt)) \to cs\Gp\da \cE^*(\iota\bt)$ given by $V \mapsto V\rtimes \cE^*(\iota\bt)$. This is a right adjoint, and is right Quillen for the Reedy model structure on $cs\Rep(\cE^*(\iota\bt))$. Denote the derived left adjoint by $\oL \cot$, and observe that for $V \in s\Rep(\cE^*(\iota\bt)^n)$,
$$
\Hom_{\Ho(s\Rep(\cE^*(\iota\bt)^n))}((\oL \cot G)^n,V) \cong \Hom_{\Ho(s\Gp\da \cE^*(\iota\bt)^n)}(G^n, V[-i]\rtimes\cE^*(\iota\bt)^n) \cong  \bH^{1}(B(G^n), \bt; V),
$$
where the final group is hypercohomology (with coefficients in a complex of local systems). 
Thus
$$
\H_i(\oL \cot G)^n\cong \H_{i+1}(B(G^n), \bt; \Z \cE^*(\iota\bt)^n).
$$ 
It therefore suffices to show that $\H_i(\oL \cot \cE^*\Xi)=0$ for all $i \ge 1$.

Now, for $W \in cs\Rep(\cE^*(\iota\bt))$,
\begin{eqnarray*}
&&
\HHom_{cs\Gp\da \cE^*(\iota\bt)}(\cE^*\Xi, W \rtimes \cE^*(\iota\bt))\\
&&\cong
\HHom_{QM^*(\bS)\da \cE\cE^*(\iota\bt)}(\Xi, \cE(W \rtimes \cE^*(\iota\bt)) )\\
&&\cong
\HHom_{QM^*(\bS)}(\Xi,  \cE(W \rtimes \cE^*(\iota\bt))\by_{\cE\cE^*(\iota\bt)}\iota\bt   )\\
&&\cong
\HHom_{QM^*(\Ab,\by)}(\cot \Xi, \cE(W \rtimes \cE^*(\iota\bt))\by_{\cE\cE^*(\iota\bt)}\iota\bt  ).
\end{eqnarray*}

This leads us to consider the functor $\jmath: cs\Rep(\cE^*(\iota\bt)) \to cs\Ab $, given by 
$$
W \mapsto \CC( \cE(W \rtimes \cE^*(\iota\bt))\by_{\cE\cE^*(\iota\bt)}\iota\bt).
$$
Explicitly, we see that this has objects $w \cdot \varpi_m$ in level $m$, for $\varpi_m$ the image of $ (\iota\bt)^m \to  \cE\cE^*(\iota\bt)^m$. The operations are $\pd^i (w \cdot \varpi_m)= \pd^i(w) \cdot\varpi_{m+1}$ for $0 < i \le m$, $\sigma^i(w \cdot \varpi_m)= \sigma^i(w) \cdot \varpi_{m-1}$, and
\begin{eqnarray*}
\pd^0_{\jmath W}(w \cdot \varpi_m)&=& \varpi_1 *(w \cdot \varpi_m)= (\pd^2)^{m}(\varpi_1)\cdot \pd^0_W(w)\cdot  \pd^0(\varpi_{m})\\
&=& \ad_{(\pd^2)^{m}(\varpi_1)}(\pd^0_Ww)\cdot \varpi_{m+1},
\end{eqnarray*}
and
$$
\pd_{\jmath W}^{m+1}(w \cdot \varpi_m)=(w \cdot \varpi_m)*\varpi_1 =\pd^{m+1}_Ww \cdot \varpi_{m+1}.
$$

Thus $(\jmath W)^n \cong W^n$, with the same operations as $W$, except for 
$$
\pd^0_{\jmath W}(w)= ((\pd^2)^{m}\varpi_1)\star \pd^0_W(w),
$$
where $\star$ denotes the group action of $\cE^*(\iota\bt)^{m+1}$ on $W^{m+1}$.

It therefore follows that the left adjoint $\jmath^*$ is given by 
$$
(\jmath^*U)^n = U^n[\cE^*(\iota\bt)^{n}], 
$$
the free representation on generators $U^n$. This will have the same operations on generators as $U$, except that $\pd^0_{\jmath^*U }(u) = ((\pd^2)^{m}\varpi_1)^{-1}\star \pd^0_U(u)$. 

For the natural Reedy model structure on $cs\Rep(\cE^*(\iota\bt))$, $\jmath^*$ is clearly left Quillen, so it follows that $\jmath$ is a right Quillen functor. Moreover, the descriptions above show that $\jmath$ and $\jmath^*$ both preserve weak equivalences.
Thus
$$
\oL \cot \cE^*\Xi \simeq \jmath^*\oL\cot (\iota\bt),
$$
so $\H_i(\oL \cot \cE^*\Xi) \cong \H_iD(\Z[-1])$ by Lemma \ref{cot2}. In particular, $\H_i(\oL \cot \cE^*\Xi)=0$ for all $i>0$, as required.
\end{proof}

\subsubsection{$\ddel$}

\begin{definition}\label{ddefdefgp}
For $G \in cs\Gp$, there is an adjoint action  of $G^0$ on $\mmc(G)$, given by
$$
(g*\omega)_n= (\pd_0 (\pd^1)^{n+1}(\sigma_0)^{n+1}g) \cdot \omega_n \cdot (\pd^0 (\pd^1)^n(\sigma_0)^ng^{-1}),
$$
as in \cite{htpy} Definition \ref{htpy-defdef}.

We then define  $\ddel(G)$ to be the homotopy quotient $\ddel(G)= [\mmc(G)/^hG^0]:= \mmc(G)\by_{G^0}WG^0\in \bS$.
\end{definition}

\begin{remark}\label{pathdef}
If $G= \CC^{\bt}(X, H)$ for $X \in \bS, H \in s\Gp$, then $\ddel(G)\simeq \Map_{\bS}(X, \bar{H})$. This is because $\ddel(G)_n= \Hom(X, \bar{W}(G^{\Delta^n}) \by (\cosk_0 G_0)^n)$, and $[n] \mapsto \bar{W}(G^{\Delta^n}) \by (\cosk_0 G_0)^n$ gives a fibrant simplicial resolution of $\bar{W}G$ in $\bS$.
\end{remark}

\begin{proposition}\label{cfdef}
The equivalence of Proposition \ref{cfmc} is $G^0$-equivariant, giving  isomorphisms 
$$
\ddel(\cE(G))\simeq \ddel(G)
$$
in $\Ho(\bS)$, functorial in fibrant objects $G \in cs\Gp$.
\end{proposition}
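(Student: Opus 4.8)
The plan is to reduce the statement to the $G^0$-equivariance of the equivalence already constructed in Proposition \ref{cfmc}, and then to establish that equivariance by identifying the two adjoint actions intrinsically.

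First I would observe that both sides are homotopy quotients by the \emph{same} group $G^0=(\cE(G))^0$: by Definition \ref{ddefdefsub}, $\ddel(\cE(G))=\mmc(\cE(G))\by_{G^0}WG^0$, and by Definition \ref{ddefdefgp}, $\ddel(G)=\mmc(G)\by_{G^0}WG^0$. Viewing $G^0$ as a one-object simplicial groupoid, the Borel construction $X\mapsto X\by_{G^0}WG^0$ is the underlying space of the right Quillen functor $\ho\LLim_{G^0}$ of Lemma \ref{holimquillen}, so it carries $G^0$-equivariant weak equivalences of fibrant $G^0$-representations to weak equivalences. Since $\cE$ is right Quillen (Definition \ref{cEdef}) and $\mmc$ is right Quillen (Corollary \ref{mcquillen}), both $\mmc(\cE(G))$ and $\mmc(G)$ are fibrant for $G$ fibrant. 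Hence it suffices to promote each leg of the zig-zag of Proposition \ref{cfmc} to a $G^0$-equivariant weak equivalence.

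Next I would identify the two actions. On $\mmc(\cE(G))$ the adjoint action $\omega\mapsto g^{-1}*\omega*g$ of Definition \ref{ddefdefsub} is induced, via functoriality of $\mmc$, by the automorphism $c_g\colon \cE(G)\to\cE(G)$, $x\mapsto g^{-1}*x*g$; this is a quasi-comonoid automorphism because $g\in(\cE(G))^0$ is invertible under $*$, and one checks from axioms (4)--(7) of Lemma \ref{qmlemma} that $c_g$ commutes with every $\pd^i$ and $\sigma^i$. Unwinding the Alexander--Whitney product of Definition \ref{cEdef} gives, on level $n$,
$$
c_g(x)=((\pd^1)^n g^{-1})\cdot x\cdot ((\pd^0)^n g),\qquad x\in G^n.
$$
On $\mmc(G)$ the corresponding action is the gauge action of Definition \ref{ddefdefgp}, imported from \cite{htpy}.

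The main obstacle is that these two actions live on genuinely different models of the Maurer--Cartan space: the first uses the cubes $I^n$ of Definition \ref{mcdefqm}, the second the simplices $\Delta^n$ of Definition \ref{mcdefgp}, and the bridge between them is the nontrivial weak equivalence $\oL\cE^*(\iota\bt)\to\cE^*(\iota\bt)$ (realized by the cofibrant models $\cE^*\Xi$ and $\Phi$) produced in the proof of Proposition \ref{cfmc}. Moreover $c_g$ respects the quasi-comonoid structure but \emph{not} the extra cofaces $\pd^0,\pd^{n+1}$ encoding the cosimplicial group $G$ — the conjugating elements $(\pd^1)^n g^{\pm1}$ and $(\pd^0)^n g^{\pm1}$ fail to commute with them — so $c_g$ is not $\cE(\psi)$ for any endomorphism $\psi$ of $G$ in $cs\Gp$, and the action does not transfer through the adjunction $\cE^*\dashv\cE$ formally; likewise the gauge action is not a level-wise conjugation of $G$. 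I would therefore finish by a direct comparison on the explicit correspondence $\underline{\omega}\leftrightarrow(\omega_{n-1},\pd^0\omega_{n-2},\dots,(\pd^0)^{n-1}\omega_0)\in(\bar{W}G^n)_n$ of the proof of Proposition \ref{rhommc2}: transporting $\omega\mapsto g^{-1}*\omega*g$ along $\mmc(\cE(G))\xra{\sim}\mmc(G)\cong\Hom_{c\bS}(\Delta,\bar{W}G)$ yields exactly the gauge formula of Definition \ref{ddefdefgp}, which is precisely the compatibility recorded in \cite{htpy} Definition \ref{htpy-defdef}. Granting this, each leg of Proposition \ref{cfmc} is $G^0$-equivariant, and the reduction of the first paragraph delivers the isomorphism $\ddel(\cE(G))\simeq\ddel(G)$ in $\Ho(\bS)$, functorially in fibrant $G$.
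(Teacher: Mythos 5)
Your reduction in the first paragraph is sound as far as it goes: both sides are Borel constructions by the same simplicial group $G^0$, the Borel construction preserves weak equivalences between fibrant representations, and so it would suffice to exhibit a zig-zag of $G^0$-equivariant weak equivalences between $\mmc(\cE(G))$ and $\mmc(G)$. You also correctly identify the obstacle. But your final step --- ``a direct comparison on the explicit correspondence'' --- is where the argument breaks down: there is no map $\mmc(\cE(G))\to\mmc(G)$ along which to transport the adjoint action. The equivalence of Proposition \ref{cfmc} is the abstract statement that $\cE^*\Xi$ and $\Phi$ are both cofibrant replacements of $\cE^*(\iota\bt)$ in $cs\Gp$; the only maps in sight are $\cE^*\Xi\to\cE^*(\iota\bt)\la\Phi$, whose induced maps on mapping spaces into $G$ are not weak equivalences (the middle term computes the discrete set $\mc(G_0)$), and the correspondence $\underline{\omega}\leftrightarrow(\omega_{n-1},\pd^0\omega_{n-2},\dots)$ you invoke lives entirely inside the identification $\mc(G)\cong\Hom_{c\bS}(\Delta,\bar{W}G)$ on one side of the comparison. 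Any comparison map between the two cofibrant models produced by the lifting axioms will not respect the relevant coaction, so equivariance cannot be checked ``directly'' on it; transporting a strict group action along an isomorphism that exists only in $\Ho(\bS)$ does not produce the equivariant zig-zag your first paragraph requires.

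The paper supplies exactly the missing mechanism, and it runs contrary to your assertion that ``the action does not transfer through the adjunction $\cE^*\dashv\cE$ formally''. The $E^0$-action on $\mmc(E)$ is corepresented by a coassociative coaction $\Xi\to\Xi\sqcup\Z$ in $QM^*(\bS)$ (since $E^0\cong\HHom_{QM^*(\bS)}(\Z,E)$), and because $\cE^*$ preserves coproducts this transfers to a coaction of the cogroup $\cE^*(\Z)$ on $\cE^*(\Xi)$; the gauge action on $\mmc(G)$ is likewise corepresented by a coaction on $\Phi$, and the zig-zag $\cE^*(\Xi)\to\cE^*(\iota\bt)\la\Phi$ is coequivariant. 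To convert this into an equivalence of homotopy quotients the paper forms the cosimplicial bar construction $\beta(D)^n=D\star\cE^*(\Z)^{\star n}$, so that $\HHom_{cs\Gp}(\beta(D),G)$ is the nerve $B[\HHom_{cs\Gp}(D,G)/G^0]$, observes that $\beta$ applied to the zig-zag gives levelwise weak equivalences of levelwise cofibrant objects, and then uses a Reedy-cofibrant replacement of $\beta(\Phi)$ together with a factorisation of $\beta(\cE^*(\Xi))\to\beta(\cE^*(\iota\bt))$ in $(cs\Gp)^{\Delta}$ to manufacture an explicit weak equivalence between the two bar constructions; applying $\HHom_{cs\Gp}(-,G)$ and taking diagonals then yields $\ddel(G)\simeq\ddel(\cE(G))$. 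Packaging the coaction into the bar construction before invoking model-category lifting is the step your proposal lacks, and without it the required equivariance cannot be established.
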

\begin{proof}
If we let $\Z \in QM^*(\bS)$ be the object given by $\Z$ in degree $0$ and $\emptyset$ in higher degrees, then 
$$
E^0 \cong \HHom_{QM^*(\bS)}(\Z, E),
$$
so the $E^0$-action on $\mmc(E)$ corresponds to a $\Z$-coaction on $\Xi$, i.e. a map
$$
\Xi \to \Xi\coprod \Z
$$ 
in $QM^*(\bS)$, where $\coprod$ denotes coproduct in the category $QM^*(\bS)$, satisfying a coassociativity condition.

Since $\cE^*$ is a left adjoint, it preserves coproducts, so we get  a coaction of $\cE^*(\Z)$ on $\cE^*(\Xi)$, i.e. a coassociative map
$$
\cE^*(\Xi) \to \cE^*(\Xi)\star \cE^*(\Z),
$$
 where $\star$ denotes free product.   Of course, $\HHom_{cs\Gp}(\cE^*(\Z), G )\cong \cE(G)^0=G^0$. 

Now, the morphisms
$$
\mc(\cE(G)) \la \mc(\cE(G)_0) =\mc(G_0) \to \mc(G)
$$
are equivariant with respect to the action of $G^0_0$. Since $\Hom_{cs\Gp}(\cE^*(\Z), G )\cong G^0_0$, this amounts to saying that the weak equivalences
$$
\cE^*(\Xi) \to  \cE^*(\iota\bt) \la \Phi
$$
are $\cE^*(\Z)$-coequivariant, with the $\cE^*(\Z)$-coaction on $\Phi$ corresponding to the adjoint action of 
Definition \ref{ddefdefgp}. 

We now make use of the fact that for $H \in s\Gp$ acting on $Y \in \bS$, one model for the homotopy quotient $[Y/^hH]$ is given by first forming the simplicial object $[Y/H]$ in the category of groupoids, then forming the nerve $B[Y/H]$ (which is a bisimplicial set), giving $ [Y/^hH] \simeq \diag B[Y/H]$.

Given $C \in cs\Gp$ equipped with a $\cE^*(\Z)$-coaction, we now define the cosimplicial diagram $\beta(C) \in (cs\Gp)^{\Delta}$ by the property that $\HHom_{cs\Gp}(\beta(D), G)= B[\HHom_{cs\Gp}(D,G)/G^0] \in (\bS)^{\Delta^{\op}}$. Explicitly, $\beta(D)^n= D\star \overbrace{\cE^*(\Z)\star\cE^*(\Z) \star \ldots \star \cE^*(\Z)}^n$. 

Now $\cE^*(\Z)$ is just the cosimplicial group $\cE^*(\Z)^n = \overbrace{\Z\star\Z \star \ldots \star \Z}^{n+1}$, so $\cE^*(\Z)$ is levelwise cofibrant.
Since $\cE^*(\Z) $, $\cE^*(\Xi) $,  $\cE^*(\iota\bt)$ and  $\Phi$ are all levelwise cofibrant, the morphisms
$$
\beta(\cE^*(\Xi)) \to  \beta(\cE^*(\iota\bt)) \la \beta(\Phi)
$$
are (levelwise) weak equivalences.

In the Reedy model category $(cs\Gp)^{\Delta}$, we now choose a cofibrant replacement $C$ for $\beta(\Phi)$, and a factorisation  $\beta(\cE^*(\Xi)) \to F \to \beta(\cE^*(\iota\bt))$ with $\beta(\cE^*(\Xi)) \to F $ a trivial cofibration and $F \to \beta(\cE^*(\iota\bt))$ a trivial fibration. Since $C$ and $F$ are weakly equivalent in the Reedy model category $(cs\Gp)^{\Delta}\da \beta(\cE^*(\iota\bt))$, there is an explicit weak equivalence $f: C \to F$ in this category. Now, Reedy cofibrations are \emph{a fortiori} levelwise cofibrations, so the objects $C^n,F^n$ are cofibrant in $cs\Gp$. We therefore have levelwise weak equivalences
$$
\HHom_{cs\Gp}( \beta(\Phi) , G)\to \HHom_{cs\Gp}( C , G), \text{ and } \HHom_{cs\Gp}( F, G)\to \HHom_{cs\Gp}( \beta(\cE^*(\Xi)) , G)
$$
in $(\bS)^{\Delta^{\op}}$, for all $G \in cs\Gp$. Combining these with $f$  gives weak equivalences
$$
B[\mmc(G)/G^0] \to  \HHom_{cs\Gp}( C , G) \xla{f^*} \HHom_{cs\Gp}( F, G) \to B[\mmc(\cE(G))/G^0],
$$
in $(\bS)^{\Delta^{\op}}$, and taking diagonals gives the required result that $\ddef(G) \simeq \ddef(\cE(G))$.
\end{proof}

\subsection{Lie algebras}\label{liesn}

\subsubsection{Nilpotent DGLAs}

\begin{definition}
Let $DG_{\Z}\hat{N}LA$ denote the category  of pro-nilpotent differential graded Lie algebras (DGLAs) over $k$.  

Explicitly, a DGLA is a  graded vector space $L=\bigoplus_{i \in \Z} L^i$ over $k$, equipped with operators $[,]:L \by L \ra L$ bilinear and $d:L \ra L$ linear,  satisfying:

\begin{enumerate}
\item $[L^i,L^j] \subset L^{i+j}.$

\item $[a,b]+(-1)^{\bar{a}\bar{b}}[b,a]=0$.

\item $(-1)^{\bar{c}\bar{a}}[a,[b,c]]+ (-1)^{\bar{a}\bar{b}}[b,[c,a]]+ (-1)^{\bar{b}\bar{c}}[c,[a,b]]=0$.

\item $d(L^i) \subset L^{i+1}$.

\item $d \circ d =0.$

\item $d[a,b] = [da,b] +(-1)^{\bar{a}}[a,db]$
\end{enumerate}

Here $\bar{a}$ denotes the degree of $a$, mod $ 2$, for $a$ homogeneous.

A DGLA is said to be nilpotent if the lower central series $\Gamma_nL$ (given by $\Gamma_1L=L$, $\Gamma_{n+1}L= [L, \Gamma_nL]$) vanishes for $n\gg 0$.

Thus $DG_{\Z}\hat{N}LA$ is the category of pro-objects in the category of nilpotent DGLAs.
\end{definition}

\begin{definition}
Given a pro-nilpotent Lie algebra $\g$, define $\hat{\cU}(\g)$ to be the pro-unipotent completion of the universal enveloping algebra of $\g$, regarded as a pro-object in the category of algebras. As in \cite{QRat} Appendix A, this is a pro-Hopf algebra, and we define $\exp(\g) \subset  \hat{\cU}(\g)$ to consist of elements $g$ with $\vareps(g)=1$ and $\Delta(g)= g\ten g$, for $\vareps: \hat{\cU}(\g) \to k$ the augmentation (sending $\g$ to $0$), and $\Delta: \hat{\cU}(\g) \to \hat{\cU}(\g)\ten \hat{\cU}(\g)$ the comultiplication.

Since $k$ is assumed to have characteristic $0$, exponentiation gives an isomorphism from $\g$ to $\exp(\g)$, so we may regard $\exp(\g)$ as having the same elements as $\g$, but with multiplication given by the Campbell--Baker--Hausdorff formula. 
\end{definition}

\begin{definition}
Given a $\Z$-graded pro-nilpotent DGLA $L^{\bt}$, define the Maurer-Cartan set by 
$$
\mc(L):= \{\omega \in  L^{1}\ \,|\, d\omega + \half[\omega,\omega]=0 \in  \bigoplus_n L^{2}\}
$$
Define the gauge group $\Gg(L)$ by $\Gg(L):= \exp(L^0)$, which acts on $\mc(L)$ by the gauge action 
 $$
g(\omega)=   g\cdot \omega \cdot g^{-1} -dg\cdot g^{-1},
$$
where $\cdot$ denotes multiplication in the universal enveloping algebra of $L$. 
That $g(\omega) \in \mc(L)$ is a standard calculation (see \cite{Kon} or \cite{Man}).

Let $\pi^1(L):= \mc(L)/\Gg(L)$ be the quotient set.
\end{definition}

\begin{lemma}\label{obsdgla}
If $e:L \onto M$ has kernel $K$, with $[K,L]=0$, then there is an obstruction map $o_e:\pi^1(M) \to \H^2(K)$, with $o_e^{-1}(0)$ being the image of $\pi^1(L)$. Moreover, $\pi^1(L)$ is a principal $\H^1(K)$-bundle over $ o_e^{-1}(0)$ 
\end{lemma}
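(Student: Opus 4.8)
The plan is to run the classical obstruction-theoretic argument for a central extension of DGLAs, adapted to the pro-nilpotent setting. Throughout I would write $F(\omega):= d\omega + \half[\omega,\omega]$ for the curvature, so that $\mc(L)=\{\omega\in L^1: F(\omega)=0\}$, and record at the outset that since $[K,L]=0$ the ideal $K$ is abelian, its bracket vanishes, and the differential of the complex $(K,d)$ computing $\H^*(K)$ agrees with the $\omega$-twisted differential $d_\omega:= d+[\omega,-]$ restricted to $K$, for any $\omega\in L^1$.

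First I would construct $o_e$. Given $\bar\omega\in\mc(M)$, choose any lift $\omega\in L^1$ (possible since $e$ is surjective). Then $e(F(\omega))=F(\bar\omega)=0$, so $F(\omega)\in K^2$, and a short computation using the Jacobi identity together with $[F(\omega),\omega]=0$ (valid because $F(\omega)\in K$ and $[K,L]=0$) gives $dF(\omega)=0$; thus $F(\omega)$ is a cocycle and I set $o_e([\bar\omega]):=[F(\omega)]\in\H^2(K)$. Independence of the lift is immediate, since any two lifts differ by $k\in K^1$ and centrality gives $F(\omega+k)=F(\omega)+dk$. That $o_e$ descends to $\pi^1(M)$ I would deduce from the gauge-covariance $F(g(\omega))=\Ad_g F(\omega)$ together with the fact that $\Ad_g$ acts trivially on $K$ (as $\ad_\xi$ kills $K$ for $\xi\in L^0$); lifting a gauge transformation $\bar g\in\Gg(M)$ to $g\in\Gg(L)$, using surjectivity of $\exp(L^0)\to\exp(M^0)$, then shows the class is unchanged under gauge equivalence. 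The identification $o_e^{-1}(0)=\mathrm{im}(\pi^1(L)\to\pi^1(M))$ follows at once: $[F(\omega)]=0$ means $F(\omega)=dk$ with $k\in K^1$, whence $\omega-k\in\mc(L)$ lifts $\bar\omega$; conversely an honest $\mc(L)$-lift has vanishing curvature.

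For the bundle structure I would first set up the $\H^1(K)$-action. For a cocycle $c\in\z^1(K)$ and $\omega\in\mc(L)$, centrality gives $F(\omega+c)=F(\omega)+dc=0$, so $\z^1(K)$ acts on $\mc(L)$ by translation, visibly preserving the fibres of $e_*$. This descends to $\pi^1(L)$ because $g(\omega+c)=g(\omega)+c$, and coboundaries act trivially since $\exp(-k)(\omega)=\omega+dk$ for $k\in K^0$ (read off from the gauge formula using $\ad_k=0$); hence $\H^1(K)=\z^1(K)/dK^0$ acts on $\pi^1(L)$ preserving fibres over $o_e^{-1}(0)$. Transitivity on each nonempty fibre is then straightforward: represent two classes over $[\bar\omega]$ by genuine lifts $\omega,\omega'$ of $\bar\omega$ (correcting one representative by a lifted gauge transformation), and observe $\omega'-\omega\in\z^1(K)$.

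The delicate point, and the step I expect to be the main obstacle, is freeness of the action — equivalently, injectivity of $[c]\mapsto[\omega+c]$ on each fibre. My approach would be to fix a Maurer--Cartan lift $\omega$ and exploit the short exact sequence of cochain complexes $0\to(K,d)\to(L,d_\omega)\to(M,d_{\bar\omega})\to 0$. A gauge element $g=\exp(\xi)$ whose image fixes $\bar\omega$ moves $\omega$ by $g(\omega)-\omega=-\,d_\omega\xi\in\z^1(K)$, and the crucial input is the identity $\exp(\bar\xi)(\bar\omega)-\bar\omega=-\,\phi(\ad_{\bar\xi})(d_{\bar\omega}\bar\xi)$ with $\phi(x)=\tfrac{e^x-1}{x}$: since $\ad_{\bar\xi}$ is pro-nilpotent, $\phi(\ad_{\bar\xi})$ is invertible, so $\exp(\bar\xi)$ fixes $\bar\omega$ precisely when $\bar\xi$ is $d_{\bar\omega}$-closed. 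Feeding this through the connecting homomorphism $\partial\colon\H^0(M,d_{\bar\omega})\to\H^1(K)$ identifies exactly which cocycles $c$ become gauge-trivial after translating $\omega$, and thus pins down the stabilizer of a lift under the gauge group. Verifying that this stabilizer is trivial — so that the fibre is a genuine torsor under $\H^1(K)$ rather than a proper quotient — is the heart of the matter, and is precisely where the pro-nilpotent invertibility above and the centrality hypothesis $[K,L]=0$ must be used in full.
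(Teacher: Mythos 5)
Your construction of $o_e$ and your proof of the first assertion are correct and follow the standard route (note that the paper itself gives no argument here, merely citing \cite{Man} \S 3): the identity $dF(\omega)=0$ does use $[F(\omega),\omega]=0$ exactly as you say, independence of the lift is immediate from centrality, and gauge-invariance follows from $\Ad_g|_K=\id$ together with surjectivity of $\exp(L^0)\to\exp(M^0)$. Your translation action of $\z^1(K)$ on $\mc(L)$, its descent to an $\H^1(K)$-action on $\pi^1(L)$ via $\exp(-k)(\omega)=\omega+dk$, and the transitivity argument on fibres are likewise correct.

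The gap is the one you flag yourself, and it cannot be closed: the action is \emph{not} free in general, so the ``principal bundle'' clause fails as literally stated. Your own analysis shows why. For $\omega\in\mc(L)$ lifting $\bar\omega$, the class $[c]\in\H^1(K)$ fixes $[\omega]\in\pi^1(L)$ precisely when $c\equiv-d_\omega\xi$ modulo $dK^0$ for some $\xi\in L^0$ with $d_{\bar\omega}\bar\xi=0$ (once $d_\omega\xi\in K$, centrality collapses $\phi(\ad_\xi)(d_\omega\xi)$ to $d_\omega\xi$, so $\exp(\xi)(\omega)-\omega=-d_\omega\xi$); that is, the stabiliser is exactly $\im \pd$ for the connecting map $\pd\co\H^0(M,d_{\bar\omega})\to\H^1(K)$, and nothing forces this image to vanish. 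Concretely, take $L$ abelian with $L^0=k\xi$, $L^1=k\,d\xi$, all other terms zero, and $K=L^1$: then $\pi^1(M)=\pi^1(L)=\{*\}$ while $\H^1(K)=k$, so the fibre over $o_e^{-1}(0)$ is a single point and not an $\H^1(K)$-torsor. What your argument does fully establish is that $\H^1(K)$ acts transitively on each fibre of $\pi^1(L)\to o_e^{-1}(0)$ with stabiliser $\im\pd$, and this weaker statement is all that the paper uses (in Proposition \ref{psidef2} one has $\H^*(K)=0$, so the fibres are singletons regardless). You should therefore either prove the transitive-action version and note that freeness requires the extra hypothesis that $\z^0(L,d_\omega)\to\z^0(M,d_{\bar\omega})$ be surjective, or record explicitly that only transitivity is needed downstream.
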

\begin{proof}
This is well-known (see \cite{Man} \S 3,  for instance). 
\end{proof}

\begin{definition}
Let $O(\mc)$ be the pro-nilpotent DGLA representing $\mc$, so $\Hom(O(\mc), L) \cong \mc(L)$. Explicitly, $O(\mc)$ is the free  pro-nilpotent graded Lie algebra on one generator $x$ in degree $1$, with differential determined by $dx=\half[x, x]$. 

Similarly, define $O(\Gg)$ to represent $\Gg$; this is freely generated by $y, dy$, for $y \in O(\Gg)^0$. Note that this has a 
cogroup structure in $DG_{\Z}\hat{N}LA$, coming from the group structure on $\Gg$.

Define $T$ and $O(T)$ by $T(L)=\Hom(O(T),L):= \exp(\z^0L)$; this is freely generated by $z \in O(T)^0$, with $dz=0$. The embedding $T \into \Gg$ corresponds to the projection $y \mapsto z, dy \mapsto 0$, and $O(T)$ is a quotient cogroup of $O(\Gg)$ in $DG_{\Z}\hat{N}LA$.
\end{definition}

We may therefore regard $\mc, \Gg, T$ as being objects of the opposite category $(DG_{\Z}\hat{N}LA)^{\op}$, which is a full subcategory of the category of presheaves on $DG_{\Z}\hat{N}LA$, via the Yoneda embedding.

\begin{proposition}\label{psidef2}
The map $q:\Gg \to \mc$ given by $g \mapsto g(0)$ gives an isomorphism between $\mc$ and the right quotient of $\Gg$ by $T$ in the category $(DG_{\Z}\hat{N}LA)^{\op}$. 
\end{proposition}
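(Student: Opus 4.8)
The plan is to work entirely with the representing objects. Since $\mc$, $\Gg$ and $T$ are the objects of $(DG_{\Z}\hat{N}LA)^{\op}$ represented by $O(\mc)$, $O(\Gg)$ and $O(T)$, the morphism $q$ corresponds to a DGLA map $q^*\colon O(\mc)\to O(\Gg)$. Evaluating $q$ on the universal point $\exp(y)\in\Gg(O(\Gg))$, this map sends the generator $x$ to the universal logarithmic derivative $-d(\exp y)\cdot(\exp y)^{-1}\in O(\Gg)^1$. The right quotient of $\Gg$ by $T$ is by definition the coequalizer in $(DG_{\Z}\hat{N}LA)^{\op}$ of the right-multiplication action $a\colon\Gg\times T\to\Gg$ and the projection $p$; dually, it is the equalizer in $DG_{\Z}\hat{N}LA$ of the two maps $\alpha,\pi_1\colon O(\Gg)\rightrightarrows O(\Gg)\star O(T)$, where $\star$ is the coproduct of pro-nilpotent DGLAs, $\pi_1$ is the coproduct inclusion, and $\alpha$ is built from the cogroup structure on $O(\Gg)$ together with the quotient cogroup $O(T)$. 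So it suffices to prove that $q^*$ identifies $O(\mc)$ with this equalizer.

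First I would record two facts about the gauge action. The stabilizer of $0\in\mc(L)$ is exactly $T(L)$: indeed $g(0)=-dg\cdot g^{-1}$ vanishes iff $dg=0$, and writing $g=\exp(\gamma)$ a lowest-order argument in the lower central series shows $dg=0\iff d\gamma=0$, i.e. $g\in\exp(\z^0L)=T(L)$. Consequently, for $t\in T(L)$ we have $t(0)=0$ (since $dt=0$), so $q(gt)=g(t(0))=g(0)=q(g)$; thus $q$ is invariant under right translation by $T$. Dualising, $q^*$ equalizes $\alpha$ and $\pi_1$ and hence factors through the equalizer --- this is the formal statement that the universal Maurer--Cartan form is unchanged by right translation along the \emph{closed} generator $z$ of $O(T)$.

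It then remains to show that the induced map from $O(\mc)$ to the equalizer is an isomorphism. Injectivity is easy: the image $q^*(x)=-dy+(\text{brackets})$ has leading term the free generator $dy$, which survives in the abelianisation of $O(\Gg)$, and since $O(\mc)$ is free on $x$ this forces $q^*$ to be injective. The substance is surjectivity onto the equalizer, which I would establish by induction along the lower central series filtration: passing to the associated graded reduces the computation to the abelian model, where $O(\Gg)$ has generators $y$ (degree $0$) and $dy$ (degree $1$), the map $\alpha$ sends $y\mapsto y+z$, and a direct calculation shows the equalizer is precisely the degree-$1$ line spanned by $dy$, matching $q^*(x)$. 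A standard obstruction-theoretic lift along the complete filtration (in the spirit of Lemma \ref{obsdgla}) then promotes this graded statement to an isomorphism of pro-nilpotent DGLAs. The main obstacle is exactly this last step: controlling the free product $O(\Gg)\star O(T)$ and the CBH-defined action map $\alpha$ well enough to run the filtration argument, and in particular checking that no new equalizing elements appear in higher filtration degree. I would also note at the outset that the pointwise map $\Gg(L)/T(L)\to\mc(L)$ is only a bijection onto the gauge orbit of $0$ (for abelian $L$ this is $\mathrm{B}^1\subsetneq\z^1$ when $\H^1\neq0$), which is precisely why the quotient must be taken categorically in $(DG_{\Z}\hat{N}LA)^{\op}$ rather than objectwise.
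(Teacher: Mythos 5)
Your reduction to representing objects is sound as far as it goes: dualising the right quotient to the equalizer of $\pi_1,\alpha\colon O(\Gg)\rightrightarrows O(\Gg)\star O(T)$ is correct, the verification that $q^*$ factors through that equalizer (because $T$ stabilises $0$) is the easy half of the statement, and the injectivity argument via the leading term $dy$ in the abelianisation is fine. But the proof stops exactly where the proposition starts to have content. Surjectivity onto the equalizer is the whole point, and your sketch only computes the equalizer of $\gr_1\alpha$ and $\gr_1\pi_1$ (the abelianisation). To run the associated-graded argument you would need to identify $\mathrm{Eq}(\gr_n\alpha,\gr_n\pi_1)$ inside the degree-$n$ piece of the free Lie algebra on $y$, $dy$, $z$ for \emph{every} $n$, where $\gr_n\alpha$ carries the degree-$n$ Campbell--Baker--Hausdorff corrections, and then check that this equalizer coincides with $\gr_n(q^*O(\mc))$ — which for $n\ge 2$ is at most one-dimensional (spanned by the image of $[x,x]$) and zero for $n\ge 3$. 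You explicitly flag this as "the main obstacle", and no mechanism is offered for controlling it; the concern that "new equalizing elements appear in higher filtration degree" is precisely the thing that must be ruled out, so as written this is a genuine gap rather than a routine completion.

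For comparison, the paper avoids this computation entirely by changing categories instead of filtering. The forgetful functor $U$ to $\Z$-graded pro-nilpotent Lie algebras (forgetting $d$) reflects the relevant (co)equalizers, and has a right adjoint $R$ with $(RL)^n=L^n\by L^{n+1}$ which is acyclic; hence $(U\mc)(L)/(U\Gg)(L)=\pi^1(RL)=0$ by the obstruction calculus of Lemma \ref{obsdgla} applied up the lower central series. Transitivity of the gauge action after applying $U$ lets one evaluate at the universal element of $(U\mc)(U^{\op}O(\mc))$ to produce a section $s$ of $Uq$, and since you have already shown the fibres of $q$ over the orbit of $0$ are exactly the $T$-cosets, one gets an isomorphism $U\mc\by UT\cong U\Gg$, from which the coequalizer property is immediate. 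Your closing observation — that the objectwise map $\Gg(L)/T(L)\to\mc(L)$ only hits the gauge orbit of $0$ — is exactly the reason the paper's detour through $U$ and $R$ is needed; if you want to keep your representing-object strategy, the cleanest fix is to import that transitivity statement rather than attempt the graded CBH computation.
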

\begin{proof}
It is immediate that for $h \in T(L)$ we have $h(0)=0$, since $dh=0$. This gives a morphism $\Gg(L)/T(L) \to \mc(L)$, functorial in $L$, and we need to show that $\mc$ is the universal representable presheaf with this property, in other words that
$$
\xymatrix@1{   \Gg\by T    \ar@<0.5ex>[r]^-{\pr_1} \ar@<-0.5ex>[r]_-{\mu} & \Gg\ar[r]^{q} & \mc}
$$
is a coequaliser in $(DG_{\Z}\hat{N}LA)^{\op}$, where $\mu(g,t) = g\cdot t$.

The forgetful functor $U^{\op}$ from  $DG_{\Z}\hat{N}LA$ to the category $G_{\Z}\hat{N}LA$ of    pro-nilpotent $\Z$-graded Lie algebras preserves and reflects all equalisers, so $U:(DG_{\Z}\hat{N}LA)^{\op} \to (G_{\Z}\hat{N}LA)^{\op} $ preserves and reflects all coequalisers. It therefore suffices to show that $U\mc = (U\Gg)/(UT)$.

Now, the forgetful functor $U^{\op}$ has a right adjoint $R$, given by $(R L)^n= L^n\by L^{n+1}$, with $[(a,a'),(b,b')]= ([a, b], [a', b] +(-1)^{\bar{a}}[a, b'])$ and $d(a,a')= (a',0)$. This gives
$$
(U\mc)(L)/ (U\Gg)(L)= \mc(RL)/\Gg(RL) =\pi^1(RL). 
$$
Applying $R$ the tower  $   \ldots \to L/[L,[L,L]] \to   L/[L,L]$ gives a tower of surjections satisfying the conditions of Lemma \ref{obsdgla}. Since $\H^*(R M)= 0$ for all $M$, this  implies by induction that $\pi^1(RL)=0$ for all $L$. Thus $U\Gg$ acts transitively on $U\mc$. 

In particular, this means that the canonical element in $(U\mc)(U^{\op}O(\mc))$ is of the form $Uq(s)$ for some $ s \in (U\Gg)(U^{\op}O(\mc))$. Via the Yoneda embedding, $s$ is equivalent to the data of a section of $Uq: U\Gg \to U\mc$. Since the fibres of $Uq$ are precisely the $UT$-orbits, the map  $ U\mc \by UT \to U\Gg $ given by $(\omega, t)\mapsto s(\omega)\cdot t$ is an isomorphism. It follows immediately that the fork is a coequaliser, since $(U\mc \by UT)/UT = U\mc$.
\end{proof}

\subsubsection{Cosimplicial simplicial groups}

\begin{definition}\label{gauge}
Given $G \in  cs\Gp$, define the gauge group $\Gg(G)$ to  be the subgroup of  $\prod_n G_n^n$ consisting of those $\underline{g}$ satisfying
\begin{eqnarray*}
\pd_ig_n &=& \pd^{i}g_{n-1}  \quad \forall i>0, \\
\sigma_ig_n &=& \sigma^{i}g_{n+1} \quad \forall i,
\end{eqnarray*}
similarly to \cite{htpy} Definition \ref{htpy-gauge}.
Note that $G_0^0$ can be regarded as a subgroup of $\Gg(G)$, setting $g_n= (\pd^1)^n(\sigma_0)^ng$, for $g\in G_0^0$. The group $T(G):=\Tot(G)_0$ is the subgroup of $\Gg(G)$ consisting of those $\underline{g}$ for which $\pd_0g_n=\pd^0g_{n-1}$.

The action of Definition \ref{ddefdefgp} extends to an action of $\Gg(G)$ on $\mc(G)$, given by
$$
(g*\omega)_n= (\pd_0g_{n+1}) \cdot \omega_n \cdot (\pd^0g_n^{-1}),
$$ 
as in \cite{htpy} Definition \ref{htpy-defdef}.
\end{definition}

\begin{remark}\label{pathgauge}
If $G \in cs\Gp$ is of the form $G= \CC^{\bt}(X,H)$, for $X \in \bS$ and $H \in s\Gp$, then \cite{htpy} (Lemma \ref{htpy-maurercartan} and Proposition \ref{htpy-psidef}) gives a canonical isomorphism $\Gg(G) \cong \Hom_{\bS}(X, WH)$ for $W$  the canonical $G$-torsor on $\bar{W}G$, as in \cite{sht} Ch.V. Moreover,  $T(H) \cong \Hom_{\bS}(X, H)$, and the map $q:\Gg(G)\to \mc(G)$ corresponding to the gauge action on $1$ comes from the identification $\bar{W}G = G \backslash WG$ of \cite{sht} Ch.V. 
\end{remark}

\begin{lemma}
The functors $\mc$, $\Gg$ and $T$ are all representable in $cs\Gp$.
\end{lemma}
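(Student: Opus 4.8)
The plan is to realise each of $\mc$, $\Gg$ and $T$ as a small limit of representable functors and then to invoke cocompleteness of $cs\Gp$. Since $cs\Gp = \Gp^{\Delta\by\Delta^{\op}}$, it is complete and cocomplete, with limits and colimits computed via the corresponding constructions in $\Gp$; in particular it has free objects. For each bidegree $(m,n)$ the evaluation functor $\ev^m_n\co cs\Gp \to \Set$, $G \mapsto (G^m)_n$, is representable: if $F_{m,n}$ denotes the free cosimplicial simplicial group on a single generator in cosimplicial degree $m$ and simplicial degree $n$ --- obtained by left Kan extension along $\{\mathbf m\}\into \Delta$ from the free simplicial group on $\Delta^n$ --- then $(G^m)_n \cong \Hom_{cs\Gp}(F_{m,n}, G)$, naturally in $G$.

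Now all the data appearing in Definitions \ref{mcdefgp} and \ref{gauge} --- the cosimplicial operations $\pd^i,\sigma^i$, the simplicial operations $\pd_i,\sigma_i$, the multiplication and the inversion --- are natural transformations between finite products of the functors $\ev^m_n$. Hence each defining equation is the equaliser of a pair of such natural transformations, and $\mc$, $\Gg$ and $T$ are each presented as an equaliser of two maps between (in the case of $\Gg$ and $T$, infinite) products of evaluation functors. Since a product $\prod_i\Hom_{cs\Gp}(A_i,-)$ is represented by the coproduct $\coprod_i A_i$, and the equaliser of a pair $\Hom_{cs\Gp}(A,-)\rightrightarrows\Hom_{cs\Gp}(B,-)$ induced by a pair $B\rightrightarrows A$ is represented by the coequaliser of $B\rightrightarrows A$, cocompleteness of $cs\Gp$ produces representing objects for all three functors; concretely these are free cosimplicial simplicial groups on the relevant generators, modulo the two families of relations. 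Alternatively, one checks directly that $\mc$, $\Gg$ and $T$ preserve all limits and applies the Special Adjoint Functor Theorem (\cite{mac} Theorem V.8.2), since $cs\Gp$ is locally presentable.

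It is worth recording the representing objects explicitly, as they are the analogues of $O(\mc)$, $O(\Gg)$ and $O(T)$ from the Lie-algebra setting and will be used to mimic Proposition \ref{psidef2}. For $\mc$ one may take $O(\mc) = \Phi$, the object constructed in Proposition \ref{rhommc2}: that proposition supplies a natural isomorphism $\mmc(G)\cong\HHom_{cs\Gp}(\Phi, G)$, whence $\mc(G) = \mmc(G)_0 \cong \Hom_{cs\Gp}(\Phi, G)$. For $T$, the equality $T(G) = \Tot(G)_0 = \Hom_{c\bS}(\Delta, G)$ together with the adjunction between the forgetful functor $cs\Gp \to c\bS$ and the levelwise free group functor $\Fr$ gives $T(G)\cong\Hom_{cs\Gp}(\Fr\,\Delta, G)$, so $O(T) = \Fr\,\Delta$. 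Since $\Gg$ is obtained from $T$ precisely by discarding the matching condition $\pd_0 g_n = \pd^0 g_{n-1}$ --- the passage from the total space to its d\'ecalage --- the representing object should be $O(\Gg)\cong\Fr(\Dec\,\Delta)$, compatibly with Remark \ref{pathgauge}, in which $\mc$, $\Gg$ and $T$ recover $\bar{W}H$, $WH$ and $H$.

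The argument is essentially bookkeeping rather than conceptual, and the one point needing care is the infinitary nature of $\Gg$ and $T$: their defining conditions range over all $n$, so the functors must be handled as genuine (infinite) limits and their representing objects as genuine colimits, and identifying the abstract coequaliser presentations with the concrete objects $\Phi$, $\Fr\,\Delta$ and $\Fr(\Dec\,\Delta)$ rests on the explicit comparison of Proposition \ref{rhommc2} and the description of $\Tot$. Once the representing objects are in hand, the group structures on $\Gg$ and $T$ endow $O(\Gg)$ and $O(T)$ with cogroup structures in $cs\Gp$, exactly as for $O(\Gg)$ and $O(T)$ over $DG_{\Z}\hat{N}LA$.
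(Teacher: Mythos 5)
Your proposal is correct, and for the bare representability statement it is arguably more systematic than the paper's argument: the paper gives no soft limit/colimit argument at all, but simply exhibits the three representing objects $O(\mc)=\Phi$ (from Proposition \ref{rhommc2}), $O(T)^n=\Fr(\Delta^n)$ and $O(\Gg)^n=\Fr(\coprod_{j\le n}\Delta^j)$, equips the last with explicit cosimplicial operations, and checks the isomorphisms directly. Your first paragraph --- writing each functor as an equaliser of natural transformations between products of evaluation functors (the multiplications, inversions and unit conditions in the defining equations being natural because morphisms in $cs\Gp$ are levelwise group homomorphisms) and representing it by the corresponding coequaliser of coproducts of free objects --- is a valid alternative that handles the infinitary conditions on $\Gg$ and $T$ uniformly. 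The difference in emphasis matters downstream, however: the lemma is only exploited in Proposition \ref{psidef} through the \emph{explicit} presentations, since the cogroup structure on $O(\Gg)$, the quotient $i\co O(\Gg)\to O(T)$ and the isomorphism $\psi\co P\to O(\Gg)$ are all written in terms of the generators $\Delta^j\subset O(\Gg)^n$. Your identifications of $O(\mc)$ and $O(T)$ agree exactly with the paper's; your description $O(\Gg)\cong\Fr(\Dec\Delta)$ is correct on underlying simplicial groups in each cosimplicial level (since $\Dec\Delta^n\cong\coprod_{j\le n}\Delta^j$ via the last-vertex decomposition), but it leaves the cosimplicial operations implicit --- and these are genuinely different from the $W$-type operations on the isomorphic-looking object $P$ of Proposition \ref{psidef} --- so that is the one piece of data you would still need to write down before the lemma can be put to use.
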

\begin{proof}
$\mc$ is represented by  the object $\Phi$ defined in Proposition \ref{rhommc2}, which we will now denote by $O(\mc)$, so $\Hom_{cs\Gp}(O(\mc), G) \cong \mc(G)$. 

Define $O(T)$  by  $O(T)^n= \Fr(\Delta^n)$ (where $\Fr$ denotes the free group functor), with cosimplicial operations coming from those on the cosimplicial space $\Delta^{\bt}$; this gives $\Hom_{cs\Gp}(O(T),G)\cong T(G)$.

Similarly, define $O(\Gg)$ to represent $\Gg$; this  is given by $O(\Gg)^n= \Fr(\coprod_{j \le n}\Delta^{j})$, with operations given on $x \in \Delta^j \subset O(\Gg)^n$ by 

\begin{eqnarray*}
\pd^i_{O(\Gg)}(x)&=& \left\{ \begin{matrix}  \pd^{i-n+j}_{\Delta}(x)   & i> n-j\\  x  & i \le n-j.\end{matrix}\right.\\
\sigma^i_{O(\Gg)}(x) &=& \left\{ \begin{matrix}  \sigma^{i-n+j}_{\Delta}(x)   & i> n-j\\  x  & i \le n-j.\end{matrix}\right.
\end{eqnarray*}
The isomorphism $\Gg(G)\cong \Hom_{cs\Gp}(O(\Gg),G)$ is given by  $\underline{g} \in \Gg(G)$ mapping $ \Delta^j \subset O(\Gg)^n$ to $G^n$ via the element  $(\pd_0)^{n-j}g_n \in G^n_j$. 

Note that $O(\Gg)$ has a 
cogroup structure in $cs\Gp$, coming from the group structure on $\Gg$. Explicitly, this is the map $O(\Gg) \to O(\Gg)\star O(\Gg)$ given by $x \mapsto x\star x$ for $x \in \Delta^j \subset O(\Gg)^n$.  The embedding $T \into \Gg$ corresponds to the quotient map $i:O(\Gg) \to O(T)$ given by mapping $ \Delta^j \subset O(\Gg)^n$ to $\Delta^n \subset O(T)^n$ via $(\pd^0_{\Delta})^{n-j}$. 
Thus $O(T)$ is a quotient cogroup of $O(\Gg)$ in $cs\Gp$.
\end{proof}

We may therefore regard $\mc$, $\Gg$, and $T$ as being objects of the opposite category $(cs\Gp)^{\op}$.

\begin{proposition}\label{psidef}
The map $q:\Gg \to \mc$ given by $g \mapsto g*1$ gives an isomorphism between $\mc$ and the right quotient of $\Gg$ by $T$ in the category $(cs\Gp)^{\op}$. 
\end{proposition}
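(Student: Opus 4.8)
The plan is to follow the strategy of Proposition \ref{psidef2} as closely as possible, replacing the forgetful functor on DGLAs by an analogous one on $cs\Gp$. To say that $\mc$ is the right quotient of $\Gg$ by $T$ in $(cs\Gp)^{\op}$ is exactly to assert that
$$\xymatrix@1{\Gg\by T \ar@<0.5ex>[r]^-{\pr_1}\ar@<-0.5ex>[r]_-{\mu} & \Gg \ar[r]^{q} & \mc}$$
is a coequaliser in $(cs\Gp)^{\op}$, where $\mu(g,t)=g\cdot t$; equivalently, that $O(\mc)\to O(\Gg)\rightrightarrows O(\Gg)\star O(T)$ is an equaliser in $cs\Gp$. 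As in Proposition \ref{psidef2}, I would reduce this to two facts about the gauge action of $\Gg$ on $\mc$: that its stabiliser at $1\in\mc$ is exactly $T$, and that it is transitive.

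The stabiliser computation is immediate from the action formula of Definition \ref{gauge}: $g*1=1$ holds precisely when $\pd_0 g_{n+1}=\pd^0 g_n$ for all $n$, which is the defining condition for $T\subset\Gg$. The reduction is then organised as in the earlier proof. Introduce the forgetful functor $U\co cs\Gp \to (s\Gp)^{\N_0}$ to $\N_0$-graded simplicial groups, forgetting all cosimplicial operations. Since limits of cosimplicial objects are computed levelwise, $U$ creates limits, hence preserves and reflects equalisers; being a left adjoint it also preserves coproducts, so $U^{\op}$ preserves and reflects coequalisers and preserves the relevant products, and it suffices to establish the coequaliser statement after applying $U$. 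The right adjoint $R$ of $U$ is the cofree cosimplicial object $(RM)^n=\prod_{f\co\on\to\om}M^m$ of Lemma \ref{u*}, giving $U\mc=\mc(R-)$, $U\Gg=\Gg(R-)$ and $UT=T(R-)$, so the task becomes showing that $\Gg(RM)$ acts transitively on $\mc(RM)$ for every $M$.

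Transitivity is the main obstacle, and it is here that the cofreeness of $RM$ is used. The object $RM$ is homotopically trivial in the cosimplicial direction: it carries an extra codegeneracy $\sigma^{-1}$, so its conormalisation $N_c(RM)$ is contractible, exactly mirroring the vanishing $\H^*(RL)=0$ that drives Proposition \ref{psidef2}. Combining this with the obstruction theory for cosimplicial simplicial groups from \cite{htpy} — the analogue of Lemma \ref{obsdgla}, where one climbs a Postnikov/central tower of surjections with abelian (indeed central) kernels and solves at each stage because the controlling cohomology groups of $N_c(RM)$ vanish — forces $\mc(RM)/\Gg(RM)$ to be a point, which is the desired transitivity.

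Having shown transitivity, the concluding argument of Proposition \ref{psidef2} transfers verbatim: the canonical element of $(U\mc)(U^{\op}O(\mc))$ is of the form $Uq(s)$ for some $s\in(U\Gg)(U^{\op}O(\mc))$, so by Yoneda $s$ is a section of $Uq\co U\Gg\to U\mc$; since the fibres of $Uq$ are exactly the $UT$-orbits by the stabiliser computation, the map $U\mc\by UT\to U\Gg$ given by $(\omega,t)\mapsto s(\omega)\cdot t$ is an isomorphism, so $(U\mc\by UT)/UT=U\mc$ and the fork is a coequaliser after $U$. Reflecting back through $U^{\op}$ yields the coequaliser in $(cs\Gp)^{\op}$, as required. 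I expect the one point needing genuine care, beyond the formal reduction, to be the precise verification that $N_c(RM)$ is acyclic and that the obstruction-theoretic induction of \cite{htpy} applies in the graded simplicial setting; everything else is bookkeeping transported from the DGLA case.
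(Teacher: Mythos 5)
Your formal skeleton is sound as far as it goes: the stabiliser computation is correct (from Definition \ref{gauge}, $(g*1)_n=(\pd_0g_{n+1})\cdot(\pd^0g_n)^{-1}$, so $\mathrm{Stab}(1)=T$ exactly), and the reduction along the forgetful functor $U\co cs\Gp\to(s\Gp)^{\N_0}$ is legitimate, since $U$ is faithful, creates limits, preserves coproducts (levelwise free products on both sides) and reflects isomorphisms, so $U^{\op}$ preserves and reflects the relevant coequaliser. The problem is the step you yourself flag as delicate: transitivity of $\Gg(RM)$ on $\mc(RM)$. In Proposition \ref{psidef2} transitivity is \emph{not} a consequence of acyclicity alone; it is a consequence of acyclicity \emph{together with} pro-nilpotence, which supplies the tower $\ldots\to L/[L,[L,L]]\to L/[L,L]$ of surjections with central kernels to which Lemma \ref{obsdgla} applies. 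There is no such hypothesis in $cs\Gp$: an arbitrary (cosimplicial) simplicial group admits no central tower exhausting it, the kernels in a Moore--Postnikov tower of a simplicial group are abelian but not central in general, and the nonabelian analogue of the obstruction calculus of Lemma \ref{obsdgla} is exactly the statement that every principal bundle is classified by nonabelian cohomology --- which does not vanish merely because $N_c(RM)$ is contractible. (Compare Remark \ref{pathgauge}: for $G=\CC^{\bt}(X,H)$ transitivity of $\Gg(G)$ on $\mc(G)$ would say every map $X\to\bar{W}H$ lifts to $WH$, i.e.\ every principal $H$-bundle on $X$ is trivial; so transitivity genuinely depends on the special cofree form of $RM$ and must be extracted by hand, not by a formal acyclicity argument.) As written, this step is a gap.

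For what it is worth, the paper avoids the transitivity question entirely and argues at the level of representing objects: it introduces $P\in cs\Gp$ with $P^n=\Fr(\coprod_{j\le n}\Delta^j)$ carrying the operations dual to those on $WG$, exhibits an explicit isomorphism $\psi\co P\to O(\Gg)$ (adapting \cite{htpy} Proposition \ref{htpy-psidef}), and then observes that $P^n\cong O(\mc)^n\star O(T)^n$ compatibly with the $O(T)$-coaction encoding the right $T$-action on $\Gg$. This explicit equivariant splitting $O(\Gg)\cong O(\mc)\star O(T)$ --- the cosimplicial avatar of the trivialisation $WG\cong \bar{W}G\by G$ --- gives the coequaliser directly. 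If you want to salvage your route, the honest content you need is precisely such an explicit section of $q\co\Gg\to\mc$ (equivalently a retraction $O(\Gg)\to O(\mc)$ splitting the inclusion), constructed from the formulae for $W$ and $\bar{W}$; once you have that, transitivity is immediate and the rest of your argument goes through.
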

\begin{proof}
Define $P \in cs\Gp$ by 
$$
P^n:=\Fr(\coprod_{j \le n}\Delta^{j}).
$$
 We give this the operations dual to those on $WG$ in \cite{sht} Ch.V, i.e. for $x \in \Delta^j \subset P^n$, we set
\begin{eqnarray*}
\pd^i_{P}(x)&=& \left\{ \begin{matrix}  \pd^{i-n+j}_{\Delta}(x)   & i> n-j\\  
(\pd^0_{\Delta}x)\cdot x & i=n-j \\
x  & i < n-j.\end{matrix}\right.\\
\sigma^i_{P}(x) &=& \left\{ \begin{matrix}  \sigma^{i-n+j}_{\Delta}(x)   & i\ge n-j\\  
1 & i=n-j \\
x  & i < n-j.\end{matrix}\right.
\end{eqnarray*}
In particular, $O(\mc)^n$ is the simplicial subgroup of $P^n$ on generators $\coprod_{j < n}\Delta^{j}$, making $O(\mc)$ a subobject of $P$. 

Now, \cite{htpy} Proposition \ref{htpy-psidef} adapts to give an isomorphism   $\psi: P \to O(\Gg)$, given by mapping $x \in \Delta^j \subset P^n$ to $(\pd^0_{\Delta}x)\cdot  x^{-1}$ when $j< n$, and $x^{-1}$ when $j=n$. 

The right action of $T$ on $\Gg$ corresponds to the coaction $\mu:O(\Gg)\to O(\Gg) \star O(T)$ given by mapping  $x \in \Delta^j \subset O(\Gg)^n$ to $x_{\Gg}\cdot ((\pd^0_{\Delta})^{n-j}x)_T$, where $y_{\Gg}$ and $y_T$ denote the copies of $y$ in $O(\Gg)^n$ and in $O(T)^n$, respectively. There is an obvious isomorphism $P^n \cong O(\mc)^n\star O(T)^n$, since $O(T)^n= \Fr(\Delta^n)$, and this is equivariant for the $O(T)$-coaction (with trivial coaction on $O(\mc)$), since  $x \in \Delta^j \subset P^n$ has $\mu(\psi(x))= \psi(x) \in O(\Gg)$ for $j<n$, and   $\mu(\psi(x)) = \psi(x) \cdot i(x)$ when $j=n$.

Therefore
$$
\xymatrix@1{   \Gg\by T     \ar@<0.5ex>[r]^-{\pr_1} \ar@<-0.5ex>[r]_-{\mu} & \Gg\ar[r]^{q} & \mc}
$$
is a coequaliser in $cs\Gp^{\op}$, as required.
\end{proof}

\subsubsection{ Lie algebras to groups }

\begin{definition}
Given an $\N_0$-graded DGLA $L$, let $DL$ be its cosimplicial denormalisation. Explicitly,
$$
D^nL:= \bigoplus_{\begin{smallmatrix} m+s=n \\ 1 \le j_1 < \ldots < j_s \le n \end{smallmatrix}} \pd^{j_s}\ldots\pd^{j_1}L^m.
$$
We then  define operations $\pd^j$ and $\sigma^i$ using the cosimplicial identities, subject to the conditions that $\sigma^i L =0$ and $\pd^0v= dv -\sum_{i=1}^{n+1}(-1)^i \pd^i v$ for all $v \in L^n$.

We now have to define the Lie bracket $\llbracket,\rrbracket$ from $D^nL \ten D^nL$ to $D^n L$. Given a finite set  $I$ of distinct strictly positive integers, write $\pd^I= \pd^{i_s}\ldots\pd^{i_1}$, for $I=\{i_1, \ldots i_s\}$, with $i_1 < \ldots < i_s$. The Lie bracket is then   defined on the basis by 
$$
\llbracket \pd^Iv, \pd^J w\rrbracket:= \left\{ \begin{matrix} \pd^{I\cap J}(-1)^{(J\backslash I, I \backslash J)}[v,w] & v\in L^{|J\backslash I|}, w\in L^{|I\backslash J|},\\ 0 & \text{ otherwise},\end{matrix} \right.
$$
where for disjoint sets $S,T$ of integers, $(-1)^{(S,T)}$ is the sign of the shuffle permutation of $S \sqcup T $ which sends the first $|S|$ elements to $S$ (in order), and the remaining $|T|$ elements to $T$ (in order). 
Beware that this formula cannot be used to calculate  $\llbracket \pd^Iv, \pd^J w\rrbracket $ when $0 \in I \cup J$ (for the obvious generalisation of $\pd^I$ to finite sets $I$ of distinct non-negative integers).
\end{definition}

\begin{theorem}\label{cfexp}
Given a simplicial pro-nilpotent $\N_0$-graded DGLA $L^{\bt}_{\bt}$, there are canonical isomorphisms 
$$
\Gg(\exp(DL)) \cong \Gg(\Tot^{\Pi} N^sL), \quad \mc(\exp(DL)) \cong \mc(\Tot{\Pi} N^sL),  
$$
compatible with the respective gauge actions. Here, $D$ is cosimplicial denormalisation and $N^s$ is simplicial normalisation (as in Definition \ref{doldkan}). This isomorphism acts as the identity on the subgroups $\exp(L^0_0) \le\Gg(\exp(DL))$ and $\exp(L^0_0) \le \Gg(\Tot^{\Pi} N^sL)$
\end{theorem}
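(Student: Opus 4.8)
The plan is to reduce everything to a single comparison of gauge groups and then transport it to the Maurer--Cartan sets using the quotient descriptions already in hand. Proposition \ref{psidef} exhibits $\mc$ as the coequaliser of $\Gg\by T \rightrightarrows \Gg$ in $(cs\Gp)^{\op}$ with $q\co g\mapsto g\ast 1$, and Proposition \ref{psidef2} does the same in $(DG_{\Z}\hat{N}LA)^{\op}$ with $q\co g\mapsto g(0)$. So it suffices to produce a natural isomorphism of gauge groups
$$
\Gg(\exp(DL)) \cong \Gg(\Tot^{\Pi}N^sL)
$$
carrying the subgroup $T(\exp(DL))=\Tot(\exp DL)_0$ onto $T(\Tot^{\Pi}N^sL)=\exp(\z^0\Tot^{\Pi}N^sL)$ and restricting to the identity on $\exp(L^0_0)$. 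Passing to the $T$-quotient then yields the $\mc$-isomorphism, and since $q$ is base-point evaluation on both sides the two gauge actions are automatically intertwined.

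To build the gauge isomorphism I would unwind both sides through their representing objects. On the group side $\Gg(\exp(DL))=\Hom_{cs\Gp}(O(\Gg),\exp(DL))$ with $O(\Gg)^n=\Fr(\coprod_{j\le n}\Delta^j)$, so a gauge element is a compatible family $(g_n)$, $g_n\in\exp(D^nL_n)$, subject to the face/degeneracy relations of Definition \ref{gauge}. Because cosimplicial denormalisation $D$ is inverse to conormalisation $N_c$, the degeneracy relations $\sigma_ig_n=\sigma^ig_{n+1}$ pin down the non-normalised components, leaving as free data exactly the normalised diagonal pieces, which assemble into $\exp$ of the degree-$0$ part $\prod_b (N^sL)^b_b$ of $\Tot^{\Pi}N^sL$. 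The essential point is that under this dictionary the group law on $\exp(DL)$ --- built from the Campbell--Baker--Hausdorff formula together with the Alexander--Whitney product $g\ast h=((\pd^{m+1})^ng)\cdot((\pd^0)^mh)$ --- corresponds to the Campbell--Baker--Hausdorff product on $\Tot^{\Pi}N^sL$; this is precisely what the shuffle-sign bracket $\llbracket\pd^Iv,\pd^Jw\rrbracket$ defining the Lie structure on $DL$ was engineered to encode.

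I would verify the heart of this first in the purely cosimplicial case (no simplicial variable), where it is a direct computation: the normalisation $\sigma^0\omega=1$ forces $\omega\in L^1$, and feeding $\pd^0\omega=d\omega+\pd^1\omega-\pd^2\omega$ into the Campbell--Baker--Hausdorff expansion of $\pd^2\omega\cdot\pd^0\omega$ collapses, via the shuffle bracket, to $d\omega+\half[\omega,\omega]=0$, all higher brackets vanishing because $\llbracket\pd^Iv,\pd^Jw\rrbracket=0$ once the degree bookkeeping fails. The simplicial direction is then layered on by the same mechanism: the Alexander--Whitney product in the simplicial degree, together with $N^s$ and the total-complex differential $d^s+(-1)^ad_c$ of Definition \ref{totprod}, reproduces the Eilenberg--Zilber (Thom--Whitney) Lie structure on $\Tot^{\Pi}N^sL$. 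Throughout, pro-nilpotence guarantees convergence of all Campbell--Baker--Hausdorff and shuffle series and permits an inductive argument up the lower central series, whose abelian graded quotients reduce the identification to the linear Dold--Kan statement underlying Propositions \ref{cotcoho} and \ref{cot3}.

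The main obstacle is exactly this matching of the two non-abelian products: showing that the Alexander--Whitney product across simplicial degrees, transported through $N^s$ and $\Tot^{\Pi}$, \emph{is} the genuine shuffle Lie bracket of the total complex, and not merely agrees with it up to homotopy. Once that compatibility is nailed down --- either by the explicit shuffle computation sketched above or by reducing along the nilpotent filtration to the abelian case and lifting --- the statements about $T$, about gauge-equivariance, and about the identity on $\exp(L^0_0)$ follow formally, since $\exp(L^0_0)$ embeds in both gauge groups by the same formula $g_n=(\pd^1)^n(\sigma_0)^ng$ of Definition \ref{gauge} and $T$ is cut out identically on the two sides.
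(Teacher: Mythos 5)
Your proposal follows essentially the same route as the paper's proof: both establish the gauge-group isomorphism via the (non-abelian) Dold--Kan correspondence for the structure obtained by discarding $\pd^0$ and $\pd_0$, check that it carries $T$ onto $T$, and then deduce the Maurer--Cartan isomorphism by passing to the right quotients $\Gg/T$ using Propositions \ref{psidef} and \ref{psidef2}. One small correction: the group law on $\Gg(\exp(DL))$ is just componentwise Campbell--Baker--Hausdorff multiplication inside $\prod_n \exp(D^nL_n)$ --- the Alexander--Whitney product plays no role in the gauge group itself --- but this slip does not affect your argument, since your explicit Maurer--Cartan computation and the quotient step both use the correct structure.
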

\begin{proof}
In order to keep track of the various gradings in these categories, we will write $DG_{\Z}$ for $\Z$-graded cochain complexes, $DG$ for cochain complexes in non-negative degrees, and $dg$ for chain complexes in non-negative degrees.

On the  category  $sDG\hat{N}LA$ of simplicial pro-nilpotent $\N_0$-graded DGLA, these functors are all representable, since $\exp$, $D$, $\Tot^{\Pi}$ and $N^s$ are all right adjoints, so
\begin{eqnarray*}
\Hom_{sDG\hat{N}LA}(D^*\exp^*O(\Gg), L) &\cong& \Gg(\exp(DL))\\
\Hom_{sDG\hat{N}LA}(D^*\exp^*O(\mc), L) &\cong& \mc(\exp(DL))\\
\Hom_{sDG\hat{N}LA}((N^s)^*(\Tot^{\Pi})^*O(\Gg), L) &\cong& \Gg(\Tot^{\Pi} N^sL )\\
\Hom_{sDG\hat{N}LA}((N^s)^*(\Tot^{\Pi})^*O(\mc), L) &\cong&\mc(\Tot{\Pi} N^sL),
\end{eqnarray*}
for $\exp^*: cs\Gp \to cs\hat{N}LA$, $D^*: cs\hat{N}LA\to sDG\hat{N}LA$, $(\Tot^{\Pi})^*: DG_{\Z}\hat{N}LA \to dgDG\hat{N}LA$, and $(N^s)^*:dgDG\hat{N}LA\to  sDG\hat{N}LA$ the corresponding left adjoints. 

We may therefore regard the functors $D^*\exp^*\Gg$, $D^*\exp^*\mc$, $(N^s)^*(\Tot^{\Pi})^*\Gg$ and $(N^s)^*(\Tot^{\Pi})^*\mc$ as objects of $(sDG\hat{N}LA)^{\op}$, and likewise for $D^*\exp^*T$ and $ (N^s)^*(\Tot^{\Pi})^*T$.

Now, the isomorphism $\Gg(\exp(DL)) \cong \Gg(\Tot^{\Pi} N^sL)$ simply follows from the proof of the  Dold-Kan correspondence,  which generalises to give an equivalence of categories between $\N_0$-graded complexes and ``simplicial abelian groups without $\pd_0$''. This isomorphism maps the  subgroup $T(\exp(DL))$ to $T(\Tot^{\Pi} N^sL)$ isomorphically, by the usual Dold-Kan correspondence.

The isomorphism $D^*\exp^*\mc \cong (N^s)^*(\Tot^{\Pi})^*\mc$ follows by taking the right quotients 
$$
(D^*\exp^*\Gg)/(D^*\exp^*T) \cong ((N^s)^*(\Tot^{\Pi})^*\Gg)/((N^s)^*(\Tot^{\Pi})^*T)
$$
in $(sDG\hat{N}LA)^{\op}$, by Propositions \ref{psidef} and \ref{psidef2}.
\end{proof}

\begin{remark}
Note that this gives a shorter and more conceptual proof of \cite{htpy} Theorems \ref{htpy-qs} and \ref{htpy-defqs}, and that in that context we may shorten the arguments, replacing Proposition \ref{psidef} with \cite{htpy} Proposition \ref{htpy-psidef}.
\end{remark}

\begin{definition}
Define $\underline{\mc\Tot{\Pi} N^s}: sDG\hat{N}LA \to \bS$ by 
$$ 
\underline{\mc\Tot{\Pi} N^s}(L)_n:=   \mc(\Tot{\Pi} N^s (L^{\Delta^n})),
$$ 
and define $\ddel: sDG\hat{N}LA \to \bS$ to be the homotopy quotient $\ddel := [\underline{\mc\Tot{\Pi} N^s}/^h\exp(L^0)]$, i.e.
$$
\ddel(L) := \underline{\mc\Tot{\Pi} N^s} (L)\by_{\exp(L^0)}W\exp(L^0),
$$
where $\exp(L^0)_n \subset \Gg(L^{\Delta^n})$ acts via the gauge action.
\end{definition}

\begin{remark}
This is essentially the functor used to  construct derived formal stacks in \cite{hinstack}. 
\end{remark}

\begin{corollary}\label{expmap2}
There are  canonical isomorphisms
$$
\ddel(L) \cong \ddel(\exp(DL))
$$
in $\bS$, functorial in $L \in sDG\hat{N}LA$.
\end{corollary}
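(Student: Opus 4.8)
The plan is to realise both sides as Borel constructions for the same simplicial group and to obtain the equivalence by assembling the isomorphisms of Theorem \ref{cfexp} over the cosimplicial resolution $L^{\Delta^{\bt}}$. First I would pin down the group. Cosimplicial denormalisation is the identity in cosimplicial degree $0$, so $(DL)^0=L^0$ and hence $\exp(DL)^0=\exp(L^0)$ as simplicial groups. Thus, by Definition \ref{ddefdefgp}, $\ddel(\exp(DL))=\mmc(\exp(DL))\by_{\exp(L^0)}W\exp(L^0)$, which is a homotopy quotient by precisely the same simplicial group $\exp(L^0)$ used in the definition of $\ddel(L)=\underline{\mc\Tot{\Pi} N^s}(L)\by_{\exp(L^0)}W\exp(L^0)$.

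Next I would identify the two Maurer-Cartan spaces levelwise. Since $\mmc(G)_n=\mc(G^{\Delta^n})$ (which holds for $cs\Gp$ by the same reasoning as in Definition \ref{mcdefqm}), and since $D$ and $\exp$ are right adjoints, as recorded in the proof of Theorem \ref{cfexp}, they commute with the simplicial cotensor; therefore $\exp(DL)^{\Delta^n}=\exp(D(L^{\Delta^n}))$ and
$$
\mmc(\exp(DL))_n=\mc(\exp(D(L^{\Delta^n}))).
$$
Applying the isomorphism $\mc(\exp(DM))\cong\mc(\Tot^{\Pi}N^sM)$ of Theorem \ref{cfexp} to $M=L^{\Delta^n}$ gives $\mmc(\exp(DL))_n\cong\mc(\Tot^{\Pi}N^s(L^{\Delta^n}))=\underline{\mc\Tot{\Pi} N^s}(L)_n$. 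Because these isomorphisms are canonical, hence natural in the DGLA, they are compatible with the structure maps induced by $\Delta^m\to\Delta^n$, so they glue to a simplicial isomorphism $\mmc(\exp(DL))\cong\underline{\mc\Tot{\Pi} N^s}(L)$.

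Finally I would check equivariance and conclude. The $\exp(L^0)$-action on the $cs\Gp$-side (Definition \ref{ddefdefgp}) is the restriction to $\exp(DL)^0$ of the gauge action of $\Gg(\exp(DL))$ from Definition \ref{gauge}, and correspondingly on the DGLA side $\exp(L^0)$ acts by restriction of the gauge action of $\Gg(\Tot^{\Pi}N^s(L^{\Delta^n}))$; Theorem \ref{cfexp} identifies these gauge actions and fixes the common subgroup $\exp(L^0)$, so the levelwise isomorphisms above are $\exp(L^0)$-equivariant. Since the Borel construction $(-)\by_{\exp(L^0)}W\exp(L^0)$ is functorial in the equivariant simplicial set, it carries this equivariant isomorphism to the desired $\ddel(L)\cong\ddel(\exp(DL))$, natural in $L$. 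The main obstacle is exactly this equivariance bookkeeping: one must verify that the adjoint $G^0$-action of Definition \ref{ddefdefgp} really is the restriction of the full gauge action handled by Theorem \ref{cfexp}, and that applying that theorem to the whole object $L^{\Delta^{\bt}}$ yields simplicially compatible identifications rather than merely isomorphisms in each degree; both reduce to the naturality of the representing objects $O(\Gg),O(\mc),O(T)$, but that is where the care lies.
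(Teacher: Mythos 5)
Your proposal is correct and follows essentially the same route as the paper: the paper likewise applies Theorem \ref{cfexp} levelwise (to $L^{\Delta^n}$) to identify $\underline{\mc\Tot{\Pi}N^s}(L)$ with $\mmc(\exp(DL))$, observes that this identification is equivariant for the action of $\exp(L^0)_n=\exp((L^{\Delta^n})^0_0)\le\Gg(L^{\Delta^n})$, and concludes by passing to homotopy quotients. Your extra care about $D$ and $\exp$ commuting with cotensors and about the adjoint action being the restriction of the gauge action is exactly the bookkeeping the paper leaves implicit.
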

\begin{proof}
By Theorem \ref{cfexp}, $\underline{\mc\Tot{\Pi} N^s}(L) \cong \mmc(L)$. This isomorphism is equivariant with respect to the action of the simplicial group $\exp(L^0)_n= \exp( (L^{\Delta^n})^0_0 \le \Gg(L^{\Delta^n})$, so gives the required isomorphism on taking homotopy quotients.
\end{proof}

\appendix
\section{Quasi-monads, quasi-comonads and quasi-distributivity}\label{qmonadsn}

To date, the main context in which s.h. $\top$-algebras have been studied is when the monad $\top$ comes from an operad. This is partly because these were the only case for which a satisfactory theory of morphisms was developed in \cite{loop}.  Since  this difficulty was resolved for general monads (and even distributive monad-comonad pairs) in \S \ref{snerves}, we now look into how related constructions adapt to this generality. 

In \cite{KS}, free resolutions of operads were exploited to study deformations of algebras. Since an objectwise weak equivalence $\top' \to \top$ of arbitrary  monads on a simplicial category gives a weak equivalence of the associated  simplicial quasi-descent data (from Proposition \ref{enrichtop}), the respective Segal spaces of s.h. $\top$-algebras are weakly equivalent. This means that many of the ideas from \cite{KS} carry over to arbitrary monads.  

However, in some settings, such as \cite{laan}, operads are too restrictive, and homotopy operads have to be used instead. 
We now introduce quasi-monads and quasi-comonads, which give a context sufficiently general to be analogous to homotopy operads, while  
providing a natural generalisation of the constructions of \S \ref{algsn}. 

\subsection{Quasi-monads and quasi-comonads}

\begin{definition} 
Define  a quasi-monad (resp. a quasi-comonad) on a category $\cB$ to be a quasi-monoid (resp. quasi-comonoid), as in Definition \ref{qmdef}, in the monoidal category $(\End(\cB), \circ)$ of endofunctors of $\cB$. 
\end{definition}

Substituting the monoidal category $(\End(\cB), \circ)$ into Lemma \ref{qmlemma} yields the following two lemmas.

\begin{lemma}\label{qmonad}
A quasi-monad consists of functors $\top_n:\cB \to \cB$, together with the following data:
$$
\begin{matrix}
\mu_i:\top_{n+1} \abuts \top_{n} & 1\le i \le n\\
\eta_i:\top_{n-1}\abuts \top_{n} &0 \le i <n,
\end{matrix}
$$
an associative coproduct $\xi_{mn}: \top_{m+n}\abuts \top_m \circ \top_n$, with coidentity $\xi_0:  \top_0\abuts \id$, satisfying:
\begin{enumerate}
\item $\mu_i\mu_j=\mu_{j-1}\mu_i\quad i<j$.
\item $\eta_i\eta_j=\eta_{j+1}\eta_i \quad i \le j$.
\item 
$
\mu_i\eta_j=\left\{\begin{matrix}
			\eta_{j-1}\mu_i & i<j \\
			\id		& i=j,\,i=j+1 \\
			\eta_j\mu_{i-1} & i >j+1
			\end{matrix} \right. .
$

\item $(\mu_i \top_n)  \xi_{m+1,n} =\xi_{mn} \mu_i$.
\item $(\top_m \mu_i) \xi_{m,n+1}=\xi_{mn}\mu_{i+m} $.
\item $(\eta_i\top_n)\xi_{m-1,n}=\xi_{mn}\eta_i$.
\item $(\top_m \eta_i)\xi_{m,n-1}=\xi_{mn}\eta_{i+m}$.
\end{enumerate}
\end{lemma}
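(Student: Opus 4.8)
The plan is to deduce this directly from Lemma~\ref{qmlemma} by specialising the ambient monoidal category to $(\End(\cB), \circ)$ and invoking the definition of a quasi-monoid. Recall from Definition~\ref{qmdef} that a quasi-monad on $\cB$ is a quasi-monoid in $(\End(\cB), \circ)$, which by definition is a quasi-comonoid in the opposite monoidal category $(\End(\cB)^{\op}, \circ)$. Thus I would apply Lemma~\ref{qmlemma} with $\C = \End(\cB)^{\op}$ and $\ten = \circ$, and then translate the resulting data and relations back into $\End(\cB)$.

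The first step is to set up the dictionary. An object $X^n$ of $\C = \End(\cB)^{\op}$ is a functor $\top_n: \cB \to \cB$. A coface map $\pd^i: X^n \to X^{n+1}$ in $\C$ is, by definition of the opposite category, a morphism $X^{n+1} \to X^n$ in $\End(\cB)$, i.e. a natural transformation $\mu_i: \top_{n+1} \abuts \top_n$; likewise each codegeneracy $\sigma^i: X^n \to X^{n-1}$ becomes $\eta_i: \top_{n-1} \abuts \top_n$. The product $\zeta^{mn}: X^m \ten X^n \to X^{m+n}$ becomes a map $\top_{m+n} \abuts \top_m \circ \top_n$, which I would call $\xi_{mn}$, and since the monoidal unit of $(\End(\cB), \circ)$ is $\id$, the unit $\zeta^0: 1 \to X^0$ becomes the coidentity $\xi_0: \top_0 \abuts \id$. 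This produces exactly the data listed in the statement.

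The second step is to translate the seven relations. Each composite appearing in Lemma~\ref{qmlemma} reverses order on passing to $\End(\cB)$: a chain $X^a \to X^b \to X^c$ in $\C$ corresponds to $\top_c \abuts \top_b \abuts \top_a$. Applying this to relation (1), $\pd^j\pd^i = \pd^i\pd^{j-1}$, yields $\mu_i\mu_j = \mu_{j-1}\mu_i$ for $i < j$; relation (2) gives $\eta_i\eta_j = \eta_{j+1}\eta_i$ for $i \le j$; and the three cases of relation (3) reverse to the three cases stated for $\mu_i\eta_j$. For the product relations, I would track the tensor factors: $\pd^i \ten \id$ and $\id \ten \pd^i$ become $\mu_i \top_n$ (i.e. $\mu_i$ whiskered with $\id_{\top_n}$) and $\top_m \mu_i$ respectively, applied after $\xi$, so that relation (4), $\zeta^{m+1,n}(\pd^i \ten \id) = \pd^i\zeta^{mn}$, becomes $(\mu_i \top_n)\xi_{m+1,n} = \xi_{mn}\mu_i$, and the remaining cases (5)--(7) translate the same way.

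The only real obstacle is bookkeeping: one must consistently reverse the direction of every arrow and the order of every composite when passing between $\C$ and $\End(\cB)$, and verify that the indices together with the left/right placement of the tensor factors come out exactly as stated rather than mirrored. Since Lemma~\ref{qmlemma} already packages all the coherence and naturality, no further computation is needed once this dictionary is fixed.
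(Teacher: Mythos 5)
Your proposal is correct and follows exactly the paper's route: the paper proves this lemma by the one-line observation that substituting the monoidal category $(\End(\cB),\circ)$ into Lemma \ref{qmlemma} (via the definition of a quasi-monoid as a quasi-comonoid in the opposite category) yields the result. You have simply made explicit the arrow-reversal dictionary that the paper leaves implicit, and your index bookkeeping (e.g.\ $\pd^j\pd^i=\pd^i\pd^{j-1}$ becoming $\mu_i\mu_j=\mu_{j-1}\mu_i$) checks out.
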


\begin{lemma}\label{qcomonad}
A quasi-comonad consists of functors $\bot^n:\cB \to \cB$, together with the following data:
$$
\begin{matrix}
\Delta^i:\bot^n \abuts \bot^{n+1} & 1\le i \le n\\
\vareps^i:\bot^{n}\abuts \bot^{n-1} &0 \le i <n,
\end{matrix}
$$
an associative product $\zeta^{mn}:\bot^m \circ \bot^n \abuts \bot^{m+n}$, with identity $\zeta^0: \id \abuts \bot^0$, satisfying:
\begin{enumerate}
\item $\Delta^j\Delta^i=\Delta^i\Delta^{j-1}\quad i<j$.
\item $\vareps^j\vareps^i=\vareps^i\vareps^{j+1} \quad i \le j$.
\item 
$
\vareps^j\Delta^i=\left\{\begin{matrix}
			\Delta^i\vareps^{j-1} & i<j \\
			\id		& i=j,\,i=j+1 \\
			\Delta^{i-1}\vareps^j & i >j+1
			\end{matrix} \right. .
$
\item $\zeta^{m+1,n}(\Delta^i\bot^n)=\Delta^i\zeta^{mn}$.
\item $\zeta^{m,n+1}(\bot^m\Delta^i)=\Delta^{i+m}\zeta^{mn}$.
\item $\zeta^{m-1,n}(\vareps^i\bot^n)=\vareps^i\zeta^{mn}$.
\item $\zeta^{m,n-1}(\bot^m\vareps^i)=\vareps^{i+m}\zeta^{mn}$.
\end{enumerate}
\end{lemma}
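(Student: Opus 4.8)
The plan is to obtain this as a direct specialization of Lemma~\ref{qmlemma}. By the Definition preceding the statement, a quasi-comonad on $\cB$ is, by definition, a quasi-comonoid in the monoidal category $(\End(\cB),\circ)$ of endofunctors of $\cB$ under composition. I would observe first that this is a (strict) monoidal category: its unit object $1$ is the identity functor $\id_{\cB}$, its tensor product $\ten$ is composition $\circ$, and its associativity and unit constraints are identities. I would therefore simply invoke Lemma~\ref{qmlemma} with $\C=(\End(\cB),\circ)$.

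Under this specialization the next step is to set up the dictionary explicitly: the objects $X^n\in\C$ become the endofunctors $\bot^n$; the abstract operations $\pd^i\colon X^n\to X^{n+1}$ and $\sigma^i\colon X^n\to X^{n-1}$ become the natural transformations $\Delta^i\colon\bot^n\abuts\bot^{n+1}$ and $\vareps^i\colon\bot^n\abuts\bot^{n-1}$; the associative product $\zeta^{mn}\colon X^m\ten X^n\to X^{m+n}$ becomes $\zeta^{mn}\colon\bot^m\circ\bot^n\abuts\bot^{m+n}$; and the unit $\zeta^0\colon 1\to X^0$ becomes $\zeta^0\colon\id\abuts\bot^0$. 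Because $\ten=\circ$, a composite such as $\pd^i\ten\id_{X^n}$ is the horizontal composite of $\Delta^i$ with $\id_{\bot^n}$, i.e.\ the whiskering denoted $\Delta^i\bot^n$, and likewise $\id_{X^m}\ten\pd^i$ becomes $\bot^m\Delta^i$.

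With this dictionary in place, the seven relations of Lemma~\ref{qmlemma} are purely formal identities among the operations $\pd^i,\sigma^i,\zeta^{mn}$ that make no reference to any feature of $\C$ beyond its monoidal structure, so they transcribe verbatim into conditions (1)--(7) of the present statement (conditions (1)--(3) involving only $\Delta^i,\vareps^i$, and conditions (4)--(7) recording the compatibility of $\zeta$ with the whiskered operations). There is no substantive obstacle here; the only points that genuinely require attention are bookkeeping ones, namely confirming that $(\End(\cB),\circ)$ really is monoidal (with $\id_{\cB}$ as unit) and that the whiskering conventions in (4)--(7) correctly match the tensored operations $\pd^i\ten\id$, $\id\ten\pd^i$, $\sigma^i\ten\id$, $\id\ten\sigma^i$ of Lemma~\ref{qmlemma}, including the index shift by $m$ in the right-whiskered cases. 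Since this is exactly the substitution announced in the sentence preceding the two lemmas, no further argument is needed.
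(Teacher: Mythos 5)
Your proposal is correct and matches the paper exactly: the paper offers no separate proof, stating only that substituting the monoidal category $(\End(\cB),\circ)$ into Lemma \ref{qmlemma} yields this lemma, which is precisely the specialization and dictionary you spell out. The bookkeeping points you flag (the unit being $\id_{\cB}$ and the whiskering conventions matching $\pd^i\ten\id$, $\id\ten\pd^i$, etc.) are the only content, and you handle them correctly.
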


\begin{lemma}
A quasi-monad  on $\cB$   gives rise to a quasi-descent datum  enriching $\cB$, given by 
$$
\cHom(B,B')^n:= \Hom_{\cB}(\top_n B, B').
$$
\end{lemma}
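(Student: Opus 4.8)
The plan is to write the quasi-descent datum down explicitly, mimicking the construction of Proposition \ref{enrichtop} but replacing the iterated functor $\top^n$ by the functors $\top_n$ of the quasi-monad and the canonical identification $\top^{m+n}=\top^m\circ\top^n$ by the coproduct $\xi_{mn}$. Concretely, for $g\in\cHom(B,B')^n=\Hom_{\cB}(\top_n B,B')$ I would set
$$
\pd^i(g):=g\circ(\mu_i)_B,\qquad \sigma^i(g):=g\circ(\eta_i)_B,
$$
using $(\mu_i)_B\colon\top_{n+1}B\to\top_n B$ (for $1\le i\le n$) and $(\eta_i)_B\colon\top_{n-1}B\to\top_n B$ (for $0\le i<n$); and for $g\in\cHom(B',B'')^m$, $h\in\cHom(B,B')^n$ I would define the product
$$
g*h:=g\circ\top_m(h)\circ(\xi_{mn})_B\colon\top_{m+n}B\to B''.
$$
The unit at $B$ is the element $(\xi_0)_B\in\cHom(B,B)^0=\Hom_{\cB}(\top_0 B,B)$.

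By Remarks \ref{quasidescentcat} (equivalently, Definition \ref{qudesc} together with Lemma \ref{qmlemma}), checking that this is a quasi-descent datum amounts to verifying that each $\cHom(B,B')$ is a functor $\Delta_{**}\to\Set$ (so that $\pd^i,\sigma^i$ satisfy relations (1)--(3) of Lemma \ref{qmlemma}), and that $*$ is associative, unital, and compatible with $\pd^i,\sigma^i$ via relations (4)--(7). Each of these reduces, via the contravariance of $\Hom_{\cB}(-,B')$, to the correspondingly-numbered relation of Lemma \ref{qmonad}. Relations (1)--(3) for $\pd^i,\sigma^i$ are immediate from relations (1)--(3) of Lemma \ref{qmonad}, which are exactly the $\Delta_{**}$-relations among the $\mu_i,\eta_i$. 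Associativity of $*$ is the coassociativity of the coproduct $\xi$ (the ``associative coproduct'' clause of Lemma \ref{qmonad}), whiskered appropriately and combined with functoriality of $\top_m$; the two unit laws $(\xi_0)_{B'}*g=g=g*(\xi_0)_B$ come from the two counit triangles for $\xi_0$ (the first also using naturality of $\xi_0$).

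The one point that needs genuine care — and the only real obstacle — is relations (4)--(7), where one must slide the structure transformations past the whiskered morphism $\top_m(h)$ occurring in the product. For instance, for $1\le i\le m$ one computes
$$
\pd^i(g)*h=g\circ(\mu_i)_{B'}\circ\top_{m+1}(h)\circ(\xi_{m+1,n})_B,
$$
and naturality of $\mu_i$ rewrites $(\mu_i)_{B'}\circ\top_{m+1}(h)$ as $\top_m(h)\circ(\mu_i)_{\top_n B}$; relation (4) of Lemma \ref{qmonad}, namely $(\mu_i\top_n)\,\xi_{m+1,n}=\xi_{mn}\,\mu_i$, evaluated at $B$, then identifies the result with $\pd^i(g*h)$. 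The companion relation (5), governing $\pd^i$ on the right-hand factor (with the index shift $\pd^{i+m}$), needs no naturality slide, since $\top_m((\mu_i)_B)=(\top_m\mu_i)_B$ already sits immediately to the right of $\top_m(h)$; it follows directly from relation (5) of Lemma \ref{qmonad} and functoriality of $\top_m$. The four relations (4)--(7) split into these two symmetric types (left factor versus right factor, $\mu$ versus $\eta$), and in each case the verification is a single naturality square followed by the evaluation at $B$ of the matching identity of Lemma \ref{qmonad}. Since this is precisely the bookkeeping already implicit in Proposition \ref{enrichtop} for an honest monad, no new phenomena arise, and the construction goes through.
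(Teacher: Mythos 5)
Your construction is exactly the one the paper intends: it is the specialisation of the proof of the subsequent quasi-distributivity lemma (take $\bot$ trivial, so $\pd^i x=x\circ\mu_i$, $\sigma^i x=x\circ\eta_i$, $x*y=x\circ\top_m(y)\circ\xi_{mn}$ with unit $\xi_0$), and your verification via the correspondingly numbered relations of Lemma \ref{qmonad} plus naturality slides is precisely the bookkeeping the paper leaves implicit. Correct, and essentially the same approach.
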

In particular, his allows us to define  Maurer-Cartan sets.  We also have a notion of homotopy monad on a simplicial category (when the $\xi$ are all weak equivalences).  Dually, we have the same constructions for quasi-comonads.

\subsection{Distributivity for quasi-monads and quasi-comonads}

We now need to describe a distributivity transformation $\lambda$ for a quasi-monad $\top$ and a quasi-comonad $\bot$.
We wish to enrich $\cB$ to a quasi-descent datum  by  setting
$$
\cHom_{\cB}(B,B')^n:= \Hom(\top_n B, \bot^nB'),
$$
so we need natural transformations
$$
\lambda_{m}^n: \top_m\bot^n \abuts \bot^n\top_m. 
$$
with the following diagrams commuting:
$$
\xymatrix{
\top_m\bot^n \ar@{=>}[r]^{\lambda_{m}^n} \ar@{=>}[d]_{\top_m\Delta^i}&   \bot^n \top_m\ar@{=>}[d]_{\Delta^i\top_m} \\
\top_m\bot^{n+1} \ar@{=>}[r]^{\lambda_{m}^{n+1}} & \bot^{n+1}\top_m
}
\quad
\xymatrix{
 \top_m\bot^n \ar@{=>}[r]^{\lambda_{m}^n} &  \bot^n \top_m  \\
\top_{m+1}\bot^n\ar@{=>}[u]_{\mu_i\bot^n} \ar@{=>}[r]^{\lambda_{m+1}^{n}} & \bot^n\top_{m+1}\ar@{=>}[u]_{\bot^n\mu_i}
}
$$

$$
\xymatrix{
\top_m\bot^n \ar@{=>}[r]^{\lambda_{m}^n} \ar@{=>}[d]_{\top_m\vareps^i}&   \bot^n \top_m\ar@{=>}[d]_{\vareps^i\top_m} \\
\top_m\bot^{n-1} \ar@{=>}[r]^{\lambda_{m}^{n-1}} & \bot^{n-1}\top_m
}
\quad
\xymatrix{
 \top_m\bot^n \ar@{=>}[r]^{\lambda_{m}^{n}} &  \bot^n \top_m  \\
\top_{m-1}\bot^n\ar@{=>}[u]_{\eta_i\bot^n} \ar@{=>}[r]^{\lambda_{m-1}^{n}} & \bot^n\top_{m-1}\ar@{=>}[u]_{\bot^n\eta_i},
}
$$
together with unit rules
$$
(\xi_0\bot^n) = (\bot^n\xi_0) \circ \lambda^n_0 \quad \zeta^0\top_m= \lambda^0_m \circ (\top_m\zeta^0)
$$
and
associativity rules
$$
(\lambda_{p}^n\top_q) \circ (\top_p \lambda_{q}^n)\circ (\xi_{pq}\bot^n)= (\bot^n\xi_{pq})\circ \lambda_{p+q}^n,
$$
$$
(\bot^p\lambda_{m}^q) \circ (\lambda_{m}^p\bot^q)\circ (\top_m\zeta^{pq})= (\zeta^{pq}\top_m)\circ \lambda_{m}^{p+q}.
$$

\begin{lemma}
Given a category $\cB$, a quasi-monad $\top$ and a quasi-comonad $\bot$ on $\cB$, together with a quasi-distributivity transformation $\lambda$ satisfying the conditions above, there is a quasi-descent datum  on $\cB$ given by  setting
$$
\cHom_{\cB}(B,B')^n:= \Hom(\top_n B, \bot^nB').
$$
\end{lemma}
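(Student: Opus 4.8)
The plan is to generalise the proof of Proposition \ref{enrichtopbot} essentially verbatim, replacing the strict powers $\top^n,\bot^n$ and their structure maps by the functors $\top_n,\bot^n$ of Lemmas \ref{qmonad} and \ref{qcomonad}. Concretely, I would define the operations on $\cHom_{\cB}(B,B')^n=\Hom(\top_nB,\bot^nB')$ by
\[
\pd^i(g)=\Delta^i_{B'}\circ g\circ (\mu_i)_B,\qquad \sigma^i(g)=\vareps^i_{B'}\circ g\circ (\eta_i)_B,
\]
the unit element of $\cHom(B,B)^0=\Hom(\top_0B,\bot^0B)$ by $\zeta^0_B\circ (\xi_0)_B$, and the product $*\colon \cHom(B',B'')^m\times\cHom(B,B')^n\to\cHom(B,B'')^{m+n}$ by
\[
g*h=\zeta^{nm}_{B''}\circ\bot^n(g)\circ\lambda_m^n\circ\top_m(h)\circ(\xi_{mn})_B.
\]
With these definitions in hand, the lemma reduces to checking the seven relations of Lemma \ref{qmlemma} together with the associativity and unit axioms for $*$.

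The relations (1)--(3), which involve only $\pd^i$ and $\sigma^i$, are immediate. Since $\Delta^i,\vareps^i$ are postcomposed with $g$ while $\mu_i,\eta_i$ are precomposed, the two families act on genuinely independent sides of $g$ and so commute freely; each relation then factors as the conjunction of the corresponding relation among the $\Delta^i,\vareps^i$ (relations (1)--(3) of Lemma \ref{qcomonad}) and the corresponding relation among the $\mu_i,\eta_i$ (relations (1)--(3) of Lemma \ref{qmonad}). For instance, $\pd^j\pd^i(g)=\Delta^j\Delta^i\circ g\circ\mu_i\mu_j=\Delta^i\Delta^{j-1}\circ g\circ\mu_{j-1}\mu_i=\pd^i\pd^{j-1}(g)$ for $i<j$, using relation (1) of each lemma; relations (2) and (3) follow in the same way from the matching case-splits.

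The substance lies in the product axioms. The unit rules $(\xi_0\bot^n)=(\bot^n\xi_0)\circ\lambda_0^n$ and $\zeta^0\top_m=\lambda_m^0\circ(\top_m\zeta^0)$ are exactly what is needed to show $\zeta^0\xi_0$ is a two-sided identity for $*$. Relations (4)--(7), which propagate a single $\pd^i$ or $\sigma^i$ across $*$, are then verified by sliding the relevant $\Delta^i,\vareps^i,\mu_i$ or $\eta_i$ through $\lambda$ by means of the four naturality squares; the index shift by $m$ in (5) and (7) is produced precisely by the $\bot^n$-- and $\zeta$--prefixes in the definition of $*$ (e.g. the square $\lambda_m^{n+1}\circ(\top_m\Delta^i)=(\Delta^i\top_m)\circ\lambda_m^n$ carries the $\Delta^i$ from the right-hand factor to the outer $\bot$, where $\zeta^{nm}$ converts it into $\pd^{i+m}$).

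The main obstacle is the associativity of $*$: one must show that $(g*h)*k$ and $g*(h*k)$ both unfold to a single canonical map $\top_{l+m+n}B\to\bot^{l+m+n}B'''$. In the strict setting this was the observation that any composite of $\lambda$'s equals $\lambda^n_m$; here that coherence is supplied by the two associativity rules $(\lambda_{p}^n\top_q)\circ(\top_p\lambda_{q}^n)\circ(\xi_{pq}\bot^n)=(\bot^n\xi_{pq})\circ\lambda_{p+q}^n$ and $(\bot^p\lambda_{m}^q)\circ(\lambda_{m}^p\bot^q)\circ(\top_m\zeta^{pq})=(\zeta^{pq}\top_m)\circ\lambda_{m}^{p+q}$, which dictate how $\lambda$ interacts with the coproduct $\xi$ and the product $\zeta$. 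Matching the two bracketings against these rules, together with the naturality of $\lambda$, is the one genuinely bookkeeping-heavy step, and is where all the hypotheses on $\lambda$ are consumed.
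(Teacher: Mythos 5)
Your proposal is correct and follows essentially the same route as the paper: the operations $\pd^i$, $\sigma^i$, the identity $\zeta^0\circ\xi_0$ and the product $\zeta^{nm}\circ\bot^n(-)\circ\lambda_m^n\circ\top_m(-)\circ\xi_{mn}$ are exactly those the paper writes down, and the paper likewise disposes of the axioms by citing Lemmas \ref{qmonad} and \ref{qcomonad} for the $\Delta_{**}$-structure, the four naturality squares for compatibility of $*$ with $\pd^i,\sigma^i$, the unit rules for the identity, and the associativity rules for associativity of $*$. Your write-up in fact records more of the index bookkeeping than the paper's proof, which states these verifications without carrying them out.
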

\begin{proof}
We define the structures on $\cHom_{\cB}$ by 
\begin{eqnarray*}
\pd^ix&=& \Delta^i \circ x\circ \mu_i,\\
 \sigma^i x &=& \vareps^i \circ x \circ \eta_i,\\ 
x*y&=& \zeta^{nm}\circ (\bot^nx) \circ \lambda_{m}^n\circ (\top_my)\circ \xi_{mn},
\end{eqnarray*}
for $x\in \cHom_{\cB}(B',B")^m $ and $y \in \cHom_{\cB}(B,B')^n$,  with identity $\zeta^0 \circ \xi_0$.

It follows immediately from Lemmas \ref{qmonad} and \ref{qcomonad} that $\cHom_{\cB}(B,B') \in \Set^{\Delta_{**}}$. The first four diagrams above ensure that $*$ defines a map
$$
\cHom_{\cB}(B',B")\ten \cHom_{\cB}(B,B') \to \cHom_{\cB}(B,B")
$$ 
in $\Set^{\Delta_{**}} $. The unit rules then ensure that $\zeta^0 \circ \xi_0 $ is the multiplicative identity,  and  the associativity rules make  this product associative.
\end{proof}

\section{$A_{\infty}$-algebras and homotopy operads}\label{shaa}

$A_{\infty}$-algebras are designed to model deformation retracts of differential graded associative  algebras without unit (DGAAs). They are thus an alternative candidate for the task performed in general by s.h. algebras, and are indeed often known as strong homotopy associative algebras (SHAAs). That both concepts are essentially equivalent seems to be folklore (though, if necessary, it could be inferred from the results of 
\S \ref{liesn}).

Now,  a DGAA is just a semigroup in the category of cochain complexes (i.e. it satisfies all the requirements of a monoid, except the unit axiom). A third candidate to model deformation retracts of DGAAs is therefore a homotopy semigroup in cochain complexes (defined analogously to a homotopy monoid). These were studied in \cite{leinsteroperads}, where it was conjectured that they give rise to $A_{\infty}$-algebras.

\subsection{Homotopy semigroups and semicogroups}

\begin{definition}
Define $\Delta_{**}^+$ to be the subcategory of $\Delta_{**}$ on objects $\mathbf{n} \ne \mathbf{0}$ and   containing only  injective morphisms. 
\end{definition}

\begin{definition}
Define a semigroupal  category to be a category $\C$ equipped with a bifunctor $\C\by \C \xra{\ten}\C$ satisfying the axioms of a monoidal category (but without a unit object).
\end{definition}

\begin{definition}\label{semigpdef}
Given a semigroupal  category $\C$ (in particular if $\C$ is monoidal), define a quasi-semigroup $X$ in $\C$ to be a colax semigroupal functor $X: (\Delta_{**}^+)^{\op} \to \C$. This means that we have maps
$$
\xi_{mn}: X_{m+n} \to X_m\ten X_n,
$$
 satisfying naturality and coherence. If $\C$ is a model category, we say that $X$ is a homotopy semigroup whenever the maps $\xi_{mn}$ are all weak equivalences.
\end{definition}

\begin{lemma}
Giving  a quasi-semigroup $X$ in $\C$ is equivalent to giving objects $X_n \in \C$ for $n \in \N_1$, together with morphisms
$$
\begin{matrix}
\pd_i:X_{n+1} \to X_{n} & 1\le i \le n,\\
\end{matrix}
$$
and an associative coproduct $\xi_{mn}: X_{m+n}\to X_m \ten X_n$, satisfying:
\begin{enumerate}
\item $\pd_i\pd_j=\pd_{j-1}\pd_i\quad i<j$.
\item $(\pd_i \ten \id)  \xi_{m+1,n} =\xi_{mn} \pd_i$.
\item $(\id\ten \pd_i) \xi_{m,n+1}=\xi_{mn}\pd_{i+m} $.
\end{enumerate}
\end{lemma}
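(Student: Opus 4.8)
The plan is to mimic the analysis behind Lemma \ref{qmlemma}, restricting to injective morphisms and passing to the colax/opposite setting. First I would record a presentation of the semigroupal category $\Delta_{**}^+$: every object is a tensor power $\mathbf{n} = \mathbf{1}^{\ten n}$ (for $n \ge 1$), and, since $\Delta_{**}^+$ consists precisely of the injective endpoint-preserving maps, it is generated under composition and $\ten$ by the inner coface maps $\delta^i \co \mathbf{n} \to \mathbf{n+1}$ ($1 \le i \le n$), subject only to the coface half of the cosimplicial relations $\delta^j\delta^i = \delta^i\delta^{j-1}$ for $i<j$. This is the content of Remark \ref{qmstructure} with the codegeneracies deleted.

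Next I would unravel the data of a colax semigroupal functor $X \co (\Delta_{**}^+)^{\op} \to \C$. The objects give $X_n := X(\mathbf{n})$; since $X$ is contravariant on $\Delta_{**}^+$, the generating cofaces produce maps $\pd_i := X(\delta^i) \co X_{n+1} \to X_n$; and the colax structure supplies comultiplications $\xi_{mn} \co X(\mathbf{m}\ten\mathbf{n}) = X_{m+n} \to X_m \ten X_n$. Relation (1) is then immediate: applying the contravariant $X$ to the relation $\delta^j\delta^i = \delta^i\delta^{j-1}$ ($i<j$) yields $\pd_i\pd_j = \pd_{j-1}\pd_i$.

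The heart of the argument is deriving relations (2) and (3) from naturality of the colax structure map $\xi$ in each tensor variable. For the morphism $(\delta^i, \id_{\mathbf{n}})$ of $\Delta_{**}^+ \by \Delta_{**}^+$, the monoidal functor gives $\delta^i \ten \id \co \mathbf{m+n} \to \mathbf{m+1+n}$, the inner coface inserting in the first block, so $X(\delta^i \ten \id) = \pd_i$; the naturality square for $\xi$ then reads $(\pd_i \ten \id)\xi_{m+1,n} = \xi_{mn}\pd_i$, which is (2). For $(\id_{\mathbf{m}}, \delta^i)$ the composite $\id \ten \delta^i$ inserts in the second block, whose global position is $i+m$, so $X(\id \ten \delta^i) = \pd_{i+m}$ and naturality gives $(\id \ten \pd_i)\xi_{m,n+1} = \xi_{mn}\pd_{i+m}$, which is (3). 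Finally, associativity of $\xi$ is exactly the coherence axiom for a colax semigroupal functor. Conversely, I would check that the listed relations suffice to define a unique such functor on all of $(\Delta_{**}^+)^{\op}$ by induction on word length, using (1)--(3) to bring any composite into a normal form; this is where the absence of extra relations in the presentation of $\Delta_{**}^+$ is used.

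The only real obstacle is the index bookkeeping --- in particular the shift $i \mapsto i+m$ recording which tensor block a coface lands in --- and confirming that no relations beyond (1)--(3) arise. Both are routine once the generators-and-relations description of $\Delta_{**}^+$ is in hand, so the result is genuinely ``a straightforward consequence'' in the same spirit as Lemma \ref{qmlemma}.
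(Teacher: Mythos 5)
Your proposal is correct and is exactly the argument the paper intends: the lemma is stated without proof precisely because, as with Lemma \ref{qmlemma}, it is a direct unravelling of the generators-and-relations description of $\Delta_{**}^+$ (Remark \ref{qmstructure} with the codegeneracies removed) together with naturality and coherence of the colax structure maps. Your index bookkeeping for $\delta^i\ten\id$ and $\id\ten\delta^i$, and the contravariant reversal giving $\pd_i\pd_j=\pd_{j-1}\pd_i$, are all as they should be.
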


\begin{definition}
Given a semigroupal category $\C$, define a quasi-semicogroup $X$ in $\C$ to be a lax semigroupal functor $X: \Delta_{**}^+ \to \C$. This means that we have maps
$$
\zeta^{mn}:  X^m\ten X^n \to X^{m+n},
$$
 satisfying naturality and coherence. If $\C$ is a model category, we say that $X$ is a homotopy semicogroup whenever the maps $\zeta^{mn}$ are all weak equivalences.
\end{definition}

\begin{lemma}
Giving  a quasi-semicogroup $X$ in $\C$ is equivalent to giving objects $X^n \in \C$ for $n \in \N_1$, together with morphisms
$$
\begin{matrix}
\pd^i:X^n \to X^{n+1} & 1\le i \le n\\
\end{matrix}
$$
an associative product $\zeta^{mn}:X^m \ten X^n \to X^{m+n}$, satisfying:
\begin{enumerate}
\item $\pd^j\pd^i=\pd^i\pd^{j-1}\quad i<j$.
\item $\zeta^{m+1,n}(\pd^i\ten \id)=\pd^i\zeta^{mn}$.
\item $\zeta^{m,n+1}(\id\ten \pd^i)=\pd^{i+m}\zeta^{mn}$.
\end{enumerate}
\end{lemma}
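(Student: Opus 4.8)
The plan is to mirror the proof of Lemma \ref{qmlemma}, replacing $\Delta_{**}$ throughout by its subcategory $\Delta_{**}^+$ and reading off the effect of discarding both the object $\mathbf{0}$ and all non-injective morphisms. Since a quasi-semicogroup is by definition a lax semigroupal functor $X\colon \Delta_{**}^+ \to \C$, the content is purely an analysis of how the generating morphisms and the monoidal structure of $\Delta_{**}^+$ interact, exactly as flagged after Remark \ref{qmstructure}.

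First I would unpack the underlying $\Delta_{**}^+$-diagram. By Definition \ref{delta**} the morphisms of $\Delta_{**}$ are the non-decreasing maps $f\colon \mathbf{m}\to\mathbf{n}$ with $f(0)=0$ and $f(m)=n$; restricting to the injective such maps (and to objects $\mathbf{n}\ne\mathbf{0}$) leaves precisely the coface maps $\pd^i\colon X^n\to X^{n+1}$ for $1\le i\le n$, since these generate all injections in $\Delta_{**}^+$. This is the injective half of Remark \ref{qmstructure}: the degeneracies $\sigma^i$ are dropped, the extreme cofaces $\pd^0,\pd^{n+1}$ already lay outside $\Delta_{**}$, and functoriality imposes exactly the single cosimplicial coface identity $\pd^j\pd^i=\pd^i\pd^{j-1}$ for $i<j$, which is condition (1). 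No unit morphism survives, since $\mathbf{0}$ has been removed.

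Next I would extract the product. The lax semigroupal structure supplies natural maps $\zeta^{mn}\colon X^m\ten X^n\to X^{m+n}$, and the coherence axioms of a lax semigroupal functor are, after the identification $\mathbf{m}\ten\mathbf{n}=\mathbf{m+n}$, precisely associativity of $\zeta$; there is no unit axiom because $\Delta_{**}^+$ has no unit object. It then remains to compute the image of the generating coface maps under the monoidal product $\ten\colon \Delta_{**}^+\by\Delta_{**}^+\to\Delta_{**}^+$. Applying the formula for $f\ten g$ in Definition \ref{delta**} with one factor an identity, one finds that $\pd^i\ten\id_{\mathbf{n}}$ is the coface $\pd^i$ on $\mathbf{m+n}$, while $\id_{\mathbf{m}}\ten\pd^i$ is the coface $\pd^{i+m}$, the index being shifted by $m$ because the second factor is translated by $m$ in the concatenation. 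The naturality squares for $\zeta$ with respect to these two morphisms then read off as conditions (2) and (3) respectively.

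Conversely, given objects $X^n$ together with maps $\pd^i$ and $\zeta^{mn}$ satisfying (1)--(3), these relations are exactly what is needed to define $X$ on all morphisms of $\Delta_{**}^+$ and to verify that $\zeta$ is natural and associative, reconstructing the lax semigroupal functor; the two constructions are mutually inverse. The only genuine computation is the index bookkeeping $\id_{\mathbf{m}}\ten\pd^i=\pd^{i+m}$, which is the main (though routine) point to check carefully, everything else being a transcription of Lemma \ref{qmlemma} with the degeneracy-related data suppressed.
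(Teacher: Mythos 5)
Your proof is correct and follows exactly the route the paper intends: the lemma is stated without proof precisely because it is the analysis of Lemma \ref{qmlemma} restricted to $\Delta_{**}^+$, which is what you carry out, including the one substantive index check that $\id_{\mathbf{m}}\ten\pd^i=\pd^{i+m}$. Nothing is missing.
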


\subsection{Homotopy semigroups in abelian categories}.

\begin{definition}
Given a quasi-semigroup $V$ in an abelian semigroupal category $\cV$, define the chain complex $ C(V) \in \Ch(\cV)$ by
$$
C(V)_n:= \left\{ \begin{matrix} V_n & n > 0 \\ 0 & n \le 0, \end{matrix} \right.
$$
with differential $d= \sum_i (-1)^i\pd_i$.
\end{definition}

\begin{definition}
A coalgebra $C$ is said to be conilpotent if the iterated coproduct $\Delta^n: C \to C^{\ten n}$ is $0$ for $n$ sufficiently large. A coalgebra $C$ is said to be ind-conilpotent if it is a filtered colimit of  conilpotent coalgebras.
\end{definition}

\begin{lemma}\label{coalg}
$C(V)$ has the natural structure of an ind-conilpotent coassociative coalgebra without counit in $\Ch(\cV)$. 
\end{lemma}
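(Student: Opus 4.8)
The plan is to build the comultiplication directly out of the coproduct maps $\xi_{mn}$ of the quasi-semigroup. Since $C(V)_p=0$ for $p\le 0$, the degree-$n$ part of $C(V)\ten C(V)$ is the finite biproduct $\bigoplus_{p+q=n,\,p,q\ge 1}V_p\ten V_q$, so I would define $\Delta\colon C(V)\to C(V)\ten C(V)$ by setting, on $V_n=C(V)_n$,
$$
\Delta|_{V_n}:=\sum_{\substack{p+q=n\\ p,q\ge 1}}\xi_{pq}\colon V_n\longrightarrow \bigoplus_{\substack{p+q=n\\ p,q\ge 1}}V_p\ten V_q .
$$
This is nonzero only for $n\ge 2$, and the absence of the outer faces $\pd_0,\pd_{n}$ in the quasi-semigroup structure is exactly what forces the target to be free of the degenerate summands $V_0\ten V_n$ and $V_n\ten V_0$, which is what one needs for a coalgebra \emph{without} counit.

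First I would check that $\Delta$ is a chain map, i.e.\ $\Delta_{n-1}\circ d_n=d^{\ten}_n\circ\Delta_n$, where $d_n=\sum_{i=1}^{n-1}(-1)^i\pd_i$ and $d^{\ten}$ carries the Koszul sign $(-1)^p$ on the second leg of $V_p\ten V_q$. Applying $d^{\ten}$ after $\Delta$ and using relation (2), $(\pd_i\ten\id)\xi_{p,q}=\xi_{p-1,q}\pd_i$, on the first leg and relation (3), $(\id\ten\pd_j)\xi_{p,q}=\xi_{p,q-1}\pd_{j+p}$, on the second leg, one reindexes the second family of faces by $k=j+p$. Here the tensor sign $(-1)^p$ combines with the face sign $(-1)^j$ to give $(-1)^{j+p}=(-1)^k$, so that for each target component $V_a\ten V_b$ (with $a+b=n-1$) the first family contributes $\sum_{i=1}^{a}(-1)^i\xi_{ab}\pd_i$ and the reindexed second family contributes $\sum_{k=a+1}^{n-1}(-1)^k\xi_{ab}\pd_k$. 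These assemble into $\sum_{i=1}^{n-1}(-1)^i\xi_{ab}\pd_i$, which is exactly the $V_a\ten V_b$-component of $\Delta_{n-1}\circ d_n$.

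Next, coassociativity $(\Delta\ten\id)\circ\Delta=(\id\ten\Delta)\circ\Delta$ reduces, component by component, to the coherence of the colax structure, namely $(\xi_{pq}\ten\id)\circ\xi_{p+q,r}=(\id\ten\xi_{qr})\circ\xi_{p,q+r}$; summing over all decompositions $n=p+q+r$ with $p,q,r\ge 1$ gives coassociativity of $\Delta$, and no counit data is ever invoked. For ind-conilpotency I would filter $C(V)$ by the brutal truncations $F_mC(V)$ with $(F_mC(V))_n=V_n$ for $n\le m$ and $0$ otherwise. Each $F_mC(V)$ is a subcomplex (as $d$ lowers degree) and a subcoalgebra, since $\Delta(V_n)\subset\bigoplus_{p,q<n}V_p\ten V_q$, and $C(V)=\LLim_m F_mC(V)$ is the evident filtered union. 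The iterated coproduct $\Delta^{(k)}$ lands on $V_n$ in the sum of the $V_{n_1}\ten\cdots\ten V_{n_k}$ with all $n_i\ge 1$ and $\sum n_i=n$, which is empty once $k>n$; hence $\Delta^{(k)}=0$ on $F_mC(V)$ for $k>m$, so each $F_mC(V)$ is conilpotent and $C(V)$ is ind-conilpotent.

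I expect the sign bookkeeping in the chain-map step to be the only delicate point. Everything there hinges on the interplay between the Koszul sign of the tensor differential and the index shift $\pd_j\mapsto\pd_{j+m}$ built into relation (3), so it is worth recording the reindexing $k=j+p$ explicitly to confirm that the two families of faces recombine into the full differential $\sum_{i=1}^{n-1}(-1)^i\pd_i$ with no leftover signs.
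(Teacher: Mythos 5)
Your proof is correct and follows essentially the same route as the paper: the comultiplication is the sum of the $\xi_{pq}$, coassociativity comes from the colax coherence, and ind-conilpotency comes from the brutal truncations being conilpotent subcoalgebras because $(C(V)^{\ten k})_i=0$ for $i<k$. The only difference is that you spell out the sign bookkeeping for the chain-map property, which the paper dismisses as straightforward; your reindexing $k=j+p$ and the cancellation of the Koszul sign against the index shift in relation (3) are exactly right.
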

\begin{proof}
We define the coproduct $\Delta: C(V) \to C(V) \ten C(V)$ by $\bigoplus_{i+j=n} \xi_{ij}: C(V)_n \to (C(V)\ten C(V))_n$. It is straightforward to verify that this is coassociative and a chain map, so $C(V)$ is a coassociative coalgebra without counit. 

 Observe that the brutal truncations $\sigma_{\le m}C(V)$ of $C(V)$ form conilpotent subcoalgebras of $C(V)$, since $(C(V)^{\ten n})_i=0$ for all $i <n$.   Thus $C(V)$ is ind-conilpotent, since $C(V)= \varinjlim \sigma_{\le m}C(V)$.
\end{proof}

\subsubsection{DG coalgebras}

Now assume that $\cV$ is the category of cochain complexes of  vector spaces over a field $k$. If $V$ is a quasi-semigroup in $\cV$, then Lemma \ref{coalg} implies that the cochain complex
$
\Tot C(V)
$ 
(given by $(\Tot C(V))^n:= \bigoplus_{i} C(V)_i^{i+n}$) is an  ind-conilpotent coassociative DG coalgebra without counit over $k$. 

\begin{definition}
Let $DG\C_k$ be the category of all ind-conilpotent coassociative   DG coalgebras without counit over $k$. Let $DG\cA_k$ be the category of associative DG algebras without unit over $k$.
\end{definition}

\begin{definition}
Define the cobar functor $\beta^*: DG\C_k \to DG\cA_k$ by letting $\beta^*C$ be the free graded associative $k$-algebra $\bigoplus_{n>0} C[-1]^{\ten n}$  on generators $C[-1]$, with differential defined on generators by $d_C+ \Delta$, for $\Delta: C[-1]\to (C\ten C)[-1]= (C[-1]\ten C[-1])[1]$ the comultiplication.

It has right adjoint $\beta:  DG\cA_k\to DG\C_k$ given by letting $\beta A$ be the cofree graded coassociative  ind-conilpotent $k$-algebra $\bigoplus_{n>0} A[1]^{\ten n}$  on cogenerators $A[1]$, with differential defined on cogenerators by $d_A+ m : A\oplus  (A\ten A)[1]  \to A[1] $, for $m : A\ten A \to A$ the multiplication.
\end{definition}

\begin{definition}
Define the tangent space $\tan C$ of $C \in  DG\C_k$ to be $\ker( \Delta: C \to C\ten C)$.
\end{definition}

\begin{theorem}\label{model}
There is a cofibrantly generated model structure on $DG\C_k$, for which a morphism $f$ is 
\begin{enumerate}
\item a cofibration if it is injective;
\item a weak equivalence if either of the following equivalent conditions holds:
\begin{enumerate}
\item $\beta^* f$ is  a quasi-isomorphism.
\item $f$ can be expressed as a filtered colimit of quasi-isomorphisms $f_{\alpha}: C_{\alpha} \to D_{\alpha}$ between finite-dimensional objects of $DG\C_k$ (note that this is a stronger  than requiring that $f$ be a quasi-isomorphism);
\end{enumerate}
\item a fibration if $f$ is cofree as a morphism of ind-conilpotent coassociative graded coalgebras without counit.
\end{enumerate}
Moreover, for a fibrant object $C$  there is a natural isomorphism  $\H^n (\tan C) \cong \H^{n+1}(\beta^*C)$.

With respect to this model structure, $\beta^*$ is a left Quillen equivalence, when $DG\cA_k$ is given its standard model structure. 
\end{theorem}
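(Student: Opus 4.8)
The plan is to build the model structure by hand and to recognise $\beta^*$ as left Quillen almost by construction, rather than attempting a direct transfer along the adjunction $\beta^* \dashv \beta$. Since the cofibrations are declared to be the injections and the initial object of $DG\C_k$ is $0$, every object will be cofibrant; dually, in the standard model structure on $DG\cA_k$ (weak equivalences the quasi-isomorphisms, fibrations the surjections) every object is fibrant. The weak equivalences of $DG\C_k$ are defined so that $\beta^*$ both preserves and reflects them, and this is the key bookkeeping device throughout. First I would record the elementary functorial facts: $\beta^* C = \bigoplus_{n>0} C[-1]^{\ten n}$ is the free graded algebra on $C[-1]$, so $\beta^*$ sends an injection $C \into D$ to the semifree extension obtained by freely adjoining the generators $(D/C)[-1]$; in particular $\beta^*$ carries cofibrations to cofibrations of $DG\cA_k$, and $\beta^*$ commutes with filtered colimits.

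To apply the recognition theorem (\cite{hovey} Theorem 2.1.19) I would first prove the equivalence of the two descriptions (a) and (b) of the weak equivalences. The implication (b)$\Rightarrow$(a) is immediate once one knows that $\beta^*$ preserves filtered colimits and sends quasi-isomorphisms between finite-dimensional conilpotent coalgebras to quasi-isomorphisms; for (a)$\Rightarrow$(b) the crucial input is that every object of $DG\C_k$ is the filtered colimit of its finite-dimensional conilpotent subcoalgebras, which is exactly the content of ind-conilpotence and is what forces us to work in this subcategory. Two-out-of-three and closure under retracts then follow from the corresponding properties in $DG\cA_k$. For generating cofibrations $I$ and trivial cofibrations $J$ I would take suitable finite-dimensional ``disk'' and ``sphere'' coalgebras, namely the cofree conilpotent coalgebras on the generating (trivial) cofibrations of cochain complexes; ind-conilpotence guarantees that their domains are small relative to injections, so the small object argument runs. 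The $I$-injectives are then the surjective weak equivalences, while the $J$-injectives turn out to be precisely the morphisms that are cofree on the level of underlying graded coalgebras, giving the stated description of the fibrations.

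For the cohomology statement, take $C$ fibrant and write it as $C \cong \bigoplus_{n>0} W^{\ten n}$, cofreely cogenerated by a cochain complex $W$; then $\tan C = \ker(\Delta) = W$, with differential induced from $d_C$. The identity $\H^n(\tan C)\cong \H^{n+1}(\beta^* C)$ is the bar--cobar computation for a cofree coalgebra: the cobar construction of the cofree coalgebra on $W$ is quasi-isomorphic as a complex to $W[-1]$, so $\H^{n+1}(\beta^* C) \cong \H^{n+1}(W[-1]) = \H^n(W) = \H^n(\tan C)$. I would prove the underlying quasi-isomorphism by a standard spectral-sequence argument, filtering $\beta^* C$ by tensor-word length so that the associated graded collapses to $W[-1]$.

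That $\beta^*$ is left Quillen is now essentially tautological: it preserves cofibrations (injections to semifree extensions) and, by the very definition of the weak equivalences, preserves trivial cofibrations. Since all objects of $DG\C_k$ are cofibrant and all objects of $DG\cA_k$ are fibrant, the Quillen pair is an equivalence if and only if the derived unit and counit are weak equivalences; this reduces to the classical bar--cobar statement that the counit $\beta^*\beta A \to A$ is a quasi-isomorphism for every $A$, whence the unit $C \to \beta\beta^* C$ is a weak equivalence for every $C$. The main obstacle is the second paragraph: making the equivalence (a)$\iff$(b) precise and checking that the $J$-cells really are weak equivalences, since the weak equivalences here are the refined ``filtered'' quasi-isomorphisms rather than plain quasi-isomorphisms, and it is exactly the ind-conilpotence hypothesis that rescues the smallness and filtered-colimit arguments on which the whole construction rests.
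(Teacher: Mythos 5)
Your outline follows the standard ``build it by hand via the recognition theorem'' route, whereas the paper simply transfers the result from \cite{ddt1} (where the analogous statement is proved for the Koszul-dual pair of cocommutative coalgebras and Lie algebras, with a proof that adapts to any quadratic Koszul pair). That difference of strategy would be fine, but as it stands your construction has concrete gaps at exactly the points where the real work lives. First, the generating (trivial) cofibrations: the cofree conilpotent coalgebra on a nonzero complex is $\bigoplus_{n>0}W^{\ten n}$, hence infinite-dimensional, so your proposed ``disk and sphere'' coalgebras are not the finite-dimensional objects on which smallness and the fundamental theorem of coalgebras rest; moreover it is not clear that their saturation is the class of all injections. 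The generators that actually work (recorded in the paper's proof) are injections $f\co C\to D$ between \emph{finite-dimensional} objects whose cokernel is primitive, i.e.\ $\coker f \to D\ten \coker f$ is zero. Relatedly, your identification of the $I$-injectives as the ``surjective weak equivalences'' is inconsistent with the theorem's own description of fibrations: trivial fibrations must be cofree morphisms that are weak equivalences, and a surjective quasi-isomorphism of coalgebras is in general neither cofree nor a weak equivalence in this structure (the weak equivalences are strictly stronger than quasi-isomorphisms, as the statement itself emphasises).

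Second, the equivalence (a)$\iff$(b) is the technical heart and cannot be dispatched by invoking ind-conilpotence. For (b)$\Rightarrow$(a) you need that $\beta^*$ sends a quasi-isomorphism of finite-dimensional conilpotent coalgebras to a quasi-isomorphism; this is true but requires the finite conilpotency filtration and a convergence argument for the resulting spectral sequence (the complexes are $\Z$-graded and unbounded, so the naive word-length filtration does not obviously converge --- compare the care taken with the interaction of the filtrations $F$ and $W$ in the proof of Proposition \ref{tancoho}). For (a)$\Rightarrow$(b) the fact that every object is the filtered colimit of its finite-dimensional subcoalgebras only gives you candidate sub-maps $f_\alpha\co C_\alpha\to D_\alpha$; you must still show these can be chosen so that each $f_\alpha$ is itself a quasi-isomorphism, which is a genuine exhaustion argument and is precisely the content of the cited \cite{ddt1} proposition. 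Finally, you flag the acyclicity of the $J$-cells as ``the main obstacle'' but do not resolve it; in a transferred-from-Koszul-duality structure this is exactly the step that fails for naive choices of $J$, so it cannot be left open. Until these three points are supplied, the proposal is an accurate table of contents for the proof rather than a proof.
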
 
\begin{proof}
Existence of such a model structure is given in \cite{ddt1} Proposition \ref{ddt1-dgspmodel} for the analogous case of cocommutative coassociative coalgebras and Lie algebras, but the proof carries over to any Koszul-dual pair of quadratic operads, so it adapts to our context (coassociative coalgebras and associative algebras). The generating cofibrations are injective morphisms $f:C \to D$ between finite-dimensional objects, satisfying the additional property that the coproduct $\coker f \to D \ten \coker f$ is zero. Generating  trivial cofibrations have the additional property that $\H^*(\coker f )=0$.

Characterisation of the weak equivalences follows from \cite{ddt1} Proposition \ref{ddt1-tqiscor}. That $\beta^*$ is a Quillen equivalence follows from \cite{ddt1} Theorem \ref{ddt1-mcequiv}.
\end{proof}

\begin{remark}
Note that fibrant objects of $DG\C_k$ are those whose underlying  coalgebras are cofree. These are precisely  strong homotopy associative algebras (SHAAs), as in \cite{Kon}, and weak equivalences between these are tangent quasi-isomorphisms. A choice of cogenerators on an SHAA is precisely an $A_{\infty}$-algebra. This means that every $A_{\infty}$-algebra has a weakly equivalent DG associative algebra.
\end{remark}

\begin{corollary}\label{cfainfty}
There is a canonical equivalence class of $A_{\infty}$-algebras associated to any quasi-semigroup $V$ in the category of cochain complexes  over  $k$. 
\end{corollary}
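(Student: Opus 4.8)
The plan is to turn the quasi-semigroup $V$ into a coalgebra, fibrantly replace it inside the model structure of Theorem \ref{model}, and then read off the $A_{\infty}$-structure from the resulting cofree coalgebra. First I would invoke Lemma \ref{coalg} together with the Remark immediately following it: the quasi-semigroup $V$ in cochain complexes produces the ind-conilpotent coassociative DG coalgebra without counit $\Tot C(V) \in DG\C_k$. This assignment $V \mapsto \Tot C(V)$ is manifestly functorial, built only from the structure maps $\pd_i$ and $\xi_{mn}$ of $V$, so no choices enter at this stage.

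Next, since Theorem \ref{model} furnishes a cofibrantly generated model structure on $DG\C_k$, functorial fibrant replacement exists via the small object argument, and I would take such a replacement $\Tot C(V) \xra{\sim} R$ with $R$ fibrant. By the Remark after Theorem \ref{model}, a fibrant object of $DG\C_k$ is precisely one whose underlying graded coalgebra is cofree, i.e. an SHAA, and a choice of cogenerators on $R$ exhibits $R$ as an $A_{\infty}$-algebra. This already produces an $A_{\infty}$-algebra attached to $V$; what remains is to see that its equivalence class does not depend on the choices made.

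The main obstacle is exactly this canonicity, which I would establish by separating the two sources of ambiguity. For the fibrant replacement: any two fibrant replacements of $\Tot C(V)$ are joined by a weak equivalence of fibrant objects, which by the Remark is a tangent quasi-isomorphism and hence an equivalence of the associated $A_{\infty}$-algebras; equivalently, because $\beta^*$ is a Quillen equivalence (Theorem \ref{model}), the object is pinned down up to weak equivalence in $\Ho(DG\C_k)\simeq \Ho(DG\cA_k)$, and the natural isomorphism $\H^n(\tan C)\cong \H^{n+1}(\beta^*C)$ shows this weak equivalence is detected on tangent cohomology. For the cogenerators: two choices of cogenerators on the \emph{same} cofree coalgebra $R$ yield $A_{\infty}$-algebras with identical underlying SHAA $R$, so they are equivalent by definition. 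Combining these two observations shows that $V$ determines a well-defined equivalence class of $A_{\infty}$-algebras, as required.
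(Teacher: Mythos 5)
Your proof is correct and follows exactly the paper's route: the paper's own proof is the one-line observation that the $A_{\infty}$-algebra is a choice of cogenerators on a fibrant replacement of $\Tot C(V)$ in the model structure of Theorem \ref{model}. Your additional verification that the equivalence class is independent of the fibrant replacement (using that every object is cofibrant, so replacements are linked by tangent quasi-isomorphisms) and of the choice of cogenerators is a sensible elaboration of what the paper leaves implicit.
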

\begin{proof}
The $A_{\infty}$-algebra is just a choice of cogenerators on a fibrant replacement for $\Tot C(V)$
\end{proof}

When $V$ is a homotopy semigroup, we wish to relate this $A_{\infty}$-algebra to $V$. 


\begin{proposition}\label{tancoho}
Assume that  $C \in  DG\C_k$ is equipped with an exhaustive  increasing filtration $0=F_0C \subset F_1C \subset \ldots$, comultiplicative in the sense that
$$
\Delta(F_n) \subset \sum_{i+j=n}F_i \ten F_j \subset C\ten C,
$$
and for which  the resulting maps $\Delta_n:\gr^F_nC \to (F_1C)^{\ten n}$ are quasi-isomorphisms. Then there are canonical isomorphisms
$$
\H^{n+1}(\beta^*C) \cong \H^n(F_1 C).
$$
\end{proposition}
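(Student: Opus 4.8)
The plan is to use the comultiplicative filtration $F$ to filter the cobar complex $\beta^* C$ and to run the resulting spectral sequence, whose $E_1$-term is the cohomology of the cobar construction of the associated graded coalgebra $\gr^F C := \bigoplus_n \gr^F_n C$. Concretely, recall $\beta^* C = \bigoplus_{m \ge 1}(C[-1])^{\ten m}$ with differential $d_C + \Delta$, and assign to a tensor $c_1 \ten \cdots \ten c_m$ with $c_j \in F_{i_j}C$ the \emph{weight} $\sum_j i_j$; let $W_p \beta^* C$ be spanned by the tensors of weight $\le p$. Since $d_C$ preserves each subcomplex $F_i$ and, by comultiplicativity, $\Delta(F_n) \subset \sum_{i+j=n} F_i \ten F_j$, both parts of the cobar differential preserve $W$. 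Moreover $W_0 = 0$ (every nonzero tensor has weight $\ge m \ge 1$) and $\bigcup_p W_p = \beta^* C$, so the filtration is bounded below and exhaustive. On the associated graded the map $\Delta$ is replaced by the induced coproduct $\bar\Delta$ of $\gr^F C$, giving a canonical isomorphism $\gr^W \beta^* C \cong \beta^*(\gr^F C)$ of weight-graded complexes, and hence a convergent spectral sequence $E_1^p = \H^*\big((\beta^*\gr^F C)_p\big) \Rightarrow \H^*(\beta^* C)$, the subscript denoting the weight-$p$ part.

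Next I would identify $\H^*(\beta^* \gr^F C)$. The iterated coproduct $\bar\Delta^{(n-1)}\colon \gr^F_n C \to (\gr^F C)^{\ten n}$ is weight-preserving, so on the weight-$n$ summand it lands in $(\gr^F_1 C)^{\ten n} = (F_1 C)^{\ten n}$ and coincides with $\Delta_n$, which is a quasi-isomorphism by hypothesis. Consequently the canonical coalgebra map $\Phi\colon \gr^F C \to T^c(F_1 C)$ to the weight-graded cofree conilpotent coalgebra $T^c(V) := \bigoplus_m V^{\ten m}$ on $V = F_1 C$, whose weight-$n$ component is exactly $\Delta_n$, is a weight-preserving quasi-isomorphism of underlying complexes. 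Filtering both $\beta^*\gr^F C$ and $\beta^* T^c(F_1 C)$ by cobar word length (the number of tensor factors, which $\Delta$ raises by one and $d_C$ preserves) produces two convergent spectral sequences whose $E_1$-pages are tensor powers of $\H^*(\gr^F C[-1])$ and of $\H^*(T^c(F_1 C)[-1])$; since $\Phi$ is a quasi-isomorphism, $\beta^*\Phi$ is an isomorphism on $E_1$, hence a weight-compatible quasi-isomorphism.

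Finally, $T^c(F_1 C)$ is precisely the bar construction $\beta A$ of the trivial (zero-product) algebra $A = (F_1 C[-1],0)$, so bar-cobar duality — the counit $\beta^*\beta A \to A$ being a quasi-isomorphism, which follows from Theorem \ref{model} asserting that $\beta^*$ is a Quillen equivalence — gives $\beta^* T^c(F_1 C) \simeq F_1 C[-1]$, weight-graded with $F_1 C[-1]$ placed in weight $1$. Indeed the weight-$1$ subcomplex of $\beta^* T^c(F_1 C)$ is just $F_1 C[-1]$ (a single non-deconcatenable generator carries no product and $\Delta$ vanishes on it), while the weight-$p$ parts for $p \ge 2$ are acyclic, being the reduced cobar resolution of a cofree coalgebra. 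Combining with the previous paragraph, $E_1^p = 0$ for $p \ne 1$ and $E_1^1 = \H^*(F_1 C[-1])$. As the $E_1$-page is concentrated in a single weight, every higher differential vanishes, the spectral sequence collapses, and $\H^*(\beta^* C) \cong \H^*(F_1 C[-1])$, i.e. $\H^{n+1}(\beta^* C) \cong \H^n(F_1 C)$.

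The main obstacle is the weight-concentration of $\H^*(\beta^*\gr^F C)$ used in the last paragraph: the rest is bookkeeping with two manifestly convergent filtrations, but this is the step where the hypothesis on the $\Delta_n$ is genuinely used, reducing the computation to the classical acyclicity of the cobar complex of a cofree coalgebra. As a consistency check, when $C$ is fibrant (cofree) one has $F_1 C = \tan C$, and the result recovers the isomorphism $\H^n(\tan C) \cong \H^{n+1}(\beta^* C)$ from Theorem \ref{model}.
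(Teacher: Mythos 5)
Your argument is correct and is essentially the paper's own proof: both filter $\beta^*C$ by total $F$-weight, identify $\gr$ of the cobar construction with $\beta^*(\gr^FC)$, compare $\gr^FC$ with the cofree coalgebra $\bigoplus_n (F_1C)^{\ten n}=\beta(F_1C[-1],0)$ via a word-length spectral sequence, invoke the bar--cobar Quillen equivalence to compute $\H^*(\beta^*\beta(F_1C[-1]))$, and conclude by weight-concentration of the $E_1$-page. The one point you leave implicit --- that the word-length filtration is \emph{finite} in each fixed weight (because each tensor factor has weight $\ge 1$, so word length is bounded by the weight), which is what makes your second spectral sequence converge --- is exactly the observation $F_p(\beta^*C)\cap W^{p+1}(\beta^*C)=0$ that the paper records explicitly.
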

\begin{proof}
The filtration $F_nC$ induces a filtration on $\beta^*C$ by
$$
F_p(C[-1]^{\ten n}):= \sum_{p_1+ \ldots +p_n=p} (F_{p_1}C[-1])\ten (F_{p_2}C[-1])\ten \ldots \ten (F_{p_n}C[-1]).
$$
Since this filtration is exhaustive and bounded below, the associated spectral sequence
$$
E^{pq}_0(F)=(\gr^F_{-p}\beta^*C)^{p+q} \abuts \H^{p+q}(\beta^*C)
$$
converges. 

Now, note that $\gr^F\beta^*C\cong \beta^*(\gr^FC)$, and that the quasi-isomorphisms
 $\Delta_n:\gr^F_nC \to (F_1C)^{\ten n}$ induce a graded quasi-isomorphism
$$
\delta: \gr^FC \to \bigoplus_{n>0} (F_1C)^{\ten n}
$$
in $DG\C_k$.

We now define a filtration $W$ on $\beta^*C$ by $W^m\beta^*C= \bigoplus_{n\ge m} C[-1]^{\ten n}$, noting that $F_p(\beta^*C) \cap W^{p+1}(\beta^*C)=0$, since $F_{n-1} (C^{\ten n})=0$. Thus $W$ induces a finite filtration on  $\gr^F_{-p}\beta^*C$, so the associated spectral sequence 
$$
E^{nq}_0(W) = (\gr_W^n\gr^F_{-p}\beta^*C)^{n+q} \abuts \H^{p+q}(\gr^F_{-p}\beta^*C)
$$
converges. Now, the left-hand side is just $\gr^F_{-p} (C[-1]^{\ten n})  $, so 
$
E^{nq}_1(W) \cong \H^{n+q}(\gr^F_{-p} (C[-1]^{\ten n}) ). 
$

Therefore $\delta$ induces an isomorphism of $E_1$ spectral sequences, making
$$
\gr^F_{-p}\beta^*(\delta):  \gr^F_{-p}\beta^*C \to\gr^F_{-p}\beta^*(\bigoplus_{n>0} (F_1C)^{\ten n} ) 
$$ 
 a quasi-isomorphism.
The right-hand side is just $\gr^F_{-p}\beta^*\beta (F_1C[-1])$, where $F_1C[-1]$ is regarded as an object of $DG\cA_k$ with zero multiplication. Therefore
$$
E_1^{pq}(F)= \H^{p+q}(\gr^F_{-p}\beta^*C) \cong \H^{p+q}(\gr^F_{-p}\beta^*\beta (F_1C[-1])).
$$
Now, since $\beta^* \dashv \beta$ are a pair of Quillen equivalences, the map $\beta^*\beta (F_1C[-1]) \to F_1C[-1]$ is a quasi-isomorphism, so
$$ 
E_1^{pq}(F) \cong \left\{ \begin{matrix} \H^{p+q}(F_1C[-1])  & p=-1, \\ 0 & \text{otherwise,} \end{matrix} \right. 
$$ 
and therefore the spectral sequence collapses at $E_1$, giving
$$
\H^{q-1}(\beta^*C) \cong \H^{q-1}(F_1C[-1])= \H^{q-2}(F_1C),
$$
as required.
\end{proof}

The following result confirms a conjecture from \cite{leinsteroperads} \S 3.5, although not with the proof envisaged there. 
\begin{proposition}
 If $V$ is a homotopy semigroup, then we may choose a representative  $A_{\infty}$-algebra with underlying cochain complex $V_1$. 
\end{proposition}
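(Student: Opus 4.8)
The plan is to realise the canonical $A_\infty$-algebra of Corollary \ref{cfainfty} by an explicit minimal fibrant replacement of $\Tot C(V)$, built by homotopy transfer along the filtration used in Proposition \ref{tancoho}. Write $C := \Tot C(V) \in DG\C_k$, and filter it by $F_p C := \Tot \sigma_{\le p}C(V)$, so that $F_1 C = \Tot(V_1)$ is (a shift of) the cochain complex $V_1$, concentrated in the first tensor-weight. As in the verification accompanying Proposition \ref{tancoho}, this filtration is exhaustive and comultiplicative, and its associated graded pieces are measured by the iterated coproduct $\Delta_n\colon \gr^F_n C \to (F_1 C)^{\ten n}$. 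The point at which the \emph{homotopy} semigroup hypothesis enters is precisely here: $\Delta_n$ is the $n$-fold composite of the structure maps $\xi_{1,1}$, which are weak equivalences by Definition \ref{semigpdef}, so each $\Delta_n$ is a quasi-isomorphism. (For a bare quasi-semigroup this step fails, which is why the result is stated only for homotopy semigroups.)

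First I would take the graded coalgebra $\hat C := \bigoplus_{n>0}(F_1 C)^{\ten n} = \beta(F_1 C[-1])$, cofree on the cogenerators $F_1 C[-1] \cong V_1$; being cofree, any coderivation differential makes it fibrant, with $\tan \hat C \cong V_1[1]$ and cogenerators exactly $V_1$. The task is to equip $\hat C$ with a codifferential $D$ and to produce a filtered coalgebra map $f\colon C \to \hat C$ whose associated graded is the quasi-isomorphism $\delta = \bigoplus_n \Delta_n$ of the Proposition \ref{tancoho} proof. I would do this by homological perturbation: over the field $k$ each quasi-isomorphism $\Delta_n$ extends to a contraction of $\gr^F_n C$ onto $(F_1 C)^{\ten n}$, and transferring the coalgebra structure of $C$ (whose differential and coproduct lower the weight filtration relative to $\gr^F C$) along these contractions yields the higher $A_\infty$-operations assembled into $D$ together with the comparison map $f$. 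Convergence is automatic because $F_{n-1}\big((F_1C)^{\ten n}\big)=0$, exactly the boundedness exploited in the spectral-sequence argument of Proposition \ref{tancoho}.

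It then remains to check that $f$ is a \emph{weak equivalence} in $DG\C_k$, which by Theorem \ref{model} is the condition that $\beta^* f$ be a quasi-isomorphism, a strictly stronger requirement than $f$ itself being a quasi-isomorphism. Since $f$ is filtered with $\gr^F f = \delta$, the map $\beta^* f$ is filtered with $\gr \beta^* f = \beta^*(\delta)$; the two spectral sequences of Proposition \ref{tancoho} are convergent and $\beta^*(\delta)$ induces an isomorphism on their $E_1$-pages, so $\beta^* f$ is a quasi-isomorphism and $f$ is a weak equivalence. Thus $\hat C$ is a fibrant replacement of $\Tot C(V)$, and a choice of cogenerators $F_1 C[-1] = V_1$ of $\hat C$ is, by the discussion following Corollary \ref{cfainfty}, a representative $A_\infty$-algebra with underlying cochain complex $V_1$.

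I expect the main obstacle to be the middle step: carrying out the transfer so that $D$ is a genuine codifferential ($D^2 = 0$, a coderivation for the deconcatenation coproduct) and $f$ a strict morphism of coalgebras, compatibly across all filtration levels at once. This is the standard but delicate homological-perturbation bookkeeping on a tensor coalgebra; the weight filtration guarantees that the perturbation series terminates in each arity, but verifying the coalgebra compatibilities and $D^2=0$ to all orders is where the real work lies.
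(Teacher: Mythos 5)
Your strategy is genuinely different from the paper's. You propose to build the fibrant model directly: take the cofree coalgebra $\hat C=\bigoplus_{n>0}(F_1C)^{\ten n}$ on cogenerators $V_1$ and manufacture, by homological perturbation along the weight filtration, both a codifferential $D$ and a filtered coalgebra quasi-isomorphism $f\co \Tot C(V)\to\hat C$ with $\gr^F f=\delta$. The paper instead argues abstractly: it takes an arbitrary fibrant replacement $E$ of $\Tot C(V)$ supplied by the model structure of Theorem \ref{model}, deduces from Proposition \ref{tancoho} together with the identification $\H^n(\tan E)\cong\H^{n+1}(\beta^*E)$ that $V_1\to\tan E$ is a quasi-isomorphism with contractible cokernel $Q$, and then kills $Q$ categorically --- the composite $\tan E\to Q\to B$ into the cofree coalgebra $B$ on $Q$ extends to a morphism $E\to B$ because $\tan E\to E$ is a cofibration, and splitting off this cofree piece leaves a cofree (hence fibrant) object $A$ with $\tan A\cong V_1$. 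The paper's route buys a short, complete proof with no explicit formulas; yours, if completed, would buy an explicit comparison map and explicit higher operations on $V_1$, which is often more useful in practice. Both proofs use the homotopy hypothesis in the same place, namely to make the maps $\gr^F_nC\to(F_1C)^{\ten n}$ quasi-isomorphisms, and your verification that $f$ is a weak equivalence (checking $\beta^*f$ via the two convergent spectral sequences of Proposition \ref{tancoho}) is sound.

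However, as written your argument has a genuine gap exactly where you say ``the real work lies'': the perturbation step is asserted rather than performed. What you need is not the standard homotopy transfer theorem (which transfers an $A_\infty$-structure along a contraction of a fixed complex) but a rectification statement for a filtered dg coalgebra whose associated graded is cofree: you must produce a square-zero coderivation $D$ on $\hat C$ and a strict coalgebra map $f$ simultaneously, compatibly across all weights, and verify $D^2=0$ and the chain-map identity to all orders. The bound $F_{n-1}\bigl((F_1C)^{\ten n}\bigr)=0$ does guarantee that the perturbation series terminates in each arity, and the induction on weight should close, but until that bookkeeping is written out the proof is incomplete; the paper's equaliser trick is precisely a device for avoiding it.
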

\begin{proof}
We adapt the proof of \cite{htpy} Lemma \ref{htpy-sminimal}.
Take a trivial cofibration $\Tot C(V) \to E$, with $E$ fibrant. The filtration $F_nC:= \bigoplus_{i \le n} V_n$ of $C(V)$ satisfies the conditions of Proposition \ref{tancoho}, since the maps $V_n \to (V_1)^{\ten n}$ are quasi-isomorphisms by hypothesis. Thus the map $V_1 \to \tan E$ is a quasi-isomorphism. Let the quotient be $Q$, and note that this is a contractible cochain complex. 

Let $B$ be the cofree coalgebra on generators $Q$; this is trivially fibrant. Since the morphism $\tan E \to E$ is a cofibration in $DG\C_k$, the composite map $\tan E \to Q \to B$ must extend to a morphism $f:E \to B$ in $DG\C_k$. Let $A$ be the coequaliser of $f$ and the zero map; this is again a cofree object in $DG\C_k$, hence fibrant, and $\tan A= \ker (\tan E \to Q) \cong V_1$.

Therefore $A$ is an SHAA with cogenerating space $V_1$, so defines an $A_{\infty}$-structure on $V_1$. 
\end{proof}

\subsection{DG co-operads}

We now show the relation  between $A_{\infty}$-algebras and homotopy semigroups of cochain complexes has an analogue for operads. Roughly speaking, we will show that a homotopy monad (in the sense of \S \ref{qmonadsn}) with suitable operadic structure is related to the homotopy operads of \cite{laan}.  

\begin{definition}
Given an additive  cocomplete monoidal category $\C$, we now define a full subcategory $\cE(\C)$ of the category $\End(\C)$ of endofunctors on $\C$.  Objects of $\cE$ correspond to  collections $\{P_n\}_{n \ge 0}$, with $P_n$ a $\C$-representation of the symmetric group $S_n$, with the associated endofunctor given by
$$
V \mapsto \bigoplus_n P_n\ten^{S_n}V^{\ten n}.
$$

 $\cE$ forms a monoidal category under composition of functors, and an additive category under $\oplus$.
\end{definition}

\begin{definition}
 An operad (resp. pseudo operad) on $\C$ is a monoid (resp. semigroup) in $\cE(\C)$, and a  co-operad (resp. pseudo co-operad) on $\C$ is a comonoid (resp. semicogroup) in $\cE(\C)$. 
\end{definition}

Note that since $\cE(\C)$ is an additive category, there is a natural retraction $(F\circ X) \oplus (F\circ Y) \to F\circ (X\oplus Y) \to (F\circ X) \oplus (F\circ Y)$, for any $F,X,Y \in \cE(\C)$.  Thus, augmented operads $\top$ on abelian categories $\C$ correspond to pseudo operads $S$ on $\C$, by setting $S:= \ker(\top \to 1)$ and $\top = S\oplus 1$. 

Let $dg\Vect$ be the category of chain complexes, and $g\Vect$ the category of graded vector spaces, both over a field of characteristic $0$.

\begin{lemma}
If $\C= dg\Vect$ or $g\Vect$, the forgetful functor from pseudo operads on $\C$ to $\cE(\C)$ has a left adjoint, denoted by $T$. Likewise, the forgetful functor from ind-conilpotent pseudo co-operads on $\C$ to $\cE(\C)$ has a right adjoint, denoted by $T'$. 
\end{lemma}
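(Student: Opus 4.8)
The plan is to first identify $\cE(\C)$ with the category of symmetric sequences in $\C$ and the monoidal product $\circ$ with the plethysm (composition) product, so that a pseudo operad is precisely a semigroup and a pseudo co-operad precisely a semicogroup for $\circ$. The two forgetful functors then ask for free and cofree non-unital (co)monoids, and the natural candidate for both is the \emph{reduced tree functor}: set
\[
T(X)\;=\;T'(X)\;=\;\bigoplus_{\tau} X(\tau),
\]
where $\tau$ runs over isomorphism classes of reduced rooted trees (those with at least one vertex) and $X(\tau)=\bigotimes_{v} X(|v|)$ is the tensor over the vertices $v$ of the components of $X$ indexed by their arities $|v|$, equipped with the induced $S_n$-action on the leaves. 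Because the base field has characteristic $0$, all symmetric group actions are semisimple and $\ten_{S_n}$ agrees with $(-)^{S_n}$; this lets me pass freely between invariants and coinvariants and ensures each $X(\tau)$ behaves well under $\oplus$ and under the structure maps below.

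For the left adjoint $T$, I would equip $T(X)$ with the grafting multiplication $T(X)\circ T(X)\to T(X)$, where an element of the source is a tree each of whose leaves is decorated by a further tree and grafting produces a single tree; associativity of $\circ$ for this product is just the associativity of grafting. I would then check the universal property: a morphism $X\to S$ in $\cE(\C)$ into a pseudo operad $S$ extends uniquely to a semigroup morphism $T(X)\to S$ by evaluating a decorated tree through the iterated multiplication of $S$, the corollas (one-vertex trees) being exactly the image of $X$, and uniqueness following because every tree is an iterated grafting of corollas. The reason trees are forced on us (rather than the naive tensor-semigroup $\coprod_{n\ge 1}X^{\circ n}$) is that $\circ$ is cocontinuous only in its \emph{left} variable, while in the right variable it is merely a retract summand, $(F\circ X)\oplus(F\circ Y)\hookrightarrow F\circ(X\oplus Y)$; the extra cross terms produced by this right-hand nonlinearity are precisely the additional trees, so the free object must be organised by tree shape rather than by composition length.

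For the right adjoint $T'$, I would dualize. The same underlying object $\bigoplus_\tau X(\tau)$ carries a tree-splitting comultiplication $T'(X)\to T'(X)\circ T'(X)$, summing over all ways of cutting a tree into an outer tree whose leaves carry the inner subtrees. Since a tree with $k$ vertices admits at most $k-1$ iterated cuts, the reduced comultiplication is locally nilpotent, so each $X(\tau)$ sits inside a conilpotent subobject and $T'(X)=\varinjlim$ over the number of vertices exhibits it as ind-conilpotent. The cofree property $\Hom(C,T'(X))\cong\Hom_{\cE}(C,X)$, via the corolla projection $T'(X)\to X$, is the operadic analogue of the tensor coalgebra being cofree conilpotent: I would reconstruct the comparison map from the iterated comultiplication of $C$, ind-conilpotency of $C$ guaranteeing that the result lands in the direct sum and not in a completed product.

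The main obstacle will be this co-operad half, namely verifying that the direct sum (rather than a completed product $\prod_\tau$) genuinely supports the cofree structure \emph{among ind-conilpotent co-operads}, i.e. that passing to the ind-conilpotent subcategory is exactly the hypothesis under which the tree sum becomes a right adjoint. The remaining delicate point is the bookkeeping of the $S_n$-actions on leaves under splitting and regrafting, which is what the characteristic-zero assumption is there to make tractable.
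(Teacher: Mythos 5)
Your tree construction is the free pseudo-operad in the sense of Markl and of the cited reference (van der Laan \S 2, which the paper's proof simply invokes), and your observation that the failure of right-linearity of $\circ$ rules out the word construction is correct; note that the paper's own parenthetical formula $\bigoplus_{n>0}F^{\circ n}$ is the sum over complete level trees only and does not support a grafting product, for exactly the cross-term reason you give, so the discrepancy between your answer and the paper's stated formula is not evidence against you --- the honest content of the lemma is just the existence of the two adjoints.

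However, there is a genuine gap relative to the definition the paper actually uses: a pseudo operad here is a \emph{semigroup} in $(\cE(\C),\circ)$, i.e.\ a single associative $\gamma\colon S\circ S\to S$ with no unit and no partial compositions. With only $\gamma$ you cannot evaluate a tree containing a vertex only some of whose inputs are attached to further vertices (e.g.\ the two-vertex tree underlying a partial composition $x\circ_1 y$, with the root's remaining inputs left as leaves): $\gamma$ requires every input of the root to be filled, and there is no unit with which to fill the rest. Such a tree is also indecomposable for the all-leaves grafting product $T(X)\circ T(X)\to T(X)$, so your claim that every tree is an iterated grafting of corollas fails, and with it both the uniqueness and the existence of the extension $T(X)\to S$ on these summands. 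The full reduced-tree sum is free for the partial-composition ($\circ_i$) notion of pseudo-operad; the free \emph{semigroup} for $\circ$ is the smaller sum over the class of trees generated from corollas by grafting onto all leaves simultaneously (the initial algebra of $Z\mapsto X\oplus(X\circ Z)$), and dually the cofree ind-conilpotent semicogroup is the corresponding restricted cotree sum rather than the full one. So you must either cut your tree class down to match the semigroup definition, or switch to the $\circ_i$ definition throughout --- in which case the structure on $T(X)$ is no longer a single map $T(X)\circ T(X)\to T(X)$ and the statement being proved has changed. (The same tension is already present in the paper's identification of augmented operads with pseudo operads.)
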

\begin{proof}
These are described in \cite{laan} \S 2. In both cases, the underlying functor is $F \mapsto \bigoplus_{n>0} \overbrace{F\circ F \circ \ldots \circ F}^n$.  
\end{proof}

\begin{definition}
Recall from \cite{laan} Definition 3.1 that an operad up to homotopy on  $dg\Vect$ is defined to be a collection $P \in \cE(g\Vect)$, together with a square-zero differential $\delta$ on the cofree pseudo co-operad
$$
T'(P[1]).
$$
\end{definition}

The notion of operad up to homotopy in \cite{laan} generalises pseudo operads (or, equivalently, augmented operads) rather than operads. This motivates the following comparison, noting that the quasi-monads of Appendix \ref{qmonadsn} are \emph{a fortiori} quasi-semigroups in $\End(\C)$, and that pseudo co-operads can be replaced by homotopy operads, similarly to Corollary \ref{cfainfty}. 
\begin{lemma}
Every quasi-semigroup $Q$ in $\cE(dg\Vect)$ (in the sense of Definition \ref{semigpdef}) naturally gives rise to a pseudo co-operad $\beta(Q)$ on $dg\Vect$.
\end{lemma}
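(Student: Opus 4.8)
The plan is to lift, one categorical level, the passage from a quasi-semigroup $V$ in cochain complexes to the ind-conilpotent coassociative DG coalgebra $\Tot C(V)$ constructed in the preceding subsubsection, replacing the tensor product by the composition product $\circ$ of $\cE(dg\Vect)$. First I would record that $\cE(dg\Vect)$ is additive (indeed abelian, as $k$ has characteristic $0$) and that composition of endofunctors makes it a semigroupal category, with an associative bifunctor $\circ$. A semicogroup in $(\cE(dg\Vect),\circ)$ is by definition precisely a pseudo co-operad on $dg\Vect$, so this is exactly the target of the construction. The key observation is that the formation of $C(-)$ and Lemma \ref{coalg} use only that $\ten$ is an associative bifunctor and that hom-sets are additive: the coproduct $\Delta=\bigoplus_{i+j=n}\xi_{ij}$ and the differential $d=\sum_i(-1)^i\pd_i$ are built summand-by-summand, so biadditivity (commuting with $\oplus$) is never invoked. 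Hence these apply verbatim with $\ten=\circ$, even though $\circ$ fails to be additive in its second variable.

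Next I would apply $C(-)$ to $Q$, presented via objects $Q_n\in\cE(dg\Vect)$ ($n\in\N_1$), faces $\pd_i$, and products $\xi_{mn}:Q_{m+n}\to Q_m\circ Q_n$ as in the lemma following Definition \ref{semigpdef}. This yields
\[
C(Q)\in\Ch(\cE(dg\Vect)),\qquad C(Q)_n=Q_n\ (n>0),\quad d=\textstyle\sum_i(-1)^i\pd_i,\quad \Delta=\textstyle\bigoplus_{i+j=n}\xi_{ij},
\]
which by (the $\circ$-version of) Lemma \ref{coalg} is an ind-conilpotent coassociative coalgebra without counit in $\Ch(\cE(dg\Vect))$.

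Finally, since every object of $\cE(dg\Vect)$ is itself valued in chain complexes, an object of $\Ch(\cE(dg\Vect))$ is, arity by arity, a bicomplex of $S_k$-representations (bar degree against internal degree). I would set $\beta(Q):=\Tot C(Q)\in\cE(dg\Vect)$, the arity-wise total complex with the shift convention of the $\Tot$ above, and transport the coproduct along $\Tot$ to obtain the comultiplication $\gamma:\beta(Q)\to\beta(Q)\circ\beta(Q)$. Ind-conilpotence is inherited from the brutal truncations of $C(Q)$, exactly as in Lemma \ref{coalg}.

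The one step needing genuine care, and the main obstacle, is that $\Tot$ is compatible with $\circ$. Working at the level of endofunctors of $dg\Vect$, where $\circ$ is literal composition and $P(W)=\bigoplus_n P_n\ten^{S_n}W^{\ten n}$, one evaluates $\Tot(C)\circ\Tot(D)$ on $W$ and rewrites $(\Tot D(W))^{\ten k}$ using that totalization of bicomplexes is monoidal (Eilenberg--Zilber, with Koszul signs) and commutes with $\ten^{S_n}$ in characteristic $0$; this produces a natural isomorphism $\Tot(C\circ D)\cong\Tot(C)\circ\Tot(D)$. A subsidiary point to check here is that the binary coproduct $\Delta$, landing in $\bigoplus_{i+j}Q_i\circ Q_j$, includes naturally into the full composition $C(Q)\circ C(Q)$ as the diagonal summands (each $Q_i\circ Q_j$ being the all-inputs-equal-$j$ part of $Q_i\circ(\bigoplus_l Q_l)$), so that $\gamma$ is well defined; coassociativity and the chain-map property then follow from those of $\Delta$ together with the associativity constraint of $\circ$, the only real labour being the sign bookkeeping, which is identical to the DG coalgebra case.
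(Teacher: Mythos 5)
Your proposal is correct and follows essentially the same route as the paper: the paper defines $\beta(Q)=\bigoplus_{n>0}Q_n[n]$ directly with total differential $d_Q\pm\sum_i(-1)^i\pd_i$ and coproduct $\sum_{i+j=n}\xi_{ij}$, using exactly the natural retraction $(F\circ X)\oplus(F\circ Y)\to F\circ(X\oplus Y)\to(F\circ X)\oplus(F\circ Y)$ to land the coproduct in the diagonal summands of $\beta(Q)\circ\beta(Q)$ — the one genuine subtlety, which you correctly isolate. Your intermediate step through $C(Q)\in\Ch(\cE(dg\Vect))$ and a totalization is just a repackaging of the same construction (and the appeal to Eilenberg--Zilber is unnecessary, since totalizing a bicomplex is strictly monoidal with Koszul signs).
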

\begin{proof}
Given a quasi-semigroup $Q= \{Q_n\}_{n >0}$ in $\cE(dg\Vect)$ , we may set $\beta(Q):= \bigoplus_{n>0} Q_n[n] $ in $\cE(g\Vect)$, with differential $\delta: d_Q \pm \sum_i (-1)^i\pd_i$. Here, $\pd_i:Q_n[n]_j \to Q_{n+1}[n+1]_{j-1}$  is the structural map $\pd_i: (Q_n)_{n+j} \to (Q_{n+1})_{n+j}$ of the quasi-semigroup. 

The functor $\beta(Q)$ has the natural structure of a pseudo co-operad, with coproduct
$
\beta(Q) \to \beta(Q) \circ \beta(Q)
$
given on $Q_n[n] \subset \beta(Q)$ by $\sum_{i+j=n} \xi_{ij}: Q_n[n] \to Q_i[i] \circ Q_j[j]$, making use of the natural retraction 
$(F\circ X) \oplus (F\circ Y) \to F\circ (X\oplus Y) \to (F\circ X) \oplus (F\circ Y)$.
\end{proof}

\bibliographystyle{alphanum}
\addcontentsline{toc}{section}{Bibliography}
\bibliography{references}
\end{document}